\pgfplotsset{compat=1.15}
\newtheorem{theorem}{Theorem}
\newtheorem{lemma}{Lemma}
\newtheorem{obs}{Observation}
\newtheorem{claim}{Claim}
\numberwithin{equation}{section}
\DeclarePairedDelimiter\floor{\lfloor}{\rfloor}
\begin{document}

\title{Disjunctive domination in maximal outerplanar graphs}
	
\author{$^1$Michael A. Henning\thanks{Research supported in part by the South African National Research Foundation under grant number 129265 and the University of Johannesburg}, $^{2}$Paras Vinubhai Maniya, and $^2$Dinabandhu Pradhan\thanks{Corresponding author.}  \\ \\
$^{1}$Department of Mathematics and Applied Mathematics\\
University of Johannesburg \\
Auckland Park, South Africa\\
\small \tt Email: mahenning@uj.ac.za
\\ \\
$^{2}$Department of Mathematics \& Computing\\
Indian Institute of Technology (ISM) \\
Dhanbad, India \\
\small \tt Email: maniyaparas9999@gmail.com \\
\small \tt Email: dina@iitism.ac.in}

\date{}
\maketitle

\begin{abstract}
A disjunctive dominating set of a graph $G$ is a set $D \subseteq V(G)$ such that every vertex in $V(G)\setminus D$ has a neighbor in $D$ or has at least two vertices in $D$ at distance $2$ from it. The disjunctive domination number of $G$, denoted by $\gamma_2^d(G)$, is the minimum cardinality of a disjunctive dominating set of $G$. In this paper, we show that if $G$ is a maximal outerplanar graph of order $n \ge 7$ with $k$ vertices of degree $2$, then $\gamma_2^d(G)\le \floor*{\frac{2}{9}(n+k)}$, and this bound is sharp.
\end{abstract}
	
{\small \textbf{Keywords:} Domination; disjunctive domination; maximal outerplanar graphs} \\
\indent {\small \textbf{AMS subject classification:} 05C69}

\section{Introduction}

All the graphs considered in this paper are finite, simple, and undirected. For a graph $G$, we use $V(G)$ and $E(G)$ to denote the vertex set and the edge set of $G$, respectively. Two vertices $u$ and $v$ of $G$ are \emph{adjacent} if $uv\in E(G)$. Two adjacent vertices are called \emph{neighbors}. The \emph{open neighborhood} $N_G(v)$ of a vertex $v$ in $G$ is the set of neighbors of $v$, while the \emph{closed neighborhood} of $v$ is the set $N_G[v] = \{v\} \cup N_G(v)$. The \emph{degree} of a vertex $v$ in $G$ is the number of vertices adjacent to $v$ in $G$, and is denoted by $\deg_G(v)$, and so $\deg_G(v) = |N_G(v)|$. A vertex of degree~$1$ in $G$ is  called a \emph{leaf} (and also called a \emph{pendant vertex} in the literature). The distance between vertices $u$ and $v$ in $G$ is the minimum length of a path between $u$ and $v$, and is denoted by $d_G(u,v)$.  For a given positive integer $l$, we use the notation $[l]$ to denote the set $\{1,2,\ldots,l\}$.

A \emph{rooted tree} $T$ distinguishes one vertex $r$ called the \emph{root}. Let $T$ be a tree rooted at vertex $r$.  For each vertex  $v \ne r$ of $T$, the \emph{parent} of $v$ is the neighbor of $v$ on the unique $(r,v)$-path, while a \emph{child} of $v$ is any other neighbor of $v$. A \emph{descendant} of $v$ is a vertex $u \ne v$ such that the unique $(r,u)$-path contains $v$. Thus, every child of $v$ is a descendant of $v$. We let $C(v)$ and $D(v)$ denote the set of children and descendants, respectively, of $v$, and we define $D[v] = D(v) \cup \{v\}$. The \emph{maximal subtree} rooted at $v$ is the subtree of $T$ induced by $D[v]$, and is denoted by $T_v$. A \emph{diametrical vertex} of $T$ is a leaf that belongs to a longest path in $T$.

In this paper, we study planar graphs. A \emph{plane embedding} of a planar graph $G$ is an embedding of $G$ in a plane such that the edges of $G$ do not intersect each other except at the endpoints. A planar graph with a plane embedding is called a \emph{plane graph}. A \emph{triangulated disk} or \emph{near-triangulation} is a $2$-connected plane graph all of whose interior faces are triangles. A \emph{maximal outerplanar graph} $G$, abbreviated \emph{mop}, is a plane graph such that all vertices lie on the boundary of the outer face (unbounded face) and all inner faces are triangles.

Throughout our discussion, we refer to inner faces of a maximal outerplanar graph as triangles. Two faces are adjacent if they share a common edge. A triangle of $G$ that is not adjacent to the outer face is called an \emph{internal triangle} of $G$. An edge on the outer face (unbounded face) is called an \emph{outer edge} of $G$, while any other edge of $G$ is called a \emph{diagonal} of $G$. A \emph{region} $R \, \colon v_1v_2 \ldots v_k$ of $G$ is a maximal outerplanar subgraph of $G$ such that one outer edge of $R$ is diagonal of $G$ and all other outer edges of $R$ are outer edges of $G$. For an edge $e=xy$ of $G$, the contraction of an edge $e$  of $G$ is the graph obtained from $G$ by deleting $x$ and $y$ (and all incident edges), adding a new vertex $v$, and adding edges between $v$ and each vertex in $(N_G(x)\cup N_G(y)) \setminus \{x,y\}$.

\begin{lemma}[\cite{Ro-83}]\label{key}
	If $H$ is obtained by the contraction of an outer edge $e$ in a mop $G$ of order~$n \ge 4$, then $H$ is also a mop.
\end{lemma}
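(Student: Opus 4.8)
The plan is to produce an explicit plane embedding of $H$ and check the two defining properties of a mop, namely that every vertex lies on the outer face and every inner face is a triangle; almost all of the work is to understand the edge $e=xy$ locally. Since $e$ is an outer edge of $G$ and the boundary of the outer face of a mop on $n\ge 3$ vertices is a Hamiltonian cycle, I may write this cycle as $C\colon x=c_0, y=c_1, c_2, \ldots, c_{n-1}$ with $x,y$ consecutive. The edge $xy$ borders the outer face on one side and therefore borders exactly one inner face, a triangle $xyz$ with $z=c_t$ for some $t$. The single structural fact I would isolate first is that $z$ is the \emph{only} common neighbor of $x$ and $y$, i.e.\ $N_G(x)\cap N_G(y)=\{z\}$.

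The key step is this uniqueness claim, which I would prove by contradiction using the convex position of the vertices on $C$. Suppose $c_i$ and $c_j$ with $2\le i<j\le n-1$ were two distinct common neighbors of $c_0$ and $c_1$. Then all four edges $c_0c_i, c_1c_i, c_0c_j, c_1c_j$ belong to $G$, and in particular $c_0c_i$ and $c_1c_j$ are present. These two edges are vertex-disjoint, and their endpoints occur in the alternating cyclic order $c_0, c_1, c_i, c_j$ around $C$; since every vertex lies on the outer face, two such chords are forced to cross in the plane, contradicting that $G$ is a plane graph. Hence $|N_G(x)\cap N_G(y)|=1$. This has two consequences that drive the rest of the argument: the contraction creates no multi-edge (the lone common neighbor $z$ contributes the single edge $vz$), and exactly one triangle of $G$, namely $xyz$, is destroyed.

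To finish I would combine minor-closedness of outerplanarity with an edge count. Outerplanar graphs are closed under minors, and contracting the edge $e$ is a minor operation, so $H$ is outerplanar. Using $|N_G(x)\cap N_G(y)|=1$ we get $\deg_H(v)=|N_G(x)\cup N_G(y)|-2=\deg_G(x)+\deg_G(y)-3$, and since the $\deg_G(x)+\deg_G(y)-1$ edges incident with $\{x,y\}$ in $G$ are replaced by the $\deg_H(v)$ edges at $v$, a short computation from $|E(G)|=2n-3$ yields $|E(H)|=2n-5=2(n-1)-3$. Thus $H$ is an outerplanar graph on $n-1\ge 3$ vertices attaining the maximum possible number $2(n-1)-3$ of edges, and by the standard characterization of that extremal case $H$ is a maximal outerplanar graph, as required.

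The main obstacle is the uniqueness step: everything downstream---the absence of multi-edges, the count of destroyed faces, and the edge count---rests on an outer edge lying in a unique triangle. As a self-contained alternative to the edge-count ending, one can verify the mop conditions directly on the embedding: the new outer boundary is the cycle $v, c_2, \ldots, c_{n-1}$, so all vertices remain on the outer face; the triangle $xyz$ collapses to the single edge $vz$; and every other inner face $xc_ac_b$ or $yc_ac_b$ of $G$ becomes a genuine triangle $vc_ac_b$ of $H$. Uniqueness of the common neighbor is precisely what prevents two distinct faces from collapsing onto the same triangle, so this route, too, turns on the same structural fact.
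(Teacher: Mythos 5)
The paper supplies no proof of Lemma~\ref{key} at all: it is imported verbatim from O'Rourke~\cite{Ro-83}, so there is no internal argument to compare yours against. Judged on its own merits, your proof is correct and complete. The hinge is exactly where you place it: for an outer edge $xy$ of the Hamiltonian boundary cycle $C$, two common neighbors $c_i,c_j$ with $2\le i<j\le n-1$ would force the edges $c_0c_i$ and $c_1c_j$, whose endpoints interleave in the cyclic order on $C$; your index bounds ($i\le n-2$, $j\ge 3$) guarantee both are genuine chords, every chord is drawn inside the disk bounded by $C$ because the outer face is bounded by $C$, and interleaving chords must cross, so $N_G(x)\cap N_G(y)=\{z\}$. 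This is the correct companion to the paper's Observation~\ref{obs1}, which records that a \emph{diagonal} has exactly two common neighbors while an outer edge has exactly one. From uniqueness, both of your endings go through: the count $|E(H)|=(2n-3)-(\deg_G(x)+\deg_G(y)-1)+(\deg_G(x)+\deg_G(y)-3)=2(n-1)-3$ combined with minor-closedness of outerplanarity and the standard extremal fact that an outerplanar graph on $m\ge 3$ vertices with $2m-3$ edges is edge-maximal, hence a triangulated polygon; or the direct re-embedding, where uniqueness of $z$ is precisely what prevents two inner faces $xc_ac_b$ and $yc_ac_b$ from collapsing onto the same triangle at $v$. Two small points if you want full rigor: the facts you invoke without proof (the boundary of a mop of order $n\ge 3$ is a Hamiltonian cycle, $|E(G)|=2n-3$, and the equivalence of edge-maximality with the embedding definition of a mop) are standard but should be cited, and in the edge-count route it is worth one sentence observing that the paper's definition of contraction already produces a simple graph, so the role of uniqueness of $z$ there is purely to make the degree computation $\deg_H(v)=\deg_G(x)+\deg_G(y)-3$ exact rather than an inequality.
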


A set $D\subseteq V(G)$ is a \emph{dominating set} of $G$ if every vertex in $V(G) \setminus D$ is adjacent to at least one vertex in $D$. The \emph{domination number} of $G$, denoted by $\gamma(G)$, is the minimum cardinality among all dominating sets of $G$. Domination and its variants are well-explored topics in graph theory, with existing literature thoroughly reviewed in~\cite{HaHeHe-20,HaHeHe-21,HaHeHe-23,HeYe-book}.

Goddard, Henning, and McPillan introduced the concept of disjunctive domination in graphs inspired by distance domination and exponential domination. In a graph $G$, a set $S\subseteq V(G)$ is called a \emph{disjunctive dominating set}, abbreviated 2DD-set, if every vertex in $V(G)\setminus S$ is adjacent to a vertex in $S$ or has at least two vertices in $S$ at distance~$2$ from it. A vertex $x$ in $G$ is said to be \emph{disjunctive dominated} by the set $S$ if it is adjacent to at least one vertex of $S$ or has at least two vertices in $S$ at distance~$2$ from it. The \emph{disjunctive domination number} of $G$,  denoted by $\gamma_2^d(G)$, is the minimum cardinality among all disjunctive dominating sets of $G$. A 2DD-set of cardinality $\gamma_2^d(G)$ is called a $\gamma_2^d$-set of $G$. The concept of disjunctive domination in graphs has been examined in \cite{GoHeMc-14,HaMa-15,HaMa-16a,HaMa-16b,Zh-23}.

In this paper, we study disjunctive domination in planar graphs. The study of domination in maximal outerplanar graphs has been extensively studied since 1975. In a seminal paper~\cite{Ch-75}, Chv\'{a}tal showed  that the domination number of a maximal outerplanar graph of order $n$ is at most $n/3$. Campos and Wakabayashi~\cite{CaWa-13} demonstrated that for a mop $G$ of order~$n$, $\gamma(G)\le \floor*{\frac{1}{4}(n+k)}$, where $k$ is the number of vertices with degree~$2$. Tokunaga independently confirmed this result in~\cite{To-13}. For additional variants of domination in maximal outerplanar
graphs, we refer the reader to the references ~\cite{Aita24,Alvarado18,Araki18,Dorfling16,Lemanska17}.

\section{Main result}
\label{Sect:main result}

Since every dominating set is a disjunctive dominating set of a graph $G$, we note that $\gamma_2^d(G) \le \gamma(G)$. In particular, for a mop $G$ of order $n$ with $k$ vertices of degree~$2$, we infer that $\gamma_2^d(G) \le \gamma(G) \le \floor*{\frac{1}{4}(n+k)}$. A natural problem is to determine whether this bound on $\gamma_2^d(G)$ can be improved, and if so, what is a tight bound in the sense that it is achievable for mops. Our main result is the following improved upper bound on the disjunctive domination  number of a mop.

\begin{theorem}
	\label{thm:main}
	If $G$ is a mop of order $n \ge 7$ with $k$ vertices of degree~$2$, then $\gamma_2^d(G) \le \floor*{\frac{2}{9}(n+k)}$.
\end{theorem}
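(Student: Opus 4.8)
The plan is to prove the bound by strong induction on the order $n$, using the edge-contraction operation of \Cref{key} together with a careful analysis of a small \emph{terminal} region of $G$. First I would record the base cases: for each fixed $n$ in a bounded range $7 \le n \le n_0$, where $n_0$ is a constant determined by the largest configuration handled in the inductive step, the inequality is checked directly. Here the already-available estimate $\gamma_2^d(G)\le \gamma(G)\le \floor*{\frac14(n+k)}$ disposes of most small cases immediately, leaving only a few non-isomorphic structures to verify by hand.

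For the inductive step, I would organize the reduction around the weak dual tree $T$ of $G$, whose nodes are the triangles of $G$ and in which two nodes are adjacent precisely when the corresponding triangles share a diagonal; since $G$ is a mop, $T$ is a tree. Rooting $T$ and passing to a diametrical vertex yields a deepest ear triangle, and following the unique path from it toward the root isolates a region $R \, \colon v_1 v_2 \ldots v_m$ attached to the rest of $G$ along a single diagonal $v_1 v_m$. The heart of the argument is then a case analysis according to the number of triangles of $R$ (equivalently $m$) and the local degree sequence along $v_2,\ldots,v_{m-1}$. In each case I would delete the interior vertices of $R$, which leaves the mop lying on the far side of $v_1 v_m$, or, when it is more convenient, contract a suitable outer edge so that \Cref{key} guarantees the reduced graph $G'$ is again a mop; apply the induction hypothesis to $G'$ to obtain a $\gamma_2^d$-set $S'$; and then augment $S'$ by a small set $A$ of vertices chosen inside $R$ or among $\{v_1,v_m\}$ so that $S' \cup A$ disjunctively dominates every vertex of $G$.

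The efficiency of disjunctive domination is what buys the improvement from $\frac14$ to $\frac29$: a single well-placed vertex inside the region can disjunctively dominate several boundary vertices by supplying one of the two required distance-$2$ witnesses, so that two added vertices suffice to handle a region whose removal decreases $n+k$ by $9$. Accordingly, in each case I would verify the numerical inequality $|A| \le \frac29\bigl((n+k)-(n'+k')\bigr)$, where $n'$ and $k'$ are the order and the number of degree-$2$ vertices of $G'$, and confirm that this is compatible with the floor function; summing over the induction yields the claim. I would also isolate the extremal configurations that force equality, namely regions contributing exactly nine units of $n+k$ and dominated by two vertices, since these guide the correct choice of reduction at each step.

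The step I expect to be the main obstacle is the bookkeeping of degree-$2$ vertices across the attaching diagonal $v_1 v_m$. Deleting the interior of $R$, or contracting an outer edge, can turn $v_1$ or $v_m$ into a degree-$2$ vertex of $G'$ that was not one in $G$, or conversely destroy an existing degree-$2$ vertex, so the difference $k-k'$ must be tracked separately in every subcase, and it can vary by amounts just large enough to threaten the inequality. A second, related subtlety is that each vertex added to $A$ must be verified to disjunctively dominate the relevant vertices \emph{in $G$ itself}: one of the two distance-$2$ witnesses for a boundary vertex may lie on the far side of $v_1 v_m$, so the local verification cannot be carried out entirely inside $R$. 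Controlling these boundary interactions, while ensuring that the finitely many minimal irreducible configurations are all absorbed into the base cases, is where the real work of the proof lies.
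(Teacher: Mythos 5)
Your overall skeleton---induction on the order, the weak dual tree $T$, isolating a terminal region behind a single diagonal, deleting vertices or contracting an outer edge via Lemma~\ref{key}, and then augmenting the inherited 2DD-set---is the same strategy the paper follows. However, there are two concrete gaps in the proposal as written.

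First, your base-case plan is numerically backwards. Since $\tfrac14 > \tfrac29$, the known bound $\gamma_2^d(G)\le\gamma(G)\le\lfloor\tfrac14(n+k)\rfloor$ is \emph{weaker} than the bound to be proved, not stronger; it settles a small case only when the two floors happen to coincide. Already for $n=10$ and $k=2$ it gives $\gamma_2^d(G)\le 3$, while the theorem demands $\gamma_2^d(G)\le 2$; the same failure occurs for $n=9$ with $k\ge 3$ and for $n=11$. This is exactly why the paper's Lemma~\ref{7to12mops} must construct two-element 2DD-sets by hand for $n\in\{9,10,11\}$, using Chv\'{a}tal's partition lemma (Lemma~\ref{partition}) together with Observations~\ref{5vertex} and~\ref{obs3}; those cases are genuine work, not something disposed of immediately.

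Second, and more seriously, your inductive step is not finite as stated. You propose a case analysis ``according to the number of triangles of $R$,'' where $R$ is the strip of triangles running from a deepest leaf of $T$ back toward the root, but nothing in your plan bounds the length of this strip, so the list of cases is a priori infinite. The paper closes precisely this hole with Lemma~\ref{lem1}: in a minimum counterexample, the distance in $T$ from any leaf to the nearest vertex of degree~$3$ must lie in $\{1,2,5,6\}$, and for distances $5$ and $6$ only the specific regions $H_1,H_2$ and $H_5,\ldots,H_8$ can occur; ruling out distances $3$, $4$, $7$, $8$ and the path case each requires its own deletion/contraction argument. Only after this structural lemma does the terminal configuration become one of finitely many rooted subtrees (the $28$ trees of Figure~\ref{trees}), each of which is then refuted separately in the main proof. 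Your proposal correctly names the delicate points (tracking $k-k'$ across the attaching diagonal, and distance-$2$ witnesses lying on the far side of it), but it defers exactly the enumeration and per-case verification in which the proof actually consists; in particular, the assertion that ``two added vertices suffice to handle a region whose removal decreases $n+k$ by $9$'' is the conclusion each case must deliver, not something your outline establishes.
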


We proceed as follows. In Section~\ref{Sect:prelim}, we present observations of maximal outerplanar graphs and present preliminary lemmas on the disjunctive domination  number of a mop. Thereafter in Section~\ref{Sect:main-proof}, we present a proof of our main result, namely Theorem~\ref{thm:main}. In Section~\ref{Sect:conclude}, we present examples that demonstrate the tightness of the given bound.

\section{Preliminary results and lemmas}
\label{Sect:prelim}

In the following, we present properties of maximal outerplanar graphs, which are well-known or easy to observe.

\begin{obs}
	\label{5vertex}
	If $G$ is a mop of order~$5$, then there exists a vertex adjacent to all other vertices of $G$.
\end{obs}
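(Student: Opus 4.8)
The plan is to exploit the fact that a mop on $n$ vertices is precisely a triangulation of a convex $n$-gon: its outer face is bounded by a Hamiltonian cycle $C$, and the remaining edges are interior diagonals that subdivide the interior into triangles. A standard count shows that such a triangulation uses exactly $n-3$ diagonals, and that these diagonals are pairwise non-crossing (they meet only at shared endpoints). Applying this with $n=5$, the graph $G$ consists of a $5$-cycle $v_1v_2v_3v_4v_5v_1$ together with exactly two non-crossing diagonals. My goal is to show that these two diagonals share a common endpoint, since that endpoint is then forced to be adjacent to all other vertices.

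First I would establish the key structural claim: \emph{any two diagonals of a convex pentagon that do not share an endpoint must cross.} Suppose two diagonals have four distinct endpoints; then together they use four of the five vertices, omitting exactly one, say $v_i$. The four remaining vertices, in cyclic order, are $v_{i+1},v_{i+2},v_{i+3},v_{i+4}$ (indices modulo $5$). There are only three ways to pair these four vertices into two disjoint pairs, and the only one consisting of two diagonals is $\{v_{i+1}v_{i+3},\,v_{i+2}v_{i+4}\}$, since each of the other two pairings includes at least one boundary edge of $C$ rather than two diagonals. This single admissible pairing is exactly a crossing pair. Hence two non-crossing diagonals cannot have four distinct endpoints.

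With this claim in hand, the two diagonals of $G$ must share an endpoint $v$. The diagonals then join $v$ to the two vertices of $C$ not already adjacent to it on the cycle, while the edges of $C$ join $v$ to its two cycle-neighbors. Since $|V(G)|=5$, these four neighbors account for all remaining vertices, so $v$ is adjacent to every other vertex of $G$, as required.

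I expect the only delicate point to be the verification of the crossing claim; everything else is bookkeeping once $G$ is viewed as a triangulated pentagon. It is worth noting that a naive degree count does not suffice here: a mop on $5$ vertices has $2n-3=7$ edges and hence degree sum $14$, which is consistent with a degree sequence such as $(3,3,3,3,2)$ in which no vertex has degree~$4$. Thus the conclusion genuinely relies on the non-crossing (planar) structure of the triangulation, which the crossing argument above is designed to capture.
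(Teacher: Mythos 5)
Your proof is correct. Note that the paper offers no argument for this observation at all---it is listed among properties of mops that are ``well-known or easy to observe''---so there is no proof to compare against; what you have written is a complete and valid justification. The chain of reasoning checks out at every step: a mop of order $5$ is a triangulated convex pentagon with exactly $n-3=2$ pairwise non-crossing diagonals; your case analysis of the three pairings of four vertices is accurate (the pairings $\{v_{i+1}v_{i+2},\,v_{i+3}v_{i+4}\}$ and $\{v_{i+1}v_{i+4},\,v_{i+2}v_{i+3}\}$ each contain a boundary edge, leaving only the crossing pair $\{v_{i+1}v_{i+3},\,v_{i+2}v_{i+4}\}$); hence the two diagonals share an endpoint $v$, whose two cycle-neighbors together with the two diagonal endpoints exhaust the remaining four vertices. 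Your closing remark is a genuinely useful addition: the degree sequence $(3,3,3,3,2)$ realized by $C_5$ plus two ``crossing'' chords (e.g., $v_1v_3$ and $v_2v_4$, which yields a $K_4$ minor and so is not outerplanar) shows that edge and degree counting alone cannot prove the observation, so the planarity of the embedding is doing real work. An equivalent and slightly quicker route, for comparison, is via the dual tree the paper introduces later: the three triangles of a triangulated pentagon form a path $P_3$ in the dual, and the middle triangle shares one edge with each end triangle; its vertex incident to both shared edges is the dominating vertex. Both arguments are elementary; yours has the advantage of being self-contained and not presupposing the dual-tree machinery.
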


\begin{obs}
	\label{obs3}		
	If $G$ is a mop of order~$n$ where $5 \le n \le 8$ and if $v_1v_2\ldots v_nv_1$ represent the boundary of the outer face of $G$, then $\{v_i,v_{i+4}\}$ is a 2DD-set of $G$ for any $i \in [n]$, where $i+4$ is calculated modulo~$n$.
\end{obs}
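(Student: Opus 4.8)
The plan is to reduce to the case $i = 1$ by symmetry and then verify that every vertex outside $S = \{v_1, v_5\}$ is disjunctively dominated, handling the few ``middle'' vertices with a short distance argument. First I would observe that relabelling the boundary cycle by the cyclic shift $v_j \mapsto v_{j-i+1}$ (indices modulo $n$) is a bijection that carries $\{v_i, v_{i+4}\}$ to $\{v_1, v_5\}$ while preserving adjacency and all distances; hence it suffices to prove that $S = \{v_1, v_5\}$ is a 2DD-set.

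Next I would use the outer edges of the mop, which are exactly $v_1v_2, v_2v_3, \ldots, v_{n-1}v_n, v_nv_1$. In particular $v_1$ is adjacent to $v_2$ and $v_n$, and $v_5$ is adjacent to $v_4$ and $v_6$. Thus every vertex of $\{v_2, v_4, v_6, v_n\} \setminus S$ is adjacent to a vertex of $S$ and is therefore dominated. For $5 \le n \le 7$ this already accounts for every vertex on the long boundary arc from $v_5$ back to $v_1$; for $n = 8$ the single remaining vertex on that arc is $v_7$.

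The only vertices that genuinely require the distance-$2$ condition are the middle vertices $v_3$ (for every $n$ in the range) and $v_7$ (when $n = 8$). For $v_3$, the boundary paths $v_3 v_2 v_1$ and $v_3 v_4 v_5$ consist of outer edges, so $d_G(v_3, v_1) \le 2$ and $d_G(v_3, v_5) \le 2$. If $v_3$ is adjacent to $v_1$ or to $v_5$, it is dominated; otherwise both inequalities are equalities, so $v_1$ and $v_5$ are two distinct vertices of $S$ at distance exactly $2$ from $v_3$, and $v_3$ is disjunctively dominated. The identical argument applied to the outer paths $v_7 v_6 v_5$ and $v_7 v_8 v_1$ settles $v_7$ when $n = 8$.

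I expect the argument to be essentially a finite case check rather than to present a real obstacle; the only points needing care are the small-$n$ overlaps (for instance $v_n \in S$ and $v_6 = v_1$ when $n = 5$, and $v_6 = v_n$ being adjacent to both $v_1$ and $v_5$ when $n = 6$) and the elementary but essential fact that graph distance is bounded above by distance along the boundary cycle, which is exactly what makes the length-two outer paths usable in the distance-$2$ step.
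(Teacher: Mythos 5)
Your proof is correct. Note that the paper states this observation without proof (it appears among the properties of mops described as ``well-known or easy to observe''), so there is no proof of record to compare against; your argument supplies the missing verification, and it is the natural one. One point of wording deserves care: the cyclic shift $v_j \mapsto v_{j-i+1}$ is \emph{not} in general an automorphism of $G$, since the diagonals of a mop are not rotation-invariant, so ``preserving adjacency and all distances'' is not literally true of that map viewed as a map of $G$ to itself. The reduction to $i=1$ is nevertheless valid, for either of two reasons: read as a mere renaming of vertices, the hypothesis of the observation (that $v_1v_2\ldots v_nv_1$ bounds the outer face) is invariant under cyclic relabelling of the boundary; or, more directly, your argument for $\{v_1,v_5\}$ uses only outer edges of $G$ and never any diagonal, so it applies verbatim with shifted indices to $\{v_i,v_{i+4}\}$. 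This last feature means you have in fact proved slightly more than asked: $\{v_i,v_{i+4}\}$ is already a 2DD-set with respect to the Hamiltonian boundary cycle alone, and the only role the unknown diagonals can play is to shorten a distance-$2$ connection to an adjacency, which your case split (``adjacent, or else at distance exactly $2$ from both $v_1$ and $v_5$'') handles correctly.
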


\begin{obs}\label{obs1}
	Let $G$ be a mop of order~$n \ge 4$ and let $v_1v_2\ldots v_nv_1$ represent the boundary of the outer face of $G$. If $v_iv_j$ is a diagonal of $G$, where $i < j$, then $v_i$ and $v_j$ have two common neighbors $v_k$ and $v_l$, where $k\in \{i+1,i+2,\ldots,j-1\}$ and $l\in \{j+1,\ldots,n,1,\ldots,i-1\}$.
\end{obs}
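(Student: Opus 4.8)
The plan is to exploit the fact that, in a mop, a diagonal is an interior edge and therefore lies on the boundary of exactly two inner faces, each of which is a triangle. First I would note that since $G$ is a mop it is $2$-connected, so every edge bounds exactly two faces of the plane embedding. The outer edges are precisely those bounding the outer (unbounded) face, so a diagonal $v_iv_j$ bounds two \emph{inner} faces, both of which are triangles by the defining property of a mop. Writing these triangles as $v_iv_jv_a$ and $v_iv_jv_b$ with $a \neq b$, we immediately obtain two distinct common neighbors $v_a, v_b$ of $v_i$ and $v_j$. It then remains only to show that one of the indices $a,b$ lies in the inner arc $\{i+1,\ldots,j-1\}$ and the other in the outer arc $\{j+1,\ldots,n,1,\ldots,i-1\}$, after which we relabel them as $v_k$ and $v_l$.

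To locate the two apex vertices I would use planarity via a Jordan-curve argument. In the outerplanar embedding all vertices lie on the outer cycle $v_1v_2\cdots v_nv_1$, and the chord $v_iv_j$ together with the boundary arc $v_iv_{i+1}\cdots v_j$ forms a simple closed curve $C$. The vertices $v_{i+1},\ldots,v_{j-1}$ lie on $C$, while the remaining vertices $v_{j+1},\ldots,v_n,v_1,\ldots,v_{i-1}$ lie in the exterior region of $C$. Of the two triangular faces incident to $v_iv_j$, exactly one lies in the bounded region enclosed by $C$ and the other in the complementary region; hence the apex of the enclosed triangle must be one of $v_{i+1},\ldots,v_{j-1}$, and the apex of the other triangle must be one of $v_{j+1},\ldots,v_n,v_1,\ldots,v_{i-1}$. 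This produces the required $v_k$ and $v_l$, one on each arc.

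I would also record the easy non-degeneracy point that both arcs are nonempty: because $v_iv_j$ is a diagonal rather than an outer edge, $v_i$ and $v_j$ are not consecutive on the outer cycle, so neither $j=i+1$ nor $\{i,j\}=\{1,n\}$ holds; thus each arc contains at least one vertex, guaranteeing that each apex genuinely falls in its claimed index set. Equivalently, one can phrase the argument through the paper's notion of a \emph{region}: the diagonal $v_iv_j$ splits $G$ into two regions $R_1\colon v_iv_{i+1}\cdots v_j$ and $R_2\colon v_jv_{j+1}\cdots v_i$ (the second wrapping through $v_n,v_1$), each a mop of order at least~$3$ having $v_iv_j$ as an outer edge, and in a mop an outer edge lies on a unique triangle whose apex is an interior vertex of that region.

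The main obstacle is making the Jordan-curve step fully rigorous, namely justifying that the two faces incident to the diagonal sit on opposite sides of $C$ and that no vertex from the wrong arc can serve as an apex. This is the one place where the plane embedding must genuinely be invoked, rather than purely combinatorial face or degree counting; once this is granted, everything else reduces to bookkeeping on the two index ranges.
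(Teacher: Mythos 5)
Your proposal is correct: a diagonal of a mop is an interior edge lying on exactly two inner (hence triangular) faces, and your Jordan-curve argument correctly places one apex on each arc of the outer cycle, with the non-degeneracy remark ensuring both index sets are nonempty. The paper states this as an observation without proof, treating it as well known, and your argument is precisely the standard justification it implicitly relies on, so there is no divergence to report.
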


Using~\cref{obs1}, we have the following observation.

\begin{obs}\label{obs2}		
	Let $G$ be a mop of order $n \ge 4$ that does not contain any internal triangles, and let $v_1v_2\ldots v_nv_1$ represent the boundary of the outer face of $G$. If $v_iv_j$ is a diagonal of $G$, where $i< j$, then $v_i$ and $v_j$ share a common neighbor, which can be either $v_{i+1}$ or $v_{j-1}$. Similarly, they also share a common neighbor, which can be either $v_{j+1}$ or $v_{i-1}$.
\end{obs}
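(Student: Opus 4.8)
The plan is to combine \cref{obs1} with the structural consequence of forbidding internal triangles. By \cref{obs1}, the diagonal $v_iv_j$ already possesses a common neighbor $v_k$ with $k \in \{i+1,\ldots,j-1\}$ on the interior arc and a common neighbor $v_l$ with $l \in \{j+1,\ldots,n,1,\ldots,i-1\}$ on the exterior arc. These two common neighbors are precisely the apexes of the two triangular inner faces that share the diagonal $v_iv_j$, namely the triangles $v_iv_kv_j$ and $v_iv_lv_j$. It therefore suffices to sharpen \cref{obs1} by showing that in fact $k \in \{i+1,j-1\}$ and $l \in \{j+1,i-1\}$.

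First I would treat the interior triangle $v_iv_kv_j$. Since $G$ contains no internal triangle, every inner face is adjacent to the outer face, that is, at least one of its three bounding edges is an outer edge. The edge $v_iv_j$ is a diagonal by hypothesis, so one of the remaining two edges $v_iv_k$ or $v_kv_j$ must be an outer edge. Recalling that the outer edges of $G$ are exactly the boundary edges $v_tv_{t+1}$ joining consecutive vertices, and using $i < k < j$, the edge $v_iv_k$ can be an outer edge only if $k = i+1$, while $v_kv_j$ can be an outer edge only if $k = j-1$. Hence $k \in \{i+1,j-1\}$, which is the first assertion. The second assertion follows by the identical argument applied to the exterior triangle $v_iv_lv_j$: one of $v_iv_l$ or $v_lv_j$ is an outer edge, and since $l$ lies on the exterior arc this forces $l = i-1$ or $l = j+1$, respectively.

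The step that needs the most care is the passage from \cref{obs1} to the two triangular faces. One must check that $v_iv_j$, being an interior edge of a near-triangulation, is shared by exactly two inner faces, one on each side of the diagonal, and that these faces are exactly the triangles on $v_k$ and $v_l$ supplied by \cref{obs1}. Once this face structure is established, the no-internal-triangle hypothesis does all the remaining work, and the translation of ``outer edge'' into ``consecutive boundary indices'' is immediate.
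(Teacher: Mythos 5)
Your proposal is correct and matches the paper's (implicit) derivation: the paper states this observation as an immediate consequence of \cref{obs1}, and your argument---that the two common neighbors supplied by \cref{obs1} are the apexes of the two triangular inner faces on the diagonal $v_iv_j$, each of which must contain an outer edge since $G$ has no internal triangles, which forces the apex index to be $i+1$ or $j-1$ on the one side and $j+1$ or $i-1$ (read cyclically) on the other---is exactly that derivation written out. The face-structure point you flag is the standard fact that in a mop every $3$-cycle bounds an inner face and every diagonal lies on exactly two such faces, one on each side, so there is no gap of substance.
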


We state next two known lemmas from the literature.

\begin{lemma}[\cite{CaWa-13}]\label{internal}
	If $G$ is a mop of order $n \ge 4$, then $G$ has at least two vertices of degree~$2$. Furthermore, if $G$ has $k$ internal triangles, then it has $k+2$ vertices of degree $2$.
\end{lemma}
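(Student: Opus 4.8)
The plan is to prove both assertions simultaneously by a double-counting argument that classifies the triangular inner faces of $G$ according to how many of their edges lie on the outer face. Throughout I would use the standard structural facts about a mop $G$ of order $n$: being a triangulation of an $n$-gon, it has (by Euler's formula) exactly $2n-3$ edges and $n-2$ triangular inner faces, and among its edges exactly $n$ are outer edges while the remaining $n-3$ are diagonals. Crucially, each outer edge lies on exactly one triangle, whereas each diagonal lies on exactly two triangles.

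For $j \in \{0,1,2,3\}$ let $a_j$ denote the number of triangles of $G$ having exactly $j$ of their three edges on the outer face. Since $n \ge 4$, no triangle can have all three edges on the outer face (such a triangle would be the whole graph), so $a_3 = 0$. Counting triangles gives $a_0 + a_1 + a_2 = n-2$, while counting the $n$ outer edges through the unique triangle containing each gives $a_1 + 2a_2 = n$. Subtracting the first identity from the second yields $a_2 - a_0 = 2$.

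It then remains to interpret $a_0$ and $a_2$. A triangle is an internal triangle precisely when it is not adjacent to the outer face, i.e.\ when none of its edges is an outer edge; hence the number of internal triangles is $k = a_0$. Next I would show that the number of vertices of degree~$2$ equals $a_2$. If $v$ has degree~$2$ with neighbours $u,w$, then $u$ and $w$ are the two cycle-neighbours of $v$, the edges $uv$ and $vw$ are outer edges, and the unique triangle at $v$ is $uvw$, whose third edge $uw$ is a diagonal (as $n \ge 4$); so $uvw$ is counted by $a_2$, and $v$ is its apex, the unique vertex incident to both of its outer edges. Conversely, a triangle with two outer edges $uv,vw$ has $v$ as their common endpoint, and then $v$ has no neighbour other than $u$ and $w$, so $\deg_G(v)=2$. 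This gives a bijection between degree-$2$ vertices and the triangles counted by $a_2$, so $G$ has exactly $a_2$ vertices of degree~$2$.

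Combining these interpretations, $G$ has $a_2 = a_0 + 2 = k+2$ vertices of degree~$2$, and since $k \ge 0$ this is at least two, which settles both claims at once. The step I expect to require the most care, and would write out in full, is the bijection of the third paragraph, namely that every degree-$2$ vertex is the apex of an \emph{ear} (a triangle with two outer edges) and conversely; the edge- and face-counts underlying the two displayed identities are the other point to justify, resting on $G$ being a near-triangulation of a polygon. An alternative route is induction on $n$ via contraction of an outer edge (Lemma~\ref{key}), deleting a degree-$2$ vertex and tracking how the triangle across the newly exposed outer edge changes type, but the uniform double count avoids the attendant case analysis.
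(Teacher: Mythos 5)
Your proposal is correct, but there is nothing in the paper to compare it against: the paper imports this lemma from \cite{CaWa-13} with no proof of its own, so your argument stands as a self-contained justification rather than a variant of an in-paper one. The two counting identities are sound for a mop with $n \ge 4$ (it is a triangulation of an $n$-gon, so there are $n-2$ inner triangles, $n$ outer edges each on one triangle, $n-3$ diagonals each on two, and $a_3=0$), and the subtraction $a_2 - a_0 = 2$ together with the interpretations $k = a_0$ and $\#\{v : \deg_G(v)=2\} = a_2$ gives exact equality, which is what the lemma asserts. The one step you rightly flag as needing care, the converse direction of the ear bijection, does go through: if $uv$ and $vw$ are the two outer edges at $v$, the inner faces at $v$ form a fan between them, and a single face $uvw$ using both extreme edges forces $\deg_G(v)=2$; injectivity follows since $a_3=0$ means an ear has a unique apex. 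It is worth noting that your double count is essentially the same fact as a leaf count in the inner dual tree $T$ that this paper constructs just before Lemma~\ref{lem1}: leaves of $T$ are exactly the triangles counted by $a_2$ (ears, one per degree-$2$ vertex), degree-$3$ nodes of $T$ are exactly the internal triangles counted by $a_0$, and in a tree of maximum degree $3$ the number of leaves exceeds the number of degree-$3$ vertices by exactly $2$. Your face-counting version buys independence from the tree construction; the dual-tree version explains the ``$+2$'' structurally and connects the lemma to the machinery the paper actually uses in Sections~\ref{Sect:prelim} and~\ref{Sect:main-proof}.
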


\begin{lemma}[\cite{Ch-75}]\label{partition}
	If $G$ is a mop of order $n \ge 6$, then $G$ has a diagonal~$d$ that partitions it into two mops $G_1$ and $G_2$ such that $G_1$ has a exactly $4$, $5$, or $6$ outer edges of $G$.
\end{lemma}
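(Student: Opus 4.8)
The plan is to pass to the \emph{weak dual tree} of the mop and reduce the statement to an elementary claim about subtree sizes, exploiting the rooted-tree terminology ($C(v)$, $D(v)$, $D[v]$, $T_v$) already set up in the preliminaries.

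First I would set up a dictionary between diagonals and a tree. Since $G$ is a triangulation of the $n$-gon bounded by its outer face, let $T$ be the weak dual: the nodes of $T$ are the triangles of $G$, and two nodes are joined by an edge precisely when the corresponding triangles share a diagonal of $G$. The standard facts I would record are that $T$ is a tree on $n-2$ nodes, that every node of $T$ has degree at most $3$ (each triangle has three sides, each either an outer edge or a diagonal, and only diagonals produce dual edges), and that deleting the edge of $T$ associated with a diagonal $d$ splits $T$ into two subtrees whose triangles assemble into the two sub-mops $G_1$ and $G_2$ produced by $d$.

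Next I would translate the target quantity. If a subtree of $T$ has $t$ nodes (triangles), the corresponding sub-mop is a triangulated $(t+2)$-gon, so it has $t+2$ vertices and $t+2$ boundary edges; exactly one of those boundary edges is $d$ itself, a diagonal of $G$, and the remaining $t+1$ are outer edges of $G$. Hence ``$G_1$ has $4$, $5$, or $6$ outer edges of $G$'' is equivalent to ``the subtree of $T$ giving $G_1$ has exactly $3$, $4$, or $5$ nodes.'' The goal thus becomes: find an edge of $T$ one side of which is a subtree on $3$, $4$, or $5$ nodes. Now the core step is a short selection argument. Root $T$ at a leaf $r$ (which exists as $n-2\ge 4$); since every node has degree at most $3$, every non-root node has at most two children. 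Writing $s(v)=|D[v]|$ for the number of nodes of $T_v$, the unique child of $r$ has $s = (n-2)-1 = n-3 \ge 3$, so the set of non-root nodes $v$ with $s(v)\ge 3$ is nonempty. I would choose such a $v$ of maximum depth, so that no proper descendant $w$ satisfies $s(w)\ge 3$; then every child of $v$ has subtree size at most $2$, and as $v$ has at most two children, $s(v) = 1 + \sum_{c \in C(v)} s(c) \le 1 + 2 + 2 = 5$, which together with $s(v)\ge 3$ gives $s(v)\in\{3,4,5\}$. The edge joining $v$ to its parent is the desired diagonal $d$: the side $T_v$ has $s(v)\in\{3,4,5\}$ triangles, so the associated sub-mop has $s(v)+1\in\{4,5,6\}$ outer edges of $G$.

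The main obstacle is not the tree argument, which is routine, but the bookkeeping in the first two steps: one must justify that the weak dual is genuinely a tree in which each node has degree at most $3$, and that a diagonal cut corresponds exactly to deleting a single dual edge and produces sub-mops whose outer-edge counts are $t+1$ and $(n-2-t)+1$ (so that these sum to $n$). Once this dictionary is pinned down, the ``deepest heavy node'' selection closes the proof immediately; the only secondary point to verify is that the chosen $v$ is never the root, which holds since $v$ is taken among non-root nodes and such a node was shown to exist.
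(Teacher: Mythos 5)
Your argument is correct, but note that the paper offers no proof of this lemma at all: it is quoted verbatim from Chv\'atal's 1975 paper \cite{Ch-75}, so there is no internal proof to compare against. Your dual-tree route is a clean, self-contained substitute, and it meshes well with the paper's own machinery, since Section~\ref{Sect:main-proof} introduces exactly the same associated tree $T$ (a tree on the triangles, maximum degree at most~$3$) and the same rooted-tree notation $C(v)$, $D[v]$, $T_v$. The bookkeeping you flag checks out: a mop is a triangulated polygon, so the weak dual is a tree on $n-2$ nodes of maximum degree~$3$; a diagonal is shared by exactly two triangles, so it corresponds to a unique dual edge, and cutting it leaves $G_1$ bounded by $d$ together with a subpath of the outer cycle, whence a $t$-node subtree yields a sub-mop with $t+2$ vertices and $t+1$ outer edges of $G$. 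The selection step is also sound: rooting at a leaf makes every non-root node have at most two children; the hypothesis $n \ge 6$ enters precisely where you use it, to guarantee the root's unique child has $s = n-3 \ge 3$ so the set of ``heavy'' non-root nodes is nonempty; and a deepest such $v$ has all children with $s \le 2$, forcing $s(v) \le 1+2+2 = 5$, i.e.\ $s(v) \in \{3,4,5\}$ and $s(v)+1 \in \{4,5,6\}$ outer edges. (One can also confirm $G_2$ is a genuine mop: since $v$ is not the root, the complementary subtree contains at least the root, so $G_2$ has at least one triangle.) For comparison, Chv\'atal's original argument works directly on the polygon: among all diagonals cutting off at least $4$ outer edges, take one cutting off the minimum number $g$, and show $g \le 6$ by examining the triangle on the far side of that diagonal. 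Your ``deepest heavy node'' is essentially the dual-tree incarnation of that extremal choice; what it buys is that the degree-$3$ bound on $T$ replaces the case analysis on how the extremal triangle splits the cut-off chain, which makes the $\le 5$ (hence $\le 6$ edges) bound a one-line computation.
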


The following lemma shows that for mops $G$ of a small order $n$, the bound $\gamma_2^d(G) \le \lfloor \frac{2}{9}(n+k) \rfloor$ is satisfied, where $k$ is the number of vertices of degree~$2$ in $G$.

\begin{lemma}\label{7to12mops}
	If $G$ is a mop of order~$n$ where $7 \le n \le 12$ with $k$ vertices of degree~$2$, then $\gamma_2^d(G) \le \floor*{\frac{2}{9}(n+k)}$.
\end{lemma}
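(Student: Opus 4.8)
The plan is to verify the bound for each order $n$ in the range $7 \le n \le 12$ by a combination of case analysis on the number $k$ of degree-$2$ vertices and explicit constructions of small 2DD-sets. Before grinding through cases, I would first compute the target value $t(n,k) := \lfloor \frac{2}{9}(n+k) \rfloor$ on this finite grid to see exactly what must be beaten. For $n \le 8$, Observation~\ref{obs3} already hands us a 2DD-set of size $2$, so the task reduces to checking that $t(n,k) \ge 2$ in every subcase that can actually occur; since $n \ge 7$ forces $n+k \ge 9$ (as $k \ge 2$ by Lemma~\ref{internal}), we get $t(n,k) \ge 2$ immediately, settling $7 \le n \le 8$ in one stroke. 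Thus the real work is confined to $9 \le n \le 12$, where we must produce 2DD-sets of size at most $2$ or $3$ depending on $k$.

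The next step is to pin down, for each of $n \in \{9,10,11,12\}$, the threshold value of $k$ at which $t(n,k)$ jumps from $2$ to $3$ (and whether it can reach $4$ for $n=12$). For example $t(9,k)=2$ iff $k \le 4$ and $t(9,k)=3$ iff $5 \le k \le 9$; similar inequalities determine the breakpoints for the other orders. Whenever the target is $3$, it suffices to exhibit a 2DD-set of size $3$, and I would argue that any mop on $9 \le n \le 12$ vertices admits such a set: one natural route is to invoke Lemma~\ref{partition} to split off a region $G_1$ with $4$, $5$, or $6$ outer edges, disjunctively dominate the small region $G_1$ with one or two well-placed vertices (using Observation~\ref{obs3} on the small mop $G_1$, which has at most $7$ vertices), and dominate the complementary region similarly, checking that the two partial 2DD-sets together cover the shared diagonal endpoints. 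The tighter subcases are those where the target is only $2$: here I must show a mop with few degree-$2$ vertices (equivalently many internal triangles, by Lemma~\ref{internal}) still has a 2DD-set of size $2$. Observation~\ref{5vertex} and Observation~\ref{obs3} are the main tools, possibly supplemented by the edge-contraction Lemma~\ref{key} to reduce to an already-handled smaller mop while tracking how $k$ changes under contraction.

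The main obstacle I anticipate is the bookkeeping linking the number $k$ of degree-$2$ vertices to the structure of $G$, because the target $t(n,k)$ depends on $k$ and degree-$2$ vertices are exactly the ones hardest to disjunctive-dominate from a distance. Concretely, a degree-$2$ vertex $v$ with neighbors $x,y$ can only be disjunctive-dominated by placing a vertex in $N[v]=\{v,x,y\}$ or by having two of $v$'s distance-$2$ vertices in $S$; when $k$ is large this constrains the placement of $S$ heavily, but large $k$ also relaxes the target, so the two effects must be balanced carefully in each subcase. I expect the cleanest organization is to fix $n$, list the admissible values of $k$ (bounded via Lemma~\ref{internal} by the count of internal triangles and the outer-cycle structure), and for each resulting target value present either an explicit labeled 2DD-set on the outer cycle $v_1v_2\ldots v_n v_1$ or a reduction to a smaller mop, verifying the disjunctive-domination condition vertex by vertex. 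The finiteness of the range guarantees the argument terminates, so the difficulty is entirely in making the casework exhaustive and transparent rather than in any single hard estimate.
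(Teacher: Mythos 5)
Your reduction of the cases $7 \le n \le 8$ to \cref{obs3} together with $k \ge 2$ is exactly the paper's first step, and organizing the remaining orders by whether the target $\floor*{\frac{2}{9}(n+k)}$ equals $2$ or $3$ is sound bookkeeping. But the proposal has genuine gaps that leave the hard part of the lemma unproved. First, you invert \cref{internal}: $k$ internal triangles force $k+2$ vertices of degree~$2$, so a mop with \emph{few} degree-$2$ vertices has \emph{few} (in the extreme case, no) internal triangles, not many. The tight cases are therefore the internal-triangle-free, path-like triangulations --- e.g.\ $n=11$ with $k=2$ --- and the paper's treatment of exactly these cases relies on \cref{obs2}, which applies only to mops without internal triangles; aiming the casework at mops with many internal triangles targets structures that cannot occur in the tight regime. (Note also that for $n=9$ one always has $k \le 4$, since a mop of order $9$ has at most two internal triangles; so every order-$9$ mop is a tight case and your ``$k\ge 5$'' regime there is empty.)

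Second, the two tools you name cannot reach the stated bounds. A singleton $\{v\}$ is a 2DD-set only if $v$ is universal (a single vertex can never supply ``two vertices at distance~$2$''), and \cref{5vertex} guarantees a universal vertex only in a part of order~$5$; since \cref{partition} does not let you choose $p=4$, your partition-union scheme in general yields $2+2=4$ vertices, exceeding the target $3$ for $n\in\{11,12\}$ (the paper instead verifies the explicit spaced set $\{v_1,v_5,v_9\}$ in those cases). Likewise, contraction via \cref{key} cannot help in the size-$2$ cases: lifting a 2DD-set across a contraction costs an extra vertex (the contracted vertex must be replaced by both endpoints, or a new vertex added, exactly as in the paper's later lemmas), so you would need a size-$1$ set, i.e.\ a universal vertex, in the smaller mop. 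What actually carries the paper's proof are the explicit constructions your outline omits: for $n=9$, a degree-$2$ vertex $v_2$ forces $v_1v_3 \in E(G)$ and then $\{v_1,v_6\}$ works; for $n=10$, the partition plus \cref{5vertex} (with a separate internal-triangle argument when $p=5$) yields a $2$-set; for $n=11$ with no internal triangle, a diagonal-by-diagonal analysis via \cref{obs2} yields a $2$-set; for $n=12$ (and $n=11$ with an internal triangle, where $k\ge 3$), the set $\{v_1,v_5,v_9\}$ is checked directly. Without these constructions or substitutes for them, the proposal is a plan for casework rather than a proof.
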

\begin{proof}
	By \cref{internal}, the mop $G$ has at least two vertices of degree~$2$. If $7 \le n \le 8$, then by \cref{obs3}, we have $\gamma_2^d(G) \le 2 \le  \floor*{\frac{2}{9}(n+k)}$. Hence, we may assume that $n \ge 9$. Suppose $n=9$. Let $v_1v_2\ldots v_9v_1$ be the boundary of the outer face of $G$. Since $G$ has at least two vertices of degree~$2$, without loss of generality, assume that $\deg_G(v_2)=2$. Since $G$ is a mop and $N_G(v_2)=\{v_1,v_3\}$, $v_1v_3\in E(G)$. In this case, $\{v_1,v_6\}$ is a 2DD-set of $G$, and so  $\gamma_2^d(G)\le 2  \le  \floor*{\frac{2}{9}(n+k)}$. Hence we may assume that $n \ge 10$, for otherwise the desired upper bound follows.
	
	Suppose $n=10$. Let $v_1v_2\ldots v_{10}v_1$ be the boundary of the outer face of $G$. By Lemma~\ref{partition}, the mop $G$ has a diagonal $d=v_iv_j$ that partitions it into mops $G_1$ and $G_2$ such that $V(G_1)\cap V(G_2)=\{v_i,v_j\}$, $E(G_1)\cap E(G_2)=\{d\}$, and $G_1$ has exactly $4$, $5$ or $6$ outer edges of $G$. Let $G_1$ has exactly $p$ outer edges of $G$ for some $p\in \{4,5,6\}$. Without loss of generality, assume that $d=v_1v_{p+1}$ and $V(G_1)=\{v_1,v_2,\ldots,v_{p+1}\}$. Suppose $p=4$. By \cref{5vertex}, there exists a vertex $v_i\in V(G_1)$ such that $v_i$ is adjacent to all other vertices of $G_1$ for some $i\in [5]$. Thus, $\{v_i,v_8\}$ is a 2DD-set of $G$, and so $\gamma_2^d(G) \le 2  \le  \floor*{\frac{2}{9}(n+k)}$. Suppose $p=6$. In this case, $G_2$ has exactly four outer edges of $G$. Thus, this case is identical to the case $p=4$. So we assume that $p=5$. Thus both $G_1$ and $G_2$ have exactly five outer edges of $G$ and the diagonal $d=v_1v_6$. We note that there exists an internal triangle $F$ with vertex set $\{v_1,v_6,v_i\}$ in $G_1$ for some $i\in \{3,4\}$, for otherwise, there exists a mop $G'_1$ with exactly four outer edges of $G$. We will present similar arguments as in the case of $p=4$. Similarly, $G_2$ also has an internal triangle with vertex set $\{v_1,v_6,v_i\}$ in $G_2$ for some $i\in \{8,9\}$. Thus, $\{v_1,v_6\}$ is a 2DD-set of $G$, and so $\gamma_2^d(G)\le 2  \le  \floor*{\frac{2}{9}(n+k)}$. Hence we may assume that $n \ge 11$.
	
	Suppose $n=11$. Let $v_1v_2\ldots v_{11}v_1$ be the boundary of the outer face of $G$. If $G$ has at least one internal triangle, then by \cref{internal}, the mop $G$ has at three vertices of degree~$2$. Thus, $\{v_1,v_5,v_9\}$ is a 2DD-set of $G$, and so $\gamma_2^d(G)\le 3  \le  \floor*{\frac{2}{9}(n+k)}$. Hence we assume that $G$ has no internal triangle. By Lemma~\ref{partition}, the mop $G$ has a diagonal $d=v_iv_j$ that partitions it into mops $G_1$ and $G_2$ such that $V(G_1)\cap V(G_2)=\{v_i,v_j\}$, $E(G_1)\cap E(G_2)=\{d\}$, and $G_1$ has a exactly $4$, $5$ or $6$ outer edges of $G$. Let $G_1$ has exactly $p$ outer edges of $G$ for some $p\in \{4,5,6\}$. Without loss of generality, assume that $d=v_1v_{p+1}$ and $V(G_1)=\{v_1,v_2,\ldots,v_{p+1}\}$.

	Suppose firstly that $p=4$. By \cref{5vertex}, there exists a vertex $v_i\in V(G_1)$ such that $v_i$ is adjacent to all other vertices of $G_1$ for some $i\in [5]$. We note that $v_3$ is not adjacent to the remaining vertices of $G_1$, for otherwise $G$ has an internal triangle. If $v_1$ is adjacent to remaining vertices of $G_1$, then $\{v_1,v_8\}$ of $G$. If $v_5$ is adjacent to the remaining vertices of $G_1$, then $\{v_5,v_9\}$ of $G$, and so $\gamma_2^d(G)\le 2  \le  \floor*{\frac{2}{9}(n+k)}$. Hence, $v_2$ or $v_4$ is adjacent to every vertex in $G_1$. By symmetry, we may assume that $v_2$ is adjacent to every vertex in $G_1$. Therefore, $v_2$ is at distance~$2$ from $v_6$ and $v_{11}$. We note that $v_5v_6\ldots v_{11}v_1v_5$ is the boundary of the outer face of $G_2$. Since $G$ has no internal triangle, $G_2$ also has no internal triangle. By \cref{obs2}, we have either $v_1v_6\in E(G)$ or $v_5v_{11}\in E(G)$. Without loss of generality, we assume that $v_1v_6\in E(G)$. Again since $G_2$ has no internal triangle and by \cref{obs2}, either $v_1v_7\in E(G)$ or $v_6v_{11}\in E(G)$.

	Suppose firstly that $v_1v_7\in E(G)$. Since $G_2$ has no internal triangle and by \cref{obs2}, either $v_1v_8\in E(G)$ or $v_7v_{11}\in E(G)$. Suppose $v_1v_8\in E(G)$. Therefore there exists a mop $G_3$ with exactly four outer edges of $G$. We note that $V(G_3)=\{v_1,v_8,v_9,v_{10},v_{11}\}$. By \Cref{5vertex}, there exists a vertex $v_i\in V(G_3)$ such that $v_i$ is adjacent to all other vertices of $G_3$ for some $i\in \{1,8,9,10,11\}$. Thus, $\{v_i,v_2\}$ is a 2DD-set of $G$, and so $\gamma_2^d(G)\le 2  \le  \floor*{\frac{2}{9}(n+k)}$. Hence we may assume that $v_7v_{11}\in E(G)$. Therefore there exists a mop $G_4$ with exactly four outer edges of $G$. We note that $V(G_4)=\{v_7,v_8,v_9,v_{10},v_{11}\}$. By \Cref{5vertex}, there exists a vertex $v_i\in V(G_4)$ such that $v_i$ is adjacent to all other vertices of $G_4$ for some $i\in \{7,8,9,10,11\}$. Thus, $\{v_i,v_2\}$ is a 2DD-set of $G$, and so $\gamma_2^d(G)\le 2  \le  \floor*{\frac{2}{9}(n+k)}$, as desired.
	
	Hence we may assume that $v_6v_{11}\in E(G)$. Since $G_2$ has no internal triangle and by \cref{obs2}, either $v_6v_{10}\in E(G)$ or $v_7v_{11}\in E(G)$. Suppose $v_6v_{10}\in E(G)$. Therefore there exists a mop $G_3$ with exactly four outer edges of $G$. We note that $V(G_3)=\{v_6,v_7,v_8,v_9,v_{10}\}$. By \Cref{5vertex}, there exists a vertex $v_i\in V(G_3)$ such that $v_i$ is adjacent to all other vertices of $G_3$ for some $i\in \{6,7,8,9,10\}$. Thus, $\{v_i,v_2\}$ is a 2DD-set of $G$, and so  $\gamma_2^d(G)\le 2  \le  \floor*{\frac{2}{9}(n+k)}$. Hence we may assume that $v_7v_{11}\in E(G)$. Therefore there exists a mop $G_4$ with exactly four outer edges of $G$. We note that $V(G_4)=\{v_7,v_8,v_9,v_{10},v_{11}\}$. By \Cref{5vertex}, there exists a vertex $v_i\in V(G_4)$ such that $v_i$ is adjacent to all other vertices of $G_4$ for some $i\in \{7,8,9,10,11\}$. Thus, $\{v_i,v_2\}$ is a 2DD-set of $G$, and so $\gamma_2^d(G)\le 2  \le  \floor*{\frac{2}{9}(n+k)}$, as desired. Hence we have shown that if $p = 4$, then the desired bound holds.

	Suppose next that $p=5$. Since $G$ has no internal triangle and by \cref{obs2}, either $v_1v_{5}\in E(G)$ or $v_2v_{6}\in E(G)$. Therefore there exists a mop graph $G'_1$ with exactly four outer edges of $G$. Present similar arguments as in the case of $p=4$, we infer that the desired bound $\gamma_2^d(G)\le 2  \le  \floor*{\frac{2}{9}(n+k)}$ holds. Hence we assume that $p=6$. Thus, $G_2$ has exactly five outer edges of $G$, and so this case is identical to the case $p=5$ analyzed earlier. Hence we have shown that if $n=11$, then  the desired bound holds.

	Suppose that $n=12$. Let $v_1v_2\ldots v_{12}v_1$ be the boundary of the outer face of $G$. Thus, $\{v_1,v_5,v_9\}$ is a 2DD-set of $G$, and so  $\gamma_2^d(G)\le 3  \le  \floor*{\frac{2}{9}(n+k)}$. This completes the proof of \cref{7to12mops}.
\end{proof}

Let $G$ be a mop of order $n \ge 4$. Thus all vertices lie on the boundary of the outer face (unbounded face) of $G$ and all inner faces are triangles. Let $T$ be the graph whose vertices correspond to the triangles of $G$, and where two vertices in $T$ are adjacent if their corresponding triangles in $G$ share an edge. If $T$ contains a cycle, it would imply that a vertex is enclosed by triangles within the graph, which contradicts the outerplanarity of $G$. Hence, $T$ is necessarily a tree. We refer the tree $T$ as the tree associated with the mop $G$. The tree $T$ has maximum degree at most~$3$, and a triangle of $G$ corresponding to a vertex of degree~$3$ in $T$ is necessarily an internal triangle of $G$. Next, we will investigate the maximum possible distance between a leaf in $T$ and its nearest vertex of degree~$3$.

\begin{lemma}\label{lem1}
	If $G$ be a mop of order $n \ge 7$ with $k$ vertices of degree~$2$ and $T$ is a tree associated with~$G$, then either $\gamma_2^d(G) \le \floor*{\frac{2}{9}(n+k)}$ or the following conditions hold where $x$ is a leaf of $T$.  \\ [-24pt]
	\begin{enumerate}[label=\rm{(\alph*)}]
		\item \label{npath} $T$ is not a path graph.\\ [-22pt]
		\item \label{pdistance} If $y$ is a nearest vertex of degree~$3$ from $x$ in $T$, then $d_T(x,y)=i$, where $i\in\{1,2,5,6\}$.\\ [-22pt]
		\item \label{5distance} If $y$ is a nearest vertex of degree~$3$ from $x$ in $T$ and $d_T(x,y)=5$, then the subgraph of $G$ associated with the path between $x$ and $y$ in $T$ corresponds to the region $H_1$ or $H_2$ illustrated in {\rm{\Cref{5-distance}(a)-(b)}}.\\ [-22pt]
		\item \label{6distance} If $y$ is a nearest vertex of degree~$3$ from $x$ in $T$ and $d_T(x,y)=6$, then the subgraph of $G$ associated with the path between $x$ and $y$ in $T$ corresponds to the region $H_5$, $H_6$, $H_7$, or $H_8$ illustrated in {\rm{\Cref{6-distance}(a)-(d)}}.\\ [-22pt]
	\end{enumerate}
\end{lemma}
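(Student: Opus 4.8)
My plan is to prove the contrapositive: assuming that the bound fails, I establish all four conclusions by showing that whenever a prescribed ``reducible'' configuration occurs one can build a disjunctive dominating set of size at most $\floor*{\frac{2}{9}(n+k)}$, which contradicts the failure of the bound. The whole argument is an induction on $n$, with base cases $7\le n\le 12$ supplied by \cref{7to12mops}; in the inductive step I may assume that the bound of \cref{thm:main} already holds for every mop of smaller order. The engine is a reduction read off from the associated tree $T$: given a leaf $x$ of $T$ and its nearest degree-$3$ vertex $y$ at distance $i$, the triangles on the $x$--$y$ path form a strip (a mop $R$ with $i+2$ vertices) attached to the rest of $G$ along a single diagonal. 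The key bookkeeping identity underlying every reduction is that, by \cref{internal}, $k$ equals the number of internal triangles plus $2$, equivalently the number of leaves of $T$; deleting the $i$ private vertices of such a pendant strip turns $y$ from a degree-$3$ into a degree-$2$ vertex of $T$, removing exactly one internal triangle and yielding a smaller mop $G'$ with $n'=n-i$ and $k'=k-1$, so that $(n+k)-(n'+k')=i+1$. More generally, peeling only a length-$j$ prefix of a strip removes $j$ vertices without changing $k$, giving a decrease of~$j$.

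The aim in each reduction is to dominate the peeled part (together with its attaching vertices) by $s$ vertices for which $s\le \tfrac{2}{9}\big[(n+k)-(n'+k')\big]$; combined with a minimum $2$DD-set of $G'$ from the inductive hypothesis, this produces a $2$DD-set of $G$ of the required size. What makes the ratio $\tfrac{2}{9}$ attainable is the distance-$2$ reach intrinsic to disjunctive domination: two vertices spaced as in \cref{obs3} disjunctively dominate a strip of as many as nine triangles, the single gap between them being covered because it lies at distance $2$ from both. This is precisely what renders long pendant strips, and the path case, reducible.

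With this machinery the four conclusions follow by excluding the reducible configurations. For \ref{npath}, if $T$ is a path then $G$ has no internal triangle and $k=2$; I split $G$ into successive length-$9$ strip chunks, each dominated by two vertices as above with decrease $9$, meeting the ratio with equality, until a remainder of order at most $12$ survives and is handled by \cref{7to12mops}. For \ref{pdistance}, a pendant strip whose length $i$ lies in $\{3,4\}$ or satisfies $i\ge 7$ is shown reducible: when $i\in\{3,4\}$ the short strip, possibly taken together with the branch triangle $y$, has a near-universal vertex (via \cref{5vertex}) that dominates it outright, and when $i\ge 7$ one peels prefixes of length up to nine, or the whole strip, so that two dominators suffice; in each case the induced decrease in $n+k$ meets the ratio, forcing every leaf-to-branch distance into $\{1,2,5,6\}$. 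Finally, for \ref{5distance} and \ref{6distance} I analyse strips of length $5$ and $6$ shape by shape: since such a strip contains no internal triangle, \cref{obs2} severely constrains its diagonals, leaving only finitely many shapes, and for every shape other than $H_1,H_2$ (length $5$) and $H_5,\dots,H_8$ (length $6$) a size-$2$ disjunctive dominating set with enough reach into $y$ exists, making the configuration reducible, whereas the listed shapes are exactly those that resist such a set.

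The main obstacle is this last shape analysis at distances $5$ and $6$. The difficulty is twofold: one must enumerate all admissible diagonal patterns of a five- and six-triangle strip, using \cref{obs1,obs2} to bound the possibilities, and for each pattern decide whether two vertices can be placed so as to disjunctively dominate the strip together with its attaching vertices $a,b$---and, where the arithmetic is tight, also the third vertex of $y$---keeping the reduction within $\floor*{\frac{2}{9}(n+k)}$. Carrying this out, and isolating precisely the residual shapes $H_1,H_2,H_5,\dots,H_8$ (together with the attendant floor-function checks in the borderline length $i=7$ and in the length-$9$ chunking), is the technical heart of the proof.
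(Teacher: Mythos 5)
Your high-level plan coincides with the paper's: take a minimum-order counterexample, walk along $T$ from a leaf toward the nearest degree-$3$ vertex, exclude the distances $3$, $4$, $7$ and $\ge 8$ (and the unwanted shapes at distances $5$ and $6$) by reductions to smaller mops, and let the surviving configurations be exactly (a)--(d). The gap is in the reductions themselves. Your own accounting rule demands $s \le \frac{2}{9}\bigl[(n+k)-(n'+k')\bigr]$ added dominators, and for a pendant strip of length $i$ the decrease you establish is exactly $i+1$. At $i=3$ the budget is $\frac{8}{9}<1$, yet $s\ge 1$ is unavoidable, because a minimum 2DD-set of $G'$ cannot be forced to disjunctively dominate the three deleted vertices; so distance $3$ cannot be excluded by your scheme, and your assertion that the $i\in\{3,4\}$ reductions ``meet the ratio'' is false at $i=3$. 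The same arithmetic defeats your plan for (c), (d) and for $i=7$: excluding a bad shape at distance $5$ or $6$ yields decrease at most $6$, resp.\ $7$, and a pendant strip of length $7$ yields decrease $8$, so the budget is strictly below $2$ in all of these cases, while you propose to pay with two dominators; one dominator is enough only when the strip near the leaf is already known to be fan-like (a single vertex adjacent to all peeled vertices), and that forcing is itself a reduction your budget cannot afford --- indeed at $i=7$ no single vertex covers the seven peeled vertices even in the forced shapes. The paper closes precisely this gap with a device you never invoke: after deleting the strip it \emph{contracts} the exposed outer edge (\cref{key}), which gains one extra unit of decrease in $n+k$ and simultaneously lets the contracted vertex mediate domination across the cut (if the optimal set of the smaller mop uses the contracted vertex, both of its preimages are reinstated). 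Where the remainder $G'$ is too small to recurse on, the paper instead exploits the ``for any $i$'' clause of \cref{obs3} to choose a size-$2$ 2DD-set of $G'$ passing through a prescribed attachment vertex. Without one of these two mechanisms the tight cases do not close, and they are the heart of the lemma.

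There are two further problems in your treatment of (a). First, it rests on the claim that two vertices disjunctively dominate every strip of nine consecutive triangles; this extrapolates \cref{obs3} far beyond its stated range (orders $5$ to $8$), is not proved, and would itself require a case analysis over all shapes of a $9$-strip --- yet it is the only thing making your length-$9$ chunking meet the ratio with equality. Second, even granting it, the chunking breaks down for $n\in\{13,14,15\}$: peeling nine vertices leaves a remainder of order $4$ to $6$, below the range of \cref{7to12mops}, and no shorter chunk fits the budget, since a chunk of length $j$ paid with two vertices needs $2\le \frac{2}{9}j$, i.e.\ $j\ge 9$. The paper avoids both issues by never chunking: its path-case contradiction is reached at tree-distance $8$ from the leaf, again by deletion combined with contraction, using only the structure of the first few triangles that its earlier claims have already forced.
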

\begin{proof}
	If $7 \le n\le 12$, then by \cref{7to12mops}, $\gamma_2^d(G) \le  \floor*{\frac{2}{9}(n+k)}$. Hence we assume that the mop $G$ has order $n \ge 13$ otherwise the desired result follows. Suppose that $\gamma_2^d(G) > \floor*{\frac{2}{9}(n+k)}$. Among all such mops $G$, let $G$ be chosen to have minimum order $n \ge 13$ where as before $G$ has $k$ vertices of degree~$2$. By the minimality of $G$, if $G'$ is a mop of order~$n'$ where $7 \le n' < n$, with $k'$ vertices of degree~$2$, then $\gamma_2^d(G') \le \lfloor \frac{2}{9}(n'+k') \rfloor$. We will now show that tree $T$ corresponding to a mop $G$ satisfies the conditions (a), (b), (c), and (d) mentioned in the statement of the lemma.

	Let $t_1$ be a leaf of $T$ and $F_1$ be a triangle  in $G$ corresponding to the vertex $t_1$ in $T$. Further, let $V(F_1)= \{u_1,u_2,u_3\}$. Let $t_2$ be the support vertex of $T$ adjacent to the leaf $t_1$, and let $F_2$ be the triangle in $G$ corresponding to the vertex $t_2$ in $T$. Renaming vertices of $F_1$ if necessary, we may assume that $V(F_2)=\{u_2,u_3,u_4\}$, and so $u_2u_3$ is the common edge of the triangles $F_1$ and $F_2$. If $\deg_T(t_2)=1$, then the order of $G$ is $4$, a contradiction to fact that $G$ is a mop of order $n \ge 13$. Hence, $2 \le \deg_{T}(t_2)\le 3$ and $d_{T}(t_1,t_2)=1$. If $\deg_{T}(t_2)=3$, then \Cref{lem1}\ref{pdistance} holds. Hence we assume that $\deg_{T}(t_2)=2$.
	
	Let $t_3 \in V(T)$ be the neighbor of $t_2$ different form $t_1$, and let $F_3$ be the triangle in $G$ corresponding to the vertex $t_3$ in $T$. Let $u_5$ be the vertex in $F_3$ that is not in $F_2$. Renaming the vertices $u_2$ and $u_3$ necessary, we assume that $V(F_3)=\{u_2,u_4,u_5\}$. Since $\deg_T(t_1)=1$ and $\deg_T(t_2)=2$, we note that there are no further edges incident with $u_1$ and $u_3$ in $G$, and $\deg_G(u_1)=2$ and $\deg_G(u_3)=3$.
	
	If $\deg_T(t_3)=1$, then the order of $G$ is~$5$, a contradiction to the fact that $G$ is a mop of order $n \ge 13$. Hence, $2 \le \deg_{T}(t_3)\le 3$ and $d_{T}(t_1,t_3)=2$. If $\deg_{T}(t_3)=3$, then \Cref{lem1}\ref{pdistance} holds. Hence we may assume that $\deg_{T}(t_3)=2$. Let $t_4 \in V(T)$ be the neighbor of $t_3$ in $T$ different from $t_2$ in $T$ and let $F_4$ be the triangle in $G$ corresponding to the vertex $t_4$ in $T$. Thus, $d_{T}(t_1,t_4)=3$. Further let $u_6$ be the vertex in $F_4$ that is not in $F_3$. We note that either $V(F_4)=\{u_2,u_5,u_6\}$ or $V(F_4)=\{u_4,u_5,u_6\}$ (see Figure~\ref{triangle}(a)-(b)).  If $\deg_T(t_4)=1$, then $n=6$, a contradiction to the fact that $G$ is a mop of order $n \ge 13$. Hence, $2 \le \deg_{T}(t_4)\le 3$.

	\begin{figure}[htb]
		\begin{center}
			\includegraphics[scale=.22]{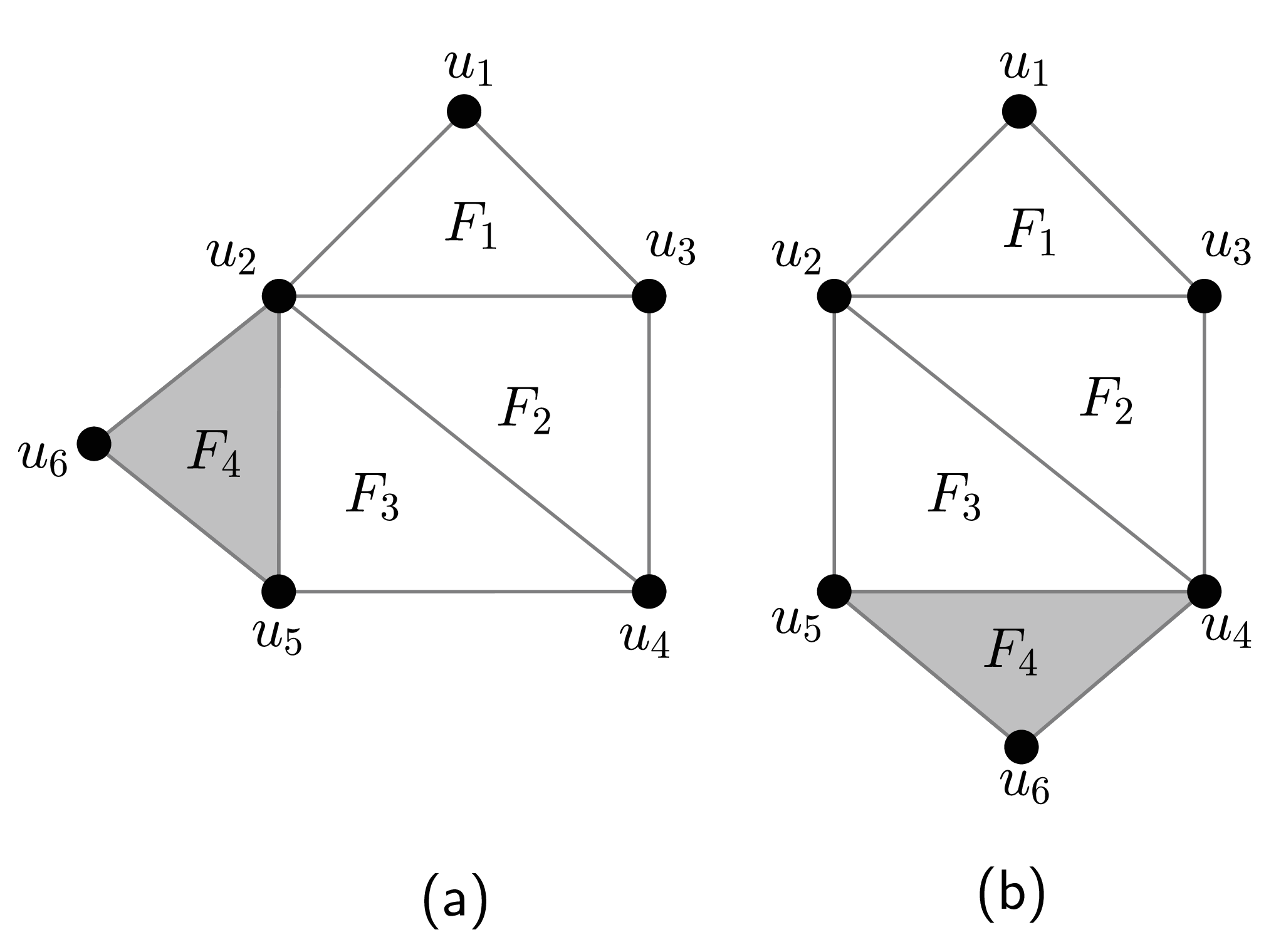}
			\caption{Possible (shaded) triangle adjacent to triangle $F_3$}\label{triangle}
		\end{center}
	\end{figure}
	
	\smallskip
	\begin{claim}\label{3-dist}
		$\deg_{T}(t_4) = 2$.
	\end{claim}
	\begin{proof}[Proof of \cref{3-dist}]
		Suppose, to the contrary, that $\deg_{T}(t_4)= 3$, implying that triangle $F_4$ is an internal triangle of $G$. We have shown the shaded triangle $F_4$ of $G$ corresponding to vertex $t_4$ in Figure~\ref{triangle}. In the following, we consider two cases depending on whether $V(F_4)=\{u_2,u_5,u_6\}$ or $V(F_4)=\{u_4,u_5,u_6\}$.
		
		Suppose firstly that $V(F_4)=\{u_2,u_5,u_6\}$. Let $G' = G - \{u_1,u_3,u_4\}$ be a graph of order $n'$ obtained by deleting the vertices $u_1$, $u_3$ and $u_4$. The resulting graph $G'$ is a mop of order $n' = n - 3 \ge 10$ with $k-1$ number of vertices of degree~$2$ since $F_4$ is an internal triangle of $G$. The edge $u_2u_5$ is an outer edge of $G'$. Let $G_1$ be graph of order $n_1$ obtained from $G'$ by contracting the edge $u_2u_5$ to form a new vertex $x$ in $G_1$, and let $G_1$ have $k_1$ vertices of degree~$2$. By Lemma~\ref{key}, $G_1$ is a mop. Since $n' \ge 10$, we note that $n_1 = n' - 1 \ge 9$. By the minimality of the mop $G$, we have $\gamma_2^d(G_1) \le \floor*{\frac{2}{9}(n_1+k_1)} \le \floor*{\frac{2}{9}(n-4+k-1)} \le \floor*{\frac{2}{9}(n+k)}-1$. Let $D_1$ be a $\gamma_2^d$-set of $G_1$. If $x \in D_1$, then let $D = (D_1\setminus \{x\}) \cup \{u_2,u_5\}$. If $x \notin D_1$, then let $D = D_1 \cup \{u_2\}$. In both cases, the set $D$ is a 2DD-set of $G$, and so $\gamma_2^d(G) \le |D| \le |D_1| + 1 \le \floor*{\frac{2}{9}(n+k)}$, a contradiction.

		Hence, $V(F_4)=\{u_4,u_5,u_6\}$. We now let $G' = G - \{u_1,u_2,u_3\}$ be a graph of order $n'$ obtained by deleting the vertices $u_1$, $u_2$ and $u_3$. The resulting graph $G'$ is a mop of order $n' = n - 3 \ge 10$ with $k-1$ number of vertices of degree~$2$ since $F_4$ is an internal triangle of $G$. The edge $u_4u_5$ is an outer edge of $G'$. Let $G_1$ be graph of order $n_1$ obtained from $G'$ by contracting the edge $u_4u_5$ to form a new vertex $x$ in $G_1$, and let $G_1$ have $k_1$ vertices of degree~$2$. By Lemma~\ref{key}, $G_1$ is a mop. Since $n' \ge 10$, we note that $n_1 = n' - 1 \ge 9$. By the minimality of the mop $G$, we have $\gamma_2^d(G_1) \le \floor*{\frac{2}{9}(n_1+k_1)}  \le \floor*{\frac{2}{9}(n-4+k-1)} \le \floor*{\frac{2}{9}(n+k)}-1$. Let $D_1$ be a $\gamma_2^d$-set of $G_1$. If $x \in D_1$, then let $D = (D_1\setminus \{x\}) \cup \{u_4,u_5\}$. If $x \notin D_1$, then let $D = D_1 \cup \{u_2\}$. In both cases, the set $D$ is a 2DD-set of $G$, and so $\gamma_2^d(G) \le |D| \le |D_1| + 1 \le \floor*{\frac{2}{9}(n+k)}$, a contradiction.
	\end{proof}

	By \cref{3-dist}, we have $\deg_{T}(t_4) \ne 3$, implying that $\deg_{T}(t_4)=2$. Recall that either $V(F_4)=\{u_2,u_5,u_6\}$ or $V(F_4)=\{u_4,u_5,u_6\}$.
	
	\smallskip
	\begin{claim}\label{notf4}
		$V(F_4)=\{u_4,u_5,u_6\}$.
	\end{claim}
	\begin{proof}[Proof of Claim~\ref{notf4}]
		Suppose, to the contrary, that $V(F_4)=\{u_2,u_5,u_6\}$. Let $t_5 \in V(T)$ be the neighbor of $t_4$ in $T$ different from $t_3$, and let $F_5$ be the triangle in $G$ corresponding to the vertex $t_5$ in $T$. If $\deg_T(t_5)=1$, then $n=7$, a contradiction to the fact that $G$ is a mop of order $n \ge 13$. Hence, $2 \le \deg_{T}(t_5)\le 3$. Let $u_7$ be the vertex in $F_5$ that is not in $F_4$. We note that either $V(F_5) = \{u_2,u_6,u_7\}$ or $V(F_5)=\{u_5,u_6,u_7\}$ (see \Cref{3-distance}(a)-(b)). In the following, we consider two cases depending on whether $V(F_5)=\{u_2,u_6,u_7\}$ or $V(F_5)=\{u_5,u_6,u_7\}$.

		\begin{figure}[htb]
			\begin{center}
				\includegraphics[scale=.22]{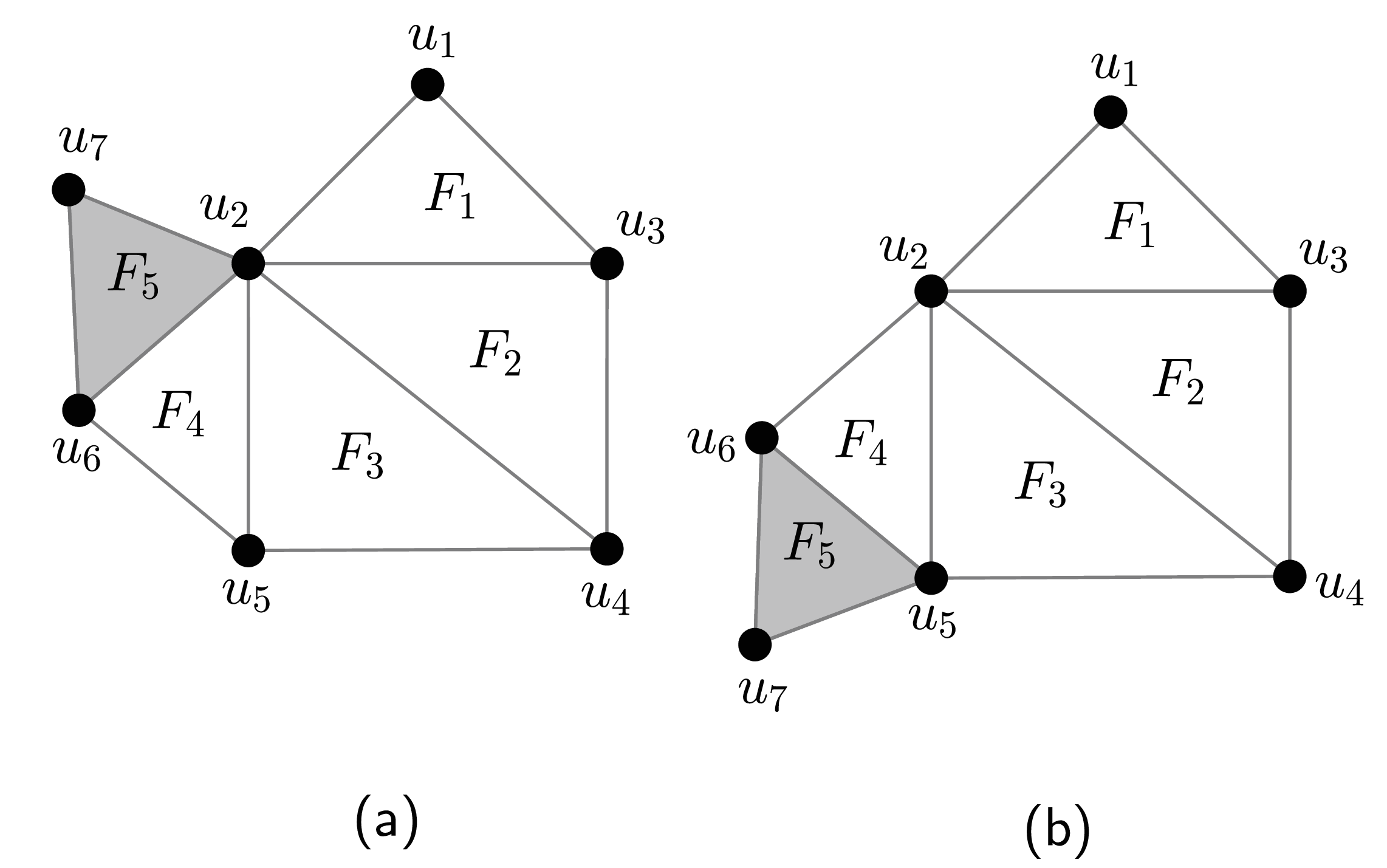}
				\caption{Possible (shaded) triangle adjacent to triangle $F_4$ when $V(F_4)=\{u_2,u_5,u_6\}$.}\label{3-distance}
			\end{center}
		\end{figure}

		Suppose firstly that $V(F_5)=\{u_2,u_6,u_7\}$. Let $G' = G - \{u_1,u_3,u_4,u_5\}$ and let $G'$ have order~$n'$. We note that $G'$ is a mop of order $n' = n - 4 \ge 9$. The edge $u_2u_6$ is an outer edge of $G'$. Let $G_1$ be a graph of order $n_1$ obtained from $G'$ by contracting the edge $u_2u_6$ to form a new vertex $x$ in $G_1$, and let $G_1$ have $k_1$ vertices of degree~$2$. By Lemma~\ref{key}, $G_1$ is a mop. Since $n' \ge 9$, we note that $n_1 = n' - 1 \ge 8$. By the minimality of the mop $G$, we have $\gamma_2^d(G_1) \le \floor*{\frac{2}{9}(n_1+k_1)}  \le \floor*{\frac{2}{9}(n-5+k)} \le \floor*{\frac{2}{9}(n+k)}-1$. Let $D_1$ be a $\gamma_2^d$-set of $G_1$. If $x \in D_1$, then let $D = (D_1\setminus \{x\}) \cup \{u_2,u_6\}$. If $x \notin D_1$, then let $D = D_1 \cup \{u_2\}$. In both cases, the set $D$ is a 2DD-set of $G$, and so $\gamma_2^d(G) \le |D| \le |D_1| + 1 \le \floor*{\frac{2}{9}(n+k)}$, a contradiction.

		Hence, $V(F_5)=\{u_5,u_6,u_7\}$. We now let $G' = G - \{u_1,u_2,u_3,u_4\}$ and let $G'$ have order~$n'$. We note that $G'$ is a mop of order $n' = n - 4 \ge 9$. The edge $u_5u_6$ is an outer edge of $G'$. Let $G_1$ be a graph of order $n_1$ obtained from $G'$ by contracting the edge $u_5u_6$ to form a new vertex $x$ in $G_1$, and let $G_1$ have $k_1$ vertices of degree~$2$. By Lemma~\ref{key}, $G_1$ is a mop. Since $n' \ge 9$, we note that $n_1 = n' - 1 \ge 8$. By the minimality of the mop $G$, we have $\gamma_2^d(G_1) \le \floor*{\frac{2}{9}(n_1+k_1)}  \le \floor*{\frac{2}{9}(n-5+k)} \le \floor*{\frac{2}{9}(n+k)}-1$. Let $D_1$ be a $\gamma_2^d$-set of $G_1$. If $x \in D_1$, then let $D = (D_1\setminus \{x\}) \cup \{u_5,u_6\}$. If $x \notin D_1$, then let $D = D_1 \cup \{u_2\}$. In both cases, the set $D$ is a 2DD-set of $G$, and so $\gamma_2^d(G) \le |D| \le |D_1| + 1 \le \floor*{\frac{2}{9}(n+k)}$, a contradiction.
	\end{proof}

	By \cref{notf4}, we have $V(F_4)=\{u_4,u_5,u_6\}$. Since $\deg_{T}(t_4)=2$, it has a neighbor different from $t_3$. Let $t_5\in T$ be the neighbor of $t_4$ different from $t_3$, and let $F_5$ be the triangle in $G$ corresponding to the vertex $t_5$ in $T$. Thus, $d_{T}(t_1,t_5)=4$. If $\deg_T(t_5)=1$, then $n=7$, a contradiction to the fact that $G$ is a mop of order $n \ge 13$. Hence, $2 \le \deg_{T}(t_5) \le 3$. Let $u_7$ be the vertex in $F_5$ that is not in $F_4$. We note that either $V(F_5)=\{u_4,u_6,u_7\}$ or $V(F_5)=\{u_5,u_6,u_7\}$ (see \Cref{4-distance}(a)-(b)).
	
	\smallskip
	\begin{claim}\label{4-dist}
		$\deg_{T}(t_5) = 2$.
	\end{claim}
	\begin{proof}[Proof of \cref{4-dist}]
		Suppose, to the contrary, that $\deg_{T}(t_5)= 3$, implying that triangle $F_5$ is an internal triangle of $G$. We have shown the shaded triangle $F_5$ of $G$ corresponding to vertex $t_5$ in \Cref{4-distance}(a)-(b). In the following, we consider two cases depending on whether $V(F_5)=\{u_4,u_6,u_7\}$ or $V(F_5)=\{u_5,u_6,u_7\}$.

		\begin{figure}[htb]
			\begin{center}
				\includegraphics[scale=.22]{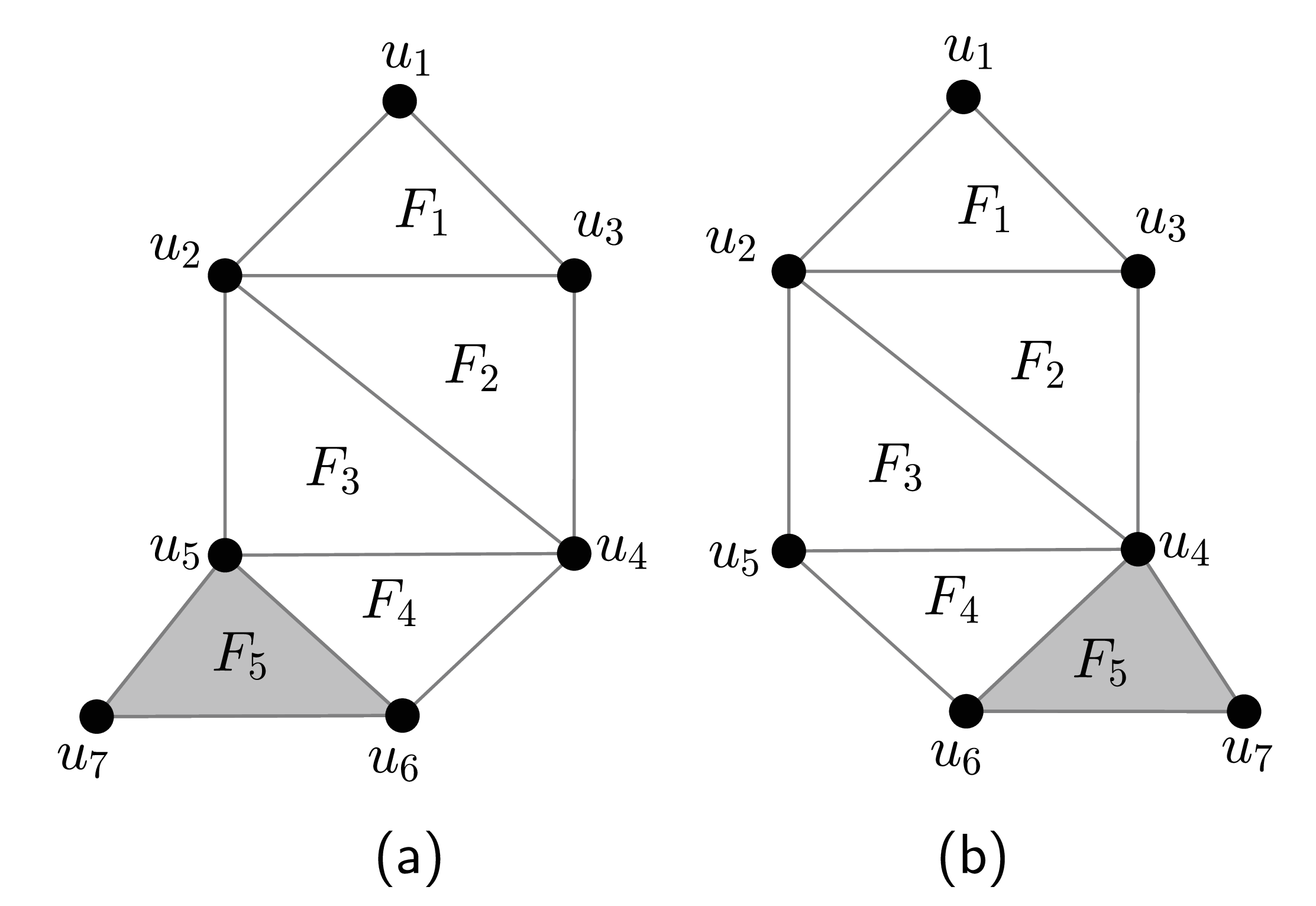}
				\caption{Possible (shaded) triangle adjacent to triangle $F_4$ when $V(F_4)=\{u_4,u_5,u_6\}$.}\label{4-distance}
			\end{center}
		\end{figure}

		If $V(F_5)=\{u_4,u_6,u_7\}$, then we let $G' = G - \{u_1,u_2,u_3,u_5\}$, and if $V(F_5)=\{u_5,u_6,u_7\}$, then we let $G' = G - \{u_1,u_2,u_3,u_4\}$. Let $G'$ have order $n'$ with $k'$ vertices of degree~$2$. In both cases, we note that $G'$ is a mop of order $n' = n - 4 \ge 9$ with $k' = k-1$ since $F_5$ is an internal triangle of $G$. By the minimality of the mop $G$, we have $\gamma_2^d(G') \le \floor*{\frac{2}{9}(n'+k')}  \le \floor*{\frac{2}{9}(n-4+k-1)} \le \floor*{\frac{2}{9}(n+k)}-1$. Let $D'$ be a $\gamma_2^d$-set of $G'$. In both cases, we let $D = D' \cup \{u_2\}$. The set $D$ is a 2DD-set of $G$, and so $\gamma_2^d(G) \le |D| \le |D'| + 1 \le \floor*{\frac{2}{9}(n+k)}$, a contradiction.
	\end{proof}

	By \cref{4-dist}, we have $\deg_{T}(t_5) = 2$. Let $t_6 \in V(T)$ be the neighbor of $t_5$ in $T$ different from $t_4$, and let $F_6$ be the triangle in $G$ corresponding to the vertex $t_6$ in $T$. Thus, $d_T(t_1,t_6)=5$. If $\deg_T(t_6)=1$, then $n=8$, a contradiction to the fact that $G$ is a mop of order $n \ge 13$. Hence, $2 \le \deg_{T}(t_6)\le 3$. Let $u_8$ be the vertex in $F_6$ that is not in $F_5$. We note that either $V(F_6)=\{u_5,u_7,u_8\}$ or  $V(F_6)=\{u_4,u_7,u_8\}$ or  $V(F_6)=\{u_6,u_7,u_8\}$ (see \Cref{5-distance}(a)-(d)).
	
	\begin{figure}[htb]
		\begin{center}
			\includegraphics[scale=.22]{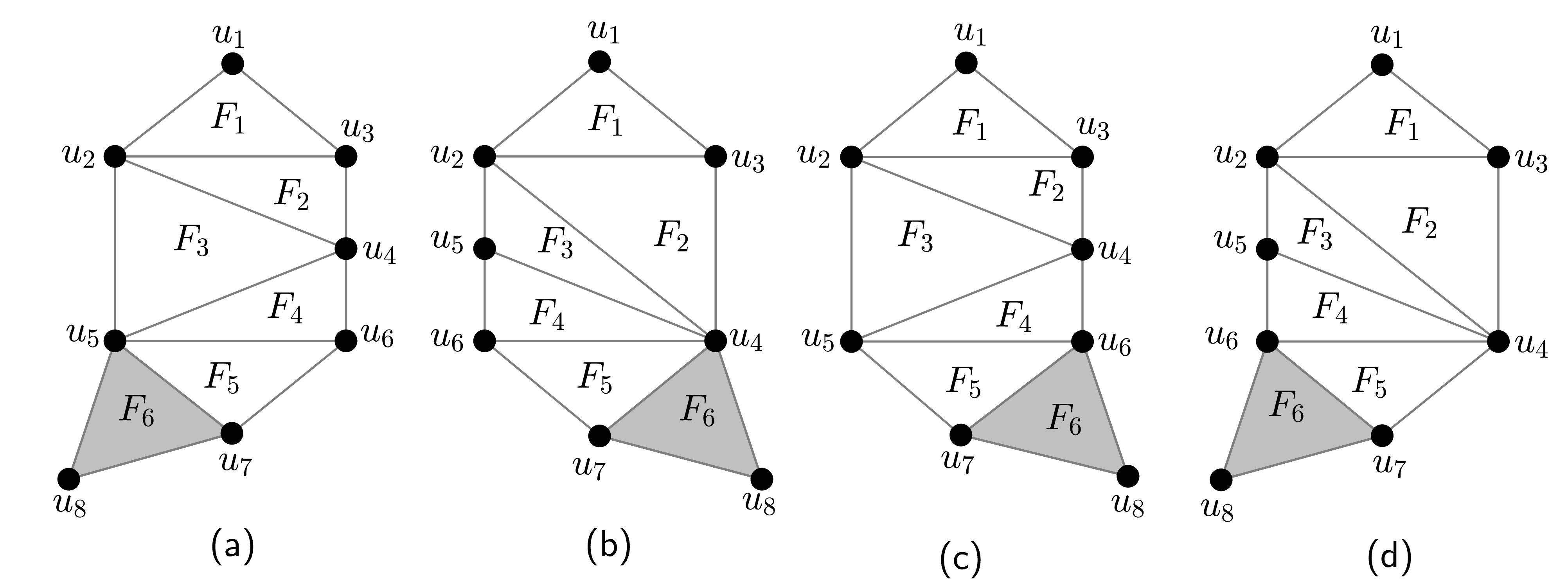}
			\caption{(a)~$H_1$, (b)~$H_2$, (c)~$H_3$, and (d)~$H_4$. Possible (shaded) triangle adjacent to triangle $F_5$.}\label{5-distance}
		\end{center}
	\end{figure}

	\begin{claim}\label{5-dist}
		$V(F_6) \ne \{u_6,u_7,u_8\}$.
	\end{claim}
	\begin{proof}[Proof of \cref{5-dist}]
		Suppose, to the contrary, that $V(F_6)=\{u_6,u_7,u_8\}$. There are two possible cases that may occur, as shown in \Cref{5-distance}(c)-(d).  In the following, we present arguments that work in both cases. Let $G'  = G - \{u_1,u_2,u_3,u_4,u_5\}$  and let $G'$ have order~$n'$ with $k'$ vertices of degree~$2$. We note that $G'$ is a mop of order $n' = n - 5 \ge 8$. The edge $u_6u_7$ is an outer edge of $G'$. We note that face $F_6$ may not be an internal triangle of $G$. By the minimality of the mop $G$, we have $\gamma_2^d(G') \le \floor*{\frac{2}{9}(n'+k')}  \le \floor*{\frac{2}{9}(n-5+k)} \le \floor*{\frac{2}{9}(n+k)}-1$. Let $D'$ be a $\gamma_2^d$-set of $G'$ and let $D = D' \cup \{u_2\}$. The set $D$ is a 2DD-set of $G$, and so $\gamma_2^d(G) \le |D| \le |D'| + 1 \le \floor*{\frac{2}{9}(n+k)}$, a contradiction.
	\end{proof}

	\begin{figure}[htbp]
		\begin{center}
			\includegraphics[scale=.24]{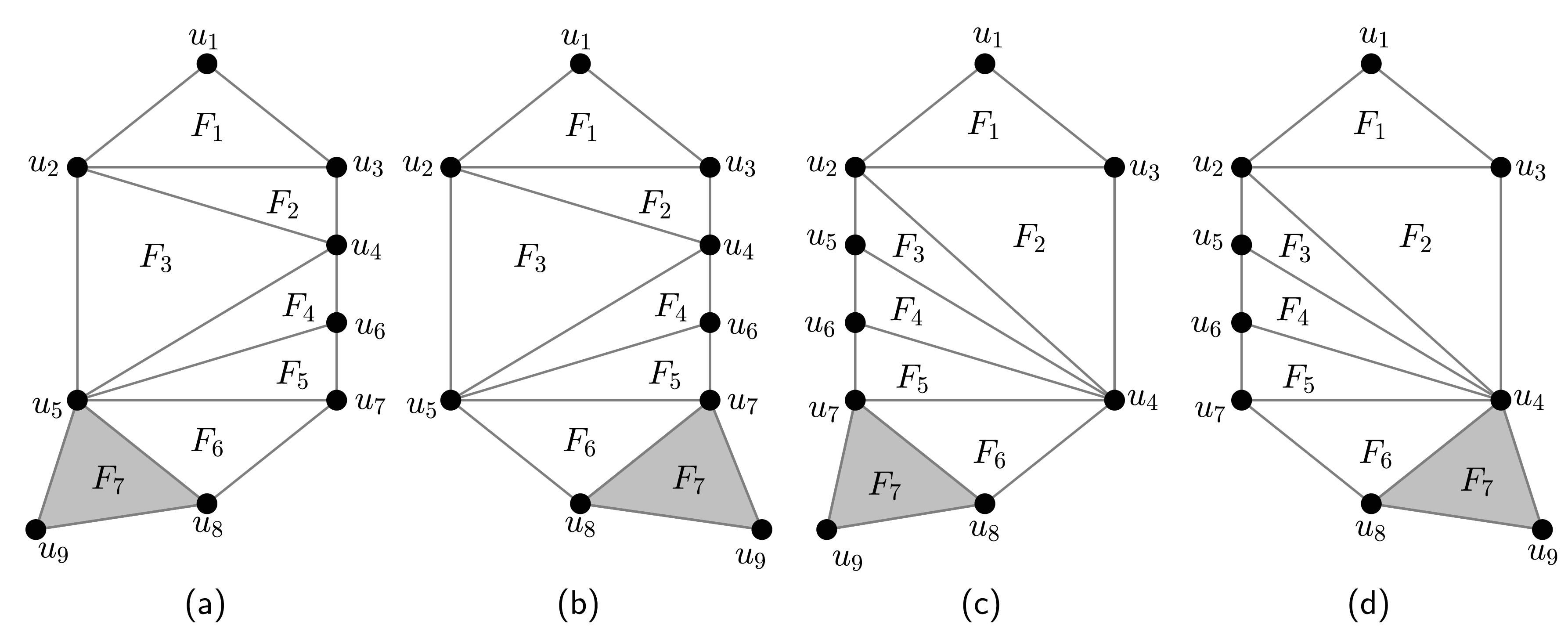}
			\caption{(a)~$H_5$, (b)~$H_6$, (c)~$H_7$, and (d)~$H_8$. Possible (shaded) triangle adjacent to triangle $F_6$.}\label{6-distance}
		\end{center}
	\end{figure}

	By \Cref{5-dist}, the triangle $F_6$ corresponding to vertex $t_6$ is either $V(F_6)=\{u_4,u_7,u_8\}$ or $V(F_6)=\{u_5,u_7,u_8\}$. If $\deg_T(t_6)=3$, then the subgraph of $G$ associated with the path between $t_1$ and $t_6$ corresponds to the region $H_1$ or $H_2$ illustrated in \Cref{5-distance}(a)-(b), and so \Cref{lem1}\ref{5distance} holds. Hence we may assume that $\deg_{T}(t_6) \ne 3$, implying that $\deg_{T}(t_6)=2$. Let $t_7 \in V(T)$ be the neighbor of $t_6$ in $T$ different from $t_5$, and let $F_7$ be the triangle in $G$ corresponding to the vertex $t_7$ in $T$. Thus, $d_T(t_1,t_7)=6$. If $\deg_T(t_7)=1$, then $n=9$, a contradiction to the fact that $G$ is a mop of order $n \ge 13$. Hence, $2 \le \deg_{T}(t_7)\le 3$. Let $u_9$ be the vertex in $F_7$ that is not in $F_6$. We note that either $V(F_7)=\{u_5,u_8,u_9\}$ or $V(F_7)=\{u_7,u_8,u_9\}$ or $V(F_7)=\{u_4,u_8,u_9\}$ (see \Cref{6-distance}(a)-(d)).

	If $\deg_T(t_7)=3$, then the subgraph of $G$ associated with the path between $t_1$ and $t_7$ corresponds to the region $H_5$, $H_6$, $H_7$, or $H_8$ illustrated in \Cref{6-distance}(a)-(d), and so \Cref{lem1}\ref{6distance} holds. Hence we may assume that $\deg_{T}(t_7) \ne 3$, implying that $\deg_{T}(t_7)=2$. Let $t_8 \in V(T)$ be the neighbor of $t_7$ in $T$ different from $t_9$, and let $F_8$ be the triangle in $G$ corresponding to the vertex $t_8$ in $T$. Thus, $d_T(t_1,t_8)=7$. If $\deg_T(t_8)=1$, then $n=10$, a contradiction to the fact that $G$ is a mop of order $n \ge 13$. Hence, $2 \le \deg_{T}(t_8)\le 3$. Let $u_{10}$ be the vertex in $F_8$ that is not in $F_7$. We note that either $V(F_8)=\{u_5,u_9,u_{10}\}$ or $V(F_8)=\{u_8,u_9,u_{10}\}$ or 	$V(F_8)=\{u_7,u_9,u_{10}\}$ or $V(F_8)=\{u_4,u_9,u_{10}\}$ (see \Cref{7-distance}(a)-(h)).
	
	\begin{figure}[htb]
		\begin{center}
			\includegraphics[scale=.24]{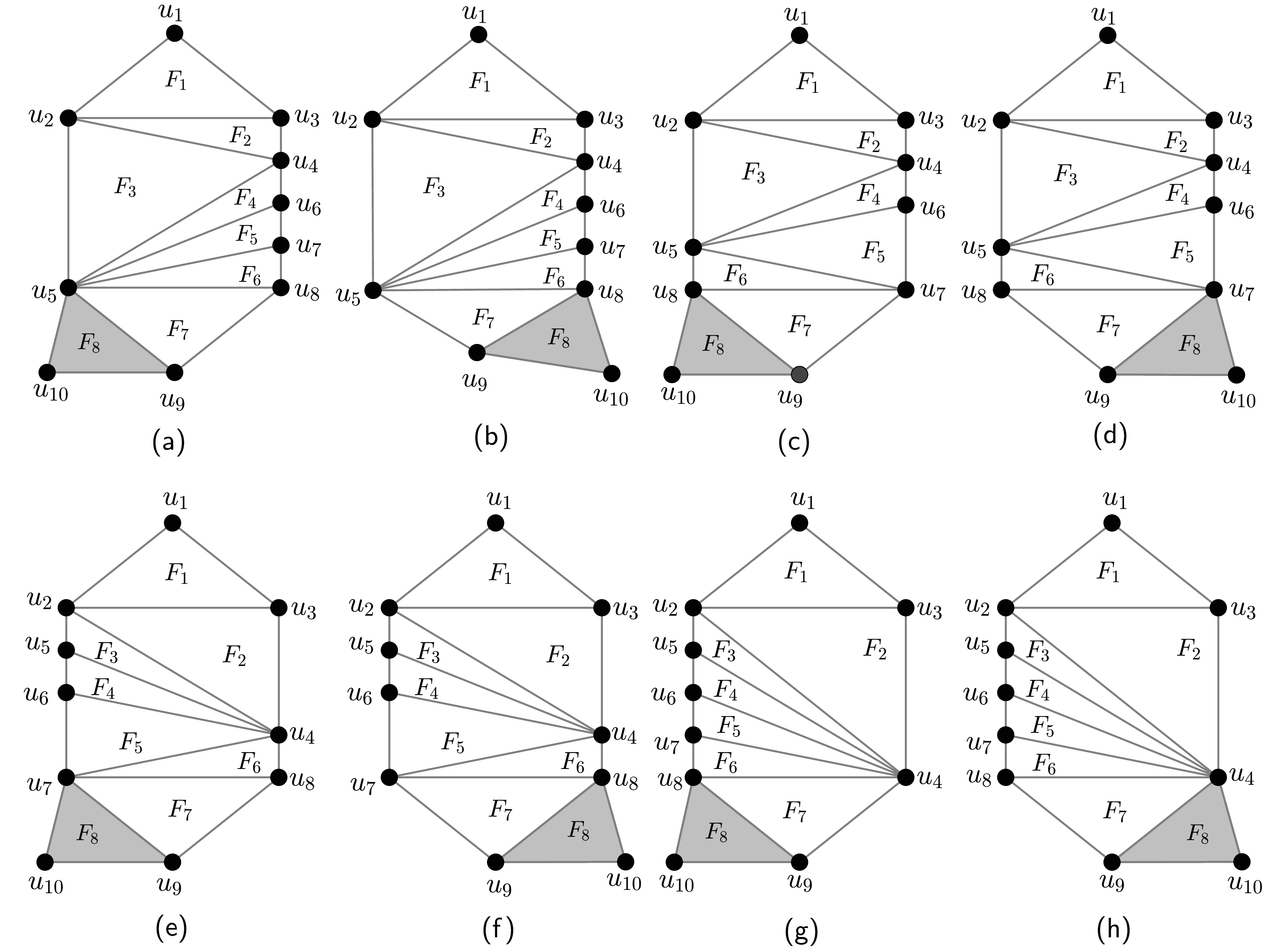}
			\caption{Possible (shaded) triangle adjacent to triangle $F_7$.}\label{7-distance}
		\end{center}
	\end{figure}
	
	\begin{claim}\label{7-dist}
		$\deg_{T}(t_8) = 2$.
	\end{claim}
	\begin{proof}[Proof of \Cref{7-dist}]
		Suppose, to the contrary, that $\deg_{T}(t_8)= 3$, implying that the triangle $F_8$ is an internal triangle of $G$. We have shown the shaded triangle $F_8$ of $G$ corresponding to vertex $t_8$ in \Cref{7-distance}(a)-(h). In the following, we consider four cases depending on whether $V(F_8)=\{u_5,u_9,u_{10}\}$ or $V(F_8)=\{u_8,u_9,u_{10}\}$ or $V(F_8)=\{u_7,u_9,u_{10}\}$ or $V(F_8)=\{u_4,u_9,u_{10}\}$.

		Suppose firstly that $V(F_8)=\{u_5,u_9,u_{10}\}$. Let $G' = G - \{u_1,u_2,u_3,u_4,u_6,u_7,u_8\}$ and let $G'$ have order~$n'$ with $k'$ vertices of degree~$2$. We note that $G'$ is a mop of order $n' = n - 7 \ge 6$. Moreover, $k' = k-1$ since $F_8$ is an internal triangle of $G$. The edge $u_5u_9$ is an outer edge of $G'$. If $6 \le n' \le 7$, then $\{u_2,u_5,u_{10}\}$ or $\{u_2,u_9,u_{10}\}$ is a 2DD-set of $G$, and so $\gamma_2^d(G) \le 3 \le  \floor*{\frac{2}{9}(n+k)}$, a contradiction. Hence, $n' \ge 8$. Let $G_1$ be a graph of order $n_1$ obtained from $G'$ by contracting the edge $u_5u_9$ to form a new vertex $x$ in $G_1$, and let $G_1$ have $k_1$ vertices of degree~$2$. By Lemma~\ref{key}, $G_1$ is a mop. Since $n' \ge 8$, we note that $n_1 = n' - 1 \ge 7$. By the minimality of the mop $G$, we have $\gamma_2^d(G_1) \le \floor*{\frac{2}{9}(n_1+k_1)} \le \floor*{\frac{2}{9}(n-8+k-1)} \le \floor*{\frac{2}{9}(n+k)}-2$. Let $D_1$ be a $\gamma_2^d$-set of $G_1$. If $x \in D_1$, then let $D = (D_1\setminus \{x\}) \cup \{u_2,u_5,u_9\}$. If $x \notin D_1$, then let $D = D_1 \cup \{u_2,u_5\}$. In both cases, the set $D$ is a 2DD-set of $G$, and so $\gamma_2^d(G) \le |D| \le |D_1| + 2 \le \floor*{\frac{2}{9}(n+k)}$, a contradiction.
		
		Suppose secondly that $V(F_8)=\{u_8,u_9,u_{10}\}$. There are four possible cases that may occur, as shown in \Cref{7-distance}(b),(c),(f), and (g).  In the following, we present arguments that work in each case. Let $G' = G - \{u_1,u_2,u_3,u_4,u_5,u_6,u_7\}$ and let $G'$ have order~$n'$ with $k'$ vertices of degree~$2$. We note that $G'$ is a mop of order $n' = n - 7 \ge 6$. Moreover, $k' = k-1$ since $F_8$ is an internal triangle of $G$. The edge $u_8u_9$ is an outer edge of $G'$. If $6 \le n' \le 7$, then $\{u_2,u_8,u_{10}\}$ or $\{u_2,u_9,u_{10}\}$ is a 2DD-set of $G$, and so $\gamma_2^d(G)\le 3 \le  \floor*{\frac{2}{9}(n+k)}$, a contradiction. Hence $n' \ge 8$. Let $G_1$ be a graph of order $n_1$ obtained from $G'$ by contracting the edge $u_8u_9$ to form a new vertex $x$ in $G_1$. By Lemma~\ref{key}, $G_1$ is a mop. Since $n' \ge 8$, we note that $n_1 = n' - 1 \ge 7$. By the minimality of the mop $G$, we have $\gamma_2^d(G_1) \le \floor*{\frac{2}{9}(n_1+k_1)} \le \floor*{\frac{2}{9}(n-8+k-1)} \le \floor*{\frac{2}{9}(n+k)}-2$. Let $D_1$ be a $\gamma_2^d$-set of $G_1$. If $x \in D_1$, then let $D = (D_1\setminus \{x\}) \cup \{u_2,u_8,u_9\}$. If $x \notin D_1$, then let $D = D_1 \cup \{u_2,u_8\}$. In both cases, the set $D$ is a 2DD-set of $G$, and so $\gamma_2^d(G) \le |D| \le |D_1| + 2 \le \floor*{\frac{2}{9}(n+k)}$, a contradiction.

		Suppose next that $V(F_8)=\{u_7,u_9,u_{10}\}$. There are two possible cases that may occur, as shown in \Cref{7-distance}(d)-(e).  In the following, we present arguments that work in both cases. Let $G' = G - \{u_1,u_2,u_3,u_4,u_5,u_6,u_8\}$ and let $G'$ have order~$n'$ with $k'$ vertices of degree~$2$. We note that $G'$ is a mop of order $n' = n - 7 \ge 6$. Moreover, $k' = k-1$ since $F_8$ is an internal triangle of $G$. The edge $u_8u_9$ is an outer edge of $G'$. If $6 \le n' \le 7$, then $\{u_2,u_7,u_{10}\}$ or $\{u_2,u_9,u_{10}\}$ is a 2DD-set of $G$, and so $\gamma_2^d(G)\le 3 \le  \floor*{\frac{2}{9}(n+k)}$, a contradiction. Hence $n' \ge 8$. Let $G_1$ be a graph of order $n_1$ obtained from $G'$ by contracting the edge $u_8u_9$ to form a new vertex $x$ in $G_1$. By Lemma~\ref{key}, $G_1$ is a mop. Since $n' \ge 8$, we note that $n_1 = n' - 1 \ge 7$. By the minimality of the mop $G$, we have $\gamma_2^d(G_1) \le \floor*{\frac{2}{9}(n_1+k_1)} \le \floor*{\frac{2}{9}(n-8+k-1)} \le \floor*{\frac{2}{9}(n+k)}-2$. Let $D_1$ be a $\gamma_2^d$-set of $G_1$. If $x \in D_1$, then let $D = (D_1\setminus \{x\}) \cup \{u_2,u_7,u_9\}$. If $x \notin D_1$, then let $D = D_1 \cup \{u_2,u_7\}$. In both cases, the set $D$ is a 2DD-set of $G$, and so $\gamma_2^d(G) \le |D| \le |D_1| + 2 \le \floor*{\frac{2}{9}(n+k)}$, a contradiction.
		
		Suppose finally that $V(F_8)=\{u_4,u_9,u_{10}\}$ (see \Cref{7-distance}(h)). Let $G' = G - \{u_1,u_2,u_3,u_5,u_6,u_7,u_8\}$ and let $G'$ have order~$n'$ with $k'$ vertices of degree~$2$. We note that $G'$ is a mop of order $n' = n - 7 \ge 6$. Moreover, $k' = k-1$ since $F_8$ is an internal triangle of $G$. The edge $u_4u_9$ is an outer edge of $G'$. If $6 \le n' \le 7$, then $\{u_2,u_4,u_{10}\}$ or $\{u_2,u_9,u_{10}\}$ is a 2DD-set of $G$, and so $\gamma_2^d(G)\le 3 \le  \floor*{\frac{2}{9}(n+k)}$, a contradiction. Hence $n' \ge 8$. Let $G_1$ be a graph of order $n_1$ obtained from $G'$ by contracting the edge $u_4u_9$ to form a new vertex $x$ in $G_1$. By Lemma~\ref{key}, $G_1$ is a mop. Since $n' \ge 8$, we note that $n_1 = n' - 1 \ge 7$. By the minimality of the mop $G$, we have $\gamma_2^d(G_1) \le \floor*{\frac{2}{9}(n_1+k_1)} \le \floor*{\frac{2}{9}(n-8+k-1)} \le \floor*{\frac{2}{9}(n+k)}-2$. Let $D_1$ be a $\gamma_2^d$-set of $G_1$. If $x \in D_1$, then let $D = (D_1\setminus \{x\}) \cup \{u_2,u_4,u_9\}$. If $x \notin D_1$, then let $D = D_1 \cup \{u_2,u_4\}$. In both cases, the set $D$ is a 2DD-set of $G$, and so $\gamma_2^d(G) \le |D| \le |D_1| + 2 \le \floor*{\frac{2}{9}(n+k)}$, a contradiction.	
	\end{proof}
	
	\begin{figure}[htbp]
		\begin{center}
			\vspace{-0cm}
			\includegraphics[scale=.25]{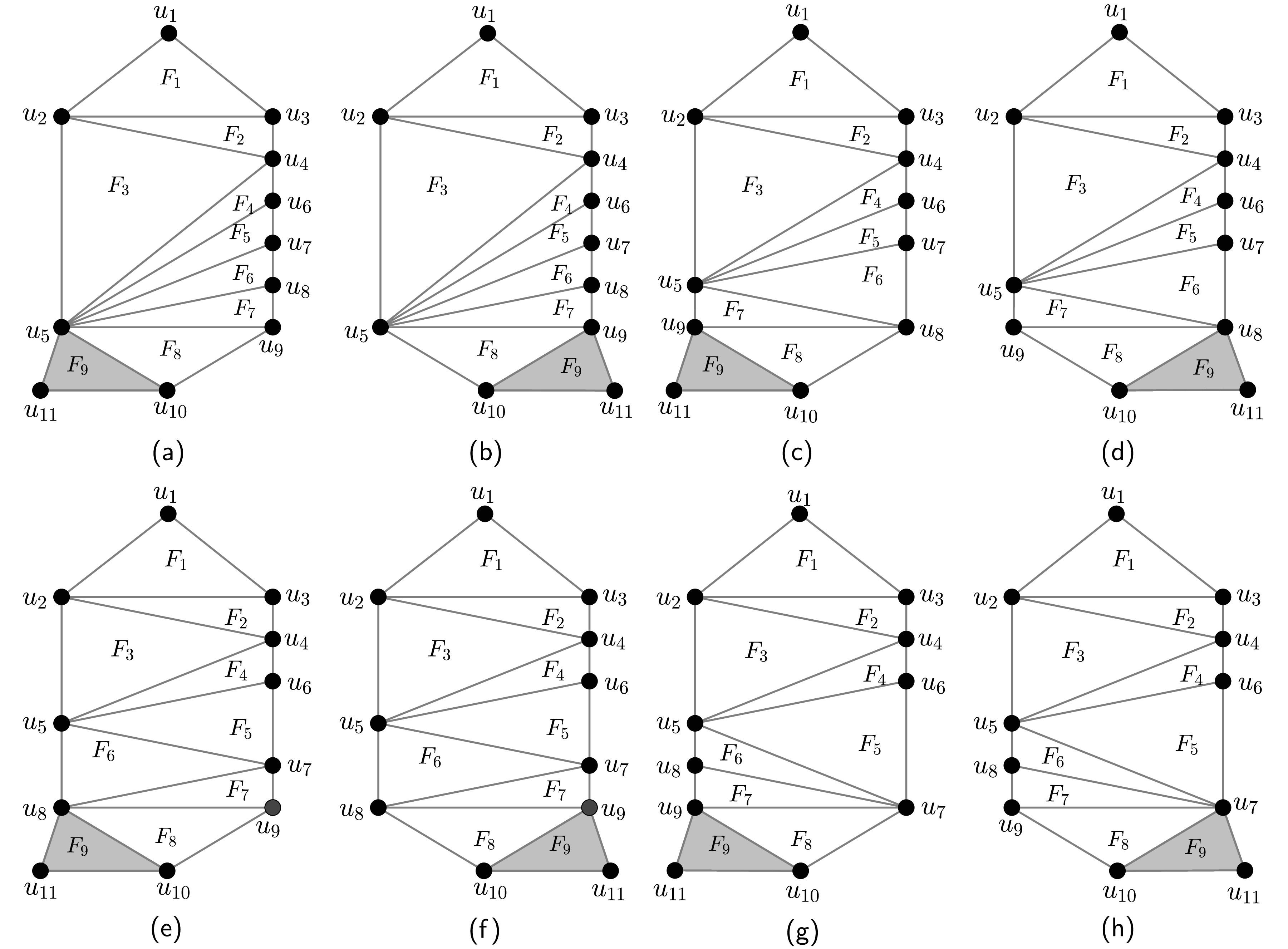}
			\includegraphics[scale=.25]{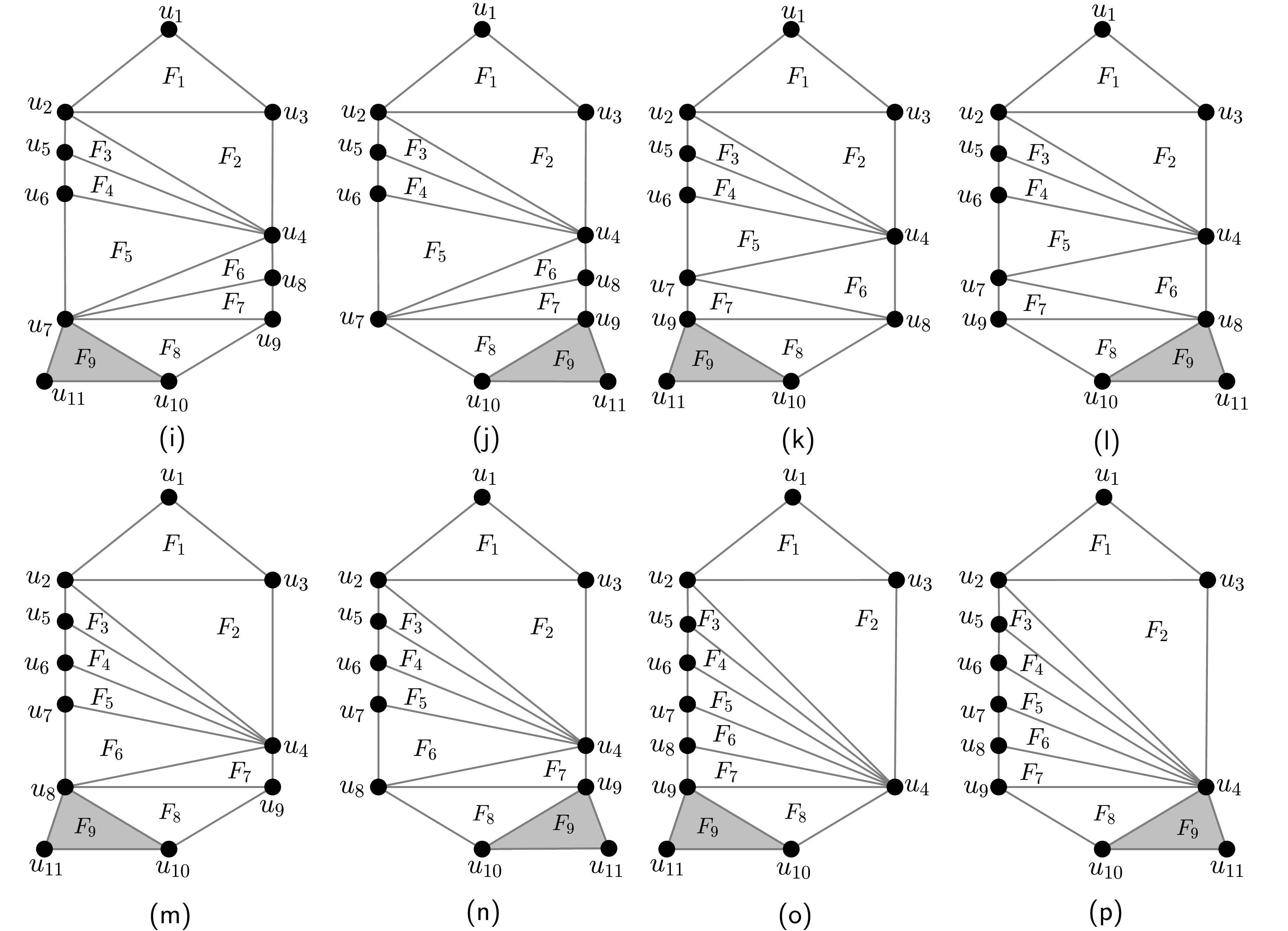}
			\caption{Possible (shaded) triangle adjacent to triangle $F_8$.}\label{8-distance}
		\end{center}
	\end{figure}

	By \cref{7-dist}, we have $\deg_{T}(t_8)\ne 3$, and so $\deg_{T}(t_8)=2$. Let $t_9 \in V(T)$ be the neighbor of $t_8$ different from $t_7$, and let $F_9$ be the triangle in $G$ corresponding to the vertex $t_9$ in $T$. Thus, $d_T(t_1,t_9)=8$. If $\deg_T(t_9)=1$, then $n=11$, a contradiction to the fact that $G$ is a mop of order $n \ge 13$. Hence, $2 \le \deg_{T}(t_9)\le 3$. Let $u_{11}$ be the vertex in $F_9$ that is not in $F_8$. We note that either $V(F_9)=\{u_5,u_{10},u_{11}\}$ or 	$V(F_9)=\{u_9,u_{10},u_{11}\}$ or $V(F_9)=\{u_8,u_{10},u_{11}\}$ or $V(F_9)=\{u_7,u_{10},u_{11}\}$ or $V(F_9)=\{u_4,u_{10},u_{11}\}$ (see \Cref{8-distance}(a)-(p)). In the following, we consider each of these five cases in turn. We show that each case yields a contradiction. We note that $F_9$ may not be an internal triangle of $G$. This shows that $T$ is not a path graph.
	
	\smallskip
	\emph{Case 1. $V(F_9)=\{u_5,u_{10},u_{11}\}$.} Let $G' = G - \{u_1,u_2,u_3,u_4,u_6,u_7,u_8,u_9\}$ and let $G'$ have order~$n'$ with $k'$ vertices of degree~$2$. We note that $G'$ is a mop of order $n' = n - 8 \ge 5$ and $k' \le k$. The edge $u_5u_{10}$ is an outer edge of $G'$. If $5 \le n' \le 7$, then by \cref{obs3}, there exists a 2DD-set $D'$ of $G'$ such that $u_5 \in D'$ and $|D'| = 2$. Therefore, $D' \cup \{u_2\}$ is a 2DD-set of $G$, and so $\gamma_2^d(G) \le 3\le  \floor*{\frac{2}{9}(n+k)}$, a contradiction. Hence, $n' \ge 8$. Let $G_1$ be a graph of order $n_1$ obtained from $G'$ by contracting the edge $u_5u_{10}$ to form a new vertex $x$ in $G_1$. By Lemma~\ref{key}, $G_1$ is a mop. Since $n' \ge 8$, we note that $n_1 = n' - 1 \ge 7$. Moreover, $n_1 = n - 9$. By the minimality of the mop $G$, we have $\gamma_2^d(G_1) \le \floor*{\frac{2}{9}(n_1+k_1)} \le \floor*{\frac{2}{9}(n-9+k)} \le \floor*{\frac{2}{9}(n+k)}-2$. Let $D_1$ be a $\gamma_2^d$-set of $G_1$. If $x \in D_1$, then let $D = (D_1\setminus \{x\}) \cup \{u_2,u_5,u_{10}\}$. If $x \notin D_1$, then let $D = D_1 \cup \{u_2,u_5\}$. In both cases, the set $D$ is a 2DD-set of $G$, and so $\gamma_2^d(G) \le |D| \le |D_1| + 2 \le \floor*{\frac{2}{9}(n+k)}$, a contradiction.
	
	\smallskip
	\emph{Case 2. $V(F_9)=\{u_9,u_{10},u_{11}\}$.} There are eight possible cases that may occur, as shown in \Cref{7-distance}(b),(c),(f),(g),(j),(k),(n), and (o). In the following, we present arguments that work in each case. Let $G' = G - \{u_1,u_2,u_3,u_4,u_5,u_6,u_7,u_8\}$ and let $G'$ have order~$n'$ with $k'$ vertices of degree~$2$. We note that $G'$ is a mop of order $n' = n - 8 \ge 5$ and $k' \le k$. The edge $u_9u_{10}$ is an outer edge of $G'$. If $5 \le n' \le 7$, then by \cref{obs3}, there exists a 2DD-set $D'$ of $G'$ such that $u_9\in D'$ and $|D'|=2$. Therefore, $D' \cup \{u_2\}$ is a 2DD-set of $G$, and so $\gamma_2^d(G)\le 3\le  \floor*{\frac{2}{9}(n+k)}$, a contradiction. Hence, $n' \ge 8$. Let $G_1$ be a graph of order $n_1$ obtained from $G'$ by contracting the edge $u_9u_{10}$ to form a new vertex $x$ in $G_1$. By Lemma~\ref{key}, $G_1$ is a mop. Since $n' \ge 8$, we note that $n_1 = n' - 1 \ge 7$. Moreover, $n_1 = n - 9$. By the minimality of the mop $G$, we have $\gamma_2^d(G_1) \le \floor*{\frac{2}{9}(n_1+k_1)} \le \floor*{\frac{2}{9}(n-9+k)} \le \floor*{\frac{2}{9}(n+k)}-2$. Let $D_1$ be a $\gamma_2^d$-set of $G_1$. If $x \in D_1$, then let $D = (D_1\setminus \{x\}) \cup \{u_2,u_9,u_{10}\}$. If $x \notin D_1$, then let $D = D_1 \cup \{u_2,u_9\}$. In both cases, the set $D$ is a 2DD-set of $G$, and so $\gamma_2^d(G) \le |D| \le |D_1| + 2 \le \floor*{\frac{2}{9}(n+k)}$, a contradiction.
	
	\smallskip
	\emph{Case 3. $V(F_9)=\{u_8,u_{10},u_{11}\}$.} There are four possible cases that may occur, as shown in \Cref{7-distance}(d),(e),(l), and (m). In the following, we present arguments that work in each case. Let $G' = G - \{u_1,u_2,u_3,u_4,u_5,u_6,u_7,u_9\}$ and let $G'$ have order~$n'$ with $k'$ vertices of degree~$2$. We note that $G'$ is a mop of order $n' = n - 8 \ge 5$ and $k' \le k$. The edge $u_8u_{10}$ is an outer edge of $G'$. If $5 \le n' \le 7$, then by \cref{obs3}, there exists a 2DD-set $D'$ of $G'$ such that $u_8\in D'$ and $|D'|=2$. Therefore, $D' \cup \{u_2\}$ is a 2DD-set of $G$, and so $\gamma_2^d(G)\le 3\le  \floor*{\frac{2}{9}(n+k)}$, a contradiction. Hence, $n' \ge 8$. Let $G_1$ be a graph of order $n_1$ obtained from $G'$ by contracting the edge $u_8u_{10}$ to form a new vertex $x$ in $G_1$. By Lemma~\ref{key}, $G_1$ is a mop. Since $n' \ge 8$, we note that $n_1 = n' - 1 \ge 7$. Moreover, $n_1 = n - 9$. By the minimality of the mop $G$, we have $\gamma_2^d(G_1) \le \floor*{\frac{2}{9}(n_1+k_1)} \le \floor*{\frac{2}{9}(n-9+k)} \le \floor*{\frac{2}{9}(n+k)}-2$. Let $D_1$ be a $\gamma_2^d$-set of $G_1$. If $x \in D_1$, then let $D = (D_1\setminus \{x\}) \cup \{u_2,u_8,u_{10}\}$. If $x \notin D_1$, then let $D = D_1 \cup \{u_2,u_8\}$. In both cases, the set $D$ is a 2DD-set of $G$, and so $\gamma_2^d(G) \le |D| \le |D_1| + 2 \le \floor*{\frac{2}{9}(n+k)}$, a contradiction.
	
	\smallskip
	\emph{Case 4. $V(F_9)=\{u_7,u_{10},u_{11}\}$.} There are two possible cases that may occur, as shown in \Cref{7-distance}(h)-(i). In the following, we present arguments that work in both cases. Let $G' = G - \{u_1,u_2,u_3,u_4,u_5,u_6,u_8,u_9\}$ and let $G'$ have order~$n'$ with $k'$ vertices of degree~$2$. We note that $G'$ is a mop of order $n' = n - 8 \ge 5$ and $k' \le k$. The edge $u_7u_{10}$ is an outer edge of $G'$. If $5 \le n' \le 7$, then by \cref{obs3}, there exists a 2DD-set $D'$ of $G'$ such that $u_7\in D'$ and $|D'|=2$. Therefore, $D' \cup \{u_2\}$ is a 2DD-set of $G$, and so $\gamma_2^d(G)\le 3\le  \floor*{\frac{2}{9}(n+k)}$, a contradiction. Hence, $n' \ge 8$. Let $G_1$ be a graph of order $n_1$ obtained from $G'$ by contracting the edge $u_7u_{10}$ to form a new vertex $x$ in $G_1$. By Lemma~\ref{key}, $G_1$ is a mop. Since $n' \ge 8$, we note that $n_1 = n' - 1 \ge 7$. Moreover, $n_1 = n - 9$. By the minimality of the mop $G$, we have $\gamma_2^d(G_1) \le \floor*{\frac{2}{9}(n_1+k_1)} \le \floor*{\frac{2}{9}(n-9+k)} \le \floor*{\frac{2}{9}(n+k)} - 2$. Let $D_1$ be a $\gamma_2^d$-set of $G_1$. If $x \in D_1$, then let $D = (D_1\setminus \{x\}) \cup \{u_2,u_7,u_{10}\}$. If $x \notin D_1$, then let $D = D_1 \cup \{u_2,u_7\}$. In both cases, the set $D$ is a 2DD-set of $G$, and so $\gamma_2^d(G) \le |D| \le |D_1| + 2 \le \floor*{\frac{2}{9}(n+k)}$, a contradiction.

	\smallskip
	\emph{Case 5. $V(F_9)=\{u_4,u_{10},u_{11}\}$.} This case is illustrated in \Cref{7-distance}(p). We now consider the graph $G'= G - \{u_1,u_2,u_3,u_5,u_6,u_7,u_8,u_9\}$ and let $G'$ have order~$n'$ with $k'$ vertices of degree~$2$. We note that $G'$ is a mop of order $n' = n - 8 \ge 5$ and $k' \le k$. The edge $u_4u_{10}$ is an outer edge of $G'$. If $5 \le n' \le 7$, then by \cref{obs3}, there exists a 2DD-set $D'$ of $G'$ such that $u_4\in D'$ and $|D'|=2$. Therefore, $D' \cup \{u_2\}$ is a 2DD-set of $G$, and so $\gamma_2^d(G)\le 3\le  \floor*{\frac{2}{9}(n+k)}$, a contradiction. Hence, $n' \ge 8$. Let $G_1$ be a graph of order $n_1$ obtained from $G'$ by contracting the edge $u_4u_{10}$ to form a new vertex $x$ in $G_1$. By Lemma~\ref{key}, $G_1$ is a mop. Since $n' \ge 8$, we note that $n_1 = n' - 1 \ge 7$. Moreover, $n_1 = n - 9$. By the minimality of the mop $G$, we have $\gamma_2^d(G_1) \le \floor*{\frac{2}{9}(n_1+k_1)} \le \floor*{\frac{2}{9}(n-9+k)} \le \floor*{\frac{2}{9}(n+k)}-2$. Let $D_1$ be a $\gamma_2^d$-set of $G_1$. If $x \in D_1$, then let $D = (D_1\setminus \{x\}) \cup \{u_2,u_4,u_{10}\}$. If $x \notin D_1$, then let $D = D_1 \cup \{u_2,u_4\}$. In both cases, the set $D$ is a 2DD-set of $G$, and so $\gamma_2^d(G) \le |D| \le |D_1| + 2 \le \floor*{\frac{2}{9}(n+k)}$, a contradiction.
	
	This completes the proof of \Cref{lem1}.
\end{proof}

\section{Proof of main result}
\label{Sect:main-proof}

In this section, we present a proof our main result, namely Theorem~\ref{thm:main}. Recall its statement.

\noindent \textbf{Theorem~\ref{thm:main}} \emph{
	If $G$ is a mop of order $n \ge 7$ with $k$ vertices of degree~$2$, then $\gamma_2^d(G) \le \floor*{\frac{2}{9}(n+k)}$.
}

\noindent
\begin{proof}
	If $7 \le n\le 12$, then by \cref{7to12mops}, $\gamma_2^d(G) \le  \floor*{\frac{2}{9}(n+k)}$. Hence we may assume that $G$ is a mop of order $n \ge 13$. Suppose, to the contrary, that there exists a counterexample to our theorem. With this supposition, let $G$ be a counterexample of minimum order $n \ge 13$ and let $G$ have $k$ vertices of degree~$2$. Since $G$ is a counterexample of minimum order, the mop $G$ satisfies $\gamma_2^d(G) > \floor*{\frac{2}{9}(n+k)}$. Furthermore, if $G'$ is a mop of order~$n'$ where $7 \le n' < n$ and with $k'$ vertices of degree~$2$, then $\gamma_2^d(G') \le \lfloor \frac{2}{9}(n'+k') \rfloor$.

	\begin{figure}[htbp]
		\begin{center}
			\includegraphics[scale=0.69]{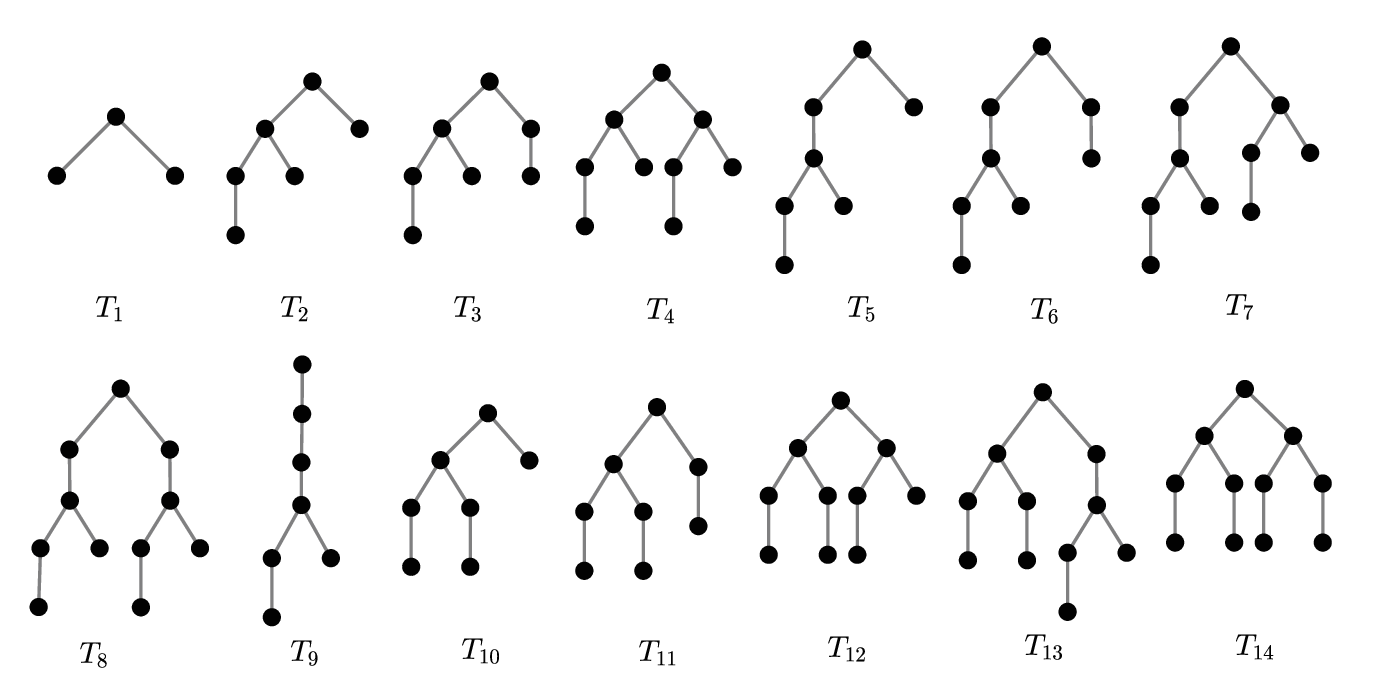}
			\includegraphics[scale=0.7]{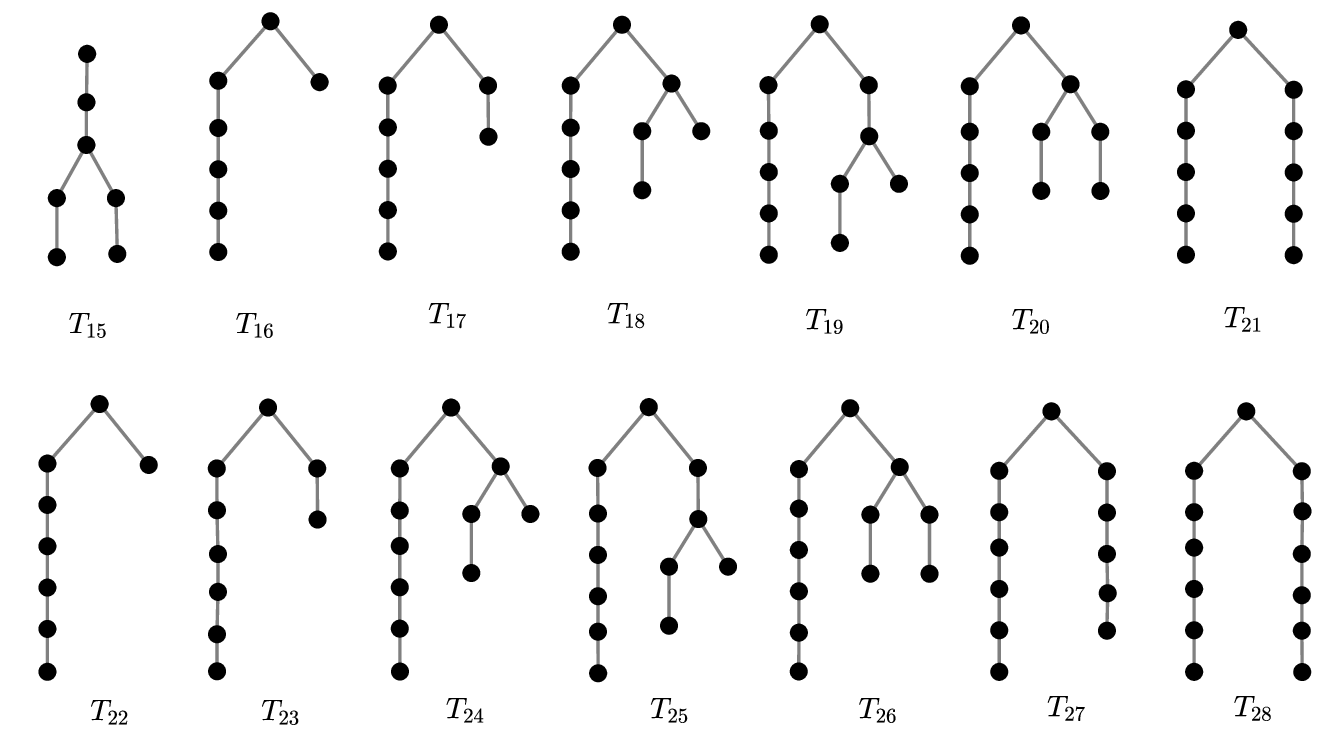}
			\caption{Trees.}\label{trees}
		\end{center}
	\end{figure}
	
	Let $T$ be the tree associated with the mop $G$. By \Cref{lem1}\ref{npath}, $T$ is not a path graph, implying that $T$ has at least one vertex of degree~$3$. We now root the tree $T$ at a leaf~$r$ that belongs to a longest path $P$ in $T$. Recall that such a vertex~$r$ is called a diametrical vertex of $T$. Further recall that if $v$ is a vertex in the rooted tree $T$, then we denote by $T_v$ the maximal subtree rooted at $v$, that is, $T_v$ is the subtree of $T$ induced by $D[v]$. Let $x$ be an arbitrary leaf of $T$ and let $y$ be a nearest vertex of degree~$3$ to $x$ in $T$. Then by \cref{lem1}\ref{pdistance}, $d_T(x,y)=i$, where $i\in\{1,2,5,6\}$. We therefore infer the following structural property of the rooted tree $T$.
	
	\begin{claim}
		\label{cl-2}
		The rooted tree $T$ contains at least one tree $T_i$ shown in Figure~\ref{trees} as a maximal subtree $T_v$ for some vertex $v$ of $T$ and some $i \in [28]$.
	\end{claim}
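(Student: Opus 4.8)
The plan is to exhibit a single branch vertex of $T$ whose maximal subtree is forced into one of the tabulated shapes. Working in the tree $T$ rooted at the diametrical leaf $r$, I would take $v$ to be a vertex of degree~$3$ of maximum depth, i.e.\ at maximum distance from $r$. Such a $v$ exists: by \Cref{lem1}\ref{npath} the tree $T$ is not a path, so it has a vertex of degree~$3$, and $v\neq r$ because $r$ is a leaf. Since $v$ has degree~$3$ with exactly one neighbor on the $(r,v)$-path, it has precisely two children $c_1,c_2$. By the maximality of the depth of $v$, no proper descendant of $v$ has degree~$3$ in $T$, so every vertex of $T_v$ other than $v$ has degree at most~$2$; consequently each of $T_{c_1}$ and $T_{c_2}$ is a path terminating in a leaf, and $T_v$ is a two-legged tree consisting of $v$ together with these two pendant paths.

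I would then determine the two leg lengths. Let $x_j$ be the leaf of $T$ at the end of the path $T_{c_j}$ for $j\in\{1,2\}$. The $(x_j,v)$-path lies inside $T_v$ and meets no degree-$3$ vertex except $v$, so $v$ is the nearest degree-$3$ vertex of $T$ to $x_j$. Applying \Cref{lem1}\ref{pdistance} to each leaf $x_j$ gives $d_T(x_j,v)\in\{1,2,5,6\}$, so each leg of $T_v$ has length $1$, $2$, $5$, or~$6$.

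Finally I would label each leg by the region of $G$ it induces. A leg of length~$5$ forces its region to be $H_1$ or $H_2$ by \Cref{lem1}\ref{5distance}, and a leg of length~$6$ forces its region to be one of $H_5,H_6,H_7,H_8$ by \Cref{lem1}\ref{6distance}, while legs of length~$1$ or~$2$ induce a unique region. Hence each leg is one of finitely many decorated path-types, and $T_v$ is assembled from an unordered pair of these. Matching the resulting list against \Cref{trees} identifies $T_v$ with one of $T_1,\dots,T_{28}$, which is the assertion of the claim.

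The step I expect to be the main obstacle is the final bookkeeping: one must check that the family $T_1,\dots,T_{28}$ drawn in \Cref{trees} is precisely the set of two-legged trees obtained from all admissible pairs of decorated legs---tracking the symmetry between the two legs and the way the distinct regions $H_i$ are recorded---so that no configuration is missed. The choice of $v$ as a deepest degree-$3$ vertex is exactly what guarantees that \Cref{lem1}\ref{pdistance}--\ref{6distance} apply to both legs simultaneously; the diametrical rooting is not needed for this existence statement, but it is what will later allow the reductions to treat $T_v$ as a lowest branch that can be deleted cleanly.
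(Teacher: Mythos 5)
Your setup is sound and matches the paper's framework: rooting $T$ at a diametrical leaf, taking $v$ to be a degree-$3$ vertex of maximum depth, observing that $T_v$ is then a two-legged spider, and applying \Cref{lem1}\ref{pdistance} to conclude that each leg has length in $\{1,2,5,6\}$. The gap is the final matching step, which you dismiss as bookkeeping but which is exactly where the argument fails: the family $T_1,\dots,T_{28}$ of Figure~\ref{trees} is \emph{not} the set of two-legged spiders with admissible decorated legs. Two of the ten spiders are missing from that list, namely those with legs $(1,2)$ and $(2,2)$. These occur in the paper only as the auxiliary rooted trees $T'$ and $T''$ of Figure~\ref{tree2to9}(a) and Figure~\ref{tree10to21}(a), and \Cref{cl3to10} and \Cref{cl-10to21} do not exclude them as maximal subtrees --- they merely pin down which region of $G$ they can induce ($H_{10}$, resp.\ $H_{13}$). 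This is no accident: for the region $H_{10}$ no single vertex is adjacent to all of $v_3,v_4,v_5,v_6$, so the deletion-plus-one-vertex reduction that kills the other spiders cannot be closed for these two shapes. Consequently, if your deepest degree-$3$ vertex has legs $(1,2)$ or $(2,2)$, then $T_v$ equals no $T_i$, and your proof of the claim is false precisely in the cases that generate the bulk of the list.

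What \Cref{cl-2} actually encodes in those cases is an ancestor analysis: one climbs from $v$ to its parent or grandparent and enumerates the possible maximal subtrees rooted there, using the fact that no degree-$3$ vertex lies deeper than $v$, so the sibling subtrees hanging off these ancestors are paths whose lengths are forced by \Cref{lem1}\ref{pdistance}, or carry their own $(1,2)$- or $(2,2)$-spiders, or legs of length $5$ or $6$. This is where the twenty non-spider entries of Figure~\ref{trees} come from: for instance $T_2$ is a path of length two sitting above a $(1,2)$-spider, $T_4$ is a root whose two children each head a $(1,2)$-spider (a tree with two degree-$3$ vertices, which no pair of legs can produce), and $T_{18}$ and $T_{20}$ combine a length-$5$ leg with a $(1,2)$-, resp.\ $(2,2)$-type branch. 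Your ``unordered pairs of decorated legs'' can never produce a tree with two branch vertices or with the root strictly above the unique branch vertex, and it would wrongly include the bare $(1,2)$ and $(2,2)$ spiders; under any counting it does not yield $28$ trees. The paper states \Cref{cl-2} tersely, but its content is this second layer of case analysis, and a correct proof must supply it; the spider classification alone does not prove the claim.
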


	We proceed as follows. We systematically show that the rooted tree $T$ cannot contain a tree $T_i$ shown in Figure~\ref{trees} as a maximal subtree for any $i \in [28]$. To do this, we analyze the specific region of the mop $G$ based on a given subtree $T_i$. In our arguments, if $T_i$ is the maximal subtree $T_v$ for some vertex $v$ of $T$, then in our illustrations of the associated subgraph of $G$ the shaded triangle corresponds to the root $v$ of the maximal subtree $T_v = T_i$. The other regions of $G$ are then triangulates according to the structure of the tree $T_i$ and according to \cref{lem1}\ref{5distance}-\ref{6distance}. Throughout our proof, we adopt the notation that if $T_i$ is a maximal subtree of $T$ for some $i \in [28]$ and $T_i$ is the maximal subtree $T_v$ for some vertex $v$ of $T$, then $R_v$ denotes the triangle in $G$ corresponding to the root vertex $v$ of $T_v$. We adopt the following notation. If $V(G) = \{v_1,v_2,\ldots,v_n\}$, and $i$ and $j$ are integers such that $1 \le j < i \le n$, then we let
	\[
	V_j^i = \{v_j,v_{j+1},\ldots,v_i\}.
	\]

	\begin{claim}\label{tree1}
		The tree $T_1$ is not a  maximal subtree of $T$.
	\end{claim}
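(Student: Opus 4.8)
The plan is to follow the established minimality paradigm: since $T_1$ is a maximal subtree $T_v$ for some vertex $v$, I will identify the associated region of the mop $G$, delete an appropriate set of its vertices to obtain a smaller mop $G'$, apply the minimality hypothesis to $G'$ (possibly after contracting an outer edge via \cref{key} to land in a range where the bound applies), and then extend a $\gamma_2^d$-set of the smaller graph back to a 2DD-set of $G$ of controlled size. The arithmetic on the floor function has been set up so that if I can delete $m$ vertices and add at most $\lceil m \cdot \tfrac{2}{9}\rceil$ new vertices to the 2DD-set while disjunctively dominating everything removed, I reach the contradiction $\gamma_2^d(G) \le \lfloor \tfrac{2}{9}(n+k)\rfloor$.

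First I would read off from Figure~\ref{trees} the precise shape of $T_1$ and, using \cref{lem1}\ref{pdistance}--\ref{6distance}, determine the triangulated region $R_v$ and its neighboring triangles in $G$; the branches of $T_1$ hanging off the degree-$3$ vertex correspond to pendant ``ears'' of the mop whose leaf triangles contribute degree-$2$ vertices. Using the $V_j^i$ notation just introduced, I would label the boundary vertices of this region explicitly. Then I would choose the deletion set $G' = G - W$ for a carefully selected $W$ so that (i) $G'$ is again a mop (the deleted part is a region attached along a single diagonal, so the cut is clean), (ii) the number of degree-$2$ vertices behaves as needed (an internal triangle created or destroyed shifts $k$ by one, exactly as in the proofs of \cref{3-dist}--\cref{7-dist}), and (iii) $n' = n - |W|$ is large enough to invoke minimality, handling the small residual orders $5 \le n' \le 7$ separately via \cref{obs3} as in \cref{lem1}.

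The main obstacle, as in the earlier claims, will be verifying the extension step: after taking a $\gamma_2^d$-set $D_1$ of the reduced graph (or $D'$ of $G'$ when no contraction is needed), I must exhibit an explicit small set of vertices from $W$ — typically a degree-$2$ vertex such as $u_2$ together with one or two spine vertices of the region — whose addition disjunctively dominates every vertex of $W$, while checking that the boundary vertices shared between $G'$ and $G$ remain dominated. The subtlety is that disjunctive domination allows a vertex to be covered by two witnesses at distance~$2$, so I cannot merely count adjacencies: I must confirm, for each removed vertex, either a neighbor in $D$ or two distance-$2$ members of $D$. I would organize this by cases according to which branch configuration of $T_1$ occurs, and in each case present the augmenting set and a one-line check. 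I expect the floor inequalities to go through automatically by the same chain $\lfloor \tfrac{2}{9}(n'+k')\rfloor \le \lfloor \tfrac{2}{9}(n+k)\rfloor - c$ used throughout, so the real work is purely the combinatorial domination check that yields the contradiction $\gamma_2^d(G) \le \lfloor \tfrac{2}{9}(n+k)\rfloor$.
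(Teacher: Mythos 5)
Your high-level template (delete a region, optionally contract an outer edge via \cref{key}, invoke minimality, extend the smaller 2DD-set back) is indeed the paper's template, but for this particular claim your arithmetic rule is wrong in a way that would sink the proof if followed literally. For $T_1$ the associated region is a single triangle $v_1v_2v_3$ carrying two pendant triangles $v_1v_3v_4$ and $v_2v_3v_5$, so the natural deletion set is $W=\{v_3,v_4,v_5\}$: only $m=3$ vertices, with $n'=n-3$ and $k'\le k-1$ (the two degree-$2$ vertices $v_4,v_5$ disappear, but $v_1$ or $v_2$ may become a new degree-$2$ vertex). Your rule ``delete $m$ vertices and add at most $\lceil \tfrac{2}{9}m\rceil$'' licenses adding one vertex here, but what is actually required is $\lfloor \tfrac{2}{9}(n'+k')\rfloor + c \le \lfloor \tfrac{2}{9}(n+k)\rfloor$, and a guaranteed drop of $n+k$ by only $4$ does not deliver this: if $n+k\equiv 4 \pmod 9$, then $\lfloor \tfrac{2}{9}(n+k-4)\rfloor = \lfloor \tfrac{2}{9}(n+k)\rfloor$, so deleting $W$ and adding even one vertex yields no contradiction. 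The correct sufficient condition is that $n+k$ drop by at least $5$ for each added vertex, so that $\lfloor \tfrac{2}{9}\Delta\rfloor \ge c$ for the total drop $\Delta$; this floor-versus-ceiling distinction is exactly where your plan breaks.

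This is why the paper's proof does not stop at the deletion: it then contracts the outer edge $v_1v_2$ of $G'$ into a vertex $x$ (legitimate by \cref{key}), obtaining $n_1=n-4$ and $k_1\le k-1$, hence a guaranteed drop of $5$ and $\gamma_2^d(G_1)\le \lfloor \tfrac{2}{9}(n+k)\rfloor-1$. Note that the contraction's purpose is purely this arithmetic gain; your parenthetical ``to land in a range where the bound applies'' has it backwards, since $G'$ already has order $n-3\ge 10\ge 7$ and contraction moves the order down, not up. The contraction also forces the two-case extension that your plan would need to spell out: if $x\in D_1$, replace $x$ by $\{v_1,v_2\}$ (a net gain of one vertex), and if $x\notin D_1$, add the hub $v_3$ (which has degree~$4$, not a degree-$2$ vertex as your plan suggests), checking in each case that $v_3,v_4,v_5$ and any vertex whose distance-$2$ witnesses passed through $x$ remain disjunctively dominated. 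Without the contraction step, correctly motivated, your argument fails whenever $n+k\equiv 4\pmod 9$.
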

	\begin{proof}[Proof of \cref{tree1}]
		Suppose, to the contrary, that $T_1$ is a maximal subtree of $T$, and so $T_1 = T_v$. Let $R_v$ be the triangle in $G$ corresponding to the vertex $v$. Let $V(R_v) = \{v_1,v_2,v_3\}$. Let $s_1$ and $t_1$ be the two children of $v$, and let $R_1$ and $Q_1$ be the triangles in $G$ corresponding to the vertices $s_1$ and $t_1$, respectively. Further, let $V(R_1) = \{v_1,v_3,v_4\}$ and $V(Q_1) = \{v_2,v_3,v_5\}$. Thus, $G$ contains the subgraph illustrated in Figure~\ref{tree_1}, where the shaded triangle corresponds to the vertex $v$ in~$T_v$. Since $s_1$ and $t_1$ are leaves in $T$, we note that $\deg_G(v_4) = \deg_G(v_5) = 2$ and $\deg_G(v_3) = 4$. Recall that $n \ge 13$.
		
		\begin{figure}[htbp]
			\begin{center}
				\includegraphics[scale=0.55]{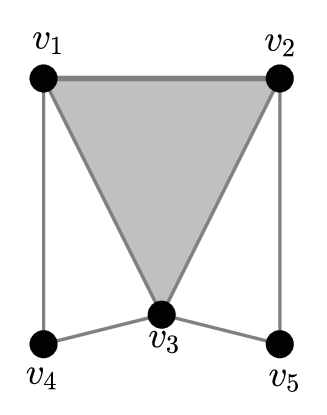}
				\caption{The region of $G$ corresponding to trees $T_1$.}\label{tree_1}
			\end{center}
		\end{figure}
		
		Let $G'$ be a graph of order $n'$ obtained from $G$ by deleting the vertices in  $V_3^5 = \{v_3,v_4,v_5\}$, and so $G' = G - \{v_3,v_4,v_5\}$. Since $n \ge 13$, we have $n' = n - 3 \ge 10$. We note that $G'$ is a mop with $k-1$ vertices of degree~$2$ and $v_1v_2$ is an outer edge of $G'$. Let $G_1$ be a graph of order $n_1$ obtained from $G'$ by contracting the edge $v_1v_2$ to form a new vertex $x$ in $G_1$, and let $G_1$ have $k_1$ vertices of degree~$2$.  Note that $n_1 = n-4$ and $k_1 \le k-1$. By Lemma~\ref{key}, $G_1$ is a mop. Since $n' \ge 10$, we have $n_1 = n' - 1 \ge 9$. Since $G$ is a counterexample of minimum order, we have $\gamma_2^d(G_1) \le \floor*{\frac{2}{9}(n_1+k_1)} \le \floor*{\frac{2}{9}(n-4+k-1)} \le \floor*{\frac{2}{9}(n+k)}-1$. Let $D_1$ be a $\gamma_2^d$-set of $G_1$. If $x \in D_1$, then let $D = (D_1 \setminus \{x\}) \cup \{v_1,v_2\}$. If $x \notin D_1$, then let $D = D_1 \cup \{v_3\}$. In both cases, the set $D$ is a 2DD-set of $G$, and so $\gamma_2^d(G) \le |D| \le |D_1| + 1 \le \floor*{\frac{2}{9}(n+k)}$, a contradiction.
	\end{proof}
	
	Let $T'$ be a path $P_4$ rooted at a vertex at distance~$1$ from a leaf of $T'$, as illustrated in Figure~\ref{tree2to9}(a). We note that the tree $T'$ is a subtree of $T_i$ for all $i$, where $ i\in ([9]\setminus\{1\})\cup \{12,13,18,19,24,25\}$. Let $v'$ be the root of $T'$. In our illustrations of the subgraph of $G$ associated with the rooted tree $T'$, let the shaded triangle with vertex set $\{v_1,v_2,v_{3}\}$ corresponds to the root $v'$ of $T'$, and let the subgraph of $G$ be obtained from region $v_2v_3v_5$ and from the region $v_1v_3v_4v_6$ by triangulating by adding the edge $v_3v_6$ or $v_1v_4$ as illustrated in Figure~\ref{tree2to9}(b)-(c), depending on the two possible cases that these regions can be triangulated.
	
	\begin{figure}[htbp]
		\begin{center}
			\includegraphics[scale=0.47]{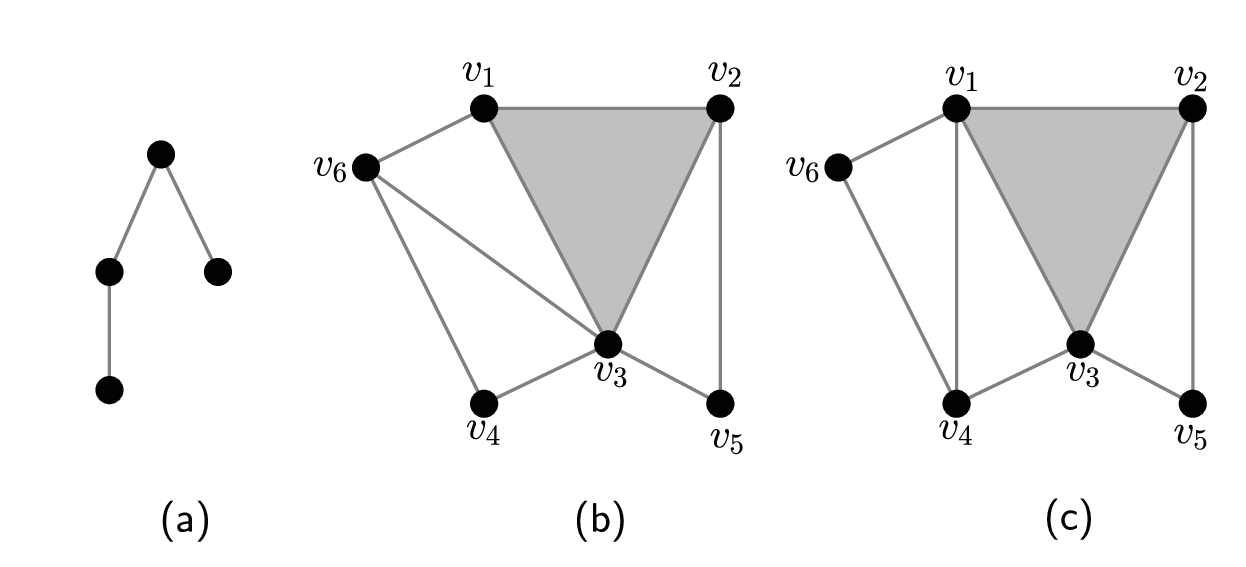}
			\caption{(a)~$T'$, (b)~$H_9$, and (c)~$H_{10}$. Tree $T'$ and possible regions of $G$ corresponding to tree $T'$.}\label{tree2to9}
		\end{center}
	\end{figure}

	\begin{claim}\label{cl3to10}
		The subgraph of $G$ associated with the tree $T'$ in $\Cref{tree2to9}{\rm(a)}$ corresponds to the region $H_{10}$ illustrated in $\Cref{tree2to9}{\rm(c)}$.
	\end{claim}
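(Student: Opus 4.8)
The plan is to rule out the alternative triangulation $H_9$ (the one obtained by inserting the diagonal $v_3v_6$), so that the region associated with $T'$ is forced to be $H_{10}$, and to do so by contradicting the minimality of $G$. First I would record the local structure imposed by $H_9$. The root triangle $R_{v'}=\{v_1,v_2,v_3\}$ is an internal triangle of $G$: its edges $v_2v_3$ and $v_1v_3$ are shared with the two child triangles of $v'$ and $v_1v_2$ is shared with the parent triangle of $v'$, so all three are diagonals. The two leaves of $T'$ force $\deg_G(v_4)=\deg_G(v_5)=2$, and in $H_9$ the vertex $v_3$ is adjacent to each of $v_1,v_2,v_4,v_5,v_6$. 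In particular every edge incident with $v_3,v_4,v_5,v_6$ has both ends in $\{v_1,\ldots,v_6\}$, so $\{v_3,v_4,v_5,v_6\}$ forms an ``ear'' of $G$ hanging on the (soon to be outer) edge $v_1v_2$.

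Next I would peel off this ear. Put $G'=G-\{v_3,v_4,v_5,v_6\}$; this is a mop of order $n'=n-4\ge 9$ in which $v_1v_2$ is now an outer edge. Contracting $v_1v_2$ to a new vertex $x$ yields, by \cref{key}, a mop $G_1$ of order $n_1=n-5\ge 8$, to which minimality applies since $n_1\ge 7$. The key bookkeeping is the degree-two count. Deleting the ear removes the two degree-two vertices $v_4,v_5$, and the only vertices whose degree can decrease are $v_1$, $v_2$ and their common neighbour in $G'$; since $G'$ is a $2$-connected mop of order at least $9$, the outer edge $v_1v_2$ lies on a single triangle, so the contraction can create at most two new degree-two vertices (the vertex $x$ and that common neighbour). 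Hence $k_1\le k$, giving $n_1+k_1\le (n-5)+k$, and so by the minimality of $G$ we get $\gamma_2^d(G_1)\le \floor*{\frac{2}{9}(n_1+k_1)}\le \floor*{\frac{2}{9}(n+k-5)}\le \floor*{\frac{2}{9}(n+k)}-1$, the last step because $\tfrac{2}{9}\cdot 5>1$.

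Finally I would lift a $\gamma_2^d$-set $D_1$ of $G_1$ to $G$ by adding a single vertex: if $x\in D_1$, take $D=(D_1\setminus\{x\})\cup\{v_1,v_2\}$, and if $x\notin D_1$, take $D=D_1\cup\{v_3\}$, so that $|D|\le |D_1|+1\le \floor*{\frac{2}{9}(n+k)}$. It then remains to check that $D$ disjunctively dominates $G$. The vertices outside the ear inherit their domination from $G_1$, since the contraction only identifies $v_1$ and $v_2$; within the ear all verifications are immediate except for $v_4$, which is adjacent to neither $v_1$ nor $v_2$ but lies at distance $2$ from both through $v_3$. Thus when $v_1,v_2\in D$ the vertex $v_4$ is disjunctively dominated by these two vertices, and when $v_3\in D$ it is dominated directly by $v_3$. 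The resulting $\gamma_2^d(G)\le\floor*{\frac{2}{9}(n+k)}$ contradicts the choice of $G$, establishing the claim. The hard part will be precisely this degree-two accounting together with the distance-$2$ check for $v_4$: one must ensure the argument loses a full unit (which is why the edge $v_1v_2$ is contracted rather than merely deleted, so that $v_1$ and $v_2$ cannot be counted as two separate new degree-two vertices), and that the single ear vertex nonadjacent to $\{v_1,v_2\}$ is still covered disjunctively rather than by direct adjacency.
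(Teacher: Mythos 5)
Your overall strategy matches the paper's: assume the region is $H_9$, peel off the four-vertex ear $\{v_3,v_4,v_5,v_6\}$, invoke minimality on the smaller mop, and lift a minimum disjunctive dominating set back by adding one vertex. Your arithmetic is sound, and your instinct that a net drop of $5$ in $n+k$ is needed (so that mere deletion with the naive count $k'\le k$ would not suffice) is correct. The paper, however, achieves this drop \emph{without} contracting: it deletes only, and uses the sharper count $k'\le k-1$, which holds because the degree-$2$ vertices $v_4,v_5$ disappear and at most one of the adjacent vertices $v_1,v_2$ can become a degree-$2$ vertex of $G'$ (two adjacent degree-$2$ vertices in a mop of order at least $4$ would make their common neighbour a cut vertex, contradicting $2$-connectivity). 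Deletion is distance-preserving here, since the ear is attached only at $v_1,v_2$ and offers no shortcuts, so the lift $D=D'\cup\{v_3\}$ transfers disjunctive domination verbatim.

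Your extra contraction step introduces a genuine gap in the case $x\notin D_1$, $D=D_1\cup\{v_3\}$. The sentence ``the vertices outside the ear inherit their domination from $G_1$, since the contraction only identifies $v_1$ and $v_2$'' is exactly the assertion that fails in general: identifying $v_1$ and $v_2$ creates distance-$2$ pairs in $G_1$ that pull apart in $G$. Concretely, if $u$ is adjacent to $v_1$ but not $v_2$, and a witness $d\in D_1$ is adjacent to $v_2$ but not $v_1$, then $d_{G_1}(u,d)=2$ via $x$, while $d_G(u,d)$ can equal $3$: the only possible common neighbours of $u$ and $d$ in $G$ are $v_1$, $v_2$, or the apex $w$ of the triangle on the outer edge $v_1v_2$, and each of these may fail to be a common neighbour. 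If every distance-$2$ witness of $u$ in $D_1$ is of this type and $u$ has no neighbour in $D_1$, then in $G$ the vertex $u$ retains only the single distance-$2$ witness $v_3$ (through $v_1$), which is one short of what disjunctive domination requires. Your case $x\in D_1$ is fine, since every witness through $x$ is then replaced by adjacency or distance $2$ to $v_1$ or $v_2$, both of which lie in $D$; but the case $x\notin D_1$ needs either a structural argument ruling out the bad configuration or a different lift, and as written it does not go through. The repair is simply the paper's route: skip the contraction and prove $k'\le k-1$ instead, which is the cheaper of the two bookkeeping claims.
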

	\begin{proof}[Proof of Claim~\ref{cl3to10}]
		Suppose, to the contrary, that the subgraph of $G$ associated with the tree $T'$ in Figure~\ref{tree2to9}(a) corresponds to the region $H_9$ illustrated in Figure~\ref{tree2to9}(b). Let $G'$ be the mop of order~$n'$ obtained from $G$ by deleting the vertices in  $V_3^6=\{v_3,v_4,v_5,v_6\}$, and let $G'$ have $k'$ vertices of degree~$2$. We note that $n'=n-4$ and $k' \le k - 1$. Since $n \ge 13$, we have $n' = n - 4 \ge 9$. Since $G$ is a counterexample of minimum order, we have $\gamma_2^d(G') \le \floor*{\frac{2}{9}(n'+k')} \le \floor*{\frac{2}{9}(n-4+k-1)} \le \floor*{\frac{2}{9}(n+k)}-1$. Let $D'$ be a $\gamma_t^d$-set of $G'$ and let $D=D'\cup\{v_3\}$. The set $D$ is a 2DD-set of $G$, and so $\gamma_2^d(G) \le |D| \le |D'| + 1 \le \floor*{\frac{2}{9}(n+k)}$, a contradiction.
	\end{proof}

	In what follows, by Claim~\ref{cl3to10}, the subgraph of $G$ associated with the tree $T'$ in Figure~\ref{tree2to9}(a) corresponds to the region $H_{10}$ illustrated in Figure~\ref{tree2to9}(c).

	\begin{claim}\label{tree2}
		The tree $T_{2}$ is not a maximal subtree of $T$.
	\end{claim}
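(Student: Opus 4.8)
The plan is to reuse, essentially verbatim, the reduction template of \cref{tree1}, but now with the local structure of $G$ pinned down by \cref{cl3to10}. First I would suppose, for a contradiction, that $T_2$ is a maximal subtree $T_v$ of $T$, and let $R_v$ be the shaded triangle corresponding to the root $v$. Since the rooted path $T'$ of \cref{tree2to9}(a) is a subtree of $T_2$, the hypothesis of \cref{cl3to10} is met, so the subgraph of $G$ associated with $T'$ must be the triangulation $H_{10}$ of \cref{tree2to9}(c) and \emph{not} $H_9$. Fixing this determines the local adjacencies along the region corresponding to $T_2$ and, crucially, identifies exactly which vertices there have degree~$2$ in $G$; this is precisely the bookkeeping the counting argument needs. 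I would then label the vertices $v_1,v_2,v_3,\ldots$ of this region as in the figure, recording that the apex of $R_v$ has high degree and that the pendant triangles contribute the degree-$2$ vertices.

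Next I would form a smaller mop by deleting from $G$ the vertices of the region corresponding to $T_2$ except for the two endpoints of a suitable outer edge $e$ of the surviving graph, obtaining $G' = G - S$ with $e$ an outer edge of $G'$. As in \cref{tree1}, a bare deletion does not save enough, so I would additionally contract $e$, invoking \cref{key} to ensure the result $G_1$ is again a mop, and record $n_1 = n - |S| - 1$ together with the drop in the number of degree-$2$ vertices (the internal triangle $R_v$ removes one such vertex). I would choose $|S|$ so that $n_1 + k_1 \le (n+k) - 5$, whence $\lfloor \tfrac{2}{9}(n_1+k_1) \rfloor \le \lfloor \tfrac{2}{9}(n+k)\rfloor - 1$ by the same floor estimate used throughout. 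Should the residual graph be too small for minimality to apply, the low-order cases would be handled directly by \cref{obs3}, exactly as in the proof of \cref{lem1}.

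I would then lift a $\gamma_2^d$-set $D_1$ of $G_1$ to a $2$DD-set $D$ of $G$: un-contracting $e$ (replacing the contracted vertex $x$ by its two preimages when $x \in D_1$, and otherwise leaving $D_1$ untouched) and adjoining a single anchor vertex of $R_v$, typically the apex $v_3$. This yields $\gamma_2^d(G) \le |D_1| + 1 \le \lfloor \tfrac{2}{9}(n+k)\rfloor$, contradicting $\gamma_2^d(G) > \lfloor \tfrac{2}{9}(n+k)\rfloor$ and thereby establishing that $T_2$ cannot occur as a maximal subtree.

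The step I expect to be the main obstacle is the verification that the lifted set $D$ really is a $2$DD-set of $G$: one must check that every deleted vertex, and every surviving vertex whose neighbourhood was altered by contracting $e$, is either adjacent to a vertex of $D$ or at distance exactly~$2$ from two vertices of $D$. This is exactly where the $H_{10}$ triangulation and the precise placement of the degree-$2$ vertices enter, because the ``two vertices at distance~$2$'' condition is sensitive to which diagonal ($v_1v_4$ versus $v_3v_6$) was inserted; it is to rule out the recalcitrant alternative $H_9$ that \cref{cl3to10} was proved first.
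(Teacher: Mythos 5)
You have correctly identified the role of \cref{cl3to10} (pinning the triangulation to $H_{10}$ rather than $H_9$), but the reduction you then run is the one from \cref{tree1}, and for $T_2$ it breaks down at exactly the step you flagged. You delete \emph{all} vertices of the $T_2$-region except the attachment pair $v_1,v_2$ (six vertices), contract $v_1v_2$, and lift with a budget of one added vertex. Note first an accounting slip: if you both un-contract \emph{and} adjoin the anchor when $x\in D_1$, the cost is $+2$, while your estimate $n_1+k_1\le (n+k)-5$ only buys a saving of $1$; so you are forced into the either/or template of \cref{tree1}. The fatal branch is then $x\notin D_1$, $D=D_1\cup\{v_3\}$. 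Every vertex of $D_1$ lies outside the region, and every path from outside into the region passes through $v_1$ or $v_2$; hence any deleted vertex $z\notin N_G(v_1)\cup N_G(v_2)$ is at distance at least $3$ from every vertex of $D_1$, so it can be disjunctively dominated only by being \emph{adjacent} to the anchor. The $T_2$-region has six triangles with three pendant apexes spread over branches of depth up to three, and no single vertex of the region is adjacent to all of them; moreover even a deleted vertex that is adjacent to $v_1$ and sees the anchor at distance $2$ still needs a \emph{second} $D$-vertex within distance $2$, i.e.\ a vertex of $D_1$ adjacent in $G$ to $v_1$ specifically --- and the contraction destroys exactly this information, since $x$ being disjunctively dominated in $G_1$ only tells you something about vertices adjacent to $v_1$ \emph{or} $v_2$, or merely at distance $2$ from $x$. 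So the lifted set need not be a 2DD-set, and no choice of a single fixed anchor repairs this.

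The paper's proof is built to avoid both problems, and it is genuinely different from your template: it deletes only the four vertices $V_4^8\setminus\{v_7\}$, retains the region vertices $v_3$ and $v_7$ inside $G'$, performs \emph{no} contraction (already $n'=n-4$ and $k'\le k-1$ give the saving of $1$), and then chooses the single added vertex \emph{adaptively}, by a case analysis on the optimal set $D'$ itself: if $D'\cap\{v_1,v_2,v_7\}\ne\emptyset$ it adds $v_3$; if $v_3\in D'$ it adds $v_2$; and if $D'$ avoids $\{v_1,v_2,v_3,v_7\}$ entirely, it uses the fact that $v_1$ must be disjunctively dominated in $G'$ to extract a vertex $u\in D'\cap N_{G'}(v_1)$, after which adding $v_2$ suffices. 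The interplay between the retained region vertices and the location of $D'$ is what makes a $+1$ lift sound; your proposal, which removes those vertices and contracts away the distinction between $v_1$ and $v_2$, has no access to this information and therefore cannot complete the domination check.
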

	\begin{proof}[Proof of Claim~\ref{tree2}]
		Suppose, to the contrary, that $T_{2}$ is a maximal subtree of $T$, and so $T_{2} = T_v$. We therefore infer that the subgraph of $G$ associated with $T_{2}$ is obtained from the region $H_{10}$ in two possible ways, as illustrated in Figure~\ref{tree_2&3}(a)-(b) where for notational convenience, we have interchanged the names of the vertices $v_1$ and $v_2$ in region $H_{10}$ illustrated in Figure~\ref{tree_2&3}(b). In the following, we present arguments that work in both cases.
		
		Let $G'$ be the mop of order $n'$ obtained from $G$ by deleting the vertices in $V_4^8  \setminus \{v_7\}$, and let $G'$ have $k'$ vertices of degree~$2$. We note that $n'=n-4$ and $k' \le k - 1$. Since $n \ge 13$, we have $n' = n - 4 \ge 9$. Since $G$ is a counterexample of minimum order, we have $\gamma_2^d(G') \le \floor*{\frac{2}{9}(n'+k')} \le \floor*{\frac{2}{9}(n-4+k-1)} \le \floor*{\frac{2}{9}(n+k)}-1$. Let $D'$ be a $\gamma_t^d$-set of $G'$. If $D'\cap\{v_1,v_2,v_7\}\ne \emptyset$, then let $ D=D'\cup \{v_3\}$. If $v_3\in D'$, then let $D=D'\cup \{v_2\}$. In both cases, the set $D$ is a 2DD-set of $G$, and so $\gamma_2^d(G)\le |D|\le  |D'|+1\le  \floor*{\frac{2}{9}(n+k)}$, a contradiction. Hence we may assume that $D' \cap \{v_1,v_2,v_3,v_7\} = \emptyset$. Since $D'$ is a 2DD-set of $G'$, we therefore infer that there exists a vertex $u\in D'$ such that $u\in N_{G'}(v_1)$. We now let $D^*=D'\cup \{v_2\}$. The resulting set $D^*$ is a 2DD-set of $G$, and so $\gamma_2^d(G)\le |D^*|\le |D'|+1\le  \floor*{\frac{2}{9}(n+k)}$, a contradiction.
	\end{proof}

	\begin{figure}[htbp]
		\begin{center}
			\includegraphics[scale=0.44]{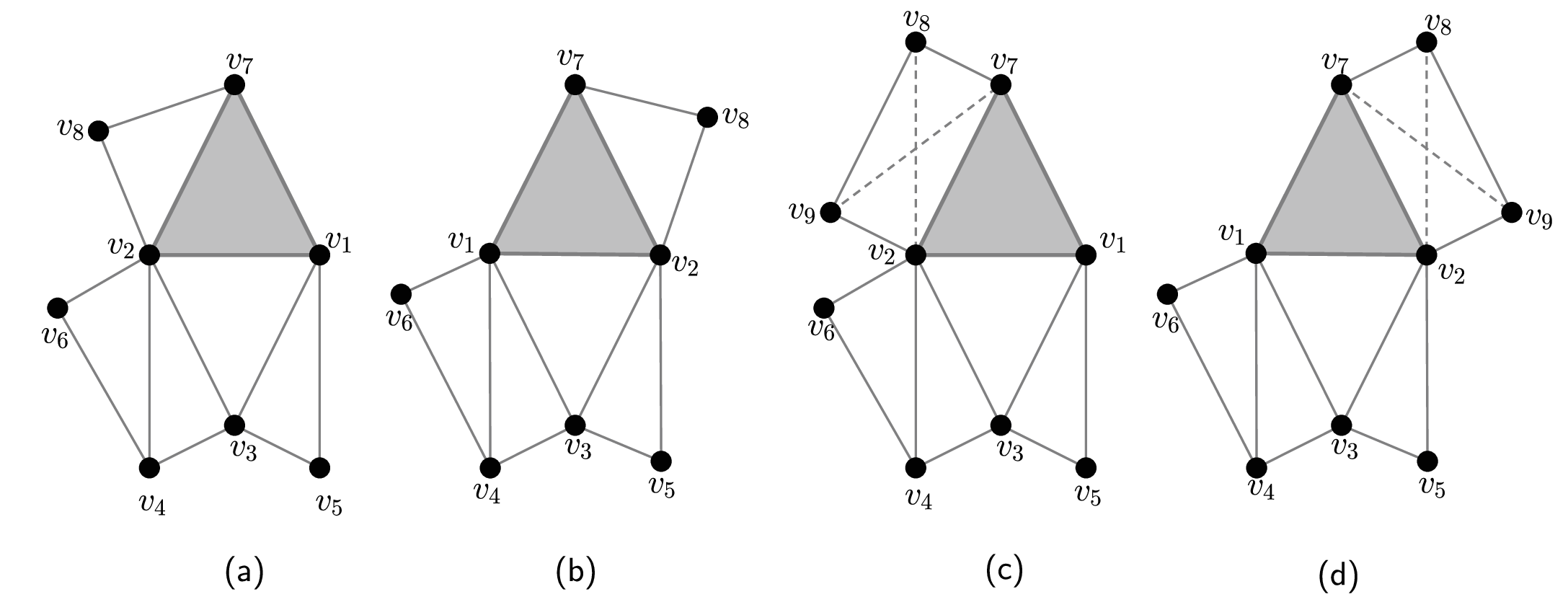}
			\caption{The regions of $G$ corresponding to trees $T_2$ and $T_3$.}\label{tree_2&3}
		\end{center}
	\end{figure}

	\begin{claim}\label{tree3}
		The tree $T_{3}$ is not a maximal subtree of $T$.
	\end{claim}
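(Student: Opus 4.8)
The plan is to mirror the proof strategy already established in the preceding claims (\Cref{tree1} and especially \Cref{tree2}), since $T_3$ is structurally very close to $T_2$: both arise from the base region $H_{10}$ by attaching one additional triangle. First I would suppose, for contradiction, that $T_3$ is a maximal subtree $T_v$ of $T$. I would then draw on the convention already fixed that $T'$ (the rooted $P_4$ of \Cref{tree2to9}(a)) is a subtree of $T_3$, and that by \Cref{cl3to10} the subgraph of $G$ associated with $T'$ must be the region $H_{10}$ of \Cref{tree2to9}(c). The associated subgraph of $G$ for $T_3$ is therefore one of the two triangulations depicted in \Cref{tree_2&3}, labeled with vertices $v_1,\ldots,v_8$ as in the $T_2$ case (again possibly interchanging $v_1$ and $v_2$ for notational convenience, as was done there).

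Next I would delete a carefully chosen set of five-minus-one vertices to pass to a smaller mop. Following the $T_2$ template exactly, I would set $G' = G - (V_4^8 \setminus \{v_7\})$, so that $n' = n - 4 \ge 9$ and the number of degree-$2$ vertices drops by at least one, giving $k' \le k-1$. By \Cref{key} (and the fact that the relevant residual edge is an outer edge), $G'$ is a mop, and by the minimality of the counterexample $G$ we get $\gamma_2^d(G') \le \floor*{\frac{2}{9}(n'+k')} \le \floor*{\frac{2}{9}(n-4+k-1)} \le \floor*{\frac{2}{9}(n+k)} - 1$. The goal is then to lift a $\gamma_2^d$-set $D'$ of $G'$ to a 2DD-set of $G$ by adding a single vertex, producing $\gamma_2^d(G) \le |D'| + 1 \le \floor*{\frac{2}{9}(n+k)}$, contradicting $\gamma_2^d(G) > \floor*{\frac{2}{9}(n+k)}$.

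The delicate step, exactly as in \Cref{tree2}, is the \emph{case analysis for the lift}: I must verify that adding one well-chosen vertex (typically $v_3$, or $v_2$) always yields a genuine 2DD-set of $G$. The newly introduced vertices $v_4,v_5,v_6,v_8$ (those deleted) and the retained $v_7$ must each be disjunctively dominated in $G$. I expect the argument to split on whether $D'$ already meets $\{v_1,v_2,v_7\}$: if $D' \cap \{v_1,v_2,v_7\} \ne \emptyset$ one choice of added vertex works, while if $v_3 \in D'$ a different added vertex works; the residual case $D' \cap \{v_1,v_2,v_3,v_7\} = \emptyset$ forces, from the fact that $D'$ disjunctively dominates $v_1$ in $G'$, the existence of a neighbor $u \in D'$ of $v_1$, after which adding $v_2$ completes a 2DD-set of $G$. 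The main obstacle is checking that the distance-$2$ disjunctive conditions for the deleted vertices $v_4,v_5,v_6,v_8$ are satisfied by the augmented set, which depends on which of the two triangulations in \Cref{tree_2&3} is in force; I would confirm the argument is robust to both triangulations, just as the $T_2$ proof claimed its argument works in both cases.
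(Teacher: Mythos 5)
Your proposal rests on a misidentification of the region of $G$ associated with $T_3$. You assume it is ``labeled with vertices $v_1,\ldots,v_8$ as in the $T_2$ case,'' but $T_3$ is strictly larger than $T_2$: in the paper's Figure~\ref{tree_2&3}(c)--(d) the single pendant triangle of the $T_2$ configuration (apex $v_8$ of degree~$2$ on the edge $v_2v_7$) is replaced by a quadrilateral on $v_2,v_7,v_8,v_9$, triangulated by one of the two diagonals $v_2v_8$ or $v_7v_9$, and the region extends further (the paper's proof works with $v_1,\ldots,v_{10}$ and deletes $V_2^9\setminus\{v_7\}$). This breaks your plan at two independent points. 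First, your graph $G'=G-(V_4^8\setminus\{v_7\})$ need not be a mop: in the triangulation with diagonal $v_2v_8$ we have $N_G(v_9)=\{v_2,v_8\}$, so deleting $v_8$ leaves $v_9$ with degree~$1$, and $G'$ is not $2$-connected; the minimality hypothesis then cannot be invoked for $G'$ at all. (Lemma~\ref{key} cannot rescue this step, since it concerns contraction of an outer edge, not vertex deletion; the $T_2$ proof never needed it because there $v_8$ really has degree~$2$.) Second, even in the triangulation with diagonal $v_7v_9$, where $G'$ is a mop, the one-vertex lift copied from Claim~\ref{tree2} fails: nothing prevents a $\gamma_2^d$-set $D'$ of $G'$ from satisfying $v_1\in D'$ and $v_2,v_7,v_9,v_{10}\notin D'$, and in that case your rule adds $v_3$, yet $v_8$ has no neighbor in $D=D'\cup\{v_3\}$ (its neighbors are $v_7$ and $v_9$) and only the single vertex $v_1$ of $D$ within distance~$2$, because the distance-$2$ neighborhood of $v_8$ lies in $\{v_1,v_2,v_7,v_9,v_{10}\}$ together with at most one vertex of the adjacent triangle outside the region. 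So $v_8$ is not disjunctively dominated, and the case split you carried over from $T_2$ (which never had to cover such a vertex) does not close.

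This is exactly why the paper's proof of Claim~\ref{tree3} uses a different decomposition rather than the $T_2$ template: it deletes the seven vertices $V_2^9\setminus\{v_7\}$, so that $n'=n-7$, $k'=k-2$, and $v_1v_7$ becomes an outer edge of the mop $G'$; it disposes of the small cases $6\le n'\le 7$ by Observation~\ref{obs3}; and in the main case it uses the larger budget $\gamma_2^d(G')\le\lfloor\frac{2}{9}(n+k)\rfloor-2$ to add the \emph{two} vertices $v_1,v_2$, which disjunctively dominate every deleted vertex under both triangulations of the quadrilateral, with no case analysis on $D'$ whatsoever. In short, the gap in your proposal is not a missing routine verification: the extra vertices $v_8,v_9$ of the quadrilateral cannot be absorbed by a one-vertex lift after your four-vertex deletion, and the correct proof requires the bigger cut together with a two-vertex augmentation.
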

	\begin{proof}[Proof of Claim~\ref{tree3}]
		Suppose, to the contrary, that $T_{3}$ is a maximal subtree of $T$, and so $T_{3} = T_v$. We therefore infer that the subgraph of $G$ associated with $T_{3}$ is obtained from the region $H_{10}$ in two possible ways, as illustrated in Figure~\ref{tree_2&3}(c)-(d), where for notational convenience, we have interchanged the names of the vertices $v_1$ and $v_2$ in region $H_{10}$ illustrated in Figure~\ref{tree_2&3}(d). The region $v_2v_7v_8v_{10}$ can be triangulated by adding either the edge $v_2v_8$ or $v_7v_9$, as indicated by the dotted lines in Figure~\ref{tree_2&3}(c)-(d). In the following, we present arguments that work in both cases.

		Let $G'$ be a graph of order $n'$ obtained from $G$ by deleting the vertices in $V_2^9\setminus \{v_7\}$, and let $G'$ have $k'$ vertices of degree~$2$. We note that $n' = n-7$ and $k' = k - 2$, and $v_1v_7$ is an outer edge of $G'$. Since $n \ge 13$, we have $n' \ge 6$. If $6 \le n' \le 7$, then by \Cref{obs3}, there exists a 2DD-set $D'$ of $G'$ such that $v_1\in D'$ and $|D'|=2$. Therefore, $D'\cup \{v_2\}$ is a 2DD-set of $G$, and so $\gamma_2^d(G)\le 3 \le  \floor*{\frac{2}{9}(n+k)}$, a contradiction. Hence, $n' \ge 8$. Since $G$ is a counterexample of minimum order, we have $\gamma_2^d(G') \le \floor*{\frac{2}{9}(n'+k')} \le \floor*{\frac{2}{9}(n-7+k-2)} = \floor*{\frac{2}{9}(n+k)}-2$. Let $D'$ be a $\gamma_2^d$-set of $G'$ and let $D=D'\cup\{v_1,v_2\}$. The set $D$ is a 2DD-set of $G$, and so $\gamma_2^d(G) \le |D| \le |D'| + 2 \le \floor*{\frac{2}{9}(n+k)}$, a contradiction.
	\end{proof}

	\begin{claim}\label{tree4}
		The tree $T_{4}$ is not a maximal subtree of $T$.
	\end{claim}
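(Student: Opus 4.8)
The plan is to argue exactly in the spirit of Claims~\ref{tree1}--\ref{tree3}, by assuming for contradiction that $T_4 = T_v$ is a maximal subtree of the rooted tree $T$ and then producing a $2$DD-set of $G$ of size at most $\floor*{\frac{2}{9}(n+k)}$. The first task is to pin down the region $R_v$ of $G$ associated with $T_4$. Since $T'$ is a subtree of $T_4$, Claim~\ref{cl3to10} forces the top of this region to be $H_{10}$ on vertices $v_1,\dots,v_6$, with the shaded triangle $\{v_1,v_2,v_3\}$ corresponding to the root $v$. The remaining triangles of $T_4$ extend $H_{10}$ downward along the longer branch, and their triangulations are determined, up to the few admissible possibilities, by \cref{lem1}\ref{5distance}--\ref{6distance} together with \cref{obs2}. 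Working through these gives an explicit labelling $v_1,v_2,\dots$ of the boundary of $R_v$ and isolates a block of consecutive boundary vertices $V_j^i$ that carries the interior of $R_v$ while leaving one ``anchor'' vertex on the bounding diagonal (the analogue of $v_7$ in Claim~\ref{tree3}); I would present the constant number of admissible triangulations together so that a single argument covers them all.

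Next I would remove that interior. Let $G'$ be obtained from $G$ by deleting the block while retaining the anchor vertex on the diagonal, so that the edge joining $v_1$ to the anchor becomes an outer edge of $G'$. The deleted block contains the degree-$2$ apex vertices produced by the leaf triangles of $T_4$, and since $T_4$ carries its full quota of internal triangles, the count of degree-$2$ vertices drops accordingly, giving $k' = k-2$ (or $k' \le k$ when the relevant apex triangle is not internal). If $G'$ is small, namely $6 \le n' \le 7$, I dispose of the case at once using \cref{obs3} to exhibit a $3$-element $2$DD-set of $G$ together with $v_2$, contradicting $\gamma_2^d(G) > \floor*{\frac{2}{9}(n+k)}$. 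Otherwise $n' \ge 8$, and depending on the exact size of the removed block I either apply the minimality of $G$ to $G'$ directly, or first contract an outer edge of $G'$ via Lemma~\ref{key} to obtain a mop $G_1$ of order $n_1 = n'-1 \ge 7$; in either case minimality yields $\gamma_2^d(G_1) \le \floor*{\frac{2}{9}(n_1+k_1)} \le \floor*{\frac{2}{9}(n+k)}-2$, since the total reduction $n+k-(n_1+k_1)$ is at least~$9$.

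Finally I would lift a $\gamma_2^d$-set $D_1$ (respectively $D'$) back to a $2$DD-set of $G$ by adding exactly two vertices, splitting on whether the contracted vertex $x$ lies in the set: when $x \in D_1$ I take $D = (D_1\setminus\{x\}) \cup \{v_2,\dots\}$, expanding $x$ into the two endpoints of the contracted edge plus $v_2$, and when $x \notin D_1$ I take $D = D_1 \cup \{v_2,\dots\}$. The added vertices are chosen among $v_1,v_2$ and the endpoints of the cut so that every deleted vertex of $R_v$ is either adjacent to $D$ or has two members of $D$ at distance~$2$. This gives $|D| \le |D_1| + 2 \le \floor*{\frac{2}{9}(n+k)}$, the desired contradiction.

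The step I expect to be the main obstacle is this last domination check on the vertices bordering the cut, and in particular on the vertex playing the role of $v_1$. As in Claim~\ref{tree2}, a naive placement of the two added vertices may leave $v_1$ neither adjacent to nor at distance~$2$ from two members of the new set; there one must instead exploit that $D'$, being a $2$DD-set of the reduced graph, already supplies a neighbour or two distance-$2$ witnesses for $v_1$, and fall back on the alternative placement. Getting this right requires a complete and correct enumeration of the admissible triangulations of $R_v$ (so that no domination subcase is overlooked) and a careful match between the drop in the degree-$2$ count and the arithmetic budget $\tfrac{2}{9}\cdot(\text{number of removed vertices plus lost degree-$2$ vertices})$; everything else is routine bookkeeping parallel to the preceding claims.
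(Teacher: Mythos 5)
Your overall skeleton is exactly the paper's: delete the interior of the region associated with $T_4$ while keeping $v_1,v_2$ and the anchor $v_7$, dispose of small residual orders via \cref{obs3}, apply minimality (the drop in $n+k$ is $8+3=11\ge 9$, so the bound falls by $2$), and lift a $\gamma_2^d$-set of the reduced mop back by adding two vertices. In fact the paper's lift is simpler than what you hedge toward: it sets $D=D'\cup\{v_2,v_7\}$ unconditionally, with no edge contraction and no case split on a contracted vertex, and this works because every deleted vertex is adjacent to $v_2$ or $v_7$ or has both of them at distance~$2$ (via $v_1$, $v_2$ or $v_7$). The genuine problem is your identification of the region, which you yourself call the crux. $T_4$ is not ``$T'$ extended downward along a longer branch'': it is a root triangle $V(T_v)=\{v_1,v_2,v_7\}$ whose root has \emph{two} children, and the maximal subtree rooted at each child is a copy of $T'$. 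Consequently the admissible triangulations are obtained by applying \cref{cl3to10} \emph{twice} --- once per branch, forcing both the region on $v_1,\ldots,v_6$ and the region $v_2v_7v_8v_9v_{10}v_{11}$ to be of type $H_{10}$ --- which yields exactly the four configurations of Figure~\ref{tree_4}. Your plan instead pins down the rest of the region using \cref{lem1}\ref{5distance}--\ref{6distance} together with \cref{obs2}; but those clauses of \cref{lem1} describe the region between a leaf and a nearest degree-$3$ vertex at distance $5$ or $6$, and in $T_4$ every leaf is within distance~$2$ of a degree-$3$ vertex, so they say nothing here. An enumeration carried out with those tools would produce the wrong list of configurations, and the final domination check --- the step you rightly flag as the whole point --- would then be verified against the wrong regions.

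Two smaller slips. First, for the paper's deletion ($V_3^{11}\setminus\{v_7\}$, eight vertices) one has $k'=k-3$, not $k-2$: the four degree-$2$ apex vertices of the two $H_{10}$-type branches are deleted, while $v_2$ becomes a new degree-$2$ vertex of $G'$; your count is conservative, so the arithmetic survives, but it should be stated correctly. Second, the small-order window must be $5\le n'\le 7$, not $6\le n'\le 7$: since $n\ge 13$ and eight vertices are deleted, you are only guaranteed $n'\ge 5$, and \cref{obs3} does cover order~$5$, so the case $n'=5$ must be included rather than overlooked.
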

	\begin{proof}[Proof of Claim~\ref{tree4}]
		Suppose, to the contrary, that $T_{4}$ is a maximal subtree of $T$, and so $T_{4} = T_v$ where $v$ denotes the root of the subtree $T_v$. We infer that the subgraph of $G$ associated with $T_{4}$ is obtained from region $H_{10}$ and  by triangulating the region $v_2v_{7}v_{8}v_{9}v_{10}v_{11}$ according to \Cref{cl3to10} as illustrated in Figure~\ref{tree_4}(a)-(d)), where we let $V(T_v) = \{v_1,v_2,v_{7}\}$ be the (shaded) triangle in $G$ associated with the vertex~$v$. In the following, we present arguments that work in each case.
		
		\begin{figure}[htbp]
			\begin{center}
				\includegraphics[scale=0.4]{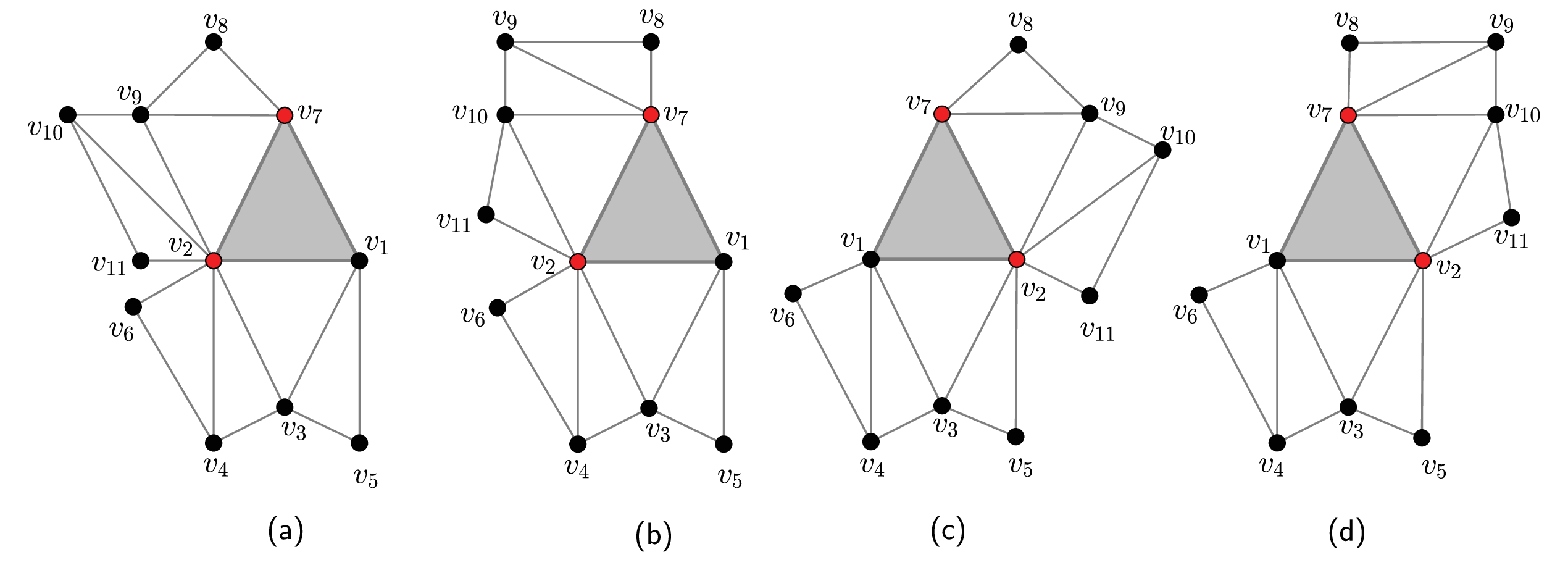}
				\caption{The regions of $G$ corresponding to tree $T_4$. The red vertices show a 2DD-set of $G[V(G)\setminus V(G')]$.}\label{tree_4}
			\end{center}
		\end{figure}

		Let $G'$ be the mop of order $n'$ obtained from $G$ by deleting the vertices in $V_3^{11}\setminus \{v_7\}$, and let $G'$ have $k'$ vertices of degree~$2$. We note that $n' = n-8$ and $k' = k - 3$. Since $n \ge 13$, we have $n' \ge 5$. If $5 \le n' \le 7$, then by~\Cref{obs3}, there exists a 2DD-set $D'$ of $G'$ such that $v_7\in D'$ and $|D'|=2$. Therefore, $D'\cup \{v_2\}$ is a 2DD-set of $G$, and so $\gamma_2^d(G) \le 3 \le  \floor*{\frac{2}{9}(n+k)}$, a contradiction. Hence, $n' \ge 8$. Since $G$ is a counterexample of minimum order, we have $\gamma_2^d(G') \le \floor*{\frac{2}{9}(n'+k')} \le \floor*{\frac{2}{9}(n-8+k-3)} \le \floor*{\frac{2}{9}(n+k)}-2$. Let $D'$ be a $\gamma_2^d$-set of $G'$ and let $D=D'\cup\{v_2,v_7\}$. The set $D$ is a 2DD-set of $G$, and so $\gamma_2^d(G) \le |D| \le |D'| + 2 \le \floor*{\frac{2}{9}(n+k)}$, a contradiction.
	\end{proof}

	\begin{claim}\label{tree5}
		The tree $T_{5}$ is not a maximal subtree of $T$.
	\end{claim}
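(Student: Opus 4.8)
The plan is to argue by contradiction along exactly the template of Claims~\ref{tree1}--\ref{tree4}. Suppose $T_5$ is a maximal subtree of $T$, so $T_5 = T_v$ for some vertex $v$, and let $R_v$ be the triangle of $G$ corresponding to the root $v$. First I would read off, from the shape of $T_5$ together with the triangulation restrictions of \cref{lem1}\ref{5distance}--\ref{6distance} and \cref{cl3to10}, the finite list of ways the region of $G$ associated with $T_5$ can be triangulated, recording them in a figure. As in the earlier claims, the branches of $T_5$ descending to leaves force a block of consecutive boundary vertices whose only attachment to the remainder of $G$ is through a single anchor vertex (the analogue of $v_7$ in Claims~\ref{tree2}--\ref{tree4}), together with the two leaf-apexes adjacent to the shaded triangle $R_v$.

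Next I would set $G' = G - (V_j^i \setminus \{v_c\})$ for suitable indices $j < i$ and anchor vertex $v_c$, deleting the whole block except $v_c$, and check via \cref{key} that $G'$ is again a mop (contracting the resulting outer edge to form $G_1$ only if the order would otherwise be too small for minimality to apply). The decisive bookkeeping is the drop in the number of degree-$2$ vertices: by \cref{internal}, each internal triangle contained in the deleted block lowers $k$ by one, so I expect the deletion to yield $\lfloor \tfrac{2}{9}(n'+k') \rfloor \le \lfloor \tfrac{2}{9}(n+k) \rfloor - 2$, mirroring Claims~\ref{tree3} and~\ref{tree4}. The small-order situation $5 \le n' \le 7$ would be disposed of directly by \cref{obs3}, producing a two-element 2DD-set of $G'$ that already contains the anchor $v_c$.

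With a $\gamma_2^d$-set $D'$ of $G'$ (or $D_1$ of $G_1$) in hand, I would extend it to a 2DD-set of $G$ by adjoining $v_2$ (to disjunctively dominate the leaf-apexes near $R_v$) together with one further branch vertex, so that $|D| \le |D'| + 2 \le \lfloor \tfrac{2}{9}(n+k) \rfloor$, contradicting the choice of $G$. The precise pair of added vertices, and whether a contraction is needed, is dictated by which of the enumerated triangulations occurs; the aim is to select a single pair that works uniformly across all of them.

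The hard part will be twofold: (i) pinning down every admissible triangulation of the $T_5$-region so that one choice of added vertices suffices in all cases (here \cref{lem1}\ref{5distance}--\ref{6distance} and \cref{cl3to10} do most of the work), and (ii) verifying that the adjoined vertices actually disjunctively dominate the entire deleted block --- each deleted vertex must be adjacent to an added vertex or have two added vertices at distance~$2$ --- while simultaneously confirming that $v_c$ and its neighbours in $G'$ remain dominated once the block is reinserted. Once the deletion set and the added pair are correctly matched to the structure of $T_5$, the arithmetic is routine.
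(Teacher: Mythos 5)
Your proposal is correct and follows essentially the same route as the paper: the paper enumerates the four triangulations of the $T_5$-region arising from $H_{10}$, deletes $V_1^8\setminus\{v_7\}$ (so $n'=n-7$ and $k'=k-2$, yielding $\gamma_2^d(G')\le\floor*{\frac{2}{9}(n+k)}-2$), disposes of $6\le n'\le 7$ via \cref{obs3} with a two-element 2DD-set containing the anchor $v_7$, and extends by exactly the pair $\{v_2,v_7\}$ that your template predicts, with no contraction needed. The one slip --- your parenthetical that contraction would be invoked when the order is too small for minimality --- is backwards (contraction only shrinks the graph; in this paper it serves to push $n+k$ down further when deletion alone is insufficient), but this is immaterial since your main line, like the paper's, requires no contraction here.
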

	\begin{proof}[Proof of Claim~\ref{tree5}]
		Suppose, to the contrary, that $T_{5}$ is a maximal subtree of $T$, and so $T_{5} = T_v$ where $v$ denotes the root of the subtree $T_v$. We infer that the subgraph of $G$ associated with $T_{5}$ is obtained from the region $H_{10}$ in four possible ways, as illustrated in Figure~\ref{tree_5}(a)-(d), where in each case we let $V(T_v) = \{v_2,v_{7},v_{9}\}$ be the (shaded) triangle in $G$ associated with the vertex~$v$. In the following, we present arguments that work in each case.
		
		\begin{figure}[htbp]
			\begin{center}
				\includegraphics[scale=0.4]{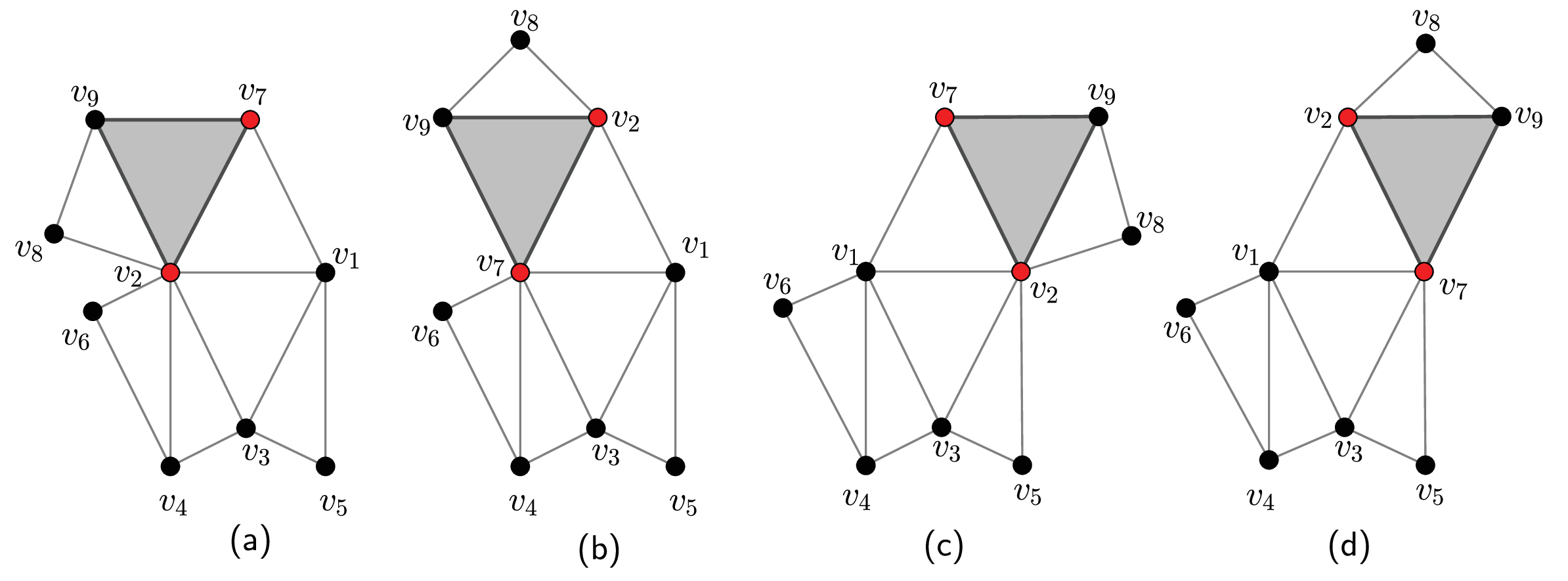}
				\caption{The regions of $G$ corresponding to tree $T_5$. The red vertices show a 2DD-set of $G[V(G)\setminus V(G')]$.}\label{tree_5}
			\end{center}
		\end{figure}	
		
		Let $G'$ be the mop of order $n'$ obtained from $G$ by deleting the vertices in $V_1^8 \setminus \{v_7\}$, and let $G'$ have $k'$ vertices of degree~$2$. We note that $n' = n-7$ and $k' = k - 2$. Since $n \ge 13$, we have $n' \ge 6$. If $6 \le n' \le 7$, then by~\cref{obs3}, there exists a 2DD-set $D'$ of $G'$ such that $v_7\in D'$ and $|D'|= 2$. Therefore, $D'\cup \{v_2\}$ is a 2DD-set of $G$, and so $\gamma_2^d(G)\le 3\le  \floor*{\frac{2}{9}(n+k)}$, a contradiction. Hence, $n' \ge 8$. Since $G$ is a counterexample of minimum order, we have $\gamma_2^d(G') \le \floor*{\frac{2}{9}(n'+k')} \le \floor*{\frac{2}{9}(n-7+k-2)} \le \floor*{\frac{2}{9}(n+k)}-2$. Let $D'$ be a $\gamma_2^d$-set of $G'$ and let $D=D'\cup\{v_2,v_7\}$. The set $D$ is a 2DD-set of $G$, and so $\gamma_2^d(G) \le |D| \le |D'| + 2 \le \floor*{\frac{2}{9}(n+k)}$, a contradiction.
	\end{proof}

	\begin{claim}\label{tree6}
		The tree $T_{6}$ is not a maximal subtree of $T$.
	\end{claim}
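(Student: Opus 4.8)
The plan is to argue by contradiction following exactly the template established in Claims~\ref{tree2}--\ref{tree5}. I would assume that $T_6 = T_v$ is a maximal subtree of $T$, where $v$ is the degree-$3$ vertex whose corresponding (shaded) internal triangle $R_v$ roots the region. Since $6 \in ([9]\setminus\{1\})$, the tree $T'$ of \Cref{tree2to9}(a) is a subtree of $T_6$, so by \Cref{cl3to10} the associated subgraph of $G$ is assembled from the region $H_{10}$ together with the triangulation of the remaining region forced by \Cref{lem1}\ref{5distance}--\ref{6distance}. This yields a small, fixed number of sub-cases corresponding to the admissible triangulations, all of which I would dispatch with a single argument, keeping the anchor vertex~$v_7$ shared between $R_v$ and the retained part of $G$.

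Next I would delete a carefully chosen block of consecutive boundary vertices while retaining $v_7$ — analogous to the deletions $V_1^8 \setminus \{v_7\}$ and $V_3^{11} \setminus \{v_7\}$ used for $T_5$ and $T_4$ — so that the resulting graph $G'$ is again a mop, with an outer edge created at the cut; if maximal outerplanarity needs to be restored after the deletion I would contract that edge and invoke \Cref{key}. I would then record the order $n' = n - 7$ (most likely, mirroring \Cref{tree5}) and bound the number of degree-$2$ vertices, typically $k' = k - 2$, using \Cref{internal} to account for the internal triangle $R_v$ having been removed. The small-order cases $6 \le n' \le 7$ are disposed of directly by \Cref{obs3}, which supplies a two-element 2DD-set $D'$ of $G'$ containing the anchor; adjoining $v_2$ then gives a 2DD-set of $G$ of size $3 \le \floor*{\frac{2}{9}(n+k)}$, a contradiction.

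For $n' \ge 8$, the minimality of $G$ gives $\gamma_2^d(G') \le \floor*{\frac{2}{9}(n'+k')} \le \floor*{\frac{2}{9}(n+k)} - 2$. Taking a $\gamma_2^d$-set $D'$ of $G'$ and adjoining two vertices (one of them $v_2$, the other the anchor $v_7$, mirroring $D = D' \cup \{v_2,v_7\}$ from \Cref{tree5}) produces a set $D$ with $|D| \le |D'| + 2 \le \floor*{\frac{2}{9}(n+k)}$. The final and, I expect, the main step is to verify that $D$ is genuinely a 2DD-set of $G$: every deleted vertex must be adjacent to $D$ or possess two vertices of $D$ at distance exactly~$2$. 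The difficulty is that this must hold \emph{uniformly} across all triangulation sub-cases of the region; in particular the vertices of the deleted block not dominated outright by $v_2$ or $v_7$ must acquire two distance-$2$ witnesses from the interaction of $\{v_2,v_7\}$ with the retained part of $G$. As in the proof of \Cref{tree2}, should the chosen additions fail to dominate some boundary vertex, I would fall back on a secondary choice of the added vertex (replacing $v_2$ by a neighbor already forced into $D'$ because $D'$ dominates $G'$), which restores the distance condition and again contradicts $\gamma_2^d(G) > \floor*{\frac{2}{9}(n+k)}$.
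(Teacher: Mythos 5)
Your proposal is correct in substance and follows essentially the same route as the paper's proof: contradiction, identification of the region built from $H_{10}$ via \Cref{cl3to10}, deletion of a consecutive block of boundary vertices retaining the anchor $v_7$, disposal of small orders by \cref{obs3}, the minimality bound $\gamma_2^d(G') \le \floor*{\frac{2}{9}(n'+k')} \le \floor*{\frac{2}{9}(n+k)}-2$, and the final set $D = D' \cup \{v_2,v_7\}$. The one parameter you hedged on resolves differently than your guess: the paper deletes $V_1^{9}\setminus\{v_7\}$, so $n' = n-8$ and $k' = k-2$ (not $n' = n-7$), and this choice matters because $v_9$ must be among the deleted vertices — in the subcase where the region $v_2v_8v_9v_{10}$ is triangulated by the edge $v_8v_{10}$, keeping $v_9$ would leave it with degree~$1$ in $G'$, so $G'$ would not be a mop — but with this correction the identical arithmetic and your proposed $D$ go through unchanged.
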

	\begin{proof}[Proof of Claim~\ref{tree6}]
		Suppose, to the contrary, that $T_{6}$ is a maximal subtree of $T$, and so $T_{6} = T_v$ where $v$ denotes the root of the subtree $T_v$. We infer that the subgraph of $G$ associated with $T_{6}$ is obtained from region $H_{10}$ in four possible ways, as illustrated in Figure~\ref{tree_6}(a)-(d), where in each case we let $V(T_v) = \{v_2,v_{7},v_{10}\}$ be the (shaded) triangle in $G$ associated with the vertex~$v$. The region $v_2v_8v_9v_{10}$ can be triangulated by adding either the edge $v_2v_9$ or $v_8v_{10}$, as indicated by the dotted lines in Figure~\ref{tree_6}(a)-(d). In the following, we present arguments that work in each case.
		
		\begin{figure}[htbp]
			\begin{center}
				\includegraphics[scale=0.4]{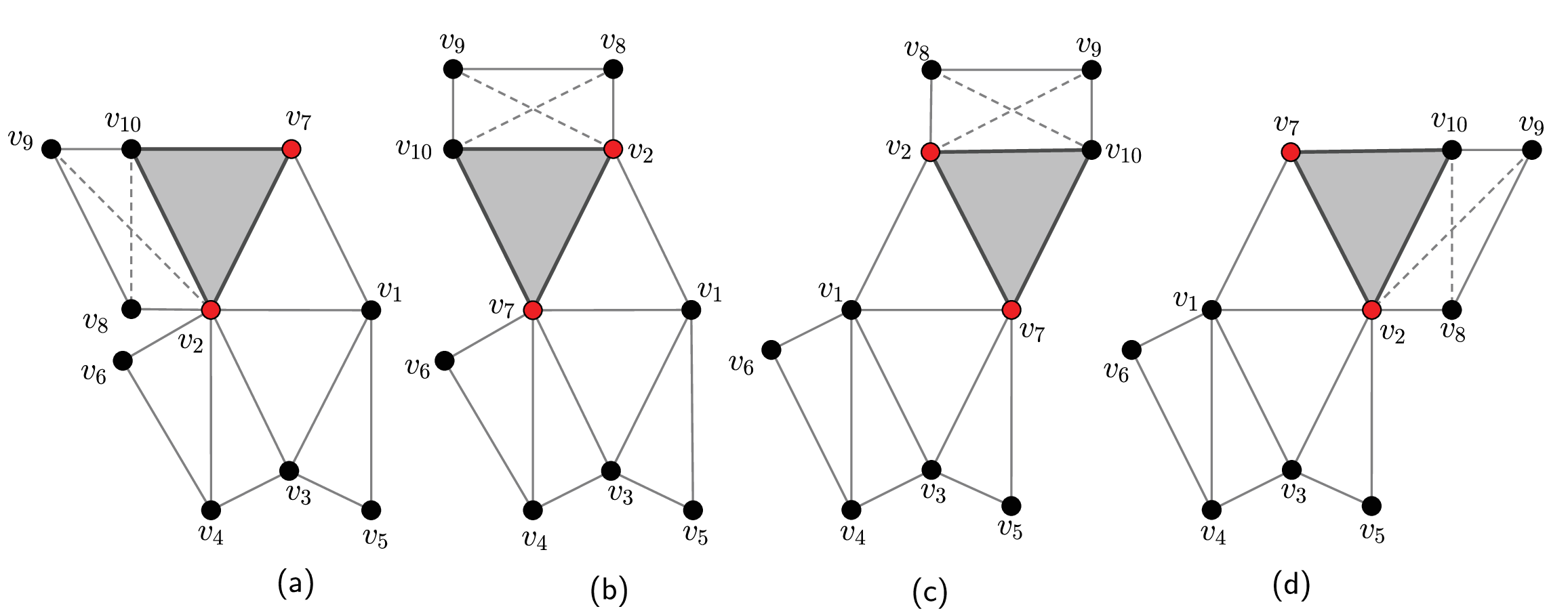}
				\caption{The regions of $G$ corresponding to tree $T_6$. The red vertices show a 2DD-set of $G[V(G)\setminus V(G')]$.}\label{tree_6}
			\end{center}
		\end{figure}

		Let $G'$ be the mop of order $n'$ obtained from $G$ by deleting the vertices in $V_1^{9}\setminus\{v_7\}$, and let $G'$ have $k'$ vertices of degree~$2$. We note that $n' = n-8$ and $k' = k - 2$. Since $n \ge 13$, we have $n' \ge 5$. If $5 \le n' \le 6$, then by~\Cref{obs3}, there exists a 2DD-set $D'$ of $G'$ such that $v_{7}\in D'$ and $|D'|= 2$. Therefore, $D'\cup \{v_2\}$ is a 2DD-set of $G$, and so $\gamma_2^d(G)\le 3\le  \floor*{\frac{2}{9}(n+k)}$, a contradiction. Hence, $n' \ge 7$.  Since $G$ is a counterexample of minimum order, we have $\gamma_2^d(G') \le \floor*{\frac{2}{9}(n'+k')} \le \floor*{\frac{2}{9}(n-8+k-2)} \le \floor*{\frac{2}{9}(n+k)}-2$. Let $D'$ be a $\gamma_2^d$-set of $G'$ and let $D=D'\cup\{v_2,v_7\}$. The set $D$ is a 2DD-set of $G$, and so $\gamma_2^d(G) \le |D| \le |D'| + 2 \le \floor*{\frac{2}{9}(n+k)}$, a contradiction.
	\end{proof}

	\begin{figure}[htbp]
		\begin{center}
			\includegraphics[scale=0.35]{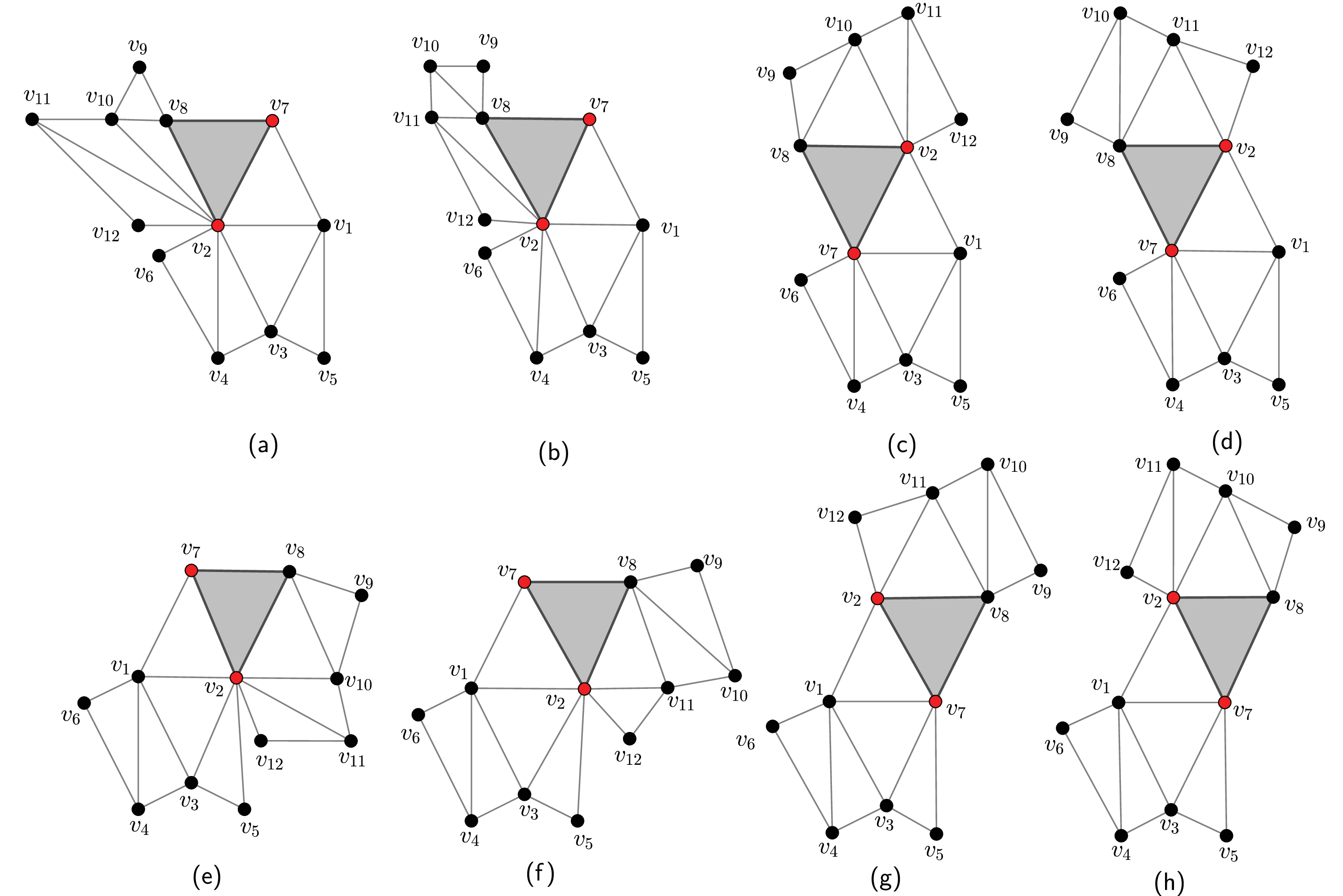}
			\caption{The regions of $G$ corresponding to tree $T_7$. The red vertices show a 2DD-set of $G[V(G)\setminus V(G')]$.}\label{tree_7}
		\end{center}
	\end{figure}
	
	\begin{claim}\label{tree7}
		The tree $T_{7}$ is not a maximal subtree of $T$.
	\end{claim}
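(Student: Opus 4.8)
The plan is to repeat the minimal-counterexample deletion scheme of Claims~\ref{tree4}--\ref{tree6}. Assume for contradiction that $T_7 = T_v$ for some vertex $v$, and read off from Figure~\ref{tree_7} the region of $G$ associated with $T_7$, together with its admissible triangulations (as fixed by \cref{cl3to10} and \cref{lem1}\ref{5distance}--\ref{6distance}). As before, one vertex of this region---playing the role of $v_7$ in the earlier claims---will be retained as an interface when passing to a smaller mop, so that the stripped-away part forms a single contiguous arc of the outer boundary meeting $G'$ along a recognizable outer edge incident with $v_7$.

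I would then set $G' = G - S$, where $S$ consists of all vertices of the $T_7$-region except the interface~$v_7$. Because the pendant triangles of $T_7$ are leaves of $T$, the block $S$ contains several degree-$2$ vertices of $G$, so that $n' = n - d$ and $k' = k - c$ with $\tfrac{2}{9}(d+c) \ge 2$; \cref{key} ensures $G'$ is again a mop. Minimality of $G$ then yields
\[
\gamma_2^d(G') \le \floor*{\frac{2}{9}(n'+k')} \le \floor*{\frac{2}{9}(n+k)} - 2.
\]
Before invoking minimality I would dispatch the small residual orders $5 \le n' \le 7$ directly: by \cref{obs3} there is a $2$DD-set $D'$ of $G'$ with $v_7 \in D'$ and $|D'| = 2$, whereupon $D' \cup \{v_2\}$ is a $2$DD-set of $G$ of size $3 \le \floor*{\frac{2}{9}(n+k)}$, a contradiction.

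For the remaining range $n' \ge 8$, take a $\gamma_2^d$-set $D'$ of $G'$ and extend it by adjoining the pair $\{v_2, v_7\}$ (or the analogous pair dictated by the figure) to obtain a set $D$ with $|D| \le |D'| + 2 \le \floor*{\frac{2}{9}(n+k)}$. If this $D$ is shown to be a $2$DD-set of $G$, then $\gamma_2^d(G) \le \floor*{\frac{2}{9}(n+k)}$, contradicting the choice of~$G$.

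The main obstacle, exactly as in Claims~\ref{tree5}--\ref{tree6}, is the final verification that $D$ disjunctively dominates \emph{every} vertex of $S$ in \emph{each} admissible triangulation of the $T_7$-region. The vertices of $S$ adjacent to $v_2$ or to $v_7$ are handled immediately; for the few remaining vertices one must exhibit two members of $D$ at distance exactly~$2$, and here the two-common-neighbor structure of diagonals (\cref{obs1}--\cref{obs2}) supplies the required pair. The argument must be phrased uniformly so that the single choice $\{v_2,v_7\}$ works across all the triangulations drawn in Figure~\ref{tree_7}.
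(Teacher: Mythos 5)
Your overall template---minimal counterexample, delete the $T_7$-region up to an interface, dispatch $5 \le n' \le 7$ via \cref{obs3}, then extend a $\gamma_2^d$-set of $G'$ by $\{v_2,v_7\}$---is indeed the paper's template, and your arithmetic target $\tfrac{2}{9}(d+c)\ge 2$ is the right one. But your deletion step fails for the specific geometry of $T_7$, and that geometry is the entire content of the claim. The root triangle of $T_v = T_7$ is $\{v_2,v_7,v_8\}$; its two child subtrees hang off the edges $v_2v_7$ (the $H_{10}$-part on $v_1,\ldots,v_7$) and $v_2v_8$ (the triangulated polygon $v_2v_8v_9v_{10}v_{11}v_{12}$), so the root triangle meets the rest of $G$ along the diagonal $v_7v_8$. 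Consequently the deleted vertices form \emph{two} arcs of the outer boundary ($v_3,\ldots,v_6$ and $v_9,\ldots,v_{12}$), not a single contiguous arc, and \emph{both} endpoints $v_7$ and $v_8$ of the separating diagonal must be retained. If, as you propose, $S$ is everything in the region except $v_7$, then $v_8$ is deleted; since $v_8$ in general has degree at least $3$ in the part of $G$ outside the region, its removal merges several triangular faces into one larger face, so $G' = G - S$ is not a maximal outerplanar graph and the minimality hypothesis (which applies only to mops) cannot be invoked. Your fallback that ``\cref{key} ensures $G'$ is again a mop'' does not repair this: \cref{key} concerns contraction of an outer edge, not vertex deletion, and no contraction occurs in this claim. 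This copy-paste of the cut from Claims~\ref{tree5} and~\ref{tree6} works there only because in those cases the second endpoint of the separating diagonal ($v_9$, respectively $v_{10}$) lies outside the deleted index range and survives automatically; for $T_7$ it does not.

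The paper's proof instead deletes $V_3^{12}\setminus\{v_7,v_8\}$, keeping $v_1,v_2,v_7,v_8$; then $G'$ consists of the rest of $G$ together with the two triangles $\{v_1,v_2,v_7\}$ and $\{v_2,v_7,v_8\}$, which is visibly a mop, with $n' = n-8$ and $k' = k-3$, so $\tfrac{2}{9}(8+3) = \tfrac{22}{9} \ge 2$ and the remainder of your outline (small cases via \cref{obs3} with $v_7 \in D'$, then $D = D' \cup \{v_2,v_7\}$) goes through exactly as you describe. So the fix is to change the cut: exempt $v_8$ (and, as the paper does, $v_1$ and $v_2$) from deletion. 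As a secondary point, you defer the verification that $D$ is a 2DD-set across all triangulations in Figure~\ref{tree_7}; the paper treats this just as tersely (via the figures), so that part is comparable---but without the correct choice of $G'$ there is nothing to verify.
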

	\begin{proof}[Proof of Claim~\ref{tree7}]
		Suppose, to the contrary, that $T_{7}$ is a maximal subtree of $T$, and so $T_{7} = T_v$ where $v$ denotes the root of the subtree $T_v$. We infer that the subgraph of $G$ associated with $T_{7}$ is obtained from region $H_{10}$ and  by triangulating the region $v_2v_{8}v_{9}v_{10}v_{11}v_{12}$ according to \Cref{cl3to10} as illustrated in Figure~\ref{tree_7}(a)-(h), where we let $V(T_v) = \{v_2,v_{7},v_{8}\}$ be the (shaded) triangle in $G$ associated with the vertex~$v$. In the following, we present arguments that work in each case.

		Let $G'$ be the mop of order $n'$ obtained from $G$ by deleting the vertices in $V_3^{12}\setminus\{v_7,v_8\}$, and let $G'$ have $k'$ vertices of degree~$2$. We note that $n' = n-8$ and $k' = k - 3$. Since $n \ge 13$, we have $n' \ge 5$. If $5 \le n' \le 7$, then by~\Cref{obs3}, there exists a 2DD-set $D'$ of $G'$ such that $v_{7}\in D'$ and $|D'|= 2$. Therefore, $D'\cup \{v_2\}$ is a 2DD-set of $G$, and so $\gamma_2^d(G)\le 3\le  \floor*{\frac{2}{9}(n+k)}$, a contradiction. Hence, $n' \ge 8$.  Since $G$ is a counterexample of minimum order, we have $\gamma_2^d(G') \le \floor*{\frac{2}{9}(n'+k')} \le \floor*{\frac{2}{9}(n-8+k-3)} \le \floor*{\frac{2}{9}(n+k)}-2$. Let $D'$ be a $\gamma_2^d$-set of $G'$ and let $D=D'\cup\{v_2,v_7\}$. The set $D$ is a 2DD-set of $G$, and so $\gamma_2^d(G) \le |D| \le |D'| + 2 \le \floor*{\frac{2}{9}(n+k)}$, a contradiction.
	\end{proof}

	\begin{figure}[htbp]
		\begin{center}
			\includegraphics[scale=0.3]{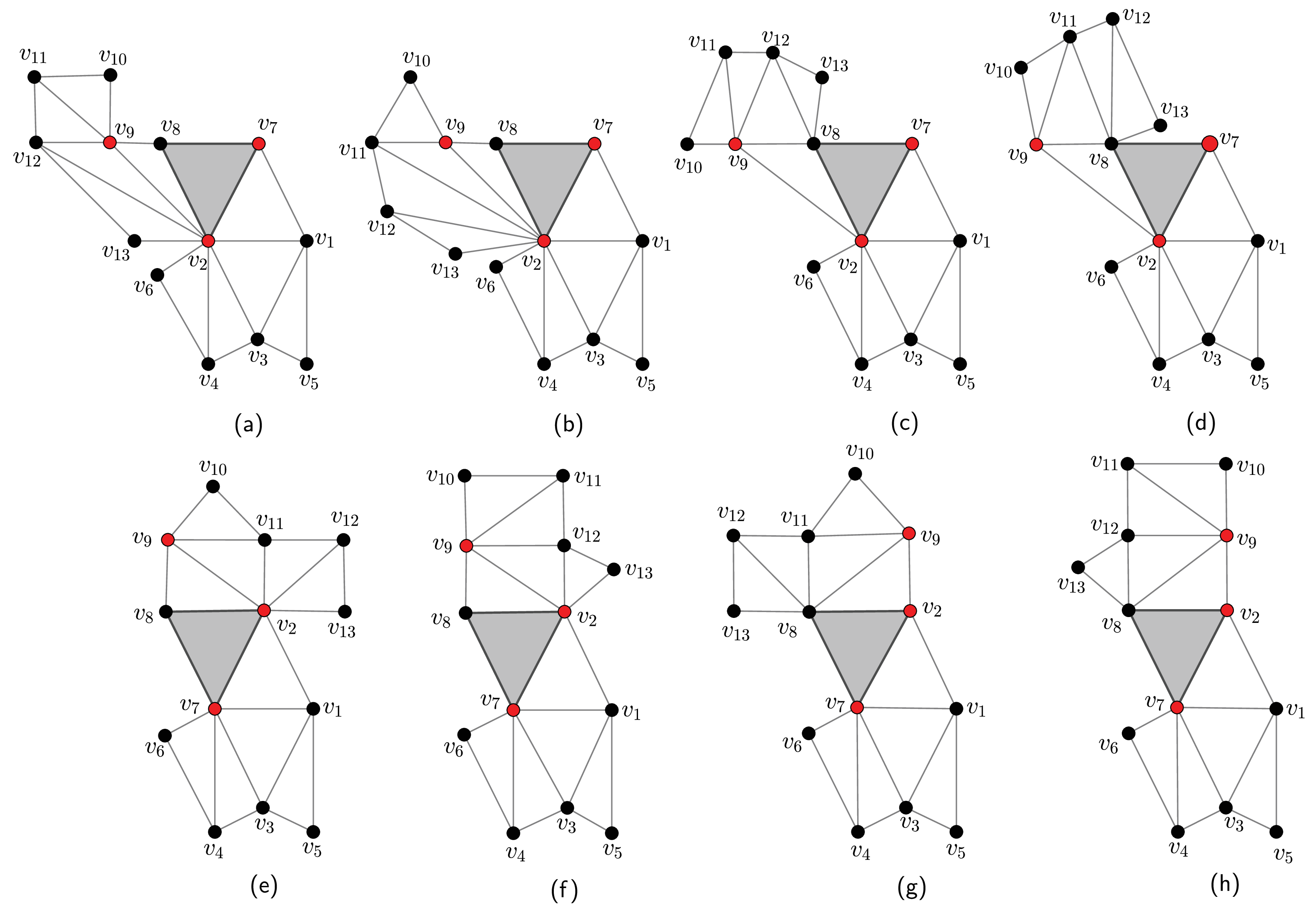}
			\includegraphics[scale=0.28]{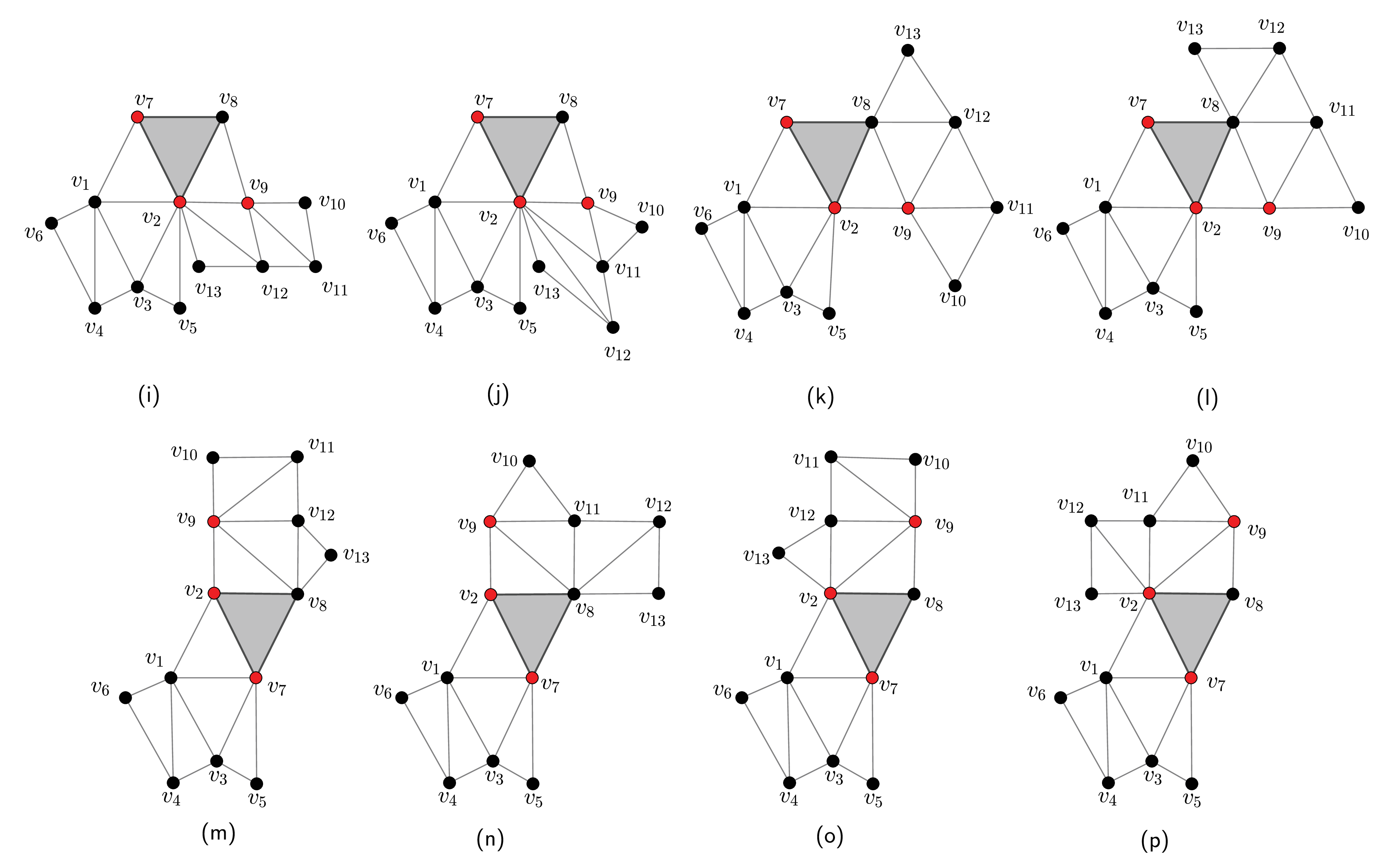}
			\vspace{-.8cm}
			\caption{The regions of $G$ corresponding to tree $T_8$. The red vertices show a 2DD-set of $G[V(G)\setminus V(G')]$.}\label{tree_8}
		\end{center}
	\end{figure}
	
	\begin{claim}\label{tree8}
		The tree $T_{8}$ is not a maximal subtree of $T$.
	\end{claim}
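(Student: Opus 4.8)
The plan is to argue by contradiction in exactly the spirit of Claims~\ref{tree1}--\ref{tree7}: assuming that $T_8 = T_v$ for some vertex $v$ of the rooted tree $T$, I would first pin down the precise region of $G$ that this forces. Since $8 \in [9]\setminus\{1\}$, the tree $T'$ of Figure~\ref{tree2to9}(a) is a subtree of $T_8$, so by \cref{cl3to10} one arm of the region is the fixed configuration $H_{10}$, while the remaining arm must be triangulated according to the admissible regions of \cref{lem1}\ref{5distance}--\ref{6distance}. This yields precisely the finite list of possible subgraphs of $G$ drawn in Figure~\ref{tree_8}, and I would then carry out a single uniform argument valid across all of them, with the shaded triangle playing the role of the root $R_v$, exactly as was done for $T_7$.

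Next I would strip off the pendant region corresponding to $T_v$. Concretely, I would delete all interior vertices of the region except for the one or two boundary vertices (the analogues of $v_7$, and possibly $v_8$) that glue $T_v$ to the rest of $G$, obtaining a reduced graph $G'$ of order $n' = n-8$ (or $n-9$) with $k'$ vertices of degree~$2$. I expect $k' = k-3$ when the relevant interior triangles are internal and $k' \le k$ otherwise. Where deleting a vertex would leave an outer edge of $G'$ that must be merged, I would contract that edge and invoke Lemma~\ref{key} to keep the resulting graph a mop, as in the earlier claims; by construction the reduced mop has order at least~$7$, so the minimality of the counterexample~$G$ applies to it.

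With $G'$ (or its contraction $G_1$) in hand, I would first dispose of the small cases $5 \le n' \le 7$ directly via \cref{obs3}, choosing the guaranteed two-element 2DD-set to contain the interface vertex so that adjoining the analogue of $v_2$ disjunctively dominates the deleted region and gives $\gamma_2^d(G) \le 3 \le \floor*{\frac{2}{9}(n+k)}$. For $n' \ge 8$ I would apply the inductive bound $\gamma_2^d(G') \le \floor*{\frac{2}{9}(n'+k')} \le \floor*{\frac{2}{9}(n+k)} - 2$, take a $\gamma_2^d$-set $D'$ of $G'$, and extend it to a 2DD-set $D$ of $G$ by adjoining two carefully chosen vertices of the deleted region. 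Then $\gamma_2^d(G) \le |D| \le |D'| + 2 \le \floor*{\frac{2}{9}(n+k)}$, contradicting the choice of $G$.

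The main obstacle I anticipate is verifying, simultaneously in every triangulation case of Figure~\ref{tree_8}, that the two adjoined vertices really do disjunctively dominate all deleted vertices and that no retained vertex loses its domination across the cut; coupled with getting the value of $k'$ right in each case (since whether a triangle is internal varies with the triangulation), this is where the genuine work lies, whereas the arithmetic with the floor bound is routine.
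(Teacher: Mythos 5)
Your overall template is the right one---it is exactly the deletion-plus-minimality scheme the paper uses for this claim---but your quantitative calibration is that of Claim~\ref{tree7}, not of Claim~\ref{tree8}, and this is where the plan breaks. The region of $G$ associated with $T_8$ spans the thirteen vertices $v_1,\ldots,v_{13}$ (sixteen possible triangulations built from $H_{10}$, see Figure~\ref{tree_8}), and its only attachment to the rest of $G$ is the edge $v_7v_8$. Stripping the pendant region therefore means deleting $V_1^{13}\setminus\{v_7,v_8\}$, i.e.\ \emph{eleven} vertices, giving $n'=n-11$ and $k'=k-3$; your proposed $n'=n-8$ or $n-9$ does not remove the whole region. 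With the correct deletion the inductive bound gives $\gamma_2^d(G') \le \floor*{\frac{2}{9}(n-11+k-3)} \le \floor*{\frac{2}{9}(n+k)}-3$, and the extension must use \emph{three} vertices (the paper takes $\{v_2,v_7,v_9\}$): a two-vertex extension with a $-2$ budget, as you propose, cannot disjunctively dominate the eleven deleted vertices across the sixteen configurations. In other words, the step you yourself flag as the main obstacle---"that the two adjoined vertices really do disjunctively dominate all deleted vertices"---is precisely what fails under your sizing; it is not a verification detail but the reason the argument for $T_8$ needs a larger deletion and a larger budget than the one for $T_7$.

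Two further corrections follow from the correct count. Since $n'=n-11$ can be as small as $2$ (recall $n \ge 13$), the small-order analysis cannot begin at $n'=5$: the paper first disposes of $2 \le n' \le 4$ directly, using $\{v_2,v_7,v_9\}$ as a 2DD-set of $G$, and then of $5 \le n' \le 8$ via \cref{obs3} (choosing the two-element set to contain $v_7$) together with $\{v_2,v_9\}$, giving $\gamma_2^d(G) \le 4 \le \floor*{\frac{2}{9}(n+k)}$. Finally, no contraction and no appeal to Lemma~\ref{key} is needed in this claim: after deleting the region, $v_7v_8$ is already an outer edge of the mop $G'$, so that part of your plan is superfluous (though harmless).
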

	\begin{proof}[Proof of Claim~\ref{tree8}]
		Suppose, to the contrary, that $T_{8}$ is a maximal subtree of $T$, and so $T_{8} = T_v$ where $v$ denotes the root of the subtree $T_v$. We infer that the subgraph of $G$ associated with $T_{8}$ is obtained from region $H_{10}$ in sixteen possible ways, as illustrated in Figure~\ref{tree_8}(a)-(p), where we let $V(T_v) = \{v_2,v_{7},v_{8}\}$ be the (shaded) triangle in $G$ associated with the vertex~$v$. In the following, we present arguments that work in each case.
		
		Let $G'$ be the mop of order $n'$ obtained from $G$ by deleting the vertices in $V_1^{13}\setminus\{v_7,v_8\}$, and let $G'$ have $k'$ vertices of degree~$2$. We note that $n' = n-11$ and $k' = k - 3$, and $v_7v_{8}$ is an outer edge of $G'$. Since $n \ge 13$, we have $n' \ge 2$. If $2 \le n' \le 4$, then $\{v_2,v_7,v_9\}$ is a 2DD-set of $G$, and hence $\gamma_2^d(G)\le 3 \le  \floor*{\frac{2}{9}(n+k)}$, a contradiction. If $5 \le n' \le 8$, then by~\Cref{obs3}, there exists a 2DD-set $D'$ of $G'$ such that $v_{7}\in D'$ and $|D'|= 2$. Therefore in this case,  $D'\cup \{v_2,v_9\}$ is a 2DD-set of $G$, and so $\gamma_2^d(G)\le 4 \le \floor*{\frac{2}{9}(n+k)}$, a contradiction. Hence, $n' \ge 8$. Since $G$ is a counterexample of minimum order, we have $\gamma_2^d(G') \le \floor*{\frac{2}{9}(n'+k')} \le \floor*{\frac{2}{9}(n-11+k-3)} \le \floor*{\frac{2}{9}(n+k)}-3$. Let $D'$ be a $\gamma_2^d$-set of $G'$ and let $D=D'\cup \{v_2,v_7,v_9\}$. The set $D$ is a 2DD-set of $G$, and so $\gamma_2^d(G) \le |D| \le |D'| + 3 \le \floor*{\frac{2}{9}(n+k)}$, a contradiction.
	\end{proof}

	\begin{claim}\label{cl-9to}
		The tree $T_9$ is not a maximal subtree of $T$.
	\end{claim}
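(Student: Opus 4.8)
The plan is to argue by contradiction along exactly the reduction template already used in Claims~\ref{tree2}--\ref{tree8}. Assume $T_9$ is a maximal subtree of $T$, say $T_9=T_v$ with root $v$, and let $R_v$ be the triangle of $G$ corresponding to $v$. Since $T_9$ contains the rooted path $T'$ of Figure~\ref{tree2to9}(a) as a subtree, Claim~\ref{cl3to10} forces the portion of $G$ meeting $T'$ to be the region $H_{10}$ of Figure~\ref{tree2to9}(c); the remaining triangles of $T_9$ then triangulate the region adjacent to $H_{10}$ in accordance with \Cref{lem1}\ref{5distance}--\ref{6distance}. First I would enumerate the resulting admissible plane subgraphs of $G$ (to be displayed in an accompanying figure) and fix a single labelling $v_1,v_2,\ldots$ of their vertices that is valid in every case, so that the subsequent counting can be carried out uniformly.

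Next I would form $G'$ by deleting all vertices of this region except two or three designated \emph{interface} vertices chosen to lie on a diagonal of $G$. As before, retaining these interface vertices guarantees that the edge joining the region to the rest of $G$ becomes an outer edge of $G'$, so that $G'$ is again a mop; if two retained vertices must be identified I would contract the intervening outer edge and invoke \Cref{key}. Writing $n'=n-d$ and $k'=k-\Delta k$ for the resulting constants, the inequality I must secure is $d+\Delta k\ge \tfrac{9}{2}a$, where $a$ is the number of vertices adjoined to a $\gamma_2^d$-set of $G'$; this is precisely what yields $\lfloor\tfrac{2}{9}(n'+k')\rfloor+a\le\lfloor\tfrac{2}{9}(n+k)\rfloor$ and hence the contradiction.

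I would then dispose of the degenerate small-order cases separately: when $n'$ (or $n_1$ after a contraction) is too small for the minimality hypothesis to apply, I would instead build an explicit 2DD-set of $G$ of the required size, using \Cref{obs3} to produce a two-element 2DD-set of the small remnant $G'$ anchored at an interface vertex and then adjoining the deleted region's vertices. In the generic range $n'\ge 8$ I would take a $\gamma_2^d$-set $D'$ of $G'$ from minimality, split into the subcases $x\in D'$ and $x\notin D'$ for the contracted vertex $x$, and in each subcase adjoin a fixed set of $a$ region vertices --- including $v_2$ to cover $R_v$ and the apex of $H_{10}$ together with one representative per pendant branch of $T_9$ --- verifying that the resulting set $D$ disjunctively dominates every deleted vertex.

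The main obstacle will be the domination bookkeeping in this final step. Because $T_9$ is the largest of the $T'$-containing subtrees, its region carries several interior vertices and several admissible triangulations, and I must certify that one fixed choice of the $a$ added vertices simultaneously disjunctively dominates every deleted vertex across all triangulations --- each such vertex needing either a neighbor in $D$ or two vertices of $D$ at distance exactly~$2$ --- while keeping $a$ small enough that $d+\Delta k\ge\tfrac{9}{2}a$ still holds. Selecting the interface vertices so as to create the arithmetic slack and, at the same time, to make a single extension set work in every triangulation case is the delicate point; the rest is the routine reduction already rehearsed for $T_2,\ldots,T_8$.
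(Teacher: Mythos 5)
Your proposal reproduces the paper's general machinery (delete the region, keep interface vertices, contract via \Cref{key}, apply minimality, settle small remnants with \Cref{obs3}, then extend a $\gamma_2^d$-set of the smaller mop), and your arithmetic criterion $d+\Delta k\ge\tfrac{9}{2}a$ is the right bookkeeping. But there is a genuine gap, and it concerns precisely the feature that distinguishes Claim~\ref{cl-9to} from Claims~\ref{tree2}--\ref{tree8}: you assume the region meets the rest of $G$ along a single, known diagonal, so that one fixed choice of interface vertices, contraction edge, and extension set can be specified in advance, the only remaining work being ``domination bookkeeping'' over the interior triangulations. For $T_9$ this assumption is false. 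The root triangle $R_v=\{v_2,v_8,v_9\}$ of the region has two edges not used inside the region, and the tree $T_9$ alone does not determine along which of them $G$ continues; indeed $R_v$ may even be an internal triangle of $G$. The paper's proof is therefore a three-way case analysis on the \emph{exterior} attachment: (i) $R_v$ is internal, (ii) the triangle of $G$ adjacent to $R_v$ outside the region is $\{v_8,v_9,v_{10}\}$, (iii) it is $\{v_2,v_9,v'_{10}\}$. The three cases use different deletion sets ($V_1^7\setminus\{v_2\}$, $V_1^7$, $V_1^8\setminus\{v_2\}$), different contraction edges ($v_2v_8$, $v_8v_9$, $v_2v_9$), and different extension sets (for instance $\{v_1,v_8\}$ in case (ii) versus $\{v_2,v_7\}$ in case (iii) when the contracted vertex is absent from $D_1$). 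No single choice serves in all cases: \Cref{key} only permits contracting an edge that is actually an outer edge of the mop $G'$ produced by the deletion, and the slack $d+\Delta k\ge 9$ needed for $a=2$ is created exactly by contracting the true attaching edge. The noncommittal alternative, keeping all of $v_2,v_8,v_9$ and contracting nothing, gives only $d+\Delta k\le 8$, which forces $a=1$, and a single added vertex cannot in general be guaranteed to disjunctively dominate all six deleted interior vertices of the region.

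Consequently your plan cannot be executed as written: before $G'$ can even be defined you must branch on the exterior configuration, which your proposal never mentions. Conversely, the difficulty you single out as the main obstacle --- certifying one extension set uniformly over all admissible interior triangulations --- turns out to be the easy part: within each attachment case the paper disposes of all four triangulations of \Cref{tree_9} with one argument and one extension set. The exterior case split is the missing key idea here, and it is structural rather than cosmetic; the same split reappears in the proofs of Claims~\ref{tree15} and~\ref{tree21}.
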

	\begin{proof}[Proof of Claim~\ref{cl-9to}]
		Suppose, to the contrary, that $T_{9}$ is a maximal subtree of $T$, and so $T_{9} = T_v$ where $v$ denotes the root of the subtree $T_v$. We infer that the subgraph of $G$ associated with $T_{9}$ is obtained from region $H_{10}$ in four possible ways, as illustrated in Figure~\ref{tree_9}(a)-(d), where we let $V(T_v) = \{v_2,v_{8},v_{9}\}$ be the (shaded) triangle in $G$ associated with the vertex~$v$.
		
		\begin{figure}[htbp]
			\begin{center}
				\includegraphics[scale=0.35]{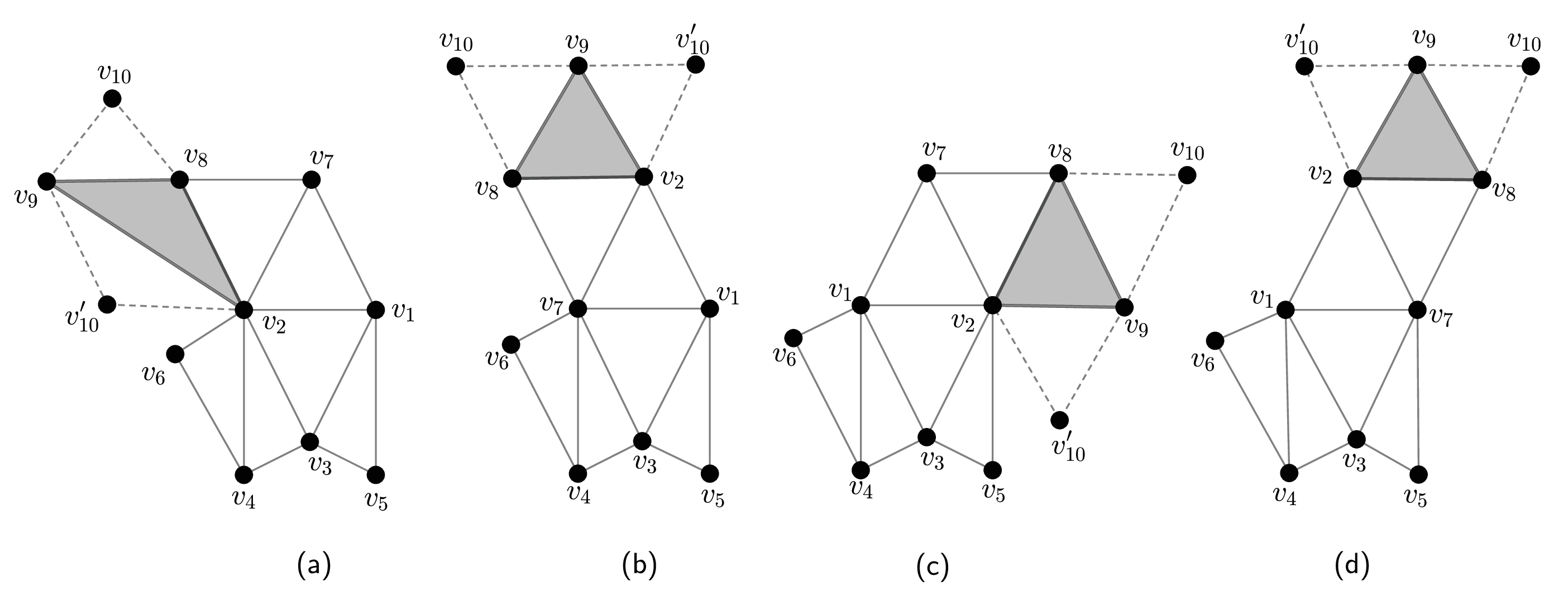}
				\caption{The regions of $G$ corresponding to tree $T_9$.} \label{tree_9}
			\end{center}
		\end{figure}
		
		Suppose firstly that $V(T_v) = \{v_2,v_{8},v_{9}\}$ is an internal triangle of $G$. Let $G'$ be a graph of order $n'$ obtained from $G$ by deleting the vertices in $V_1^{7}\setminus\{v_2\}$, and let $G'$ have $k'$ vertices of degree~$2$. We note that $n' = n-6$ and $k' = k - 2$, and $v_2v_{8}$ is an outer edge of $G'$. Since $n \ge 13$, we have $n' \ge 7$. If $n'=7$, then by ~\Cref{obs3}, there exists a 2DD-set $D'$ of $G'$ such that $v_{8}\in D'$ and $|D'|= 2$. Therefore, $D'\cup \{v_1\}$ is a 2DD-set of $G$, and so $\gamma_2^d(G)\le 3 \le  \floor*{\frac{2}{9}(n+k)}$, a contradiction. Hence, $n' \ge 8$. Let $G_1$ be a graph of order $n_1$ obtained from $G'$ by contracting the edge $v_2v_8$ to form a new vertex $x$ in $G_1$, and let $G_1$ have $k_1$ vertices of degree~$2$. By Lemma~\ref{key}, $G_1$ is a mop. Since $n' \ge 8$, we note that $n_1 = n' - 1 \ge 7$. Further, we note that $n_1 = n - 7$ and $k_1 \le k - 2$. By the minimality of the mop $G$, we have $\gamma_2^d(G_1) \le \floor*{\frac{2}{9}(n_1+k_1)} \le \floor*{\frac{2}{9}(n-7+k-2)} \le \floor*{\frac{2}{9}(n+k)}-2$. Let $D_1$ be a $\gamma_2^d$-set of $G_1$. If $x \in D_1$, then let $D = (D_1 \setminus \{x\}) \cup \{v_1,v_2,v_8\}$. If $x \notin D_1$, then let $D = D_1 \cup \{v_1,v_8\}$. In both cases $D$ is a 2DD-set of $G$, and so $\gamma_2^d(G) \le |D| \le |D_1| + 2 \le \floor*{\frac{2}{9}(n+k)}$, a contradiction.
		
		Hence, $V(T_v) = \{v_2,v_{8},v_{9}\}$ is not an internal triangle of $G$. Since $n \ge 13$, there exists a triangle $F$ adjacent to face $v_2v_8v_{9}$. There are two possible triangles that can be formed: either $V(F)=\{v_8,v_{9},v_{10}\}$ or $V(F)=\{v_2,v_{9},v'_{10}\}$. These are illustrated with dotted lines in \Cref{tree_9}(a)-(d).
		
		Suppose firstly that $V(F)=\{v_8,v_{9},v_{10}\}$. In this case, let $G'$ be a graph of order $n'$ obtained from $G$ by deleting the vertices in $V_1^{7}$, and let $G'$ have $k'$ vertices of degree~$2$. We note that $n' = n-7$ and $k' = k - 1$, and $v_8v_{9}$ is an outer edge of $G'$. Since $n \ge 13$, we have $n' \ge 6$. If $6 \le n' \le 7$, then by \Cref{obs3}, there exists a 2DD-set $D'$ of $G'$ such that $v_8\in D'$ and $|D'|=2$. Therefore, $D'\cup \{v_1\}$ is a 2DD-set of $G$, and so $\gamma_2^d(G)\le 3\le  \floor*{\frac{2}{9}(n+k)}$, a contradiction. Hence, $n' \ge 8$. Let $G_1$ be a graph of order $n_1$ obtained from $G'$ by contracting the edge $v_8v_{9}$ to form a new vertex $x$ in $G_1$, and let $G_1$ have $k_1$ vertices of degree~$2$. By Lemma~\ref{key}, $G_1$ is a mop. Since $n' \ge 8$, we note that $n_1 = n' - 1 \ge 7$. Further, we note that $n_1 = n - 8$ and $k_1 \le k - 1$. By the minimality of the mop $G$, we have $\gamma_2^d(G_1) \le \floor*{\frac{2}{9}(n_1+k_1)} \le \floor*{\frac{2}{9}(n-8+k-1)} \le \floor*{\frac{2}{9}(n+k)}-2$. Let $D_1$ be a $\gamma_2^d$-set of $G_1$. If $x \in D_1$, then let $D = (D_1 \setminus \{x\}) \cup \{v_1,v_8,v_9\}$. If $x \notin D_1$, then let $D = D_1 \cup \{v_1,v_8\}$. In both cases $D$ is a 2DD-set of $G$, and so $\gamma_2^d(G) \le |D| \le |D_1| + 2 \le \floor*{\frac{2}{9}(n+k)}$, a contradiction.
		
		Hence, $V(F)=\{v_2,v_{9},v'_{10}\}$. We now let $G'$ be a graph of order $n'$ obtained from $G$ by deleting the vertices in $V_1^{8}\setminus\{v_2\}$, and let $G'$ have $k'$ vertices of degree~$2$. We note that $n' = n-7$ and $k' = k - 1$, and $v_2v_{9}$ is an outer edge of $G'$. Since $n \ge 13$, we have $n' \ge 6$. If $6 \le n' \le 7$, then by \Cref{obs3}, there exists a 2DD-set $D'$ of $G'$ such that $v_2\in D'$ and $|D'|=2$. Therefore, $D'\cup \{v_7\}$ is a 2DD-set of $G$, and so $\gamma_2^d(G)\le 3\le  \floor*{\frac{2}{9}(n+k)}$, a contradiction. Hence, $n' \ge 8$. Let $G_1$ be a graph of order $n_1$ obtained from $G'$ by contracting the edge $v_2v_{9}$ to form a new vertex $x$ in $G_1$, and let $G_1$ have $k_1$ vertices of degree~$2$. By Lemma~\ref{key}, $G_1$ is a mop. Since $n' \ge 8$, we note that $n_1 = n' - 1 \ge 7$. Further, we note that $n_1 = n - 8$ and $k_1 \le k - 1$. By the minimality of the mop $G$, we have $\gamma_2^d(G_1) \le \floor*{\frac{2}{9}(n_1+k_1)} \le \floor*{\frac{2}{9}(n-8+k-1)} \le \floor*{\frac{2}{9}(n+k)}-2$. Let $D_1$ be a $\gamma_2^d$-set of $G_1$. If $x \in D_1$, then let $D = (D_1 \setminus \{x\}) \cup \{v_2,v_7,v_9\}$. If $x \notin D_1$, then let $D = D_1 \cup \{v_2,v_7\}$. In both cases $D$ is a 2DD-set of $G$, and so $\gamma_2^d(G) \le |D| \le |D_1| + 2 \le \floor*{\frac{2}{9}(n+k)}$, a contradiction.
	\end{proof}
	
	Let $T''$ be a path $P_5$ rooted at a vertex at distance~$2$ from a leaf of $T''$, as illustrated in Figure~\ref{tree10to21}(a). We note that the tree $T''$ is a subtree of $T_i$ for all $i$, where  $i\in([15]\setminus[9])\cup \{20,26\}$. Let $v'$ be the root of $T''$. In our illustrations of the subgraph of $G$ associated with the rooted tree $T'$, let the shaded triangle with vertex set $\{v_1,v_2,v_{3}\}$ corresponds to the root $v'$ of $T'$, and let the subgraph of $G$ be obtained from the region $v_2v_3v_7v_5$ by triangulating by adding the edge $v_3v_7$ or $v_2v_5$ and from the region $v_1v_3v_4v_6$ by triangulating by adding the edge $v_3v_6$ or $v_1v_4$ as illustrated in Figure~\ref{tree10to21}(b)-(d), depending on the three possible cases that these regions can be triangulated.	
	
	\begin{figure}[htbp]
		\begin{center}
			\includegraphics[scale=0.4]{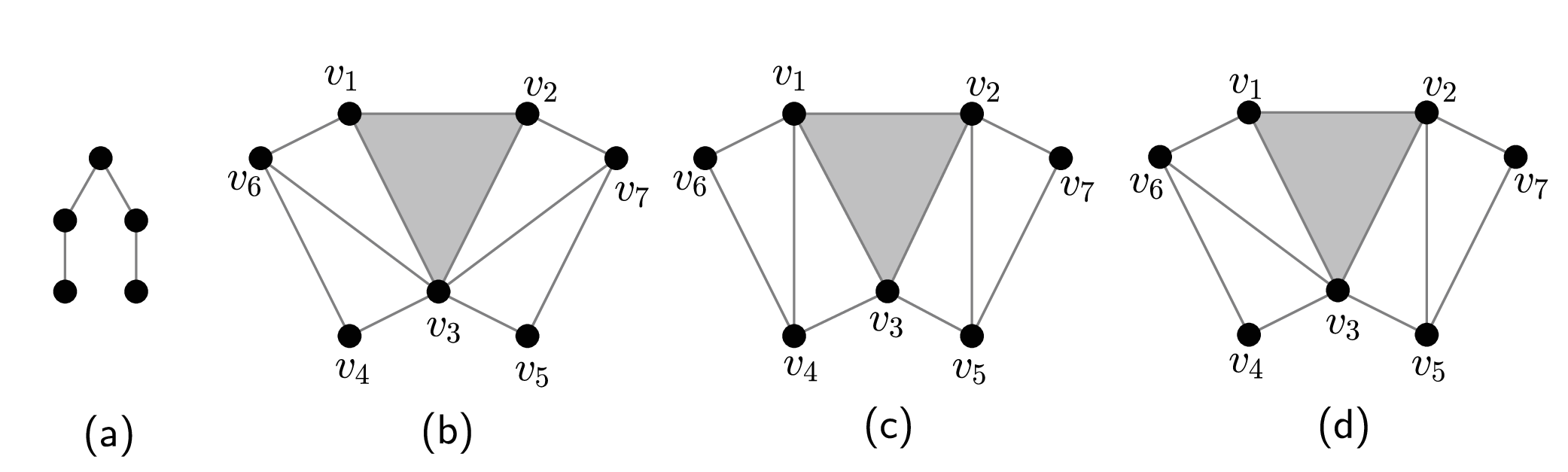}
			\caption{(a)~$T''$, (b)~$H_{11}$, (c)~$H_{12}$, and (d)~$H_{13}$. Tree $T''$ and possible regions of $G$ corresponding to tree $T''$.} \label{tree10to21}
		\end{center}
	\end{figure}

	\begin{claim}\label{cl-10to21}
		The subgraph of $G$ associated with the tree $T''$ in Figure~$\ref{tree10to21}(a)$ corresponds to the region $H_{13}$ illustrated in Figure~$\ref{tree10to21}(d)$.
	\end{claim}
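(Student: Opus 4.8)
The plan is to run exactly the argument of \Cref{cl3to10}, but now the root triangle $R_{v'}=v_1v_2v_3$ carries \emph{two} pendant fans (one on $v_1v_3$ using $v_4,v_6$, one on $v_2v_3$ using $v_5,v_7$), so three triangulation patterns arise instead of two, according to whether each fan is folded \emph{toward} $v_3$ (adding $v_3v_6$, resp.\ $v_3v_7$) or \emph{away} from $v_3$ (adding $v_1v_4$, resp.\ $v_2v_5$). These are the regions $H_{11}$ (both fans toward $v_3$), $H_{12}$ (exactly one fan toward $v_3$), and $H_{13}$ (both fans away) in \Cref{tree10to21}. I would suppose for contradiction that $G$ realizes $H_{11}$ or $H_{12}$, excise the seven-vertex region down to its base diagonal $v_1v_2$, reduce to a strictly smaller mop, invoke the minimality of $G$, and then reassemble a small $2$DD-set of $G$.

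For $H_{11}$ the argument is immediate, since folding both fans toward $v_3$ makes $v_3$ adjacent to all of $v_4,v_5,v_6,v_7$. I would delete $V_3^7=\{v_3,v_4,v_5,v_6,v_7\}$; by \Cref{key} the result $G'$ is a mop, and as the internal triangle $R_{v'}$ disappears we get $n'+k'\le(n-5)+(k-1)=n+k-6$, so by minimality $\gamma_2^d(G')\le\floor*{\frac{2}{9}(n'+k')}\le\floor*{\frac{2}{9}(n+k)}-1$. Adding $v_3$ to a $\gamma_2^d$-set $D'$ of $G'$ gives a $2$DD-set of $G$, because the single vertex $v_3$ dominates the whole excised region, contradicting $\gamma_2^d(G)>\floor*{\frac{2}{9}(n+k)}$.

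For $H_{12}$ the fan folded away from $v_3$ produces a leaf — say $v_6$, with $N_G(v_6)=\{v_1,v_4\}$ — that is \emph{not} adjacent to $v_3$, so no single apex dominates the region; here I would mimic \Cref{tree1}, deleting $V_3^7$ and contracting the base edge $v_1v_2$ to a vertex $x$ to obtain a mop $G_1$ with $n_1=n-6$, and then take an optimal $2$DD-set $D_1$ of $G_1$ and split on whether $x\in D_1$. If $x\in D_1$, I would set $D=(D_1\setminus\{x\})\cup\{v_1,v_2\}$: every excised vertex is adjacent to $v_1$ or to $v_2$ except the leaf $v_5$ of the folded fan, and $v_5$ has both $v_1$ and $v_2$ at distance $2$, so $D$ is a $2$DD-set of $G$ of size $|D_1|+1$. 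If $x\notin D_1$, I would add $v_3$, which disjunctively dominates $v_3,v_4,v_5,v_7$, leaving only $v_6$: since $v_6\sim v_1$ and $v_3$ already lies at distance $2$ from $v_6$, it is covered as soon as $v_1$ is dominated in $G_1$ by adjacency, which supplies a second distance-$2$ witness.

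The hard part will be twofold. The routine half is the degree-$2$ bookkeeping: using $k=(\#\text{internal triangles})+2$ from \Cref{internal}, I must verify that excising the region (which kills the internal triangle $R_{v'}$ and its two leaf triangles) and contracting $v_1v_2$ lowers $\floor*{\frac{2}{9}(n+k)}$ by enough to pay for the vertices reinserted in each branch. The genuinely delicate point is the coverage of the off-$v_3$ leaf $v_6$ in the branch $x\notin D_1$: in the borderline sub-case where $v_1$ is dominated in $G_1$ only through two distance-$2$ witnesses, one is forced either into a sharper choice of $D_1$ or into reinserting a second vertex, whose cost must be absorbed by the internal triangles removed. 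This same maneuver provably cannot rescue $H_{13}$, where the two leaves $v_6$ and $v_7$ sit at opposite ends of the region (one near $v_1$, one near $v_2$), share no common dominator, and admit no bounded reinsertion that pays for itself — which is precisely why $H_{13}$ is the unique surviving configuration and hence the region realized by $T''$.
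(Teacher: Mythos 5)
Your overall plan --- suppose $H_{11}$ or $H_{12}$, excise the region, apply minimality, and rebuild a 2DD-set with a $+1$ budget --- is the same as the paper's, and your $H_{11}$ argument is correct (one quibble: deleting the region beyond the diagonal $v_1v_2$ yields a mop directly; \Cref{key} is about contracting an outer edge, not about vertex deletion, so that citation does no work). The genuine gap is in $H_{12}$, in your branch $x \notin D_1$ --- exactly the point you yourself call delicate but never close. Label the away fan so that $N_G(v_6)=\{v_1,v_4\}$ and $N_G(v_4)=\{v_1,v_3,v_6\}$, and the toward fan so that $N_G(v_5)=\{v_3,v_7\}$ and $N_G(v_7)=\{v_2,v_3,v_5\}$. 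In $D=D_1\cup\{v_3\}$ the vertex $v_6$ has no neighbor in $D$, and its distance-$2$ neighborhood is $\{v_2,v_3\}$ together with the $G'$-neighbors of $v_1$; so besides $v_3$ it needs a second witness in $D_1\cap N_{G'}(v_1)$. But disjunctive domination of the contracted vertex $x$ in $G_1$ only guarantees a $D_1$-vertex adjacent to $v_1$ \emph{or} to $v_2$, or else two $D_1$-vertices at distance $2$ from $x$; it is entirely possible that $D_1\cap N_{G'}(v_1)=\emptyset$, and then every vertex of $D_1$ is at distance at least $3$ from $v_6$ in $G$. In that sub-case no single added vertex can finish the job: $v_6$ requires $v_1$ or $v_4$ in the set, while $v_5$ requires $v_3$ or $v_7$ (its distance-$2$ set is $\{v_1,v_2,v_4\}$, which is disjoint from $V(G_1)$), and these demands cannot both be met by one vertex. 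Your two proposed rescues also fail: reinserting a second vertex needs a $-2$ budget, but your reduction only yields $n_1+k_1\le (n-6)+(k-1)$, and $\floor*{\frac{2}{9}(n+k-7)}\le\floor*{\frac{2}{9}(n+k)}-2$ is false in general (for instance when $n+k\equiv 4\pmod 9$); and a ``sharper choice of $D_1$'' is not an argument, since the proof must work for whatever $\gamma_2^d$-set of $G_1$ one is handed.

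The paper avoids this trap with a different excision: it deletes only the four fan vertices $V_4^7$, keeps the root triangle $v_1v_2v_3$ intact, and performs no contraction, so that $v_3$ survives as a degree-$2$ vertex of $G'$ with $n'=n-4$ and $k'=k-1$. The case analysis then runs on $D'\cap\{v_1,v_2,v_3\}$: if $v_3\in D'$, replace it by $\{v_1,v_2\}$; if $v_1$ or $v_2$ lies in $D'$, add $v_3$; and otherwise the disjunctive domination of the \emph{retained} vertex $v_3$ --- whose only $G'$-neighbors are $v_1$ and $v_2$ --- forces two vertices of $D'$ inside $N_{G'}(v_1)\cup N_{G'}(v_2)$, i.e.\ genuine witnesses sitting on the base edge, which the paper then pairs with a single added vertex. (Even there, matching the side of the witness with the side of the away fan is the delicate step.) This retained-$v_3$ leverage is precisely what your reduction throws away: once you delete $v_3$ and contract $v_1v_2$, the statement ``$x$ is disjunctively dominated in $G_1$'' is too weak to force any vertex of $D_1$ onto the correct side of the base edge. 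So the $H_{12}$ case of your proposal is a genuine gap, not a routine verification.
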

	\begin{proof}[Proof of Claim~\ref{cl-10to21}]
		Suppose, to the contrary, that the subgraph of $G$ associated with the tree $T''$ in Figure~\ref{tree10to21}(a) corresponds to one of the regions $H_{11}$ and $H_{12}$ illustrated in Figure~\ref{tree10to21}(b) and~\ref{tree10to21}(c). Let $G'$ be the mop of order $n'$ obtained from $G$ by deleting the vertices in $V_4^{7}$, and let $G'$ have $k'$ vertices of degree~$2$. We note that $n' = n-4$ and $k' = k - 1$. Since $n \ge 13$, we have $n' \ge 9$. By the minimality of the mop $G$, we have $\gamma_2^d(G_1) \le \floor*{\frac{2}{9}(n'+k')} = \floor*{\frac{2}{9}(n-4+k-1)}= \floor*{\frac{2}{9}(n+k)}-1$. Let $D'$ be a $\gamma_2^d$-set of $G'$, and so $|D'| \le  \floor*{\frac{2}{9}(n+k)}-1$. If $T''$ corresponds to region $H_{11}$ shown in Figure~\ref{tree10to21}(b), then let $D = D' \cup \{v_3\}$.  The set $D$ is a 2DD-set of $G$, and so $\gamma_2^d(G) \le |D| \le |D'| + 1 \le \floor*{\frac{2}{9}(n+k)}$, a contradiction. Hence, $T''$ corresponds to region $H_{12}$ shown in Figure~\ref{tree2to9}(c).

		If $v_3 \in D'$, then let $D=(D'\setminus\{v_3\})\cup \{v_1,v_2\}$. If $D'\cap \{v_1,v_2\}\ne \emptyset$, then  let $D=D'\cup \{v_3\}$. In both cases, the resulting set $D$ is a 2DD-set of $G$, and so $\gamma_2^d(G)\le |D|\le |D'|+1 \le \floor*{\frac{2}{9}(n+k)}$, a contradiction. Hence, $D'\cap \{v_1,v_2,v_3\} = \emptyset$. Since $D'$ is a 2DD-set of $G'$, there exists a vertex $u\in D'$ such that $u\in N_{G'}(v_1)$ or $u\in N_{G'}(v_2)$. Without loss of generality, we may assume that  $u\in N_{G'}(v_1)$. Let $D^*=D'\cup \{v_2\}$. The set $D^*$ is a 2DD-set of $G$, and so  $\gamma_2^d(G)\le |D^*|\le |D'|+1\le \floor*{\frac{2}{9}(n+k)}$, a contradiction. Thus, $T''$ does not correspond to regions $H_3$ and $H_4$ shown in Figure~\ref{tree10to21}(b)-(c). We therefore infer that  $T''$ corresponds to the region $H_{13}$ illustrated in Figure~\ref{tree10to21}(d).
	\end{proof}
	
	In what follows, by Claim~\ref{cl-10to21}, the subgraph of $G$ associated with the tree $T''$ in Figure~\ref{tree10to21}(a) corresponds to the region $H_{13}$ illustrated in Figure~\ref{tree10to21}(d).
	
	\begin{claim}\label{tree10}
		The tree $T_{10}$ is not a maximal subtree of $T$.
	\end{claim}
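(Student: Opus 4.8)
The plan is to reuse, almost verbatim, the delete-and-extend reduction that disposed of $T_2,\dots,T_9$, now driven by the structure forced in Claim~\ref{cl-10to21}. First I would suppose, for a contradiction, that $T_{10}=T_v$ for the root vertex~$v$, and let $R_v$ be the shaded triangle of $G$ corresponding to~$v$. Since $T''$ is a subtree of $T_{10}$, Claim~\ref{cl-10to21} forces the $P_5$-portion of the associated region of $G$ to be the region $H_{13}$ of Figure~\ref{tree10to21}(d); the remaining triangles of $T_{10}$ are then glued onto $H_{13}$, and each admissible triangulation produces one subgraph of $G$ from a short finite list (the cases to be drawn in the figure accompanying this claim). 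I would arrange the notation so that a single argument covers all of these cases at once.

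The heart of the proof is the excision. I would let $G'$ be obtained from $G$ by deleting the interior vertices of the region associated with $T_{10}$, while keeping the one or two interface vertices through which $R_v$ attaches to the rest of $G$ --- these are the analogues of $v_7$ (and possibly $v_8$) in the earlier claims. As in those claims, $G'$ is again a mop, and if the retained interface reduces to a single outer edge I would contract it by Lemma~\ref{key} to form a mop $G_1$, exactly as in the proof of Claim~\ref{cl-9to}. Tracking the order and the number of degree-$2$ vertices, the excision removes a fixed number $c$ of vertices and at least $c'$ degree-$2$ vertices (the two arm-tips of the $P_5$ together with any degree-$2$ vertices forced by an internal triangle of the branch), so that $n'=n-c$ and $k'\le k-c'$. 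The minimality of $G$ then gives
\[
\gamma_2^d(G')\le\floor*{\frac{2}{9}(n'+k')}\le\floor*{\frac{2}{9}(n+k)}-2 .
\]
Before invoking minimality I would clear the small residual orders $5\le n'\le 7$ directly through Observation~\ref{obs3}, choosing its guaranteed $2$-element 2DD-set of $G'$ to contain the interface vertex so that it extends at unit cost to a $3$-element 2DD-set of $G$.

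To finish, I would take a $\gamma_2^d$-set $D'$ of $G'$ (or $D_1$ of $G_1$) and adjoin a fixed pair of vertices from the excised region, most plausibly $v_2$ together with the interface vertex, in the spirit of the choice $\{v_2,v_7\}$ made for $T_4$ through $T_7$. When a contraction was used this bifurcates into the subcases $x\in D_1$ and $x\notin D_1$, with the contracted vertex~$x$ replaced by its two preimages in the first subcase. In each case I would check that every deleted vertex is either adjacent to a vertex of the enlarged set or at distance~$2$ from two of its vertices, which yields $\gamma_2^d(G)\le|D'|+2\le\floor*{\frac{2}{9}(n+k)}$, the desired contradiction.

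I expect the real difficulty to lie not in the counting but in the disjunctive-domination bookkeeping of this final step: one must confirm, simultaneously across every triangulation case listed for $T_{10}$, that the single chosen pair of added vertices disjunctively dominates the whole interior of the $H_{13}$-region --- in particular the two degree-$2$ arm-tips and the middle vertices created by the $H_{13}$ triangulation --- while the interface vertices stay correctly dominated from within $G'$ (or from $x$) after the reduction. Finding one pair that succeeds in every case, so as to justify a uniform ``arguments that work in each case'' treatment, is the delicate point.
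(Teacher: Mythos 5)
Your proposal follows essentially the same route as the paper's proof: the paper deletes the seven interior vertices $V_2^9\setminus\{v_8\}$ of the $H_{13}$-based region (so $n'=n-7$ and $k'=k-2$, giving the needed $\floor*{\frac{2}{9}(n+k)}-2$ by minimality), clears the small orders $6\le n'\le 7$ via Observation~\ref{obs3}, and extends a minimum 2DD-set of $G'$ by exactly the pair you predicted, namely $\{v_1,v_2\}$ ($v_2$ plus the retained interface vertex $v_1$), to reach the contradiction. The only deviation is that no edge contraction is needed for this claim --- both interface vertices $v_1$ and $v_8$ are simply kept in $G'$ --- so your conditional fallback to Lemma~\ref{key} never fires.
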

	\begin{proof}[Proof of Claim~\ref{tree10}]
		Suppose, to the contrary, that $T_{10}$ is a maximal subtree of $T$, and so $T_{10} = T_v$. We therefore infer that the subgraph of $G$ associated with $T_{10}$ is obtained from the region $H_{13}$ in two possible ways, as illustrated in Figure~\ref{tree_1011}(a)-(b) where for notational convenience, we have interchanged the names of the vertices $v_1$ and $v_2$ in region $H_{13}$ in Figure~\ref{tree_1011}(b), where $v$ denotes the root of the subtree $T_v$ and where in this case we let $V(T_v) = \{v_1,v_2,v_{8}\}$ be the (shaded) triangle in $G$ associated with the vertex~$v$ as illustrated in Figure~\ref{tree_1011}(a)-(b).  In the following, we present arguments that work in both cases.

		Let $G'$ be the mop of order $n'$ obtained from $G$ by deleting the vertices in $V_2^9 \setminus \{v_8\}$, and let $G'$ have $k'$ vertices of degree~$2$. We note that $n' = n-7$ and $k' = k - 2$. Since $n \ge 13$, we have $n' \ge 6$. If $6 \le n' \le 7$, then by \cref{obs3}, there exists a 2DD-set $D'$ of $G'$ such that $v_{1}\in D'$ and  $|D'|=2$. Therefore, $D'\cup \{v_2\}$ is a 2DD-set of $G$, and so $\gamma_2^d(G)\le 3 \le  \floor*{\frac{2}{9}(n+k)}$, a contradiction. Hence, $n' \ge 8$. By the minimality of the mop $G$, we have $\gamma_2^d(G_1) \le \floor*{\frac{2}{9}(n'+k')} = \floor*{\frac{2}{9}(n-7+k-2)} = \floor*{\frac{2}{9}(n+k)}-2$. Let $D'$ be a $\gamma_2^d$-set of $G'$ and let $D=D'\cup\{v_1,v_2\}$. The set $D$ is a 2DD-set of $G$, and so $\gamma_2^d(G) \le |D| \le |D'| + 2 \le \floor*{\frac{2}{9}(n+k)}$, a contradiction.
	\end{proof}
	
	\begin{figure}[htbp]
		\begin{center}
			\includegraphics[scale=0.4]{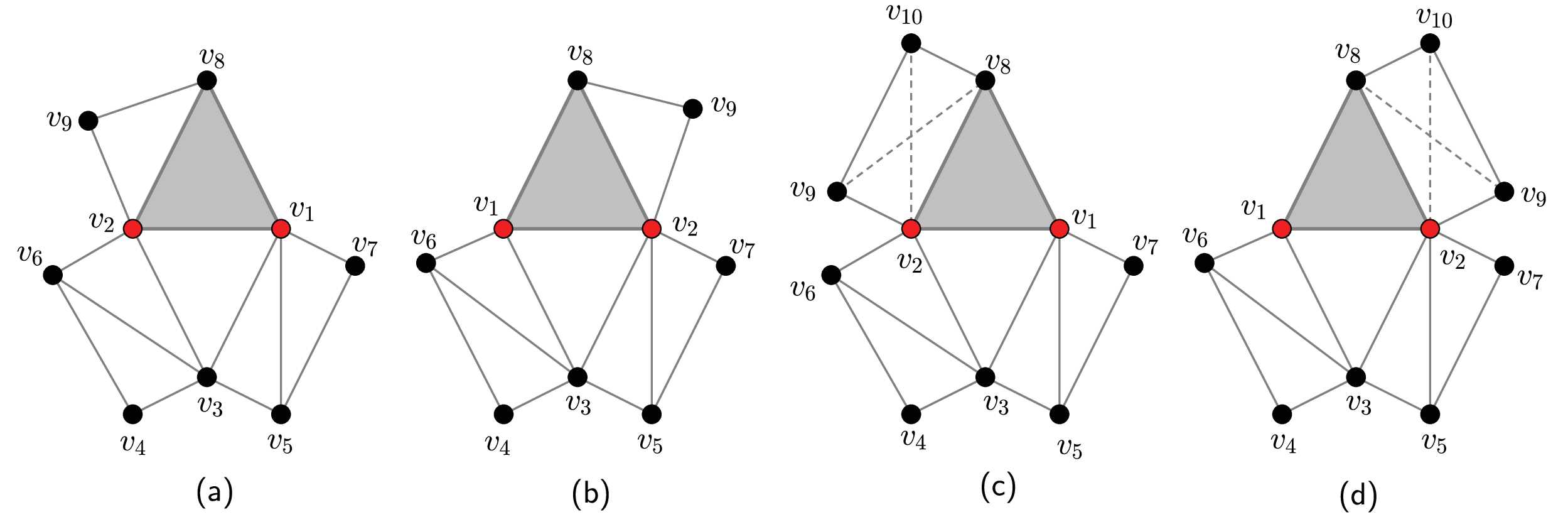}
			\caption{The regions of $G$ corresponding to trees $T_{10}$ and $T_{11}$. The red vertices show a 2DD-set of $G[V(G)\setminus V(G')]$.}\label{tree_1011}
		\end{center}
	\end{figure}

	\begin{claim}\label{tree11}
		The tree $T_{11}$ is not a maximal subtree of $T$.
	\end{claim}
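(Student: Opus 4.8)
The plan is to argue by contradiction, in exact parallel with the proof of Claim~\ref{tree10}. Suppose $T_{11} = T_v$. Since the path $T''$ is a subtree of $T_{11}$, Claim~\ref{cl-10to21} forces the subgraph of $G$ associated with $T''$ to be the region $H_{13}$. Building on this, I would first record the labelled subgraph associated with the whole of $T_{11}$ (as in the relevant panels of Figure~\ref{tree_1011}), marking the shaded root triangle $R_v$ and the diagonal of $G$ along which $T_v$ attaches to the remainder of $G$, and noting the (at most two) admissible triangulations of the region.

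I would then form $G'$ by deleting every interior vertex of the $T_{11}$-region while retaining the two endpoints of the attaching diagonal, so that this diagonal becomes an outer edge of $G'$; by Lemma~\ref{key}, $G'$ is a mop (contracting that outer edge first, if the count so demands). I would tally $n'$ and $k'$, tracking which removed triangle is internal, since each such triangle lowers the degree-$2$ count by one. The arithmetic is arranged to give $\floor*{\frac{2}{9}(n'+k')} = \floor*{\frac{2}{9}(n+k)} - 2$, precisely the slack needed to reinsert two vertices.

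For small residual order I would invoke \Cref{obs3}: when $n'$ is small there is a 2DD-set $D'$ of $G'$ of size~$2$ containing one endpoint of the attaching diagonal, and adjoining a single vertex of $R_v$ yields a 2DD-set of $G$ of size $3 \le \floor*{\frac{2}{9}(n+k)}$. Otherwise minimality of $G$ gives a $\gamma_2^d$-set $D'$ of $G'$ of size at most $\floor*{\frac{2}{9}(n+k)} - 2$; I would then set $D = D' \cup \{v_1,v_2\}$, where $v_1,v_2$ are two suitably chosen vertices of $R_v$, and verify that $D$ is a 2DD-set of $G$. This gives $\gamma_2^d(G) \le |D| \le |D'| + 2 \le \floor*{\frac{2}{9}(n+k)}$, the desired contradiction.

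The main obstacle is the disjunctive-domination check in the last step: across each admissible triangulation of the $T_{11}$-region I must confirm that the two reinserted vertices $v_1,v_2$---together with whatever $D'$ already supplies at the retained diagonal---disjunctively dominate every deleted interior vertex, either by direct adjacency or by furnishing two vertices of $D$ at distance~$2$ from it. The companion difficulty is pinning the count $(n',k')$ so that the slack is exactly~$2$ (neither $1$ nor $3$), since an off-by-one there would either invalidate the two-vertex extension or fail to contradict the bound.
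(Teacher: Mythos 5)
Your proposal is correct and is essentially the paper's own argument for Claim~\ref{tree11}: pin the region down via Claim~\ref{cl-10to21} (it is built from $H_{13}$, with two admissible triangulations of the remaining quadrilateral), delete the region while keeping the attachment to the rest of $G$, dispose of small residual orders with Observation~\ref{obs3} using a size-$2$ set through an endpoint of the attaching diagonal, otherwise invoke minimality of the counterexample, and reinsert two vertices of the root triangle to reach the contradiction $\gamma_2^d(G) \le \floor*{\frac{2}{9}(n+k)}$. The only deviations are harmless bookkeeping: the paper retains all three vertices of the root triangle in $G'$ (deleting $7$ vertices, so $n'=n-7$, $k'=k-2$ and the slack is exactly $2$), whereas you also delete the third root-triangle vertex (so $n'=n-8$ and $k'\le k-1$, since the region loses three degree-$2$ vertices and at most two new ones are created), which still yields slack at least $2$ --- note that slack \emph{at least} $2$ is all that is required, so your concern that slack $3$ would ``fail to contradict the bound'' is unfounded (only slack $\le 1$ is harmful), and the fact that your $G'$ is a mop follows from the attaching edge being a diagonal of $G$, not from Lemma~\ref{key}, which concerns edge contraction.
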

	\begin{proof}[Proof of Claim~\ref{tree11}]
		Suppose, to the contrary, that $T_{11}$ is a maximal subtree of $T$, and so $T_{11} = T_v$. We therefore infer that the subgraph of $G$ associated with $T_{11}$ is obtained from the region $H_{13}$ in two possible ways, as illustrated in Figure~\ref{tree_1011}(c)-(d) where for notational convenience, we have interchanged the names of the vertices $v_1$ and $v_2$ in region $H_{13}$ illustrated in Figure~\ref{tree_1011}(d), where $v$ denotes the root of the subtree $T_v$ and where in this case we let $V(T_v) = \{v_1,v_2,v_{8}\}$ be the (shaded) triangle in $G$ associated with the vertex~$v$ as illustrated in Figure~\ref{tree_1011}(c)-(d). The region $v_2v_8v_{10}v_9$ can be triangulated by adding either the edge $v_2v_{10}$ or $v_8v_{9}$, as indicated by the dotted lines in Figure~\ref{tree_1011}(c)-(d).  In the following, we present arguments that work in both cases.
		
		Let $G'$ be the mop of order $n'$ obtained from $G$ by deleting the vertices in $V_3^{10}\setminus\{v_8\}$, and let $G'$ have $k'$ vertices of degree~$2$. We note that $n' = n-7$ and $k' = k - 2$. Since $n \ge 13$, we have $n' \ge 6$. If $6 \le n' \le 7$, then by \cref{obs3}, there exists a 2DD-set $D'$ of $G'$ such that $|D'|= 2$ and $v_{1}\in D'$. Therefore, $D'\cup \{v_2\}$ is a 2DD-set of $G$, and so $\gamma_2^d(G)\le 3 \le  \floor*{\frac{2}{9}(n+k)}$, a contradiction. Hence, $n' \ge 8$. By the minimality of the mop $G$, we have $\gamma_2^d(G_1) \le \floor*{\frac{2}{9}(n'+k')} = \floor*{\frac{2}{9}(n-7+k-2)} \le \floor*{\frac{2}{9}(n+k)}-2$. Let $D'$ be a $\gamma_2^d$-set of $G'$ and let $D=D'\cup\{v_1,v_2\}$. The set $D$ is a 2DD-set of $G$, and so $\gamma_2^d(G) \le |D| \le |D'| + 2 \le \floor*{\frac{2}{9}(n+k)}$, a contradiction.
	\end{proof}

	\begin{claim}\label{tree12}
		The tree $T_{12}$ is not a maximal subtree of $T$.
	\end{claim}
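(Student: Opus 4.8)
The plan is to argue by contradiction, following the deletion-and-reinsertion template established in Claims~\ref{tree1}--\ref{tree11}. Suppose $T_{12}=T_v$ for some vertex $v$ of $T$. Since $12$ lies in both index sets $([9]\setminus\{1\})\cup\{12,13,18,19,24,25\}$ and $([15]\setminus[9])\cup\{20,26\}$, the subtree $T_{12}$ contains both the path $T'$ and the path $T''$ as subtrees; hence by \Cref{cl3to10,cl-10to21} the region of $G$ determined by $T_{12}$ is glued together from copies of $H_{10}$ and $H_{13}$. First I would enumerate the finitely many admissible internal triangulations of this region (as was done in the earlier figures), fix the labelling $v_1,v_2,\dots$ of its vertices so that the shaded root triangle $R_v$ and the ``gluing'' vertices that attach the region to the rest of $G$ are identified, and then present one argument that covers every triangulation simultaneously.

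Next I would delete the pendant block $S$ of the region --- all its vertices except the gluing vertices (and, where convenient, one interior vertex that must be retained, exactly as in the preceding claims) --- to form $G'=G-S$. Using \Cref{key} (contracting the newly created outer edge between the gluing vertices when one extra unit of slack is needed, to form a mop $G_1$) together with \Cref{internal}, I would check that the result is again a mop, compute its order $n'=n-|S|$ (or $n_1=n'-1$), and bound its number $k'$ of degree-$2$ vertices by $k$ minus the number of internal triangles destroyed. The finitely many small values of $n'$ (resp.\ $n_1$) are disposed of directly through \Cref{obs3}, exactly as in the proofs of \Cref{tree7,tree8,tree10}, giving a short explicit 2DD-set of $G$; thereafter one may assume the order is large enough to invoke minimality.

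Invoking the minimality of $G$ then yields $\gamma_2^d(G_1)\le\floor*{\frac{2}{9}(n_1+k_1)}\le\floor*{\frac{2}{9}(n+k)}-c$, where $c$ is chosen to equal the number of vertices to be reinserted. Taking a $\gamma_2^d$-set $D_1$ of $G_1$ and splitting on whether the contracted vertex $x$ belongs to $D_1$, I would extend $D_1$ to a 2DD-set $D$ of $G$ by adding $c$ suitably placed vertices of $S$ (for instance the analogue of $v_2$ together with the appropriate boundary vertices of the two branches), so that every deleted vertex is either adjacent to a member of $D$ or has two members of $D$ at distance~$2$. This gives $\gamma_2^d(G)\le|D_1|+c\le\floor*{\frac{2}{9}(n+k)}$, the desired contradiction.

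The main obstacle will be the case bookkeeping in the middle two steps: because $T_{12}$ splices the $H_{10}$ and $H_{13}$ templates together, the region admits the largest family of triangulations encountered so far, and the reinsertion set must disjunctively dominate all of them at once. Concretely, I must balance three constraints --- that $|S|$ together with the number of destroyed internal triangles creates slack at least $c$ in the floor, that the count $k'$ of lost degree-$2$ vertices is accounted for correctly (this is where the choice between deleting with or without a subsequent contraction matters), and that the $c$ reinserted vertices supply the required distance-$2$ domination to the handful of region vertices left outside $S$. Verifying this last point \emph{uniformly} across every triangulation, rather than figure-by-figure, is the delicate part.
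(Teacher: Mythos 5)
Your plan follows the same template as the paper's proof (delete the pendant region, dispose of small orders via \cref{obs3}, invoke minimality, reinsert a few vertices), but as written it is a description of the method rather than a proof of the claim: you never commit to a deletion set $S$, never compute the arithmetic that produces the slack, never name the constant $c$ or the reinserted vertices, and you explicitly defer the verification that the reinserted vertices disjunctively dominate every deleted vertex in every admissible triangulation. For this claim that verification \emph{is} the content, so deferring it is a genuine gap rather than a routine omission.

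For comparison, the paper's proof is a single concrete instantiation and needs none of the optional machinery you keep in reserve. The region associated with $T_{12}$ has vertex set $\{v_1,\ldots,v_{12}\}$ with root triangle $\{v_1,v_2,v_8\}$, obtained from $H_{13}$ by triangulating $v_2v_8v_9v_{10}v_{11}v_{12}$ according to \cref{cl3to10}; this yields only four cases. One deletes $V_3^{12}\setminus\{v_8\}$, i.e.\ nine vertices, which destroys three vertices of degree~$2$, so $n'=n-9$ and $k'=k-3$ and hence $n'+k'=(n+k)-12$, giving $\gamma_2^d(G')\le\floor*{\frac{2}{9}(n+k)}-2$ once $n'\ge 8$ (the cases $4\le n'\le 7$ are handled directly, using \cref{obs3} with a set containing $v_1$). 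The reinsertion set is just $\{v_1,v_2\}$ (so $c=2$), and no contraction via \cref{key} is needed: because $v_8$ is retained in $G'$ and every deleted vertex lies on one of the two fans hanging off the triangle $\{v_1,v_2,v_8\}$, each deleted vertex is adjacent to one of $v_1,v_2,v_8$ or has two of them at distance~$2$, uniformly over the four triangulations. Your stated worry about balancing the three constraints is legitimate in general, but for $T_{12}$ the balance works out with the simplest choice; a complete write-up would have to exhibit exactly this (or an equivalent) choice and check it against the figures, which your proposal stops short of doing.
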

	\begin{proof}[Proof of Claim~\ref{tree12}]
		Suppose, to the contrary, that $T_{12}$ is a maximal subtree of $T$, and so $T_{12} = T_v$. We infer that the subgraph of $G$ associated with $T_{12}$ is obtained from region $H_{13}$ and by triangulating the region $v_2v_{8}v_{9}v_{10}v_{11}v_{12}$ according to \Cref{cl3to10} as illustrated in Figure~\ref{tree_12}(a)-(d), where $v$ denotes the root of the subtree $T_v$ and where in this case we let $V(T_v) = \{v_1,v_2,v_{8}\}$ be the (shaded) triangle in $G$ associated with the vertex~$v$. In the following, we present arguments that work in each case.
		
		\begin{figure}[htbp]
			\begin{center}
				\includegraphics[scale=0.36]{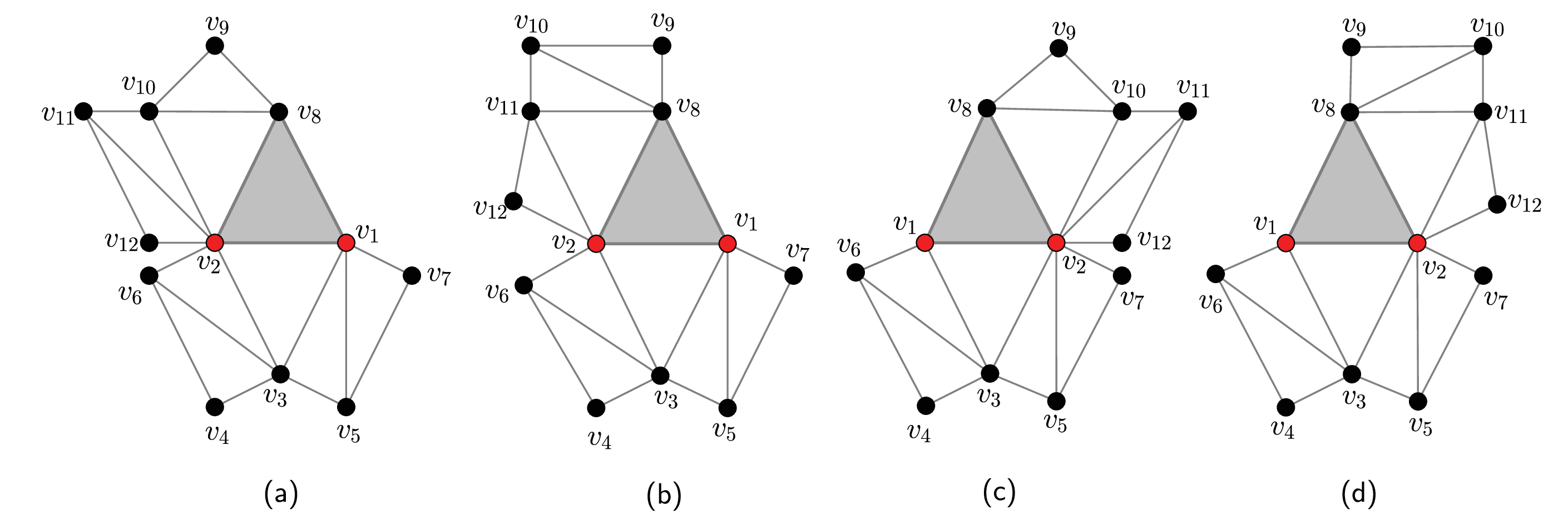}
				\caption{The regions of $G$ corresponding to tree $T_{12}$. The red vertices show a 2DD-set of $G[V(G)\setminus V(G')]$.}\label{tree_12}
			\end{center}
		\end{figure}
		
		Let $G'$ be a graph of order $n'$ obtained from $G$ by deleting the vertices in $V_3^{12}\setminus\{v_8\}$, and let $G'$ have $k'$ vertices of degree~$2$. We note that $n' = n-9$ and $k' = k - 3$. Since $n \ge 13$, we have $n' \ge 4$. If $n'=4$, then $\{v_1,v_2\}$ is a 2DD-set of $G$, and hence $\gamma_2^d(G)\le 2 \le \floor*{\frac{2}{9}(n+k)}$, a contradiction. If $5 \le n' \le 7$, then by \cref{obs3}, there exists a 2DD-set $D'$ of $G'$ such that $|D'|= 2$ and $v_{1}\in D'$. Therefore in this case, $D'\cup \{v_2\}$ is a 2DD-set of $G$, and so $\gamma_2^d(G)\le 3 \le  \floor*{\frac{2}{9}(n+k)}$, a contradiction. Hence, $n' \ge 8$. By the minimality of the mop $G$, we have $\gamma_2^d(G_1) \le \floor*{\frac{2}{9}(n'+k')} = \floor*{\frac{2}{9}(n-9+k-3)} \le \floor*{\frac{2}{9}(n+k)}-2$. Let $D'$ be a $\gamma_2^d$-set of $G'$ and let $D=D'\cup\{v_1,v_2\}$. The set $D$ is a 2DD-set of $G$, and so $\gamma_2^d(G) \le |D| \le |D'| + 2 \le \floor*{\frac{2}{9}(n+k)}$, a contradiction.
	\end{proof}

	\begin{claim}\label{tree13}
		The tree $T_{13}$ is not a maximal subtree of $T$.
	\end{claim}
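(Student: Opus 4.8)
The plan is to run the same reduction as in Claims~\ref{tree10}--\ref{tree12}, adjusting only the deletion set and the reinserted vertices to the region that $T_{13}$ forces. First I would fix the shape of the associated subgraph of $G$. Since $T_{13}$ contains the path $T''$, Claim~\ref{cl-10to21} pins the corresponding part of $G$ to the region $H_{13}$, and the remaining branch of $T_{13}$ is triangulated according to Claim~\ref{cl3to10} together with Lemma~\ref{lem1}\ref{5distance}--\ref{6distance}. This leaves only finitely many triangulations, all sharing the same shaded root triangle $R_v$ (where $T_v=T_{13}$), and I would write one argument covering them all simultaneously, exactly as the earlier claims do.

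Next I would strip off the descendants of $v$. Delete from $G$ the vertices corresponding to the descendant triangles of $v$, keeping the root triangle $R_v$ intact (the analogues of $v_1,v_2,v_8$ in Claim~\ref{tree12}), so that an edge of $R_v$ becomes an outer edge of the retained part. This yields a mop $G'$ with $n'=n-d$ and $k'=k-r$ for the constants $d,r$ read off the figure; these are arranged so that $(n-n')+(k-k')$ is large enough to force the floor bound to drop by the number $c$ of vertices I intend to reinsert. If the bookkeeping needs an extra unit of slack, I would contract that outer edge of $G'$ via Lemma~\ref{key} to obtain a mop $G_1$ with $n_1=n'-1$, precisely as in Claim~\ref{cl-9to}.

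I would then dispose of the small-order cases: when $n'$ (or $n_1$) falls in the range covered by Observation~\ref{obs3}, that observation supplies a $2$-element 2DD-set of the small mop containing a prescribed boundary vertex, and adjoining one or two descendant vertices yields an explicit small 2DD-set of $G$ that already meets the bound $\lfloor\frac{2}{9}(n+k)\rfloor$ (using $n\ge13$). For the remaining large-order case I invoke the minimality of $G$ to get $\gamma_2^d(G')\le\lfloor\frac{2}{9}(n'+k')\rfloor\le\lfloor\frac{2}{9}(n+k)\rfloor-c$, take a $\gamma_2^d$-set $D'$ of $G'$ (lifting through the contraction when $G_1$ is used, splitting into the subcases $x\in D_1$ and $x\notin D_1$), and adjoin $c$ descendant vertices so that the union $D$ disjunctively dominates every deleted vertex. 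This gives $\gamma_2^d(G)\le|D'|+c\le\lfloor\frac{2}{9}(n+k)\rfloor$, contradicting the choice of $G$.

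The main obstacle is this last verification. The $c$ reinserted vertices must disjunctively dominate the entire deleted region and, at the same time, must leave no vertex on the interface between the deleted and retained parts undominated --- in particular the retained vertices of $R_v$ and, when $G_1$ is used, the two preimages of the contracted vertex $x$. Because this has to hold uniformly across every triangulation admitted by the figure and across both contraction subcases, the delicate point is to choose the reinserted vertices --- typically one or two vertices of $R_v$ together with a descendant --- so that the required distance-$2$ domination of these interface vertices is guaranteed in every case at once.
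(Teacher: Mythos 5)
Your overall strategy --- fix the region via \cref{cl-10to21} and \cref{cl3to10}, delete the region, settle small orders with \cref{obs3}, and for large orders invoke minimality and reinsert a constant number of vertices --- is exactly the paper's strategy, so the approach itself is not the issue. The problem is that everything that constitutes the actual proof is precisely what you defer: you never fix the deletion set, never do the arithmetic, and never exhibit or verify the reinserted 2DD-set; you explicitly label this ``the main obstacle'' and leave it unresolved. In the paper these choices are made concretely: one deletes $V_2^{13}\setminus\{v_8\}$, i.e.\ eleven vertices \emph{including} the root-triangle vertex $v_2$, which gives $n'=n-11$ and $k'=k-3$, hence $\floor*{\frac{2}{9}(n'+k')}\le\floor*{\frac{2}{9}(n+k)}-3$ (this works because $\frac{2}{9}\cdot 14>3$), and then $D=D'\cup\{v_1,v_2,v_9\}$ is checked to be a 2DD-set of $G$ in all eight admissible triangulations. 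No contraction is needed for this claim.

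Moreover, the one concrete commitment you do make --- keeping the root triangle $\{v_1,v_2,v_8\}$ intact and deleting only the proper descendants --- breaks the bookkeeping. That deletion removes only ten vertices, and $v_2$ then has only $v_1$ and $v_8$ as neighbors in $G'$, so it becomes a \emph{new} vertex of degree~$2$; consequently $(n-n')+(k-k')\le 10+(3-1)=12$, and since $\frac{2}{9}\cdot 12<3$ the floor bound is only guaranteed to drop by $2$. Two reinserted vertices cannot disjunctively dominate the ten deleted vertices, which are spread over two separate branches hanging off different edges of the root triangle, so the count does not close. Your fallback of contracting an outer edge of $G'$ via \cref{key} could in principle recover one unit of slack, but then you must also handle the lifting subcases $x\in D_1$ and $x\notin D_1$ and re-verify domination of both branches and of the contracted interface in every triangulation --- none of which is carried out. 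As written, the proposal is a plan whose decisive step is missing and whose stated bookkeeping, taken literally, fails.
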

	\begin{proof}[Proof of Claim~\ref{tree13}]
		Suppose, to the contrary, that $T_{13}$ is a maximal subtree of $T$, and so $T_{13} = T_v$. We infer that the subgraph of $G$ associated with $T_{13}$ is obtained from region $H_{13}$ and  (i) either by triangulating the region $v_2v_{9}v_{10}v_{11}v_{12}v_{13}$ according to \Cref{cl3to10} as illustrated in Figure~\ref{tree_4}(a), (b), (e), and (f) or (ii) by triangulating the region $v_8v_{9}v_{10}v_{11}v_{12}v_{13}$ according to \Cref{cl3to10} as illustrated in Figure~\ref{tree_4}(c), (d), (g), and (h), where we let $V(T_v) = \{v_1,v_{2},v_{8}\}$ be the (shaded) triangle in $G$ associated with the vertex~$v$. In the following, we present arguments that work in each case.
		
		\begin{figure}[htbp]
			\begin{center}
				\includegraphics[scale=0.35]{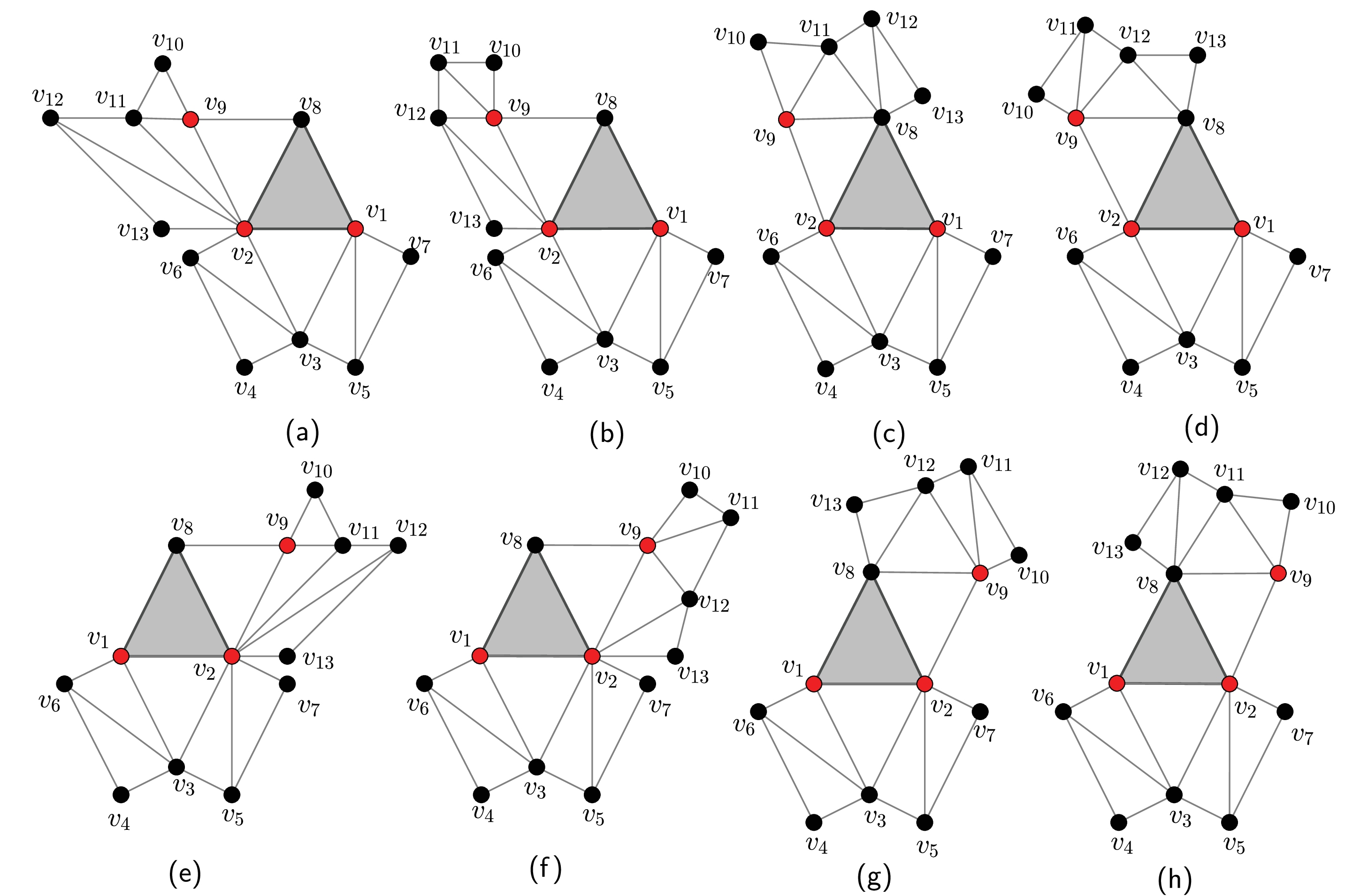}
				\caption{The regions of $G$ corresponding to tree $T_{13}$. The red vertices show a 2DD-set of $G[V(G)\setminus V(G')]$.}\label{tree_13}
			\end{center}
		\end{figure}
		
		Let $G'$ be a graph of order $n'$ obtained from $G$ by deleting the vertices $V_2^{13}\setminus\{v_8\}$, and let $G'$ have $k'$ vertices of degree~$2$. We note that $n' = n-11$ and $k' = k - 3$. Since $n \ge 13$, we have $n' \ge 2$. If $2 \le n' \le 4$, then $\{v_1,v_2,v_9\}$ is a 2DD-set of $G$, and hence $\gamma_2^d(G)\le 3 \le \floor*{\frac{2}{9}(n+k)}$, a contradiction. If $5 \le n' \le 7$, then by \cref{obs3}, there exists a 2DD-set $D'$ of $G'$ such that $|D'|= 2$ and $v_{1}\in D'$. Therefore, $D'\cup \{v_2,v_9\}$ is a 2DD-set of $G$, and so $\gamma_2^d(G)\le 4 \le  \floor*{\frac{2}{9}(n+k)}$, a contradiction. Hence, $n' \ge 8$. By the minimality of the mop $G$, we have $\gamma_2^d(G_1) \le \floor*{\frac{2}{9}(n'+k')} = \floor*{\frac{2}{9}(n-11+k-3)} \le \floor*{\frac{2}{9}(n+k)}-3$. Let $D'$ be a $\gamma_2^d$-set of $G'$ and let $D=D'\cup\{v_1,v_2,v_9\}$. The set $D$ is a 2DD-set of $G$, and so $\gamma_2^d(G) \le |D| \le |D'| + 3 \le \floor*{\frac{2}{9}(n+k)}$, a contradiction.
	\end{proof}

	\begin{claim}\label{tree14}
		The tree $T_{14}$ is not a maximal subtree of $T$.
	\end{claim}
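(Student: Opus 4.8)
The plan is to follow the minimal-counterexample template of Claims~\ref{tree1}--\ref{tree13}. Suppose for contradiction that $T_{14}=T_v$, and let $R_v$ be the shaded triangle of $G$ associated with the root~$v$. Since $T_{14}$ contains the path $T''$ as a subtree, Claim~\ref{cl-10to21} forces the subgraph determined by $T''$ to be the region $H_{13}$ of Figure~\ref{tree10to21}(d); this fixes one fan of the configuration, while the second fan is triangulated according to Claim~\ref{cl3to10}, producing the finitely many labelled cases shown for $T_{14}$. First I would read off from that figure the boundary labels $v_1,v_2,\ldots$, the diagonal separating $V(T_v)$ from the remainder of $G$, and the two apex vertices of the fans.

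Next I excise the interior of $T_v$: I set $G'=G-\bigl(V_j^i\setminus\{v_8\}\bigr)$ for the indices read off from the figure, retaining the endpoints of the separating diagonal so that this edge becomes an outer edge of $G'$ and $G'$ is directly a mop—no contraction being needed, exactly as in Claims~\ref{tree10}--\ref{tree13} (should a merge be required I would instead contract and invoke Lemma~\ref{key}). Counting deleted vertices and the internal triangles destroyed, I record $n'=n-m$ and $k'=k-c$; by analogy with Claim~\ref{tree13} I expect $m=11$ and $c=3$, so that $n\ge 13$ gives $n'\ge 2$ and the order of $G'$ is controlled.

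I then split into the standard regimes. For small $n'$ I dominate $G$ directly: when $2\le n'\le 4$ a fixed three-vertex set such as $\{v_1,v_2,v_9\}$ is already a 2DD-set of $G$, and when $5\le n'\le 7$ Observation~\ref{obs3} yields a $2$-element 2DD-set $D'$ of $G'$ containing the retained boundary vertex, whence $D'\cup\{v_2,v_9\}$ has at most four vertices and disjunctively dominates $G$; either way $\gamma_2^d(G)\le 4\le\floor*{\frac{2}{9}(n+k)}$, a contradiction. For $n'\ge 8$ I apply minimality to get $\gamma_2^d(G')\le\floor*{\frac{2}{9}(n'+k')}\le\floor*{\frac{2}{9}(n+k)}-3$, take a $\gamma_2^d$-set $D'$ of $G'$, and form $D=D'\cup\{v_1,v_2,v_9\}$. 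Then $\gamma_2^d(G)\le|D|\le|D'|+3\le\floor*{\frac{2}{9}(n+k)}$, the desired contradiction.

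The crux is checking that $D$ really is a 2DD-set of $G$ in each displayed triangulation simultaneously. The vertex $v_2$ is the apex of the distance-$2$ fan inherited from $H_{13}$, so it is adjacent to every vertex of that fan, in particular to its two degree-$2$ ears; the vertices $v_1$ and $v_9$ play the analogous apex role for the second fan. The delicate points are the vertices lying on the separating diagonal and any ear whose only $G$-neighbours were deleted: for each such vertex I must confirm that it is adjacent to a vertex of $D$, or else has two vertices of $D$ at distance~$2$. Pinning down the apex vertices and the separating diagonal precisely enough, from the figure, so that all of these adjacency and distance-$2$ conditions hold at once across every triangulation of $T_{14}$ is the main obstacle; once the labels are fixed, the verification is the same routine check performed in the preceding claims.
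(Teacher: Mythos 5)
Your skeleton coincides with the paper's proof: assume $T_{14}=T_v$ with root triangle $\{v_1,v_2,v_8\}$, delete $V_2^{13}\setminus\{v_8\}$ (no contraction needed), record $n'=n-11$ and $k'=k-3$, dispose of $2\le n'\le 4$ directly and of $5\le n'\le 7$ via \Cref{obs3}, and for $n'\ge 8$ apply minimality to get $\gamma_2^d(G')\le\floor*{\frac{2}{9}(n+k)}-3$ and add three vertices. Those counts and thresholds are exactly the paper's.

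There are, however, two concrete problems. First, your structural identification of $T_{14}$ is wrong: you triangulate one branch as $H_{13}$ via \Cref{cl-10to21} and the other according to \Cref{cl3to10} (i.e., as $H_{10}$). That is the structure of $T_{13}$, handled in the preceding claim, not of $T_{14}$: the paper states that $T'$ is a subtree of $T_i$ only for $i\in([9]\setminus\{1\})\cup \{12,13,18,19,24,25\}$, so $T_{14}$ has no $T'$-branch. In $T_{14}$ both branches below the root are copies of $T''$, and hence both attached regions (in the paper, the region $v_2v_8v_{13}v_9v_{10}v_{11}v_{12}$ as well as the region below $v_1v_2$) are forced to be $H_{13}$ by \Cref{cl-10to21}. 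The finite list of configurations you propose to enumerate is therefore the wrong list. Second, your added set $\{v_1,v_2,v_9\}$ is the one suited to $T_{13}$, where $v_9$ is a fan apex of the $H_{10}$-region; for the actual $T_{14}$ the paper adds the root triangle $\{v_1,v_2,v_8\}$, and this choice is what makes the verification you explicitly defer (``the main obstacle'') immediate: $v_1,v_2$ are the two apexes of the first $H_{13}$ double-fan and $v_2,v_8$ of the second, so every deleted vertex is adjacent to a vertex of $D$ in every one of the four figures, with no distance-$2$ argument required. With $\{v_1,v_2,v_9\}$ the ears of the fan apexed at $v_8$ are not adjacent to $D$ and must be saved by finding two vertices of $D$ at distance~$2$, a check that depends on the triangulation and that your proposal never performs. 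So the approach is the right one, but as written the proof both enumerates the wrong configurations and leaves the decisive domination check open.
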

	\begin{proof}[Proof of Claim~\ref{tree14}]
		Suppose, to the contrary, that $T_{14}$ is a maximal subtree of $T$, and so $T_{14} = T_v$. We infer that the subgraph of $G$ associated with $T_{14}$ is obtained from region $H_{13}$ and by triangulating the region $v_2v_{8}v_{13}v_9v_{10}v_{11}v_{12}$ according to \Cref{cl-10to21} as illustrated in Figure~\ref{tree_14}(a)-(d), where $v$ denotes the root of the subtree $T_v$ and where in this case we let $V(T_v) = \{v_1,v_2,v_{8}\}$ be the (shaded) triangle in $G$ associated with the vertex~$v$. In the following, we present arguments that work in each case.
		
		\begin{figure}[htbp]
			\begin{center}
				\includegraphics[scale=0.37]{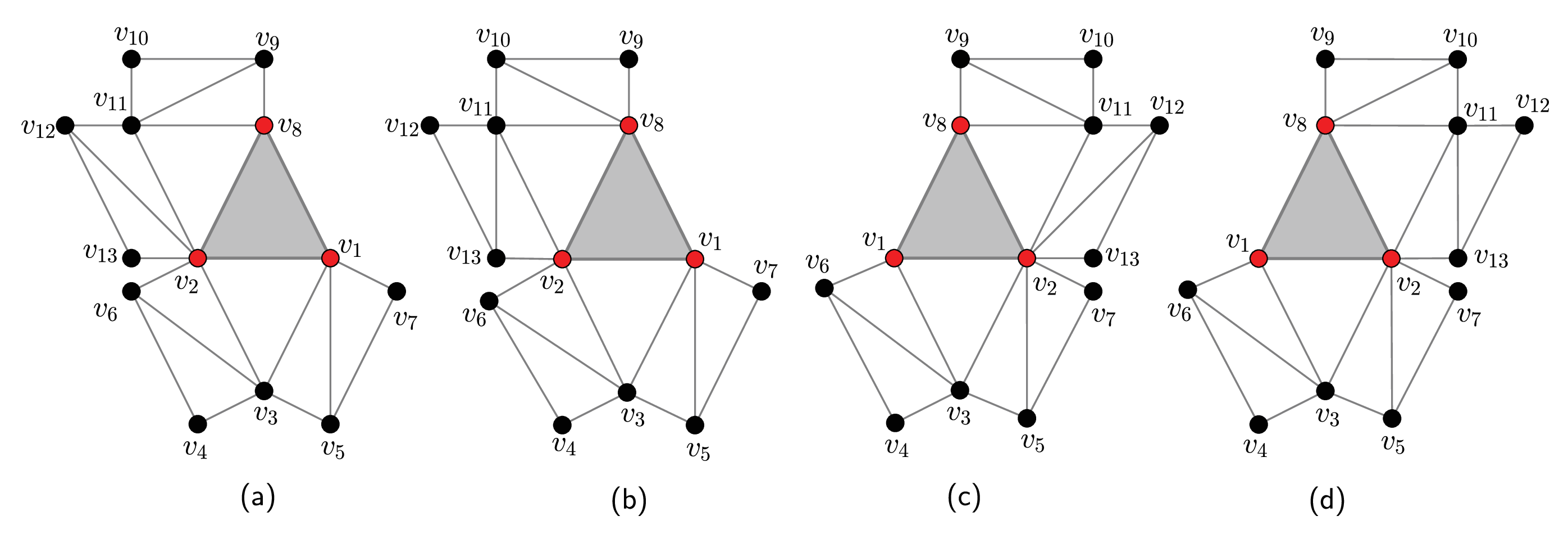}
				\caption{The regions of $G$ corresponding to tree $T_{14}$. The red vertices show a 2DD-set of $G[V(G)\setminus V(G')]$.}\label{tree_14}
			\end{center}
		\end{figure}
		
		Let $G'$ be a graph of order $n'$ obtained from $G$ by deleting the vertices in $V_2^{13}\setminus\{v_8\}$, and let $G'$ have $k'$ vertices of degree~$2$. We note that $n' = n-11$ and $k' = k - 3$. Since $n \ge 13$, we have $n' \ge 2$. If $2 \le n' \le 4$, then $\{v_1,v_2,v_8\}$ is a 2DD-set of $G$, and hence $\gamma_2^d(G)\le 2 \le  \floor*{\frac{2}{9}(n+k)}$, a contradiction. If $5 \le n' \le 7$, then by \cref{obs3}, there exists a 2DD-set $D'$ of $G'$ such that $|D'|= 2$ and $v_{1}\in D'$. Therefore, $D'\cup \{v_2,v_8\}$ is a 2DD-set of $G$, and so $\gamma_2^d(G)\le 4 \le  \floor*{\frac{2}{9}(n+k)}$, a contradiction. Hence, $n' \ge 8$. By the minimality of the mop $G$, we have $\gamma_2^d(G_1) \le \floor*{\frac{2}{9}(n'+k')} = \floor*{\frac{2}{9}(n-11+k-3)} \le \floor*{\frac{2}{9}(n+k)}-3$. Let $D'$ be a $\gamma_2^d$-set of $G'$ and let $D=D'\cup\{v_1,v_2,v_8\}$. The set $D$ is a 2DD-set of $G$, and so $\gamma_2^d(G) \le |D| \le |D'| + 3 \le \floor*{\frac{2}{9}(n+k)}$, a contradiction.
	\end{proof}

	\begin{claim}\label{tree15}
		The tree $T_{15}$ is not a maximal subtree of $T$.
	\end{claim}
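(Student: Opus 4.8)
The plan is to argue by contradiction in exactly the same manner as the preceding claims. Suppose that $T_{15}$ is a maximal subtree of $T$, say $T_{15} = T_v$, and let $R_v$ be the (shaded) triangle of $G$ corresponding to the root $v$. Since $T''$ is a subtree of $T_{15}$, Claim~\ref{cl-10to21} forces the associated sub-region of $G$ to be the region $H_{13}$ of Figure~\ref{tree10to21}(d); the remaining portion is then triangulated according to Claims~\ref{cl3to10} and~\ref{cl-10to21}, producing a finite list of admissible regions of $G$ (to be displayed in a figure analogous to those for Claims~\ref{tree13} and~\ref{tree14}). Following the neighbouring claims I would take $V(R_v)=\{v_1,v_2,v_8\}$, so that the two arms of $v$ are developed on the $v_1v_2v_3$ side and on the $v_2v_8$ side, and I would then give one argument that is uniform over all of these cases.

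First I would delete from $G$ all vertices lying strictly inside the region of $T_{15}$ except for the boundary vertex $v_8$ --- that is, a set of the form $V_2^{m}\setminus\{v_8\}$ for the appropriate top index $m$ --- and call the result $G'$. By \cref{key,internal} the graph $G'$ is again a mop, and I would record its order $n'$ and its number $k'$ of degree-$2$ vertices, tracking that every internal triangle destroyed lowers $k$ by one (so $k'=k-3$ with $n'=n-11$ in the fully developed cases, exactly as in Claims~\ref{tree13} and~\ref{tree14}). As there, small residual orders must be handled separately: if $2\le n'\le 4$ I would exhibit an explicit $2$DD-set of $G$ of size at most $3$, and if $5\le n'\le 7$ I would invoke \cref{obs3} to obtain a two-element $2$DD-set $D'$ of $G'$ with $v_8\in D'$, adjoin two further vertices to dominate the removed part, and verify directly that $\gamma_2^d(G)\le 4\le\floor*{\frac{2}{9}(n+k)}$.

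Once $n'\ge 8$, the minimality of the counterexample yields $\gamma_2^d(G')\le\floor*{\frac{2}{9}(n'+k')}\le\floor*{\frac{2}{9}(n-11+k-3)}\le\floor*{\frac{2}{9}(n+k)}-3$, the slack of $3$ arising because $\frac{2}{9}\cdot 14>3$. Taking a $\gamma_2^d$-set $D'$ of $G'$ and setting $D=D'\cup\{v_1,v_2,v_9\}$ --- the three added vertices chosen so that, together with the surviving $v_8$, they disjunctively dominate every deleted vertex in each triangulation case --- I would obtain a $2$DD-set of $G$ with $\gamma_2^d(G)\le|D'|+3\le\floor*{\frac{2}{9}(n+k)}$, contradicting $\gamma_2^d(G)>\floor*{\frac{2}{9}(n+k)}$.

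The main obstacle is the verification concealed in that last line: I must check that the single augmenting set $\{v_1,v_2,v_9\}$ (together with $v_8$) disjunctively dominates all of the deleted vertices simultaneously across every admissible triangulation of the two arms, and that each vertex shared between $G'$ and the removed region stays disjunctively dominated in $G$ --- that is, every such boundary vertex either keeps a neighbour in $D'$ or gains two vertices of $D$ at distance~$2$. Securing one augmenting set that works uniformly, rather than a separate patch for each figure, is the crux; if no uniform choice exists I would split into the few sub-cases distinguished by which diagonal triangulates the region $v_2v_8\ldots$, precisely as the dotted-edge alternatives are treated in Claims~\ref{tree6} and~\ref{tree11}.
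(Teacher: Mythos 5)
Your plan has a genuine structural gap: you have modelled $T_{15}$ on the wrong tree. You assume the root triangle is $\{v_1,v_2,v_8\}$ and that the associated region is ``fully developed'' as in Claims~\ref{tree13} and~\ref{tree14}, so that one can delete a set of the form $V_2^{m}\setminus\{v_8\}$ with $n'=n-11$ and $k'=k-3$. In the paper, however, the region associated with $T_{15}$ is just the $H_{13}$-region together with two further triangles: the root triangle is $V(T_v)=\{v_2,v_8,v_9\}$ and the whole region lives on $v_1,\ldots,v_9$. There is no deletion of eleven vertices available; the most one can remove is six or seven vertices, with $k$ dropping by at most two, so $n+k$ drops by at most nine. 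The minimality of the counterexample then yields only $\gamma_2^d(G')\le\floor*{\frac{2}{9}(n+k)}-2$, a deficit of $2$, not $3$. With your three-vertex augmentation $\{v_1,v_2,v_9\}$ this gives $|D|\le\floor*{\frac{2}{9}(n+k)}+1$, which is not a contradiction. The entire arithmetic backbone of your argument therefore fails for the actual tree $T_{15}$.

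The second missing ingredient is the boundary analysis that the paper's proof is built around. Because the root triangle of $T_{15}$ is $\{v_2,v_8,v_9\}$, the way this triangle attaches to the rest of $G$ is not determined: the paper splits into the case where $\{v_2,v_8,v_9\}$ is an internal triangle, and otherwise the two cases where the adjacent triangle $F$ is $\{v_8,v_9,v_{10}\}$ or $\{v_2,v_9,v'_{10}\}$. This case split dictates which vertices may be deleted so that $G'$ remains a mop (keeping only $v_8$, as you propose, can leave $v_8$ with all its neighbours deleted when the attachment is along $v_2v_9$, so $G'$ is not even a mop), and in each case the paper contracts the attachment edge ($v_2v_8$, $v_8v_9$, or $v_2v_9$) via Lemma~\ref{key} and lifts a $\gamma_2^d$-set of the contracted graph back to $G$ with a two-vertex (or three-vertex, when the contracted vertex $x$ is in the set) augmentation. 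Your proposal uses no contraction at all, and you yourself flag the domination verification as an unresolved ``main obstacle''; in fact that verification cannot succeed uniformly, because without knowing where the parent triangle sits, no single augmenting set plays the role you want. The correct repair is not a uniform deletion but precisely the three-case contraction argument the paper gives.
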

	\begin{proof}[Proof of Claim~\ref{tree15}]
		Suppose, to the contrary, that $T_{15}$ is a maximal subtree of $T$, and so $T_{15} = T_v$ where $v$ denotes the root of the subtree $T_v$. We infer that the subgraph of $G$ associated with $T_{15}$ is obtained from region $H_{13}$ in four possible ways, as illustrated in Figure~\ref{tree_15}(a)-(b), where we let $V(T_v) = \{v_2,v_{8},v_{9}\}$ be the (shaded) triangle in $G$ associated with the vertex~$v$.
		
		\begin{figure}[htbp]
			\begin{center}
				\includegraphics[scale=0.43]{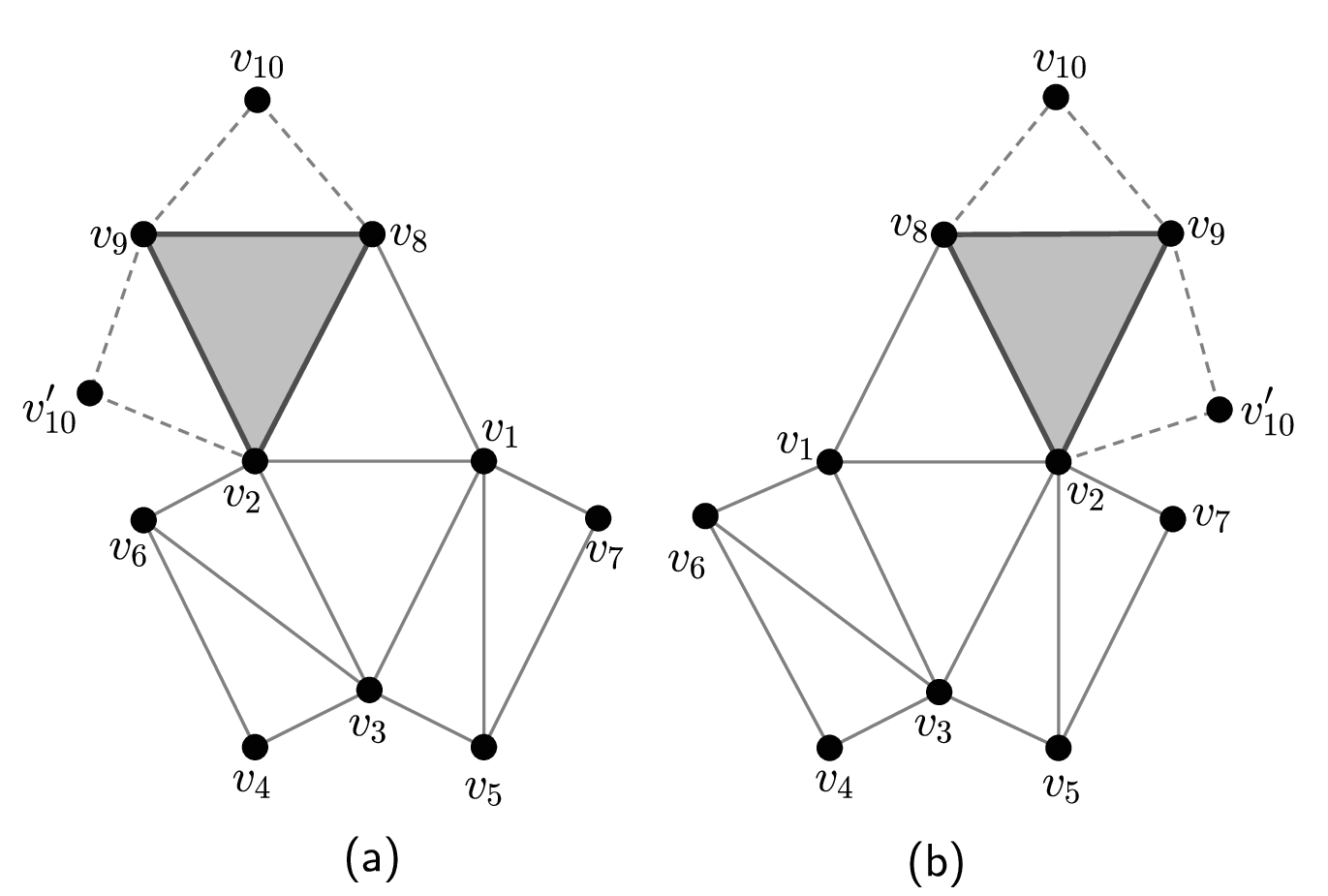}
				\caption{The regions of $G$ corresponding to tree $T_{15}$.}\label{tree_15}
			\end{center}
		\end{figure}
		
		Suppose that $V(T_v) = \{v_2,v_{8},v_{9}\}$ is an internal triangle of $G$. Let $G'$ be a graph of order $n'$ obtained from $G$ by deleting the vertices in $V_1^{7}\setminus\{v_2\}$, and let $G'$ have $k'$ vertices of degree~$2$. We note that $n' = n-6$ and $k' = k - 2$, and $v_2v_{8}$ is an outer edge of $G'$. Since $n \ge 13$, we have $n' \ge 7$. If $n'=7$, then by \cref{obs3}, there exists a 2DD-set $D'$ of $G'$ such that $|D'|= 2$ and $v_{2}\in D'$, and so $D'\cup \{v_1\}$ is a 2DD-set of $G$, and hence $\gamma_2^d(G)\le 3 \le  \floor*{\frac{2}{9}(n+k)}$, a contradiction. Hence, $n' \ge 8$. Let $G_1$ be a graph of order $n_1$ obtained from $G'$ by contracting the edge $v_2v_8$ to form a new vertex $x$ in $G_1$, and let $G_1$ have $k_1$ vertices of degree~$2$. By Lemma~\ref{key}, $G_1$ is a mop. Since $n' \ge 8$, we note that $n_1 = n' - 1 \ge 7$. By the minimality of the mop $G$, we have $\gamma_2^d(G_1) \le \floor*{\frac{2}{9}(n_1+k_1)} \le \floor*{\frac{2}{9}(n-7+k-2)} \le \floor*{\frac{2}{9}(n+k)}-2$. Let $D_1$ be a $\gamma_2^d$-set of $G_1$. If $x \in D_1$, then let $D = (D_1 \setminus \{x\}) \cup \{v_1,v_2,v_8\}$. If $x \notin D_1$, then let $D = D_1 \cup \{v_1,v_2\}$. In both cases $D$ is a 2DD-set of $G$, and so $\gamma_2^d(G) \le |D| \le |D_1| + 2 \le \floor*{\frac{2}{9}(n+k)}$, a contradiction.

		Hence, $V(T_v) = \{v_2,v_{8},v_{9}\}$ is not an internal triangle of $G$. Since $n \ge 13$, there exists a triangle $F$ adjacent to face $v_2v_8v_{9}$. There are two possible triangles that can be formed: either $V(F)=\{v_8,v_{9},v_{10}\}$ or $V(F)=\{v_2,v_{9},v'_{10}\}$. These are illustrated with dotted lines in \Cref{tree_15}(a)-(b).
		
		Suppose firstly that $V(F)=\{v_8,v_{9},v_{10}\}$. In this case, we let $G'$ be the mop of order $n'$ obtained from $G$ by deleting the vertices in $V_1^{7}$, and let $G'$ have $k'$ vertices of degree~$2$. We note that $n' = n-7$ and $k' \le k - 1$, and $v_8v_{9}$ is an outer edge of $G'$. Since $n \ge 13$, we have $n' \ge 6$. If $6 \le n' \le 7$, then by \cref{obs3}, there exists a 2DD-set $D'$ of $G'$ such that $v_8\in D'$ and $|D'|=2$. Therefore, $D'\cup \{v_3\}$ is a 2DD-set of $G$, and so $\gamma_2^d(G)\le 3\le  \floor*{\frac{2}{9}(n+k)}$, a contradiction. Hence, $n' \ge 8$. Let $G_1$ be a graph of order $n_1$ obtained from $G'$ by contracting the edge $v_8v_{9}$ to form a new vertex $x$ in $G_1$, and let $G_1$ have $k_1$ vertices of degree~$2$. By Lemma~\ref{key}, $G_1$ is a mop. Since $n' \ge 8$, we note that $n_1 = n' - 1 \ge 7$. Further we note that $n_1 = n - 8$ and $k_1 \le k-1$. By the minimality of the mop $G$, we have $\gamma_2^d(G_1) \le \floor*{\frac{2}{9}(n_1+k_1)} \le \floor*{\frac{2}{9}(n-8+k-1)} \le \floor*{\frac{2}{9}(n+k)}-2$. Let $D_1$ be a $\gamma_2^d$-set of $G_1$. If $x \in D_1$, then let $D = (D_1 \setminus \{x\}) \cup \{v_3,v_8,v_9\}$. If $x \notin D_1$, then let $D = D_1 \cup \{v_3,v_8\}$. In both cases $D$ is a 2DD-set of $G$, and so $\gamma_2^d(G) \le |D| \le |D_1| + 2 \le \floor*{\frac{2}{9}(n+k)}$, a contradiction.
		
		Hence, $V(F)=\{v_2,v_{9},v'_{10}\}$. Let $G'$ be the mop of order $n'$ obtained from $G$ by deleting the vertices in $V_1^{8} \setminus \{v_2\}$, and let $G'$ have $k'$ vertices of degree~$2$. We note that $n' = n-7$ and $k' \le k - 1$, and $v_2v_{9}$ is an outer edge of $G'$. Since $n \ge 13$, we have $n' \ge 6$. If $6 \le n' \le 7$, then by \cref{obs3}, there exists a 2DD-set $D'$ of $G'$ such that $v_9\in D'$ and $|D'|=2$. Therefore, $D'\cup \{v_3\}$ is a 2DD-set of $G$, and so $\gamma_2^d(G)\le 3 \le  \floor*{\frac{2}{9}(n+k)}$, a contradiction. Hence, $n' \ge 8$. Let $G_1$ be a graph of order $n_1$ obtained from $G'$ by contracting the edge $v_2v_{9}$ to form a new vertex $x$ in $G_1$, and let $G_1$ have $k_1$ vertices of degree~$2$. By Lemma~\ref{key}, $G_1$ is a mop. Since $n' \ge 8$, we note that $n_1 = n' - 1 \ge 7$. Further we note that $n_1 = n - 8$ and $k_1 \le k-1$. By the minimality of the mop $G$, we have $\gamma_2^d(G_1) \le \floor*{\frac{2}{9}(n_1+k_1)} \le \floor*{\frac{2}{9}(n-8+k-1)} \le \floor*{\frac{2}{9}(n+k)}-2$. Let $D_1$ be a $\gamma_2^d$-set of $G_1$. If $x \in D_1$, then let $D = (D_1 \setminus \{x\}) \cup \{v_3,v_8,v_9\}$. If $x \notin D_1$, then let $D = D_1 \cup \{v_3,v_9\}$. In both cases $D$ is a 2DD-set of $G$, and so $\gamma_2^d(G)\le |D| \le |D_1| + 2 \le \floor*{\frac{2}{9}(n+k)}$, a contradiction.
	\end{proof}
	
	\begin{figure}[htbp]
		\begin{center}
			\includegraphics[scale=0.185]{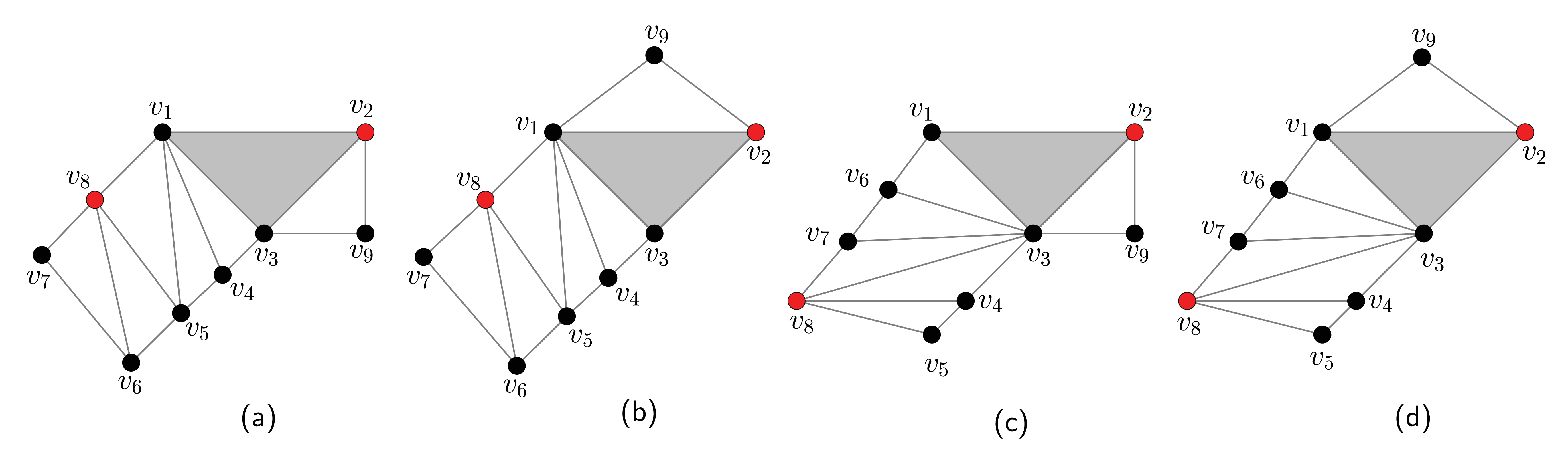}
			\caption{The regions of $G$ corresponding to tree $T_{16}$. The red vertices show a 2DD-set of $G[V(G)\setminus V(G')]$.}\label{tree_16}
		\end{center}
	\end{figure}

	\begin{claim}\label{tree16}
		The tree $T_{16}$ is not a maximal subtree of $T$.
	\end{claim}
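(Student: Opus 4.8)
Suppose $T_{16}$ is a maximal subtree of $T$, so $T_{16}=T_v$ for its root~$v$, and let $R_v$ be the shaded triangle of $G$ corresponding to~$v$. The plan is to repeat the contradiction scheme of Claims~\ref{tree1}--\ref{tree15}. First I would record the subgraph of $G$ associated with $T_{16}$ together with all of its admissible triangulations, as depicted in Figure~\ref{tree_16}. Since $T_{16}$ is not among the trees preprocessed through the forced regions $H_{10}$ and $H_{13}$, at least one child-arm of $v$ realizes a distance-$5$ or distance-$6$ configuration; by \Cref{lem1}\ref{5distance}--\ref{6distance} that arm is one of $H_1,H_2$ or $H_5,\ldots,H_8$, while the remaining arm and the quadrilaterals lying along the long arm triangulate in the finitely many ways shown in the figure. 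I would then fix a boundary labeling $V(G)=\{v_1,\ldots,v_n\}$ so that $R_v$ together with both child-arms occupies an initial block, this block being separated from the rest of $G$ by a diagonal whose surviving endpoints form an anchor set $A$ of one or two retained vertices.

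The core reduction deletes this block. Setting $G'=G-(V_1^{b}\setminus A)$, the graph $G'$ is again a mop (removing a run of boundary triangles from a mop leaves a mop); its order $n'=n-c$ and its number $k'=k-d$ of degree-$2$ vertices are read off from the figure, with $d$ counting the internal triangles absorbed into the block. Before invoking minimality I would clear the degenerate ranges exactly as in the earlier claims: when $n'$ is small I would build a 2DD-set $D'$ of $G'$ by hand, using \Cref{obs3} to secure $|D'|=2$ with a prescribed anchor vertex in $D'$, and then adjoin the constant number of red vertices of Figure~\ref{tree_16} that disjunctively dominate the excised block, obtaining a 2DD-set of $G$ of size $3$ or $4\le\lfloor\frac{2}{9}(n+k)\rfloor$ because $n\ge 13$.

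For the generic range, if the cut leaves a single outer edge $v_iv_j$ that must be merged, I would contract it by \Cref{key} to a new vertex $x$, forming a mop $G_1$ of order $n_1=n'-1$ with $k_1$ vertices of degree~$2$; otherwise I work with $G'$ directly. By the minimality of $G$ the smaller mop satisfies $\gamma_2^d(G_1)\le\lfloor\frac{2}{9}(n_1+k_1)\rfloor$, and the block has been chosen large enough that this is at most $\lfloor\frac{2}{9}(n+k)\rfloor-e$, where $e$ is the number of vertices to be added back. Taking a $\gamma_2^d$-set $D_1$ of $G_1$ and splitting on whether $x\in D_1$, I would lift $D_1$ to $G$—replacing $x$ by its two pre-images when $x\in D_1$—and adjoin the $e$ red vertices; the resulting set $D$ is a 2DD-set of $G$ with $|D|\le|D_1|+e\le\lfloor\frac{2}{9}(n+k)\rfloor$, contradicting the choice of $G$.

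The main obstacle, as in the preceding claims, is the uniform verification. One must confirm that the $e$ adjoined red vertices, together with the lift of $D_1$ (or with $D'$ in the degenerate ranges), disjunctively dominate \emph{every} vertex of the excised block in \emph{all} triangulation variants of Figure~\ref{tree_16}, and that contracting $v_iv_j$ never destroys the pair of distance-$2$ witnesses on which a boundary vertex of $G'$ along the cut may rely. Because each such vertex may be covered either by a neighbor in $D$ or by two vertices of $D$ at distance~$2$, the choice of the anchor set $A$ and of the red vertices must be calibrated so that this finite but case-heavy check succeeds simultaneously across the whole figure; that calibration, rather than any single inequality, is where the real work lies.
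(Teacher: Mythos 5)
Your proposal identifies the right contradiction scheme, and it is indeed the same scheme the paper uses for Claim~\ref{tree16} (delete a block of the arm, dispose of small residual orders via \cref{obs3}, contract an anchor edge via Lemma~\ref{key}, apply minimality, lift a $\gamma_2^d$-set back). But as written it is a template with unbound parameters, not a proof: the block $V_1^b$, the anchor set $A$, the counts $c$, $d$, $e$, and the ``red vertices'' are never instantiated for $T_{16}$, and you explicitly defer the verification that the lifted set disjunctively dominates the excised block (``that calibration \ldots is where the real work lies''). That calibration \emph{is} the content of the claim. Concretely, you never pin down that for $T_{16}$ the arm hanging off the root triangle is forced by \Cref{lem1}\ref{5distance} to be the distance-$5$ region $H_1$ or $H_2$ (your hedge ``distance-$5$ or distance-$6$'' leaves the geometry, and hence the whole case analysis, undetermined); without fixing the geometry one cannot choose the deletion set, cannot compute $n'$ and $k'$, cannot verify the budget inequality, and cannot check the 2DD property, so no contradiction is actually derived.

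For comparison, the paper's instantiation is: the region is the root triangle $\{v_1,v_2,v_3\}$ with $H_1$ or $H_2$ attached along $v_1v_3$, giving the four configurations of Figure~\ref{tree_16}; one deletes $V_3^9$, so $n'=n-7$ and $k'\le k-1$, with $v_1v_2$ an outer edge of $G'$; if $6\le n'\le 7$, \cref{obs3} supplies $D'$ with $v_2\in D'$, $|D'|=2$, and $D'\cup\{v_8\}$ already works; otherwise one contracts $v_1v_2$ to get $G_1$ with $n_1=n-8$, $k_1\le k-1$, whence $\gamma_2^d(G_1)\le \floor*{\frac{2}{9}(n-8+k-1)}\le \floor*{\frac{2}{9}(n+k)}-2$, so the budget is exactly $e=2$; the lift is $D=(D_1\setminus\{x\})\cup\{v_1,v_2,v_8\}$ or $D=D_1\cup\{v_2,v_8\}$. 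The step your proposal omits, and the only step that is specific to $T_{16}$, is checking that the pair $\{v_2,v_8\}$ (together with the lifted $D_1$) disjunctively dominates every deleted vertex $v_3,\ldots,v_9$ simultaneously in all four triangulations, and that the contraction does not strip any boundary vertex of $G'$ of its distance-$2$ witnesses. Until that finite check is written out, the claim is not proved.
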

	\begin{proof}[Proof of Claim~\ref{tree16}]
		Suppose, to the contrary, that $T_{16}$ is a maximal subtree of $T$, and so $T_{16} = T_v$ where $v$ denotes the root of the subtree $T_v$. We infer that the subgraph of $G$ associated with $T_{16}$ is obtained from either (i) the region $H_1$ by triangulating the region $v_1v_3v_4v_5v_6v_7v_8$  according to \Cref{lem1}\ref{5distance} as illustrated in Figure~\ref{tree_16}(a)-(b) or (ii) the region $H_2$ by triangulating the region $v_1v_3v_4v_5v_8v_7v_6$ according to \Cref{lem1}\ref{5distance} as illustrated in Figure~\ref{tree_16}(c)-(d), where we let $V(T_v) = \{v_1,v_2,v_{3}\}$ be the (shaded) triangle in $G$ associated with the vertex~$v$. In the following, we present arguments that work in each cases.
		
		Let $G'$ be the mop of order $n'$ obtained from $G$ by deleting the vertices in $V_3^{9}$, and let $G'$ have $k'$ vertices of degree~$2$. We note that $n' = n-7$ and $k' \le k - 1$, and $v_1v_2$ is an outer edge of $G'$. Since $n \ge 13$, we have $n' \ge 6$. If $6 \le n' \le 7$, then by \cref{obs3}, there exists a 2DD-set $D'$ of $G'$ such that $v_2\in D'$ and $|D'|=2$. Therefore, $D' \cup \{v_8\}$ is a 2DD-set of $G$, and so $\gamma_2^d(G)\le 3\le  \floor*{\frac{2}{9}(n+k)}$, a contradiction. Hence, $n' \ge 8$. Let $G_1$ be a graph of order $n_1$ obtained from $G'$ by contracting the edge $v_1v_2$ to form a new vertex $x$ in $G_1$, and let $G_1$ have $k_1$ vertices of degree~$2$. By Lemma~\ref{key}, $G_1$ is a mop. Since $n' \ge 8$, we note that $n_1 = n' - 1 \ge 7$. Further we note that $n_1 = n - 8$ and $k_1 \le k-1$. By the minimality of the mop $G$, we have $\gamma_2^d(G_1) \le \floor*{\frac{2}{9}(n_1+k_1)} \le \floor*{\frac{2}{9}(n-8+k-1)} \le \floor*{\frac{2}{9}(n+k)}-2$. Let $D_1$ be a $\gamma_2^d$-set of $G_1$. If $x \in D_1$, then let $D = (D_1 \setminus \{x\}) \cup \{v_1,v_2,v_8\}$. If $x \notin D_1$, then let $D = D_1 \cup \{v_2,v_8\}$. In both cases $D$ is a 2DD-set of $G$, and so $\gamma_2^d(G) \le |D| \le |D_1| + 2 \le \floor*{\frac{2}{9}(n+k)}$, a contradiction.
	\end{proof}

	\begin{claim}\label{tree17}
		The tree $T_{17}$ is not a maximal subtree of $T$.
	\end{claim}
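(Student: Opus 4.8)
The plan is to reproduce, for $T_{17}$, the minimal-counterexample reduction that disposed of $T_1,\dots,T_{16}$. Assume for contradiction that $T_{17}$ occurs as a maximal subtree $T_v$ of $T$, and let $R_v$ with $V(R_v)=\{v_1,v_2,v_3\}$ be the shaded root triangle, labelling the remaining vertices $v_4,v_5,\dots$ consecutively along the outer boundary. Since $T_{17}$ (like $T_{16}$) carries a long leg, its realization in $G$ is not unique; by \Cref{lem1}\ref{5distance}--\ref{6distance} together with \Cref{cl3to10} and \Cref{cl-10to21} only finitely many triangulations of that leg are possible, and I would list exactly these cases from Figure~\ref{trees} and then run a single argument valid in all of them, as in \Cref{tree8} and \Cref{tree16}.

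First I would delete the block of vertices lying strictly below $R_v$, retaining at most the one or two interface vertices shared with the rest of $G$, to obtain $G'=G-(V_j^i\setminus\{\dots\})$; this is again a mop, and the internal triangle(s) destroyed by the deletion account for the drop $k'\le k-c$ in the number of degree-$2$ vertices. If an interface edge must become an outer edge, I would contract it by Lemma~\ref{key} to form a mop $G_1$ of order $n_1=n'-1$. I would then clear the small-order possibilities directly: whenever $5\le n'\le 7$ (and, if it arises, $2\le n'\le 4$) \cref{obs3} supplies an explicit $2$DD-set of $G'$ of size $2$ containing a prescribed interface vertex, which extends to a $2$DD-set of $G$ of size $3$ or $4$ that already satisfies the bound. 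For the main range $n'\ge 8$ I would invoke the minimality of $G$ to get $\gamma_2^d(G_1)\le\floor*{\frac{2}{9}(n_1+k_1)}$; since the reduction $(n-n_1)+(k-k_1)$ is at least $9$ (respectively $14$), this yields $\gamma_2^d(G_1)\le\floor*{\frac{2}{9}(n+k)}-2$ (respectively $-3$).

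Finally I would lift a $\gamma_2^d$-set $D_1$ of $G_1$ back to $G$. Splitting on whether the contracted vertex $x$ lies in $D_1$, I would either replace $x$ by the two endpoints of the contracted edge or add a single endpoint, and in both branches adjoin the two or three vertices (for instance $v_2$ together with one interface vertex) that are needed to disjunctively dominate the removed leg. Checking that the resulting $D$ is a $2$DD-set then gives $\gamma_2^d(G)\le|D_1|+2\le\floor*{\frac{2}{9}(n+k)}$, contradicting the choice of $G$. The hard part will be the bookkeeping: I must pick the deletion set and the few added vertices so that (i) every removed vertex of the long leg is dominated, or has two chosen vertices at distance $2$, (ii) the retained interface vertices remain disjunctively dominated after contraction, and (iii) the budget $\tfrac{2}{9}\bigl((n-n_1)+(k-k_1)\bigr)$ exceeds the number of added vertices simultaneously in every triangulation sub-case.
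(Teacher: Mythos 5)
Your plan is the right template---it is exactly the deletion-plus-minimality scheme the paper runs for every tree $T_i$---but as a proof of Claim~\ref{tree17} it has a genuine gap: nothing in it is ever instantiated for $T_{17}$. You never determine what region of $G$ the subtree $T_{17}$ forces, which block of vertices to delete, or which vertices to add back, and for this claim those choices are the entire content. Concretely, the paper first pins the region down: the root triangle $V(T_v)=\{v_1,v_2,v_3\}$ carries, off the edge $v_1v_3$, a leg triangulated as $H_1$ or $H_2$ (by \Cref{lem1}\ref{5distance}), and, off the edge $v_2v_3$, a quadrilateral $v_2v_9v_{10}v_3$ triangulated by adding $v_2v_{10}$ or $v_3v_9$; it then deletes $V_3^{10}$, so $n'=n-8$ and $k'\le k-1$, i.e.\ the reduction is exactly $9$, which after the floor inequality affords precisely \emph{two} added vertices. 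Here your hedge ``adjoin the two or three vertices'' is fatal rather than harmless: a reduction of $9$ only gives $\gamma_2^d(G')\le\floor*{\frac{2}{9}(n+k)}-2$, so if your bookkeeping ended up needing three added vertices the contradiction would evaporate (three would require a reduction of at least $14$, which this deletion does not provide). The claim therefore reduces to exhibiting two vertices---the paper uses $D'\cup\{v_2,v_7\}$ or $D'\cup\{v_3,v_8\}$, depending on the sub-case---and checking that they disjunctively dominate all eight deleted vertices in each possible triangulation; that verification is exactly the step you defer as ``the hard part.''

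A secondary, harmless deviation: for $T_{17}$ no contraction is needed, and the paper uses none. After deleting $V_3^{10}$ the edge $v_1v_2$ is an outer edge of $G'$, the small orders $5\le n'\le 7$ are settled by \cref{obs3}, and for $n'\ge 8$ minimality is applied to $G'$ directly. Your conditional route through Lemma~\ref{key} would also work (it only increases the reduction from $9$ to $10$, still paying for two added vertices after the $x\in D_1$ versus $x\notin D_1$ split), so the contraction is not the problem; the missing instantiation and the two-versus-three ambiguity are.
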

	\begin{proof}[Proof of Claim~\ref{tree17}]
		Suppose, to the contrary, that $T_{17}$ is a maximal subtree of $T$, and so $T_{17} = T_v$ where $v$ denotes the root of the subtree $T_v$. We infer that the subgraph of $G$ associated with $T_{17}$ is obtained from either (i) the region $H_1$ by triangulating the region $v_1v_3v_4v_5v_6v_7v_8$  according to \Cref{lem1}\ref{5distance} as illustrated in Figure~\ref{tree_17}(a)-(b) or (ii) the region $H_2$ by triangulating the region $v_1v_3v_4v_5v_8v_7v_6$ according to \Cref{lem1}\ref{5distance} as illustrated in Figure~\ref{tree_17}(c)-(d), where we let $V(T_v) = \{v_1,v_2,v_{3}\}$ be the (shaded) triangle in $G$ associated with the vertex~$v$. The region $v_2v_9v_{10}v_3$ can be triangulated by adding either the edge $v_2v_{10}$ or $v_3v_{9}$, as indicated by the dotted lines in Figure~\ref{tree_17}(a)-(d). In the following, we present arguments that work in each cases.
		
		Let $G'$ be the mop of order $n'$ obtained from $G$ by deleting the vertices in $V_3^{10}$, and let $G'$ have $k'$ vertices of degree~$2$. We note that $n' = n-8$ and $k' \le k - 1$, and $v_1v_2$ is an outer edge of $G'$. Since $n \ge 13$, we have $n' \ge 5$. If $5 \le n' \le 7$, then by \cref{obs3}, there exists a 2DD-set $D'$ of $G'$ such that $v_3\in D' $ or $v_2\in D'$ and $|D'|=2$. Therefore, $D'\cup \{v_8\}$ is a 2DD-set of $G$, and so $\gamma_2^d(G)\le 3\le  \floor*{\frac{2}{9}(n+k)}$, a contradiction. Hence, $n' \ge 8$. By the minimality of the mop $G$, we have $\gamma_2^d(G_1) \le \floor*{\frac{2}{9}(n'+k')} = \floor*{\frac{2}{9}(n-8+k-1)} \le \floor*{\frac{2}{9}(n+k)}-2$. Let $D'$ be a $\gamma_2^d$-set of $G'$ and let $D=D'\cup\{v_2,v_7\}$ or $D=D'\cup\{v_3,v_8\}$. The set $D$ is a 2DD-set of $G$, and so $\gamma_2^d(G) \le |D| \le |D'| + 2 \le \floor*{\frac{2}{9}(n+k)}$, a contradiction.
	\end{proof}
	
	\begin{figure}[htbp]
		\begin{center}
			\includegraphics[scale=0.185]{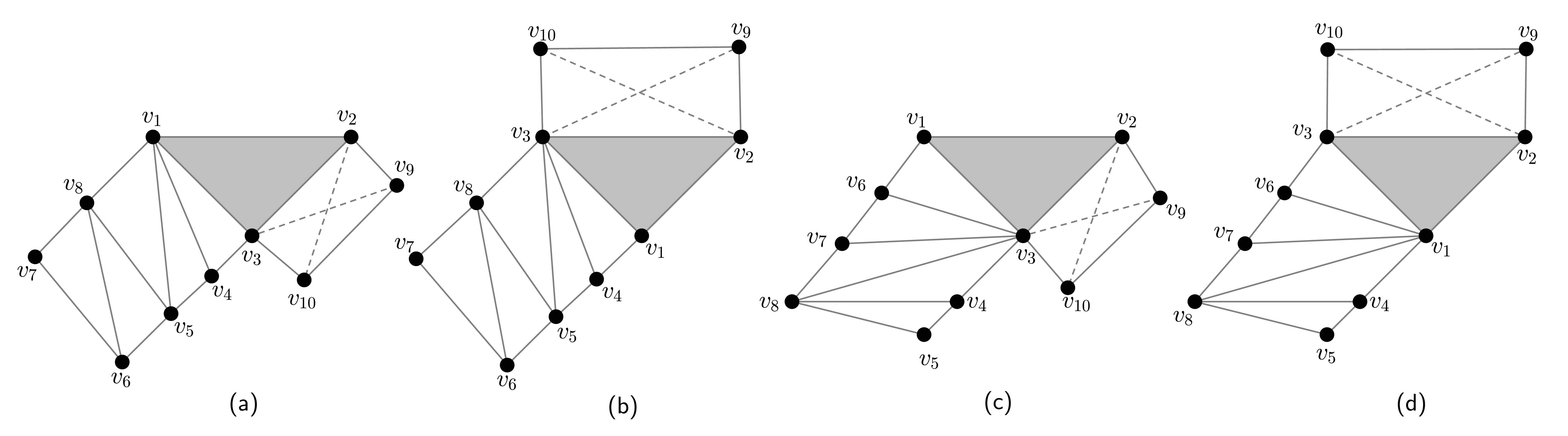}
			\caption{The regions of $G$ corresponding to tree $T_{17}$.}\label{tree_17}
		\end{center}
	\end{figure}
	
	\begin{figure}[htbp]
		\begin{center}
			\includegraphics[scale=0.16]{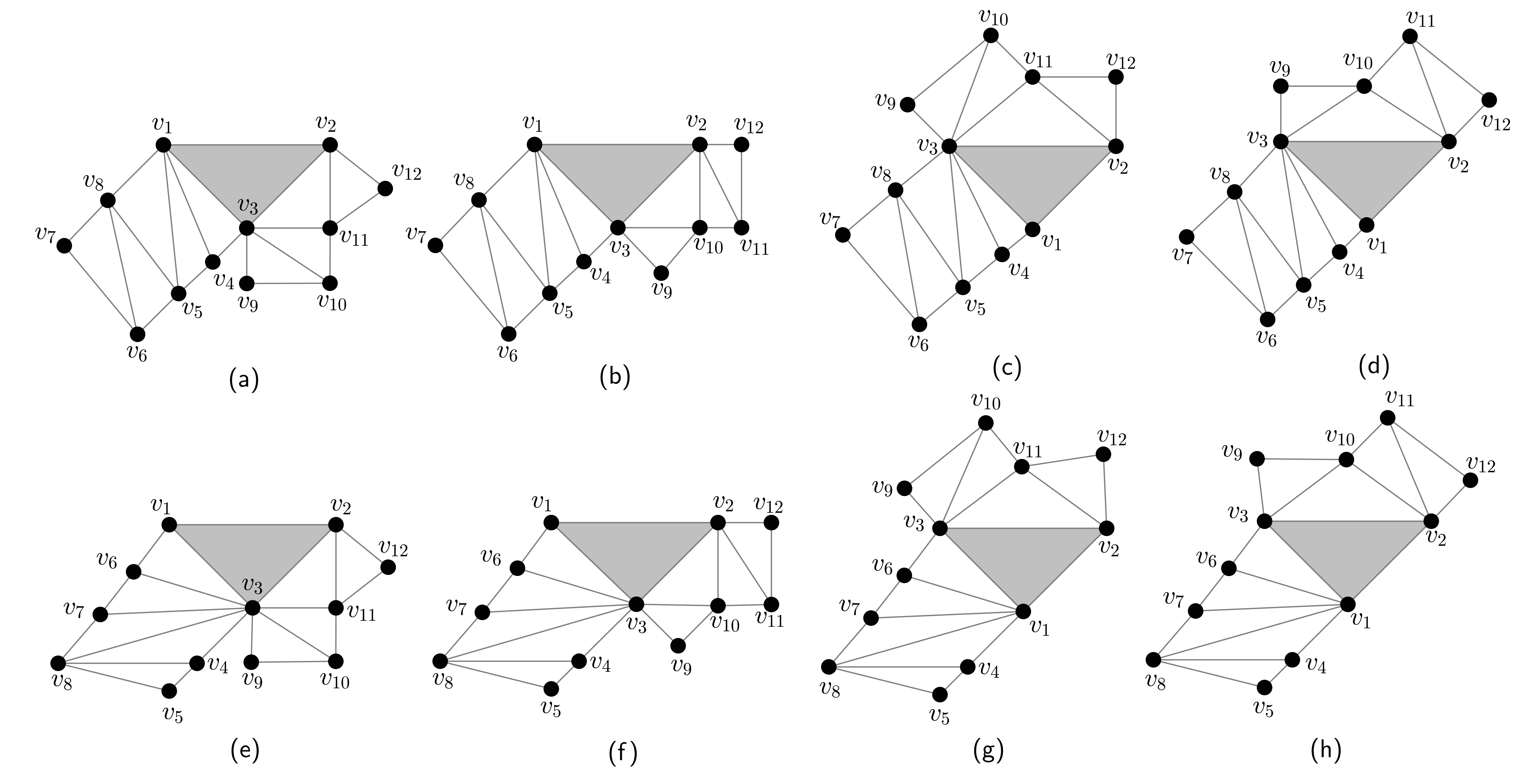}
			\caption{The regions of $G$ corresponding to tree $T_{18}$. The red vertices show a 2DD-set of $G[V(G)\setminus V(G')]$.}\label{tree_18}
		\end{center}
	\end{figure}

	\begin{claim}\label{tree18}
		The tree $T_{18}$ is not a maximal subtree of $T$.
	\end{claim}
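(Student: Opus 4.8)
The plan is to repeat the minimal-counterexample excision used in \Cref{tree16} and \Cref{tree17}, now applied to the larger region that $T_{18}$ forces. I would assume for contradiction that $T_{18}=T_v$ with $v$ the root of the maximal subtree, and read the associated region of $G$ off Figure~\ref{tree_18} using the two restrictions already in force: since $18$ is one of the indices for which $T'$ is a subtree, the short branch descending from $v$ realizes $T'$ and hence is triangulated as $H_{10}$ by \Cref{cl3to10}, while the long branch descending from $v$ is a distance-$5$ branch and so is $H_1$ or $H_2$ by \Cref{lem1}\ref{5distance}. Thus the region is pinned down up to the finitely many triangulations drawn in Figure~\ref{tree_18}, and I would take the shaded triangle $V(T_v)$ to carry the diagonal $v_1v_2$ along which the region is glued to the rest of $G$.

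I would then peel off the region. Setting $G'=G-W$, where $W$ consists of all region-vertices other than $v_1$ and $v_2$, \Cref{key} together with \Cref{internal} guarantees that $G'$ is again a mop, and I would record $n'=n-|W|$ and $k'\le k-3$, the lost degree-$2$ vertices being the ear-tip of the distance-$5$ branch and the two leaf-tips of the $T'$-part. The small ranges of $n'$ are cleared first exactly as in the earlier claims: for $2\le n'\le 4$ an explicit set of at most three vertices already disjunctively dominates $G$, and for $5\le n'\le 7$ \Cref{obs3} supplies a size-$2$ 2DD-set of $G'$ through the surviving diagonal vertex, which after adjoining the interior dominators yields a 2DD-set of $G$ of size at most $4\le\floor*{\frac{2}{9}(n+k)}$, a contradiction, precisely as in \Cref{tree13} and \Cref{tree14}.

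For $n'\ge 8$ I would contract the outer edge $v_1v_2$ to a new vertex $x$, obtaining by \Cref{key} a mop $G_1$ with $n_1=n'-1$ and $k_1\le k-3$, so that $n_1+k_1\le n+k-14$ when $|W|\ge 10$. Minimality then gives a $\gamma_2^d$-set $D_1$ of $G_1$ with $|D_1|\le\floor*{\frac{2}{9}(n_1+k_1)}\le\floor*{\frac{2}{9}(n+k)}-3$, the same $-3$ computation used in \Cref{tree13} and \Cref{tree14}. I would lift $D_1$ by setting $D=(D_1\setminus\{x\})\cup\{v_1,v_2,a,b\}$ when $x\in D_1$ and $D=D_1\cup\{v_2,a,b\}$ when $x\notin D_1$, where $v_2$ disjunctively dominates the $H_{10}$-portion together with $v_1$ and $a,b$ are the two distance-$5$-strip dominators shown in red in Figure~\ref{tree_18}. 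In either case $|D|\le|D_1|+3\le\floor*{\frac{2}{9}(n+k)}$, contradicting $\gamma_2^d(G)>\floor*{\frac{2}{9}(n+k)}$ and establishing the claim.

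The step I expect to be the real work is the verification that $D$ is a 2DD-set of $G$, not merely of $G_1$. Excising $W$ can destroy the distance-$2$ witnesses that $D_1$ relied on for the diagonal endpoints, so I must confirm that $v_1$ and $v_2$ remain disjunctively dominated once $v_2,a,b$ are inserted, and that every interior vertex of the distance-$5$ strip has either a neighbor in $\{a,b\}$ or two members of $D$ at distance~$2$. Because $H_1$, $H_2$, and $H_{10}$ each occur in two triangulations, this domination check must be carried out uniformly across all the subcases illustrated in Figure~\ref{tree_18}; this is where the placement of $a$ and $b$ is forced and where essentially all the casework of the claim concentrates.
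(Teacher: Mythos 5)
Your proposal does not follow the paper's route, and the difference is instructive. The paper's proof of Claim~\ref{tree18} performs only a \emph{partial} excision: it deletes just the five strip vertices $V_4^8$, keeping the whole $H_{10}$-branch inside $G'$. Then $n'=n-5\ge 8$ (so no contraction and no small-order cases are needed) and $k'=k-1$, a count that is robust because $v_1$ and $v_3$ each retain at least three neighbors in $G'$. Minimality gives $\gamma_2^d(G')\le\floor*{\frac{2}{9}(n+k)}-1$, and the surviving branch is then exploited: since $v_9$ must be disjunctively dominated in $G'$, every $\gamma_2^d$-set $D'$ of $G'$ meets $\{v_1,v_2,v_3,v_9,v_{10}\}$, and this forced vertex supplies the second distance-$2$ witness for the strip vertices, so the single addition $D=D'\cup\{v_8\}$ is already a 2DD-set of $G$. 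One added vertex against a guaranteed drop of one yields the contradiction.

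Your full-excision route has a genuine gap in its arithmetic. With $|W|=10$ and $n_1=n-11$, your contradiction requires $k_1\le k-3$ with zero slack: if $k_1=k-2$, then $n_1+k_1=n+k-13$ and $\tfrac{2}{9}\cdot 13<3$, so minimality only guarantees $|D_1|\le\floor*{\frac{2}{9}(n+k)}-2$, and adding three vertices produces no contradiction. But $k_1\le k-3$ does not follow from the fact that the three deleted tips have degree~$2$; the excision and the contraction can \emph{create} degree-$2$ vertices. For instance, if the only neighbor of $v_1$ outside the region is the apex $w$ of the parent triangle and $\deg_{G'}(v_2)=3$, then $x$ has degree $\deg_{G'}(v_1)+\deg_{G'}(v_2)-3=2$ in $G_1$, and if moreover $\deg_G(w)=3$ then $w$ also drops to degree~$2$ after the contraction; in such an outside configuration $k_1$ can be as large as $k-1$ and your inequality chain collapses. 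Nothing established before Claim~\ref{tree18} rules this out, and this fragility is exactly what the paper's partial excision avoids. Separately, the verification you defer is not a formality: besides checking in all eight pictures of Figure~\ref{tree_18} that $\{v_2,a,b\}$ disjunctively dominates the ten deleted vertices, you must re-dominate vertices of $G_1$ whose only distance-$2$ witnesses had $x$ as their common neighbor; when $x\notin D_1$, a vertex adjacent in $G$ to $v_1$ but not to $v_2$ can lose both of its witnesses, which forces $a$ or $b$ to be chosen adjacent to $v_1$ as well as to $v_3$ --- a constraint your sketch never records. As written, the proposal does not establish the claim.
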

	\begin{proof}[Proof of Claim~\ref{tree18}]
		Suppose, to the contrary, that $T_{18}$ is a maximal subtree of $T$, and so $T_{18} = T_v$ where $v$ denotes the root of the subtree $T_v$. We infer that the subgraph of $G$ associated with $T_{18}$ is obtained from either (i) the region $H_1$ by triangulating the region $v_1v_3v_4v_5v_6v_7v_8$  according to \Cref{lem1}\ref{5distance} as illustrated in Figure~\ref{tree_18}(a)-(d) or (ii) the region $H_2$ by triangulating the region $v_1v_3v_4v_5v_8v_7v_6$ according to \Cref{lem1}\ref{5distance} as illustrated in Figure~\ref{tree_18}(e)-(h), where we let $V(T_v) = \{v_1,v_2,v_{3}\}$ be the (shaded) triangle in $G$ associated with the vertex~$v$. The region $v_2v_3v_{9}v_{10}v_{11}v_{12}$ is triangulated as region $H_{10}$ according to \Cref{cl3to10} as illustrated in Figure~\ref{tree_18}(a)-(h). In the following, we present arguments that work in each cases.
		
		Let $G'$ be the mop of order $n'$ obtained from $G$ by deleting the vertices in $V_4^{8}$, and let $G'$ have $k'$ vertices of degree~$2$. We note that $n' = n-5$ and $k' = k - 1$. Since $n \ge 13$, we have $n' \ge 8$.  By the minimality of the mop $G$, we have $\gamma_2^d(G_1) \le \floor*{\frac{2}{9}(n'+k')} = \floor*{\frac{2}{9}(n-5+k-1)} \le \floor*{\frac{2}{9}(n+k)}-1$. Let $D'$ be a $\gamma_2^d$-set of $G'$. Since $v_9$ is disjunctive dominated by some vertex of $D'$, we have $D' \cap \{v_1,v_2,v_3,v_9,v_{10}\} \ne \emptyset$. We now consider the set $D=D' \cup \{v_8\}$. The resulting set $D$ is a 2DD-set of $G$, and so $\gamma_2^d(G) \le |D| \le |D'| + 1\le \floor*{\frac{2}{9}(n+k)}$, a contradiction.
	\end{proof}
	
	\begin{figure}
		\begin{center}
			\includegraphics[scale=0.16]{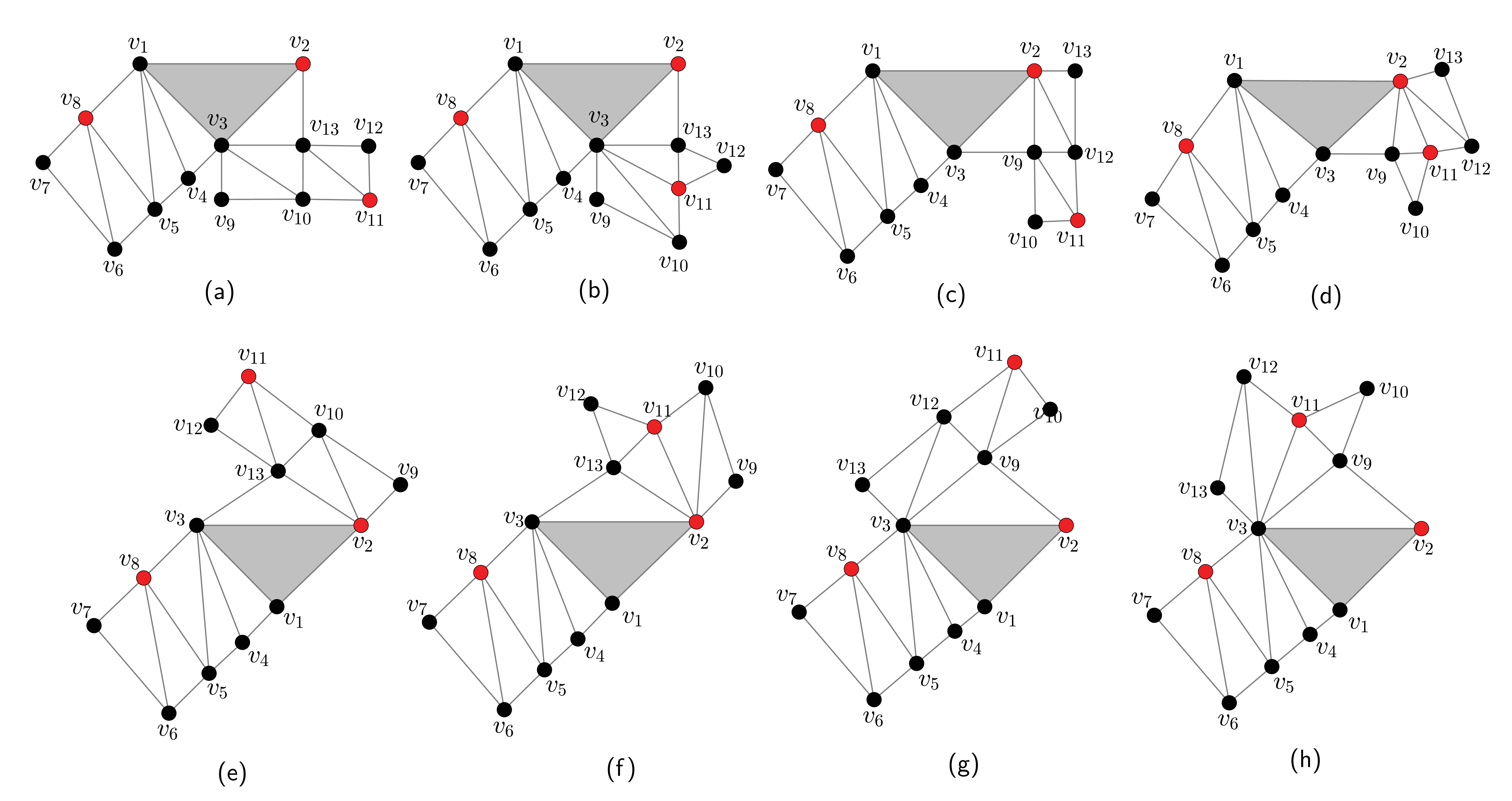}
			\includegraphics[scale=0.16]{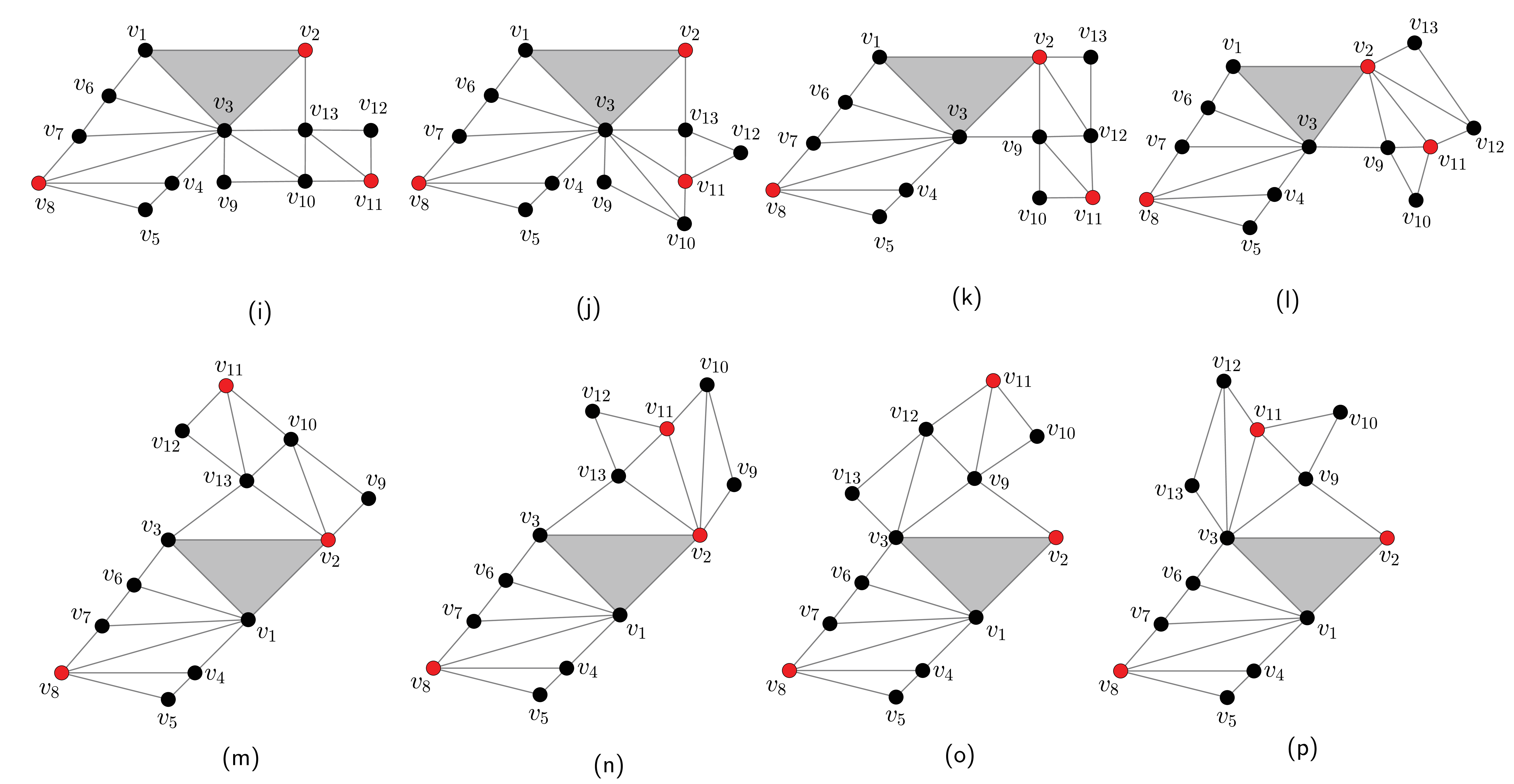}
			\caption{The regions of $G$ corresponding to tree $T_{19}$. The red vertices show a 2DD-set of $G[V(G)\setminus V(G')]$.}\label{tree_19}
		\end{center}
	\end{figure}

	\begin{claim}\label{tree19}
		The tree $T_{19}$ is not a maximal subtree of $T$.
	\end{claim}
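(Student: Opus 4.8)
The plan is to argue by contradiction in exactly the style of the preceding claims. Suppose $T_{19} = T_v$ for some vertex $v$ of $T$, and let $R_v$ be the corresponding (shaded) triangle in $G$ with $V(R_v)=\{v_1,v_2,v_3\}$. Since $T_{19}$ contains the path $T'$ of Figure~\ref{tree2to9}(a) as a subtree and also carries a branch realizing a leaf at distance~$5$ from $v$, I would first pin down the subgraph of $G$ associated with $T_{19}$ by appealing to the structural results already in hand: by \Cref{lem1}\ref{5distance} the distance-$5$ arm is triangulated as the region $H_1$ or $H_2$, while by \Cref{cl3to10} the attached $P_4$-portion is triangulated as the region $H_{10}$. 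Combining the independent choices on the two arms yields the configurations drawn in Figure~\ref{tree_19}, and the aim is to produce a single argument that disposes of all of them simultaneously, precisely as was done for $T_7$ and $T_8$.

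Next I would delete a carefully chosen block of vertices to obtain a strictly smaller mop~$G'$. Following the template of \Cref{tree16} and \Cref{tree17}, the natural choice is to remove the entire distance-$5$ arm together with most of the $P_4$-arm, retaining one outer ``anchor'' vertex (the analogue of $v_7$ or $v_8$ in the earlier claims) so that $G'$ inherits a convenient outer edge. I would then record $n' = n-m$ and $k' = k-j$ for the relevant constants, using \Cref{internal} to count how many degree-$2$ vertices are destroyed, and arrange $m$ and $j$ so that $\lfloor \tfrac{2}{9}(n'+k')\rfloor \le \lfloor \tfrac{2}{9}(n+k)\rfloor - a$, where $a$ is the number of vertices to be added back. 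When no internal triangle is forced I would instead contract an outer edge of $G'$ to a new vertex~$x$ via \Cref{key}, gaining one extra unit in the order as in \Cref{tree15}. Before invoking minimality I must dispose of the small-order base cases: when $n'$ is too small for the minimality hypothesis to apply, I would use \Cref{obs3} to produce a $2$DD-set $D'$ of $G'$ of size~$2$ that contains the anchor vertex, and check directly that augmenting $D'$ by one or two fixed vertices (such as $v_2$ and the apex of a distance-$5$ triangle) disjunctively dominates $G$ with total size at most $\lfloor \tfrac{2}{9}(n+k)\rfloor$.

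In the main case I take a $\gamma_2^d$-set $D_1$ of the (possibly contracted) graph, split on whether $x\in D_1$, and set $D=(D_1\setminus\{x\})\cup S$ or $D=D_1\cup S$ for an explicit set $S$ of at most $a$ vertices chosen from $R_v$ and the distance-$5$ arm, concluding $\gamma_2^d(G)\le |D|\le |D_1|+a\le \lfloor \tfrac{2}{9}(n+k)\rfloor$, which contradicts $\gamma_2^d(G)>\lfloor \tfrac{2}{9}(n+k)\rfloor$. The crux of the argument, and the step I expect to be most delicate, is verifying that the fixed set $S$ genuinely disjunctively dominates every deleted vertex across \emph{all} the triangulations of Figure~\ref{tree_19} at once. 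The difficulty is that the two admissible diagonals on each arm change which pairs of deleted vertices lie at distance exactly~$2$, so a vertex of $S$ that supplies a distance-$2$ dominator under the region $H_1$ (respectively $H_9$) may fail to do so under $H_2$ (respectively $H_{10}$). I would therefore select $S$ to consist of vertices whose closed neighborhoods in $G$ are triangulation-independent, namely vertices incident to the common edges of the forced faces, so that each deleted vertex is either adjacent to a vertex of $S$ regardless of how the diagonals are drawn, or else has two $S$-vertices at distance~$2$ through the fixed triangle $R_v$. Establishing this uniform domination is what makes a single set $S$ suffice for every subcase and thereby completes the claim.
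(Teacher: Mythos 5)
Your proposal reproduces the paper's general template (structural identification via \Cref{lem1}\ref{5distance} and \Cref{cl3to10}, deletion of a block of vertices, edge contraction via \Cref{key}, base cases via \Cref{obs3}, then minimality plus an added-back set), and indeed this is the strategy the paper uses for $T_{19}$. But as written it is a plan, not a proof: every quantity that actually makes the argument close is left unspecified. The paper deletes precisely $V_3^{13}$ (so $n' = n-11$, $k' = k-2$, with $v_1v_2$ becoming an outer edge of $G'$), contracts $v_1v_2$ to get $n_1 = n-12$ and $k_1 \le k-2$, and then the arithmetic $\floor*{\frac{2}{9}(n-12+k-2)} \le \floor*{\frac{2}{9}(n+k)}-3$ works only because the total drop in $n+k$ is $14 > \frac{9}{2}\cdot 3$. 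Your proposal says you would ``arrange $m$ and $j$'' so that such an inequality holds for the number $a$ of vertices added back, but never exhibits values of $m$, $j$, $a$ for which this is true, and it is not automatic: with $a=3$ you need a drop of at least $14$, which forces you to delete essentially both arms (contrary to your stated plan of ``retaining one outer anchor vertex'' from an arm, which is the $T_7$/$T_8$ template, not the one that works here -- in the paper's proof of this claim nothing from either arm survives, and the anchor role is played by the root edge $v_1v_2$ itself).

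The second and larger gap is the one you yourself flag as the crux and then do not carry out: exhibiting the explicit set $S$ and verifying it disjunctively dominates all deleted vertices in every one of the sixteen triangulations. The paper's proof commits to $S=\{v_2,v_8,v_{11}\}$ (augmented by $v_1$ when the contracted vertex $x$ lies in $D_1$) and checks, configuration by configuration, that these three vertices handle the eleven deleted vertices $v_3,\ldots,v_{13}$; your substitute -- choosing ``vertices whose closed neighborhoods are triangulation-independent'' -- is a heuristic, not a verification, and without naming the vertices there is nothing to check. A smaller but genuine omission: your base cases rely solely on \Cref{obs3}, which applies only for $5 \le n' \le 8$; since deleting eleven vertices from a mop of order $n \ge 13$ can leave $n'$ as small as $2$, you also need the paper's separate direct argument (an explicit $3$-element 2DD-set of $G$ such as $\{v_2,v_8,v_{11}\}$) for $2 \le n' \le 4$, which your outline does not cover.
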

	\begin{proof}[Proof of Claim~\ref{tree19}]
		Suppose, to the contrary, that $T_{19}$ is a maximal subtree of $T$, and so $T_{19} = T_v$ where $v$ denotes the root of the subtree $T_v$. We infer that the subgraph of $G$ associated with $T_{19}$ is obtained from either (i) the region $H_1$ by triangulating the region $v_1v_3v_4v_5v_6v_7v_8$  according to \Cref{lem1}\ref{5distance} as illustrated in Figure~\ref{tree_19}(a)-(h) or (ii) the region $H_2$ by triangulating the region $v_1v_3v_4v_5v_8v_7v_6$ according to \Cref{lem1}\ref{5distance} as illustrated in Figure~\ref{tree_19}(i)-(p), where we let $V(T_v) = \{v_1,v_2,v_{3}\}$ be the (shaded) triangle in $G$ associated with the vertex~$v$. In the following, we present arguments that work in each cases.

		Let $G'$ be the mop of order $n'$ obtained from $G$ by deleting the vertices in $V_3^{13}$, and let $G'$ have $k'$ vertices of degree~$2$. We note that $n' = n-11$ and $k' = k - 2$, and $v_1v_2$ is an outer edge of $G'$. Since $n \ge 13$, we have $n' \ge 2$. If $2 \le n' \le 4$, then $\{v_1,v_8,v_{11}\}$ or  $\{v_2,v_8,v_{11}\}$ is a 2DD-set of $G$, and hence $\gamma_2^d(G)\le 3 \le  \floor*{\frac{2}{9}(n+k)}$, a contradiction. If $5 \le n' \le 7$, then by \cref{obs3}, there exists a 2DD-set $D'$ of $G'$ such that $v_2\in D'$ and $|D'|=2$. Therefore, $D'\cup \{v_8,v_{11}\}$ is a 2DD-set of $G$, and so $\gamma_2^d(G)\le 4 \le \floor*{\frac{2}{9}(n+k)}$, a contradiction. Hence, $n' \ge 8$. Let $G_1$ be a graph of order $n_1$ obtained from $G'$ by contracting the edge $v_1v_2$ to form a new vertex $x$ in $G_1$, and let $G_1$ have $k_1$ vertices of degree~$2$. By Lemma~\ref{key}, $G_1$ is a mop. Since $n' \ge 8$, we note that $n_1 = n' - 1 \ge 7$. Further we note that $n_1 = n - 12$ and $k_1 \le k-2$. By the minimality of the mop $G$, we have $\gamma_2^d(G_1) \le \floor*{\frac{2}{9}(n_1+k_1)} \le \floor*{\frac{2}{9}(n-12+k-2)} \le \floor*{\frac{2}{9}(n+k)}-3$. Let $D_1$ be a $\gamma_2^d$-set of $G_1$. If $x \in D_1$, then let $D = (D_1 \setminus \{x\}) \cup \{v_1,v_2,v_8,v_{11}\}$. If $x \notin D_1$, then let $D = D_1 \cup \{v_2,v_8,v_{11}\}$. In both cases $D$ is a 2DD-set of $G$, and so $\gamma_2^d(G) \le |D| \le |D_1| + 3 \le \floor*{\frac{2}{9}(n+k)}$, a contradiction.
	\end{proof}

	\begin{figure}[htbp]
		\begin{center}
			\includegraphics[scale=0.164]{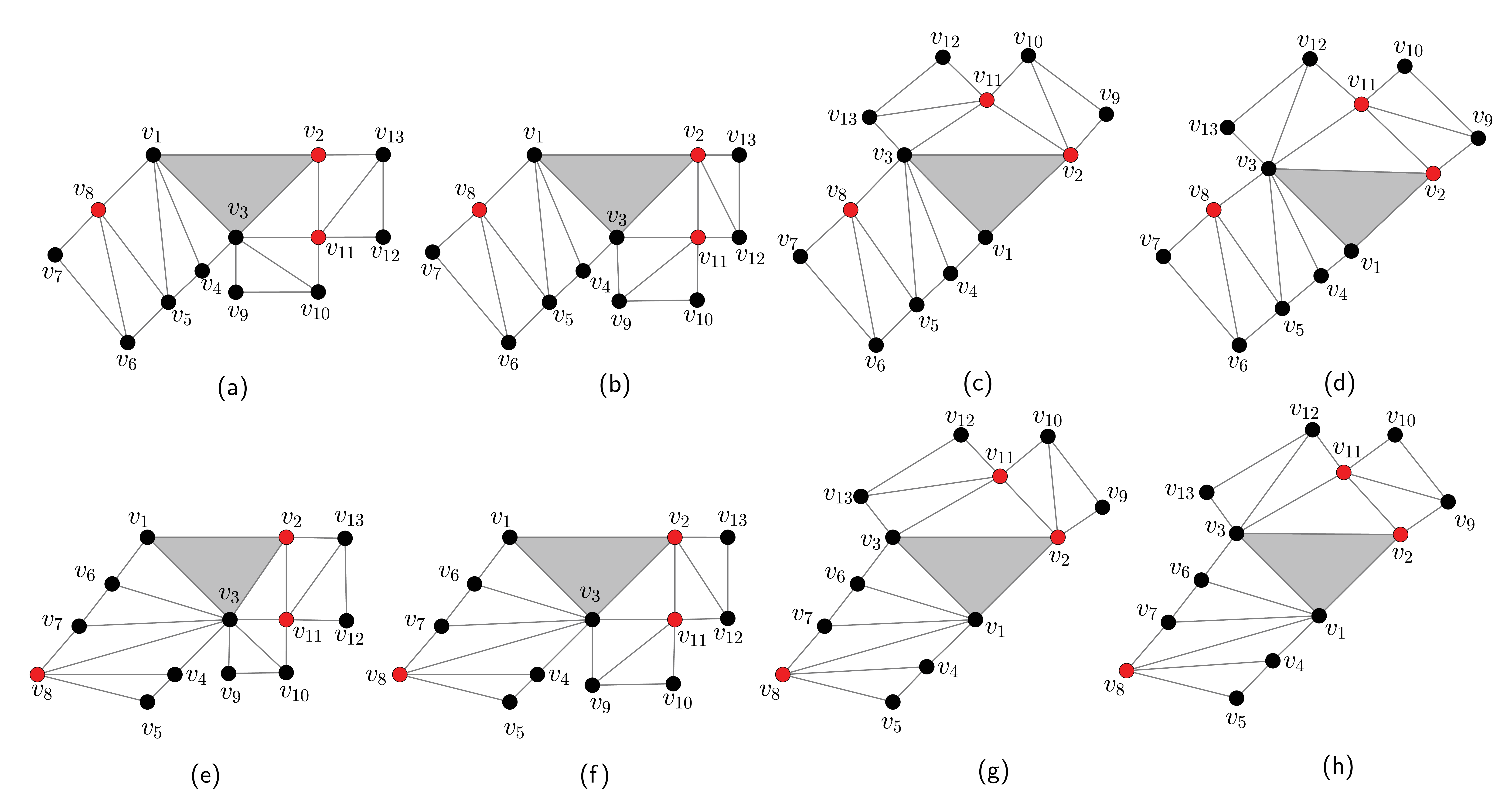}
			\caption{The regions of $G$ corresponding to tree $T_{20}$. The red vertices show a 2DD-set of $G[V(G)\setminus V(G')]$.}\label{tree_20}
		\end{center}
	\end{figure}

	\begin{claim}\label{tree20}
		The tree $T_{20}$ is not a maximal subtree of $T$.
	\end{claim}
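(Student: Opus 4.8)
The plan is to continue within the minimal-counterexample framework and suppose, for contradiction, that $T_{20}=T_v$ for some vertex $v$ of $T$, eventually contradicting the choice of $G$. The first task is to describe the region of $G$ associated with $T_{20}$. Since $T_{20}$ contains the path $T''$ as a subtree, \cref{cl-10to21} forces that portion to be triangulated as the region $H_{13}$ of \cref{tree10to21}(d); coupling this with the distance-$5$ branch issuing from the root triangle---which by \cref{lem1}\ref{5distance} is either $H_1$ or $H_2$---yields the finitely many admissible triangulations depicted in \cref{tree_20}. I would fix the labelling so that $V(T_v)=\{v_1,v_2,v_3\}$ is the shaded root triangle, the distance-$5$ branch passes through $v_3,v_4,\ldots,v_8$, and the $T''$-branch supplies the remaining vertices.

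The core of the argument is to excise a large, structurally uniform block and appeal to minimality, mirroring \cref{tree19}. I would take $G'=G-S$, where $S$ removes the distance-$5$ leaf portion together with the interior of the $T''$-branch while retaining just enough boundary vertices that $G'$ is again a mop (invoking \cref{key} after contracting an outer edge to a new vertex $x$ when the count demands it). Writing $n'=|V(G')|$ and letting $k'$ be the number of vertices of degree~$2$ in $G'$, the internal triangles buried in $S$ should guarantee $k'=k-2$, so that $n'+k'$ falls by enough to make the minimality bound read $\gamma_2^d(G_1)\le\floor*{\frac{2}{9}(n+k)}-3$.

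For the degenerate orders I would separate off the small cases exactly as in the neighbouring claims: whenever $n'$ lies in the range handled by \cref{obs3}, I would produce a $2$-element 2DD-set $D'$ of $G'$ containing a prescribed vertex of the retained boundary, adjoin two or three fixed vertices of $S$, and verify by inspection that the result is a 2DD-set of $G$ of cardinality at most $\floor*{\frac{2}{9}(n+k)}$. In the generic range I would take a $\gamma_2^d$-set $D_1$ of $G_1$ and set $D=(D_1\setminus\{x\})\cup\{v_1,v_2,v_8,v_{11}\}$ when $x\in D_1$ and $D=D_1\cup\{v_2,v_8,v_{11}\}$ otherwise, so that $|D|\le|D_1|+3\le\floor*{\frac{2}{9}(n+k)}$. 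As in \cref{tree18}, the decisive structural fact is that some retained boundary vertex adjacent to $S$ must be disjunctively dominated by $D_1$, which forces $D_1$ to contain a vertex near the root triangle; together with the three adjoined vertices this disjunctively dominates every vertex of $S$.

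The step I expect to be the main obstacle is confirming that this single choice of adjoined vertices yields a legitimate 2DD-set in \emph{every} triangulation depicted in \cref{tree_20} simultaneously. Because the distance-$5$ branch admits two triangulations and the $T''$-branch several more, the distance-$2$ neighbourhoods of the reintroduced vertices migrate between sub-cases, and one must check uniformly that each of $v_4,v_5,v_6,v_7$ and each interior vertex of the $T''$-branch still has either a neighbour in $D$ or two members of $D$ at distance~$2$. The only other delicate point is pinning down $k'$, and hence the exact slack in $n'+k'$; this is forced by \cref{internal} applied to the internal triangles contained in $S$, after which the inequality $|D|\le\floor*{\frac{2}{9}(n+k)}$ follows as above.
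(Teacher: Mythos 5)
Your construction coincides, step for step, with the paper's proof of \cref{tree20}: the same description of the region (an $H_1$ or $H_2$ branch and an $H_{13}$ branch joined across the root triangle $v_1v_2v_3$, giving eight triangulations), the same deletion keeping only $v_1,v_2$ of the region (so $n'=n-11$, $k'=k-2$, with $v_1v_2$ an outer edge of $G'$), the same small-order cases via \cref{obs3}, the same contraction of $v_1v_2$ to $x$ yielding $\gamma_2^d(G_1)\le\floor*{\frac{2}{9}(n+k)}-3$ by minimality, and the identical adjoined sets $\{v_1,v_2,v_8,v_{11}\}$ (when $x\in D_1$) and $\{v_2,v_8,v_{11}\}$ (when $x\notin D_1$).

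However, the step you call the ``decisive structural fact'' is both false in this setting and unnecessary. The forcing argument of \cref{tree18} works there because only $V_4^{8}$ is deleted, no contraction is performed, and the surviving vertex $v_9$ of $G'$ compels $D'\cap\{v_1,v_2,v_3,v_9,v_{10}\}\ne\emptyset$. Here, by contrast, every vertex of the region except $v_1,v_2$ is deleted and the edge $v_1v_2$ is then contracted, so the only retained boundary object is $x$ itself; if $x\notin D_1$, the disjunctive domination of $x$ only yields vertices of $D_1$ on the far side of the diagonal $v_1v_2$, which lie at distance at least $3$ in $G$ from essentially all of the deleted region and hence cannot help dominate it --- nothing forces $D_1$ to contain a vertex ``near the root triangle.'' What the proof actually requires (and what the paper supplies via the red vertices in \cref{tree_20}) is that $v_2,v_8,v_{11}$ \emph{by themselves} disjunctively dominate all of $v_3,\ldots,v_{13}$ in each of the eight triangulations. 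The vertices of $G'$ whose domination in $G_1$ passed through $x$ are then recovered without any forcing: each such vertex is adjacent to $v_1$ or $v_2$ in $G$; adjacency to $v_2$ dominates it outright, while adjacency to $v_1$ gives it the two witnesses $v_2$ and $v_8$ at distance~$2$ through $v_1$ (note $v_1v_8$ is an outer edge of $G$, so both are neighbours of $v_1$). With the erroneous forcing claim deleted and replaced by this direct verification --- the very case analysis you flag as the main labour --- your plan is exactly the paper's proof.
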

	\begin{proof}[Proof of Claim~\ref{tree20}]
		Suppose, to the contrary, that $T_{20}$ is a maximal subtree of $T$, and so $T_{20} = T_v$ where $v$ denotes the root of the subtree $T_v$.  We infer that the subgraph of $G$ associated with $T_{20}$ is obtained from either (i) the region $H_1$ by triangulating the region $v_1v_3v_4v_5v_6v_7v_8$  according to \Cref{lem1}\ref{5distance} as illustrated in Figure~\ref{tree_20}(a)-(d) or (ii) the region $H_2$ by triangulating the region $v_1v_3v_4v_5v_8v_7v_6$ according to \Cref{lem1}\ref{5distance} as illustrated in Figure~\ref{tree_20}(e)-(h), where we let $V(T_v) = \{v_1,v_2,v_{3}\}$ be the (shaded) triangle in $G$ associated with the vertex~$v$. The region $v_2v_3v_{9}v_{10}v_{11}v_{12}v_{13}$ is triangulated as region $H_{13}$ according to \Cref{cl-10to21} as illustrated in Figure~\ref{tree_20}(a)-(h). In the following, we present arguments that work in each cases.

		Let $G'$ be the mop of order $n'$ obtained from $G$ by deleting the vertices in $V_{3}^{13}$, and let $G'$ have $k'$ vertices of degree~$2$. We note that $n' = n-11$ and $k' = k - 2$, and $v_1v_2$ is an outer edge of $G'$. Since $n \ge 13$, we have $n' \ge 2$. If $2 \le n' \le 4$, then $\{v_1,v_8,v_{11}\}$ or $\{v_2,v_8,v_{11}\}$ is a 2DD-set of $G$, and hence $\gamma_2^d(G)\le 3 \le  \floor*{\frac{2}{9}(n+k)}$, a contradiction. If $4\le n' \le 7$, then by \cref{obs3}, there exists a 2DD-set $D'$ of $G'$ such that $v_2\in D'$ and $|D'|=2$. Therefore, $D'\cup \{v_8,v_{11}\}$ is a 2DD-set of $G$, and so $\gamma_2^d(G)\le 4 \le  \floor*{\frac{2}{9}(n+k)}$, a contradiction. Hence, $n' \ge 8$. Let $G_1$ be a graph of order $n_1$ obtained from $G'$ by contracting the edge $v_1v_2$ to form a new vertex $x$ in $G_1$, and let $G_1$ have $k_1$ vertices of degree~$2$. By Lemma~\ref{key}, $G_1$ is a mop. Since $n' \ge 8$, we note that $n_1 = n' - 1 \ge 7$. Further we note that $n_1 = n - 12$ and $k_1 \le k-2$. By the minimality of the mop $G$, we have $\gamma_2^d(G_1) \le \floor*{\frac{2}{9}(n_1+k_1)} \le \floor*{\frac{2}{9}(n-12+k-2)} \le \floor*{\frac{2}{9}(n+k)}-3$. Let $D_1$ be a $\gamma_2^d$-set of $G_1$. If $x \in D_1$, then let $D = (D_1 \setminus \{x\}) \cup \{v_1,v_2,v_8,v_{11}\}$. If $x \notin D_1$, then let $D = D_1 \cup \{v_2,v_8,v_{11}\}$. In both cases $D$ is a 2DD-set of $G$, and so $\gamma_2^d(G) \le |D| \le |D_1| + 3 \le \floor*{\frac{2}{9}(n+k)}$, a contradiction.
	\end{proof}

	\begin{figure}[htbp]
		\begin{center}
			\includegraphics[scale=0.15]{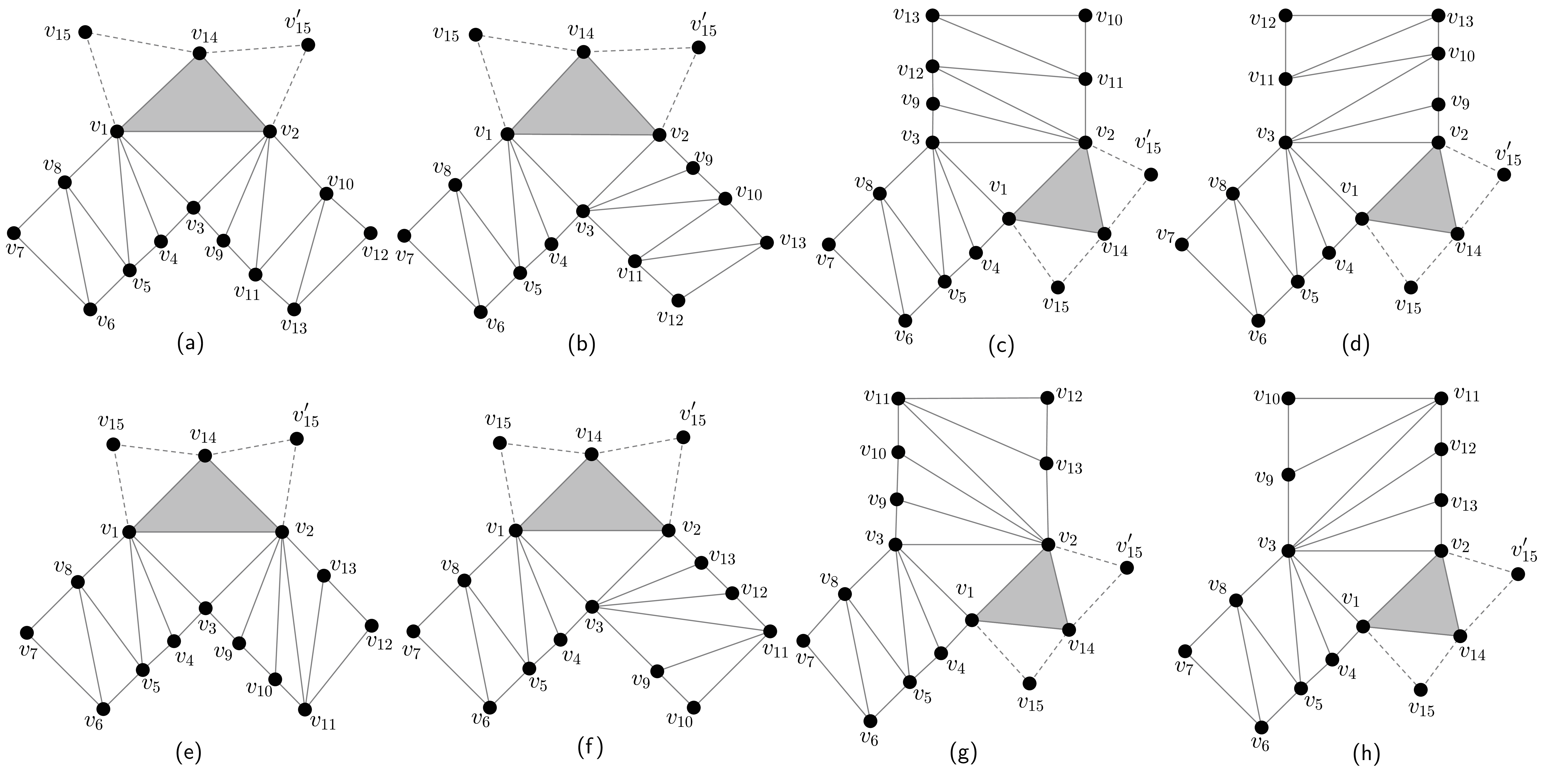}
			\includegraphics[scale=0.153]{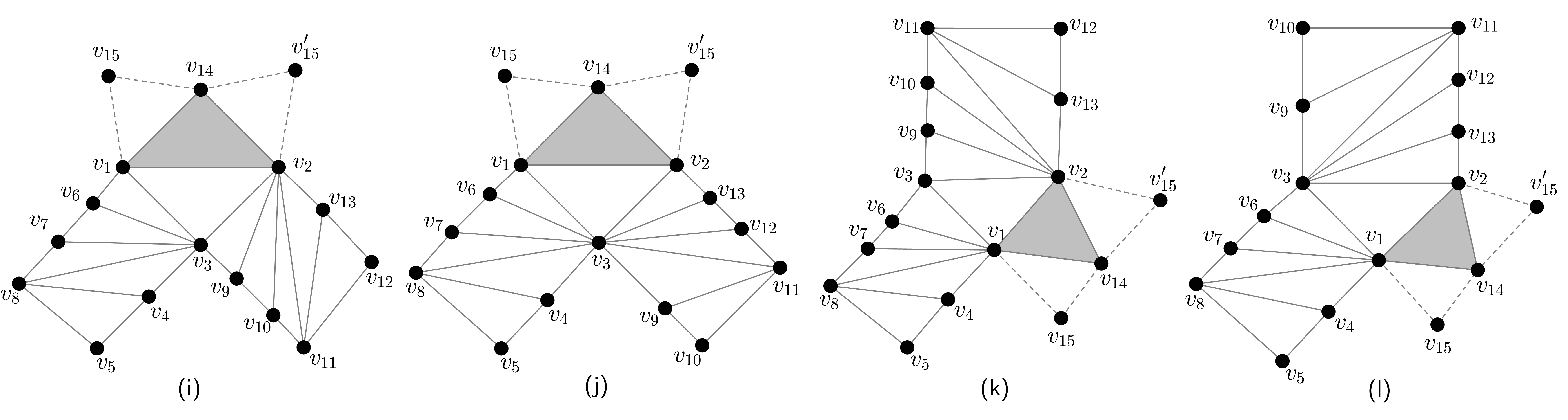}
			\caption{The regions of $G$ corresponding to tree $T_{21}$.}\label{tree_21}
		\end{center}
	\end{figure}

	\begin{claim}\label{tree21}
		The tree $T_{21}$ is not a maximal subtree of $T$.
	\end{claim}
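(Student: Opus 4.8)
The plan is to argue by contradiction exactly as in the preceding claims. I would suppose $T_{21}$ is a maximal subtree of $T$, so that $T_{21} = T_v$ for some vertex $v$, and let $V(R_v) = \{v_1,v_2,v_3\}$ be the shaded triangle corresponding to the root $v$. Since $T_{21}$ contains a branch realizing the distance-$5$ configuration, I would first invoke \Cref{lem1}\ref{5distance} to force the subgraph of $G$ along that branch to be region $H_1$ or $H_2$, and then use \Cref{cl3to10} and \Cref{cl-10to21} to fix the triangulations of the remaining regions. This reduces the verification to the finitely many triangulated subgraphs depicted in \Cref{tree_21}.

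Following the template established for $T_{19}$ and $T_{20}$, I would then delete an initial segment of vertices, read off from \Cref{tree_21} and expected to be $V_3^{13}$ with $v_1,v_2$ retained as anchors, to obtain a mop $G'$ of order $n' = n-11$ with $k' = k-2$ vertices of degree~$2$, in which $v_1v_2$ remains an outer edge. Since $n \ge 13$, I would first clear the small residual cases: for $2 \le n' \le 4$ an explicit three-element 2DD-set such as $\{v_1,v_8,v_{11}\}$ or $\{v_2,v_8,v_{11}\}$ works, and for $5 \le n' \le 7$ I would apply \Cref{obs3} to produce a two-element 2DD-set $D'$ of $G'$ containing a suitable anchor, whence $D' \cup \{v_8,v_{11}\}$ is a 2DD-set of $G$ of size $4 \le \floor*{\frac{2}{9}(n+k)}$.

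For the main range $n' \ge 8$, I would contract the outer edge $v_1v_2$ to form, by \Cref{key}, a mop $G_1$ of order $n_1 = n-12$ with $k_1 \le k-2$, and apply the minimality of $G$ to obtain $\gamma_2^d(G_1) \le \floor*{\frac{2}{9}(n_1+k_1)} \le \floor*{\frac{2}{9}(n+k)}-3$. Taking a $\gamma_2^d$-set $D_1$ of $G_1$ and splitting on whether the contracted vertex $x$ lies in $D_1$, I would set $D = (D_1 \setminus \{x\}) \cup \{v_1,v_2,v_8,v_{11}\}$ or $D = D_1 \cup \{v_2,v_8,v_{11}\}$; in either case $|D| \le |D_1|+3 \le \floor*{\frac{2}{9}(n+k)}$, contradicting $\gamma_2^d(G) > \floor*{\frac{2}{9}(n+k)}$.

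The main obstacle is verifying uniformly, across every triangulation in \Cref{tree_21}, that the three added vertices $\{v_2,v_8,v_{11}\}$ (together with whatever $D_1$ contributes through $x$) disjunctively dominate every vertex of the deleted segment $V_3^{13}$, in particular the degree-$2$ vertices and the vertices $v_9,v_{10},v_{12},v_{13}$ whose adjacencies depend on the chosen triangulation. I would check, case by case, that each such vertex is either adjacent to one of $v_8,v_{11}$ or has two of $\{v_2,v_8,v_{11}\}$ at distance~$2$; the structural constraints imposed by \Cref{lem1}\ref{5distance} and \Cref{cl-10to21} are precisely what guarantee this for all the triangulations simultaneously.
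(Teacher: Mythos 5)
Your overall template (delete the region, handle small orders, contract an edge, invoke minimality) is the right family of argument, but your proposal has a genuine gap: the counting claim $k' = k-2$ (and hence $k_1 \le k-2$) is \emph{not} valid unconditionally for $T_{21}$, and this is precisely why the paper's proof of this claim does not follow the $T_{19}$/$T_{20}$ template. The trees $T_{19}$ and $T_{20}$ have three leaves, so their associated regions contain three vertices of degree~$2$ inside the deleted set $V_3^{13}$; even if the deletion creates one new degree-$2$ vertex among $\{v_1,v_2\}$, one still has $k' \le k-2$. The tree $T_{21}$, by contrast, has only \emph{two} leaves (both of its branches are distance-$5$ legs), so its region contains only two degree-$2$ vertices. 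Whether deleting $V_3^{13}$ costs two degree-$2$ vertices or only one net depends on the triangle $F_1=\{v_1,v_2,v_{14}\}$ of $G$ lying across the diagonal $v_1v_2$: if $F_1$ is not an internal triangle, say $v_1v_{14}$ is an outer edge of $G$, then in $G'$ the vertex $v_1$ has $N_{G'}(v_1)=\{v_2,v_{14}\}$, so $k'=k-1$; and after contracting $v_1v_2$ the vertex $v_{14}$ can itself drop to degree~$2$ (e.g.\ when $\deg_G(v_{14})=3$), so $k_1 = k-1$ is possible. Then minimality only yields $\gamma_2^d(G_1)\le \floor*{\frac{2}{9}(n-12+k-1)} = \floor*{\frac{2}{9}((n+k)-13)}$, and since $\frac{2}{9}\cdot 13 < 3$ this can equal $\floor*{\frac{2}{9}(n+k)}-2$. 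As your construction adds the three vertices $\{v_2,v_8,v_{11}\}$, no contradiction follows in this case, and the proof collapses.

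The paper closes exactly this hole with a case analysis that your proposal omits. It first disposes of $13\le n\le 14$ directly, so that the triangle $F_1=\{v_1,v_2,v_{14}\}$ exists. If $F_1$ is an internal triangle of $G$, then both $v_1$ and $v_2$ keep degree at least~$3$ in $G'$, so $k'=k-2$ and your argument (delete $V_3^{13}$, contract $v_1v_2$) goes through verbatim. If $F_1$ is not internal, the paper instead looks at the next triangle $F$ across $F_1$ and, depending on its orientation, deletes \emph{twelve} vertices ($V_2^{13}$, keeping only $v_1$, or $V_1^{13}\setminus\{v_2\}$, keeping only $v_2$) and contracts $v_1v_{14}$ (resp.\ $v_2v_{14}$) rather than $v_1v_2$; this gives $n_1=n-13$ and $k_1\le k-1$, so $n_1+k_1 \le (n+k)-14$ and the required saving of~$3$ is restored, with the replacement sets $\{v_1,v_8,v_{11},v_{14}\}$, $\{v_1,v_8,v_{11}\}$ (resp.\ with $v_2$ in place of $v_1$). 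To repair your proposal you would need to add this trichotomy; as written, the uniform deletion of $V_3^{13}$ with contraction of $v_1v_2$ fails whenever the triangle adjacent to the root triangle across $v_1v_2$ is not internal.
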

	\begin{proof}[Proof of Claim~\ref{tree21}]
		Suppose, to the contrary, that $T_{21}$ is a maximal subtree of $T$, and so $T_{21} = T_v$ where $v$ denotes the root of the subtree $T_v$. We infer that the subgraph of $G$ associated with $T_{21}$ is obtained from either (i) the region $H_1$ by triangulating the region $v_1v_3v_4v_5v_6v_7v_8$  according to \Cref{lem1}\ref{5distance} as illustrated in Figure~\ref{tree_21}(a)-(h) or (ii) the region $H_2$ by triangulating the region $v_1v_3v_4v_5v_8v_7v_6$ according to \Cref{lem1}\ref{5distance} as illustrated in Figure~\ref{tree_21}(i)-(l), where we let $V(T_v) = \{v_1,v_2,v_{3}\}$ be the triangle in $G$ associated with the vertex~$v$. Recall that $n \ge 13$. If $13\le n\le 14$, then $\{v_2,v_8,v_{11}\}$ is a 2DD-set of $G$. So $n \ge 15$. Therefore there exists a triangle that is adjacent to $V(T_v) = \{v_1,v_2,v_{3}\}$. Let $V(F_1) = \{v_1,v_2,v_{14}\}$ be the (shaded) triangle adjacent to  $V(T_v) = \{v_1,v_2,v_{3}\}$ in $G$.

		Suppose that $V(F_1)=\{v_1,v_2,v_{14}\}$ is an internal triangle of $G$. Let $G'$ be the mop of order $n'$ obtained from $G$ by deleting the vertices in $V_3^{13}$, and let $G'$ have $k'$ vertices of degree~$2$. We note that $n' = n-11$ and $k' = k - 2$, and $v_1v_2$ is an outer edge of $G'$. Since $n \ge 13$, we have $n' \ge 2$. If $2 \le n' \le 4$, then $\{v_2,v_8,v_{11}\}$ or $\{v_1,v_8,v_{11}\}$ is a 2DD-set of $G$, and hence $\gamma_2^d(G)\le 3 \le  \floor*{\frac{2}{9}(n+k)}$, a contradiction. If $5 \le n' \le 7$, then by \cref{obs3}, there exists a 2DD-set $D'$ of $G'$ such that $v_2\in D'$ and $|D'|=2$. Therefore, $D'\cup \{v_8,v_{11}\}$ is a 2DD-set of $G$, and so $\gamma_2^d(G)\le 4 \le \floor*{\frac{2}{9}(n+k)}$, a contradiction. Hence, $n' \ge 8$. Let $G_1$ be a graph of order $n_1$ obtained from $G'$ by contracting the edge $v_1v_2$ to form a new vertex $x$ in $G_1$, and let $G_1$ have $k_1$ vertices of degree~$2$. By Lemma~\ref{key}, $G_1$ is a mop. Since $n' \ge 8$, we note that $n_1 = n' - 1 \ge 7$. Further we note that $n_1 = n - 12$ and $k_1 \le k-2$. By the minimality of the mop $G$, we have $\gamma_2^d(G_1) \le \floor*{\frac{2}{9}(n_1+k_1)} \le \floor*{\frac{2}{9}(n-12+k-2)} \le \floor*{\frac{2}{9}(n+k)}-3$. Let $D_1$ be a $\gamma_2^d$-set of $G_1$. If $x \in D_1$, then let $D = (D_1 \setminus \{x\}) \cup \{v_1,v_2,v_8,v_{11}\}$. If $x \notin D_1$, then let $D = D_1 \cup \{v_2,v_8,v_{11}\}$. In both cases $D$ is a 2DD-set of $G$, and so $\gamma_2^d(G) \le |D| \le |D_1| + 3 \le \floor*{\frac{2}{9}(n+k)}$, a contradiction.
		
		Hence, $V(F_1)=\{v_1,v_2,v_{14}\}$ is not an internal triangle of $G$, implying that $n \ge 15$. Therefore there exists a triangle $F$ adjacent to $V(F_1)=\{v_1,v_2,v_{14}\}$. There are two possible triangles that can be formed: either $V(F)=\{v_1,v_{14},v_{15}\}$ or $V(F)=\{v_1,v_{14},v'_{15}\}$. These are illustrated with dotted lines in \Cref{tree_21}(a)-(l).
		
		Suppose that $V(F)=\{v_1,v_{14},v_{15}\}$. In this case, let $G'$ be a graph of order $n'$ obtained from $G$ by deleting the vertices $V_2^{13}$, and let $G'$ have $k'$ vertices of degree~$2$. We note that $n' = n-12$ and $k' = k - 1$, and $v_1v_{14}$ is an outer edge of $G'$. If $2 \le n' \le 4$, then $\{v_1,v_8,v_{11}\}$ is a 2DD-set of $G$, and hence $\gamma_2^d(G)\le 3 \le  \floor*{\frac{2}{9}(n+k)}$, a contradiction. If $5 \le n' \le 7$, then by \cref{obs3}, there exists a 2DD-set $D'$ of $G'$ such that $v_1\in D'$ and $|D'|=2$. Therefore, $D'\cup \{v_8,v_{11}\}$ is a 2DD-set of $G$, and so $\gamma_2^d(G)\le 4 \le \floor*{\frac{2}{9}(n+k)}$, a contradiction. Hence, $n' \ge 8$. Let $G_1$ be a graph of order $n_1$ obtained from $G'$ by contracting the edge $v_1v_{14}$ to form a new vertex $x$ in $G_1$, and let $G_1$ have $k_1$ vertices of degree~$2$. By Lemma~\ref{key}, $G_1$ is a mop. Since $n' \ge 8$, we note that $n_1 = n' - 1 \ge 7$. Further we note that $n_1 = n - 13$ and $k_1 \le k-1$. By the minimality of the mop $G$, we have $\gamma_2^d(G_1) \le \floor*{\frac{2}{9}(n_1+k_1)} \le \floor*{\frac{2}{9}(n-13+k-1)} \le \floor*{\frac{2}{9}(n+k)}-3$. Let $D_1$ be a $\gamma_2^d$-set of $G_1$. If $x \in D_1$, then let $D = (D_1 \setminus \{x\}) \cup \{v_1,v_8,v_{11},v_{14}\}$. If $x \notin D_1$, then let $D = D_1 \cup \{v_1,v_8,v_{11}\}$. In both cases $D$ is a 2DD-set of $G$, and so $\gamma_2^d(G) \le |D| \le |D_1| + 3 \le \floor*{\frac{2}{9}(n+k)}$, a contradiction.
		
		Hence, $V(F)=\{v_1,v_{14},v'_{15}\}$. We now let $G'$ be the mop of order $n'$ obtained from $G$ by deleting the vertices $V_1^{13}\setminus \{v_2\}$, and let $G'$ have $k'$ vertices of degree~$2$. We note that $n' = n-12$ and $k' = k - 1$, and $v_2v_{14}$ is an outer edge of $G'$. If $2 \le n' \le 3$, then $\{v_2,v_8,v_{11}\}$ is a 2DD-set of $G$, and hence $\gamma_2^d(G)\le 3 \le  \floor*{\frac{2}{9}(n+k)}$, a contradiction. If $5 \le n' \le 7$, then by \cref{obs3}, there exists a 2DD-set $D'$ of $G'$ such that $v_2\in D'$ and $|D'|=2$. Therefore, $D'\cup \{v_8,v_{11}\}$ is a 2DD-set of $G$, and so $\gamma_2^d(G)\le 4 \le \floor*{\frac{2}{9}(n+k)}$, a contradiction. Hence, $n' \ge 8$. Let $G_1$ be a graph of order $n_1$ obtained from $G'$ by contracting the edge $v_2v_{14}$ to form a new vertex $x$ in $G_1$, and let $G_1$ have $k_1$ vertices of degree~$2$. By Lemma~\ref{key}, $G_1$ is a mop. Since $n' \ge 8$, we note that $n_1 = n' - 1 \ge 7$. Further we note that $n_1 = n - 13$ and $k_1 \le k-1$. By the minimality of the mop $G$, we have $\gamma_2^d(G_1) \le \floor*{\frac{2}{9}(n_1+k_1)} \le  \floor*{\frac{2}{9}(n-13+k-1)} \le \floor*{\frac{2}{9}(n+k)}-3$. Let $D_1$ be a $\gamma_2^d$-set of $G_1$. If $x \in D_1$, then let $D = (D_1 \setminus \{x\}) \cup \{v_2,v_8,v_{11},v_{14}\}$. If $x \notin D_1$, then let $D = D_1 \cup \{v_2,v_8,v_{11}\}$. In both cases $D$ is a 2DD-set of $G$, and so $\gamma_2^d(G) \le |D| \le |D_1| + 3 \le \floor*{\frac{2}{9}(n+k)}$, a contradiction.
	\end{proof}
	
	\begin{figure}[htbp]
		\begin{center}
			\includegraphics[scale=0.18]{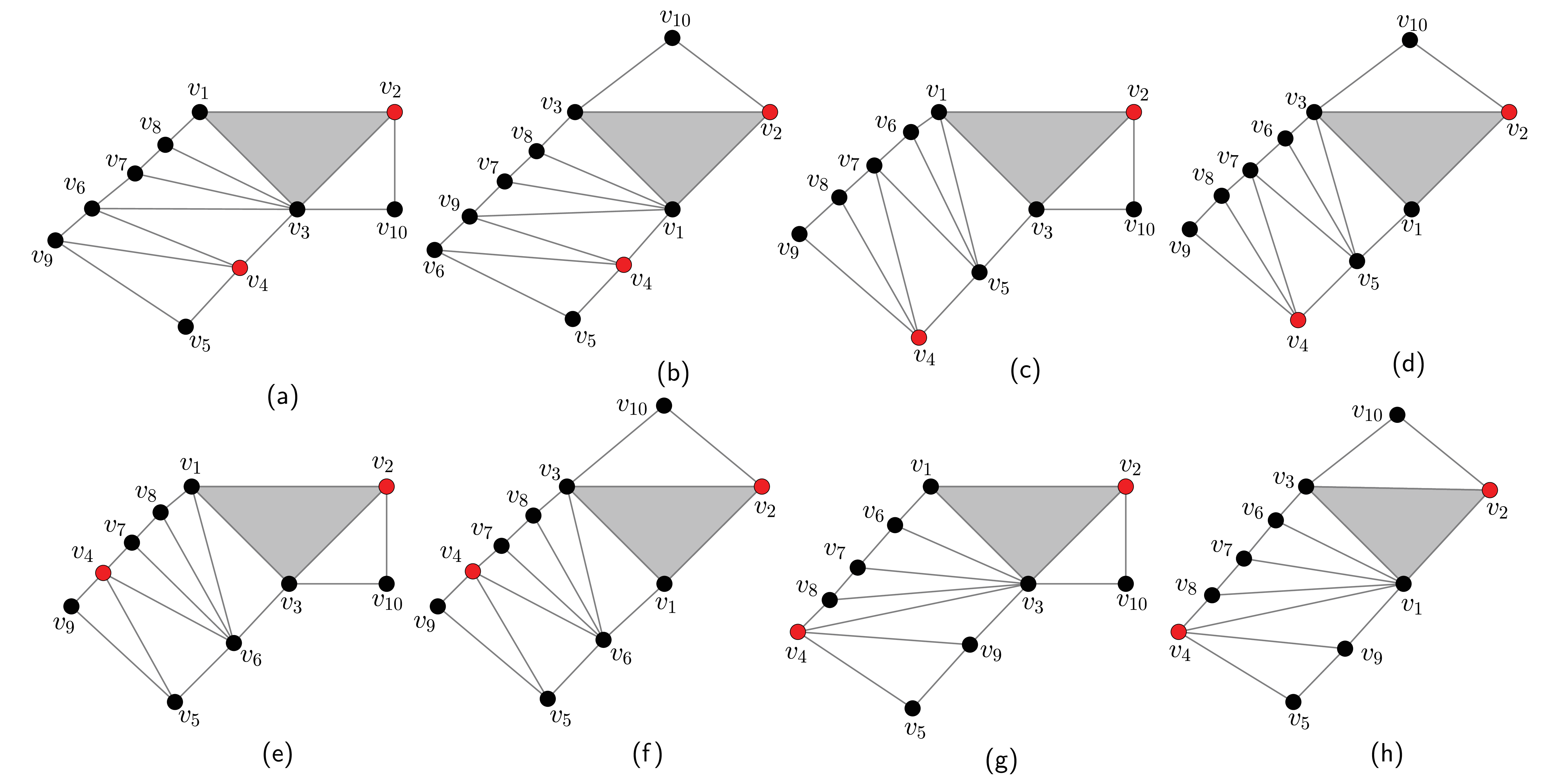}
			\caption{The regions of $G$ corresponding to tree $T_{22}$. The red vertices show a 2DD-set of $G[V(G)\setminus V(G')]$.}\label{tree_22}
		\end{center}
	\end{figure}

	\begin{claim}\label{tree22}
		The tree $T_{22}$ is not a maximal subtree of $T$.
	\end{claim}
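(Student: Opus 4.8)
The plan is to argue by contradiction along the lines of the preceding claims. Assuming $T_{22}=T_v$, I would first read off the region of $G$ from \Cref{tree_22} and \Cref{lem1}\ref{6distance}: the root triangle is $R_v=\{v_1,v_2,v_3\}$ with parent edge $v_1v_2$ and apex $v_3$, one child of $v$ is the distance-$6$ arm whose triangulation is one of $H_5,H_6,H_7,H_8$ (on the vertices $v_1,v_3,v_4,\ldots,v_9$), and the other child is a single leaf triangle contributing $v_{10}$ on the edge $v_2v_3$. Since \Cref{lem1}\ref{6distance} permits four triangulations of the long arm (and two choices of which edge of $R_v$ is the parent edge), the region comes in several shapes; I would fix the labeling uniformly so that the far leaf of the arm is $v_9$ and the vertex of $F_3$ farthest from $R_v$ is always the red interior vertex $v_7$ of \Cref{tree_22}, and then give one argument valid for all cases.

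Next I would excise the entire pendant region. Set $G'=G-V_3^{10}$. Because $R_v$ is an internal triangle (as $\deg_T(v)=3$) while the arm is a path in $T$, the graph $G'$ is a mop with $v_1v_2$ an outer edge, $n'=n-8\ge 5$, and $k'\le k-1$ (deleting $R_v$ destroys one internal triangle). For the boundary cases $5\le n'\le 7$ I would use \Cref{obs3} to get a 2DD-set $D'$ of $G'$ with $v_2\in D'$ and $|D'|=2$, and then verify that $D=D'\cup\{v_7\}$ is a 2DD-set of $G$: here $v_2$ dominates $v_3,v_{10}$, the vertex $v_7$ dominates $v_6,v_8,v_9$, the interior vertices $v_4,v_5$ each acquire $v_2$ and $v_7$ as two distinct vertices at distance $2$, and $v_1$ is dominated by $v_2$. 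As $n\ge 13$ and $R_v$ internal force $\lfloor\tfrac{2}{9}(n+k)\rfloor\ge 3$, this gives $\gamma_2^d(G)\le 3\le\lfloor\tfrac{2}{9}(n+k)\rfloor$, a contradiction.

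For $n'\ge 8$ I would contract $v_1v_2$ to a new vertex $x$, obtaining by \Cref{key} a mop $G_1$ with $n_1=n-9\ge 7$ and $k_1\le k-1$, so minimality yields $\gamma_2^d(G_1)\le\lfloor\tfrac{2}{9}(n_1+k_1)\rfloor\le\lfloor\tfrac{2}{9}(n+k)\rfloor-2$ (the inequality $\tfrac{2}{9}\cdot 10>2$ is exactly what makes the $-2$ work). Taking a $\gamma_2^d$-set $D_1$ of $G_1$, I would lift it by the usual split: if $x\in D_1$ set $D=(D_1\setminus\{x\})\cup\{v_1,v_2,v_7\}$, and if $x\notin D_1$ set $D=D_1\cup\{v_2,v_7\}$. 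In either case $|D|\le|D_1|+2$, and the same local check (with $v_1$ available in the first case and $v_1$ dominated by $v_2$ in the second) shows $D$ is a 2DD-set of $G$, whence $\gamma_2^d(G)\le\lfloor\tfrac{2}{9}(n+k)\rfloor$, the desired contradiction.

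The main obstacle is precisely this local domination check for the pair $\{v_2,v_7\}$ across all four triangulations $H_5$--$H_8$ and both parent-edge choices. The only truly case-sensitive points are the interior vertices $v_4$ and $v_5$: disjunctive domination allows distance-$2$ coverage only with multiplicity two, so I must confirm in every configuration that each of $v_4,v_5$ has two distinct members of $\{v_1,v_2,v_7\}$ at distance exactly $2$. This rests on the spine adjacencies forced by the buildup in \Cref{lem1} (such as $u_2u_4,u_2u_5,u_4u_5$ being edges of $G$), which is what lets a single interior vertex $v_7$, together with the apex/seam, reach all six triangles of the long arm; checking this uniformly is the heart of the argument, after which the arithmetic closes the case exactly as above.
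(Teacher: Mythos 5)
Your proposal is correct and essentially reproduces the paper's own proof: the same excision $G' = G - V_3^{10}$ of the root triangle, distance-$6$ arm and pendant leaf triangle, the same small-order cases via \Cref{obs3} (with $v_2$ forced into the $2$-element 2DD-set), the same minimality count $n'+k' \le n+k-9$ giving the $-2$ slack, and the same two covering vertices --- your $v_7$ is exactly the paper's $v_4$ (in both labelings it is the vertex $u_2$ of $F_3$ from \Cref{lem1}, the hub adjacent to the far leaf of the arm), and the local check that $\{v_2,u_2\}$ disjunctively dominates the whole excised region in all four triangulations $H_5$--$H_8$ goes through as you sketch it. The one deviation is your contraction of $v_1v_2$ when $n' \ge 8$: the paper skips this step entirely, since deleting $V_3^{10}$ alone already yields $\gamma_2^d(G') \le \floor*{\frac{2}{9}(n+k)}-2$ and a 2DD-set of the induced subgraph $G'$ remains valid inside $G$, whereas the contraction buys nothing arithmetically and forces the two-case lift together with the delicate (and in your write-up unexamined) verification that distance-$2$ witnesses passing through the contracted vertex $x$ survive in $G$ when $x \notin D_1$; dropping it makes your argument both shorter and strictly cleaner.
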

	\begin{proof}[Proof of Claim~\ref{tree22}]
		Suppose, to the contrary, that $T_{22}$ is a maximal subtree of $T$, and so $T_{22} = T_v$ where $v$ denotes the root of the subtree $T_v$. We infer that the subgraph of $G$ associated with $T_{22}$ is obtained from either (i) the region $H_{5}$ by triangulating the region $v_1v_3v_4v_5v_9v_6v_7v_8$  according to \Cref{lem1}\ref{6distance} as illustrated in Figure~\ref{tree_22}(a)-(b) or (ii) the region $H_7$ by triangulating the region $v_1v_3v_5v_4v_9v_8v_7v_6$ according to \Cref{lem1}\ref{6distance} as illustrated in Figure~\ref{tree_22}(c)-(d) or (iii) the region $H_6$ by triangulating the region $v_1v_3v_6v_5v_9v_4v_7v_8$ according to \Cref{lem1}\ref{6distance} as illustrated in Figure~\ref{tree_22}(e)-(f) or (iv) the region $H_8$ by triangulating the region $v_1v_3v_9v_5v_4v_8v_7v_6$ according to \Cref{lem1}\ref{6distance} as illustrated in Figure~\ref{tree_22}(g)-(h), where we let $V(T_v) = \{v_1,v_2,v_{3}\}$ be the (shaded) triangle in $G$ associated with the vertex~$v$. In the following, we present arguments that work in each cases.
		
		Let $G'$ be the mop of order $n'$ obtained from $G$ by deleting the vertices in $V_3^{10}$, and let $G'$ have $k'$ vertices of degree~$2$. We note that $n' = n-8$ and $k' = k - 1$, and $v_1v_2$ is an outer edge of $G'$. Since $n \ge 13$, we have $n' \ge 5$. If $5 \le n' \le 7$, then by \cref{obs3}, there exists a 2DD-set $D'$ of $G'$ such that $v_2\in D'$ and $|D'|=2$. Therefore, $D'\cup \{v_4\}$ is a 2DD-set of $G$, and so $\gamma_2^d(G)\le 3\le  \floor*{\frac{2}{9}(n+k)}$, a contradiction. Hence, $n' \ge 8$. By the minimality of the mop $G$, we have $\gamma_2^d(G_1) \le \floor*{\frac{2}{9}(n'+k')} = \floor*{\frac{2}{9}(n-8+k-1)} \le \floor*{\frac{2}{9}(n+k)}-2$. Let $D'$ be a $\gamma_2^d$-set of $G_1$ and $D= D' \cup \{v_2,v_4\}$. The resulting set $D$ is a 2DD-set of $G$, and so $\gamma_2^d(G) \le |D| \le |D_1| + 2 \le \floor*{\frac{2}{9}(n+k)}$, a contradiction.
	\end{proof}
	
	\begin{figure}[htbp]
		\begin{center}
			\includegraphics[scale=0.19]{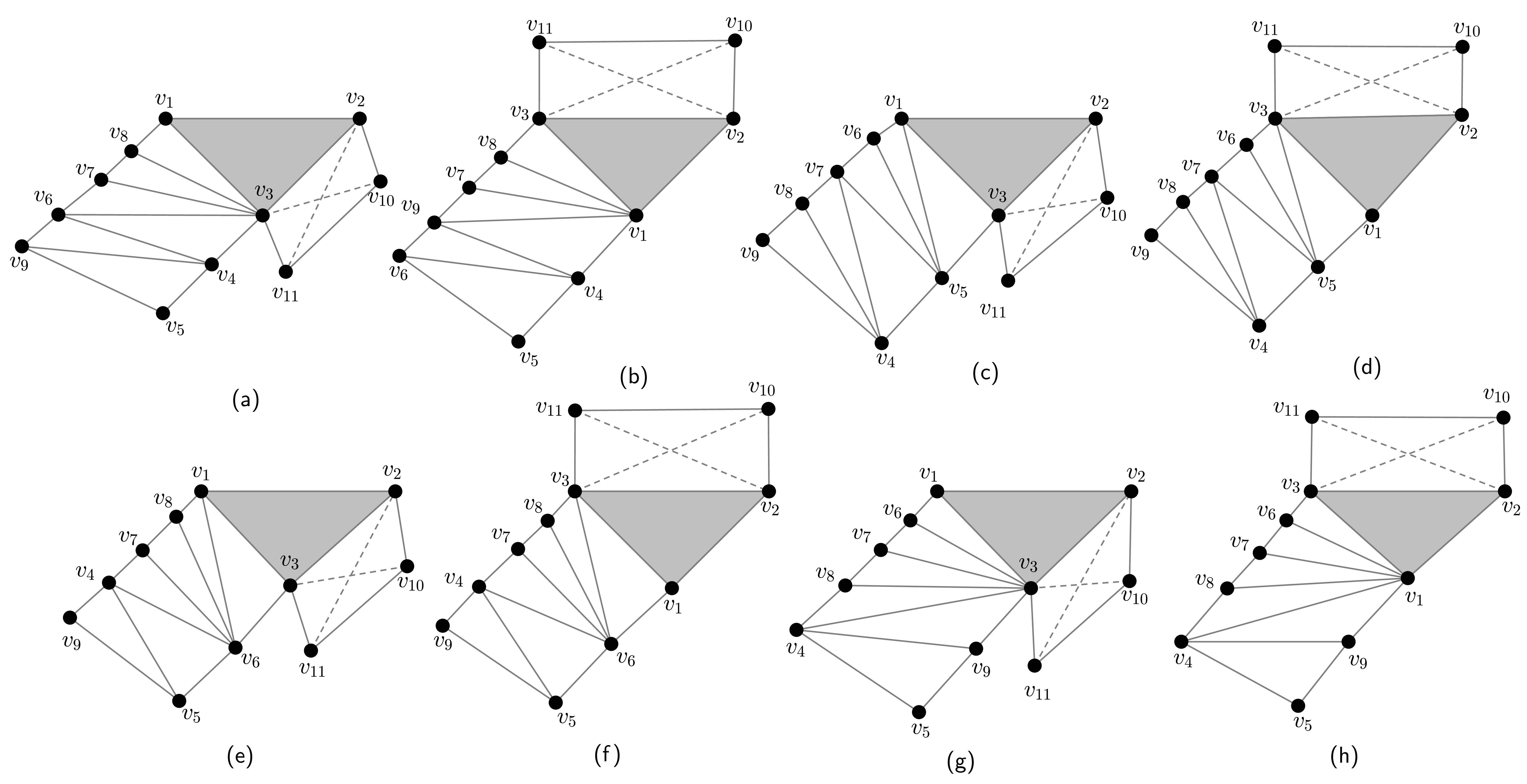}
			\caption{The regions of $G$ corresponding to tree $T_{23}$.}\label{tree_23}
		\end{center}
	\end{figure}

	\begin{claim}\label{tree23}
		The tree $T_{23}$ is not a maximal subtree of $T$.
	\end{claim}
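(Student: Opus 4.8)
The plan is to mirror the contradiction argument of \Cref{tree22}, since $T_{23}$ is the companion configuration in the same distance-$6$ family and differs from $T_{22}$ only in the placement of the short second leg and the triangulation of the long arm. So I would suppose, for contradiction, that $T_{23}$ is a maximal subtree of $T$, write $T_{23}=T_v$, and let $R_v$ be the triangle of $G$ corresponding to the root $v$, with $V(R_v)=\{v_1,v_2,v_3\}$ and $v_1v_2$ the diagonal along which $T_v$ hangs off the rest of $G$. First I would use \Cref{lem1}\ref{6distance} to fix the long arm descending from $v$ as one of the regions $H_5,H_6,H_7,H_8$, and \Cref{cl3to10} (together with \Cref{cl-10to21} if a longer path appears) to pin down the triangulation of the remaining region meeting $v$. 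This yields the finite list of plane subgraphs drawn in \Cref{tree_23}, and the whole point is that a single delete-and-extend recipe should settle all of them at once.

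Next I would remove the block corresponding to the long arm. Concretely, I would take $G'=G-(V_3^{\ell}\setminus S)$, where $V_3^{\ell}$ sweeps out the distance-$6$ arm together with the interior vertex $v_3$ of $R_v$, and $S$ is a (possibly empty) set of one or two boundary vertices among $v_1,v_2$ and an apex of the arm, kept so that the deleted block meets $G'$ along a single outer edge. By \Cref{internal} the deleted block contains at least one internal triangle, so the degree-$2$ count drops; as in \Cref{tree22} I expect $n'=n-8$ and $k'=k-1$, giving $(n+k)-(n'+k')=9$. The small leftover cases $5\le n'\le 7$ (and $n'\le 4$) are disposed of directly by \Cref{obs3}, which supplies a $2$-element 2DD-set of $G'$ through a prescribed boundary vertex; otherwise, if the residual edge is not yet an outer edge of a large enough mop, I would contract it and invoke \Cref{key} to stay within the class of mops, always checking that the reduced order is at least $7$ so that minimality applies.

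Minimality of $G$ then gives $\gamma_2^d(G') \le \floor*{\frac{2}{9}(n'+k')} \le \floor*{\frac{2}{9}(n+k)}-2$, and I would re-expand a $\gamma_2^d$-set $D'$ of $G'$ to a 2DD-set of $G$ by adjoining exactly two vertices, one near the diagonal (to dominate $v_1,v_2,v_3$) and one apex of the distance-$6$ arm (to disjunctively dominate the arm's interior), exactly as $\{v_2,v_4\}$ is adjoined in \Cref{tree22}. The resulting bound $\gamma_2^d(G)\le |D'|+2 \le \floor*{\frac{2}{9}(n+k)}$ contradicts $\gamma_2^d(G)>\floor*{\frac{2}{9}(n+k)}$. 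The main obstacle I anticipate is the uniformity: the added pair must disjunctively dominate every orphaned vertex in each of the four arm shapes $H_5$--$H_8$ (and in both placements of the short leg) simultaneously, and the reason one adjoins an apex like $v_4$ rather than a boundary vertex is precisely that a single apex can sit at distance $\le 2$ from all interior vertices of the arm under the 2DD relaxation. The $n+k$ arithmetic is the easy part, since the internal triangle guaranteed by \Cref{internal} already forces the gain of $2$.
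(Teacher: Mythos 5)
Your high-level template (delete the block hanging off the root triangle, apply minimality of the counterexample, re-expand by two vertices) is indeed the paper's template, but your execution has a genuine gap, and it sits exactly at the one feature that distinguishes $T_{23}$ from $T_{22}$: in $T_{23}$ the second leg consists of \emph{two} triangles, so in $G$ it is a quadrilateral region $v_2v_{10}v_{11}v_3$ attached along $v_2v_3$, and it can be triangulated in two ways, by the diagonal $v_2v_{11}$ or by the diagonal $v_3v_{10}$. Your proposal never records this (you describe the difference from $T_{22}$ as mere ``placement'' of the leg), and your concrete choices break on it. Your deletion set is ``the distance-$6$ arm together with $v_3$.'' If the leg vertices $v_{10},v_{11}$ are kept, then deleting $v_3$ always leaves the leg attached to the rest of $G'$ through the single vertex $v_2$ (and in the triangulation $v_3v_{10}\in E(G)$ the vertex $v_{11}$ becomes pendant), so $G'$ is not $2$-connected, hence not a mop; minimality and \Cref{key} cannot be invoked, and contracting an edge does not repair a cut vertex. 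If instead you also delete $v_{10},v_{11}$, then nine vertices are removed, so your stated arithmetic $n'=n-8$, $k'=k-1$ is wrong; the count could be repaired (one then needs $n'=n-9$ with $k'\le k$), but that bookkeeping is about two deleted degree-$2$ apexes versus $v_1,v_2$ possibly becoming degree $2$, not about \Cref{internal}, which is not the right mechanism here. In fact the numbers you quote, $n'=n-8$ and $k'=k-1$, correspond precisely to the deletion the paper actually performs, namely $G'=G-V_4^{11}$, which removes the arm and both leg vertices while keeping all of $v_1,v_2,v_3$, so that $v_3$ is the single new degree-$2$ vertex; your described deletion and your arithmetic are inconsistent with one another.

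The second gap is in the re-expansion. The paper keeps both $v_2$ and $v_3$ in $G'$ exactly so it can distinguish the two leg triangulations: it adjoins $\{v_2,v_4\}$ when $v_2v_{11}\in E(G)$ and $\{v_3,v_4\}$ when $v_3v_{10}\in E(G)$, because which of $v_2,v_3$ is adjacent to both $v_{10}$ and $v_{11}$ depends on that diagonal. You assert one uniform pair ``exactly as $\{v_2,v_4\}$ is adjoined in \Cref{tree22}'' without ever checking the leg. As it happens, with the paper's deletion this uniform choice can be verified to work even in the $v_3v_{10}$ case ($v_{10}$ is adjacent to $v_2$, and $v_{11}$ has both $v_2$ and $v_4$ at distance $2$, through $v_{10}$ and through $v_3$), but that verification of $v_{10}$ and $v_{11}$ under both triangulations is precisely the content of this claim beyond \Cref{tree22}, and it is absent from your proposal; moreover it relies on the arm satisfying $v_3v_4\in E(G)$ and on $v_3$ surviving as a path vertex, which your own deletion destroys. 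To fix the proof, replace your deletion by $G'=G-V_4^{11}$ (no contraction is needed anywhere in this claim), keep the small-order cases via \Cref{obs3} as you did, and finish with an explicit check of $v_{10},v_{11}$ in each of the two triangulations of the quadrilateral.
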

	\begin{proof}[Proof of Claim~\ref{tree23}]
		Suppose, to the contrary, that $T_{23}$ is a maximal subtree of $T$, and so $T_{23} = T_v$ where $v$ denotes the root of the subtree $T_v$. We infer that the subgraph of $G$ associated with $T_{23}$ is obtained from either (i) the region $H_{5}$ by triangulating the region $v_1v_3v_4v_5v_9v_6v_7v_8$  according to \Cref{lem1}\ref{6distance} as illustrated in Figure~\ref{tree_23}(a)-(b) or (ii) the region $H_7$ by triangulating the region $v_1v_3v_5v_4v_9v_8v_7v_6$ according to \Cref{lem1}\ref{6distance} as illustrated in Figure~\ref{tree_23}(c)-(d) or (iii) the region $H_6$ by triangulating the region $v_1v_3v_6v_5v_9v_4v_7v_8$ according to \Cref{lem1}\ref{6distance} as illustrated in Figure~\ref{tree_23}(e)-(f) or (iv) the region $H_8$ by triangulating the region $v_1v_3v_9v_5v_4v_8v_7v_6$ according to \Cref{lem1}\ref{6distance} as illustrated in Figure~\ref{tree_23}(g)-(h), where we let $V(T_v) = \{v_1,v_2,v_{3}\}$ be the (shaded) triangle in $G$ associated with the vertex~$v$. The region $v_2v_{10}v_{11}v_3$ can be triangulated by adding either the edge $v_2v_{11}$ or $v_3v_{10}$, as indicated by the dotted lines in Figure~\ref{tree_23}(a)-(h). In the following, we present arguments that work in each cases.

		Let $G'$ be the mop of order $n'$ obtained from $G$ by deleting the vertices $V_4^{11}$, and let $G'$ have $k'$ vertices of degree~$2$. We note that $n' = n-8$ and $k' = k - 1$. Since $n \ge 13$, we have $n' \ge 5$. If $5 \le n' \le 7$, then by \cref{obs3}, there exists a 2DD-set $D'$ of $G'$ such that $v_2\in D'$ or $v_3\in D'$  and $|D'|=2$. Therefore, $D'\cup \{v_4\}$ is a 2DD-set of $G$, and so $\gamma_2^d(G)\le 3 \le  \floor*{\frac{2}{9}(n+k)}$, a contradiction. Hence, $n' \ge 8$. By the minimality of the mop $G$, we have $\gamma_2^d(G_1) \le \floor*{\frac{2}{9}(n'+k')} = \floor*{\frac{2}{9}(n-8+k-1)} \le \floor*{\frac{2}{9}(n+k)}-2$. Let $D'$ be a $\gamma_2^d$-set of $G_1$. If $v_2v_{11}\in E(G)$, then let $D= D' \cup \{v_2,v_4\}$. If $v_3v_{10}\in E(G)$, then let $D= D' \cup \{v_3,v_4\}$. In the both cases, $D$ is a 2DD-set of $G$, and so $\gamma_2^d(G) \le |D| \le |D_1| + 2 \le \floor*{\frac{2}{9}(n+k)}$, a contradiction.
	\end{proof}
	
	\begin{figure}[htbp]
		\begin{center}
			\includegraphics[scale=0.17]{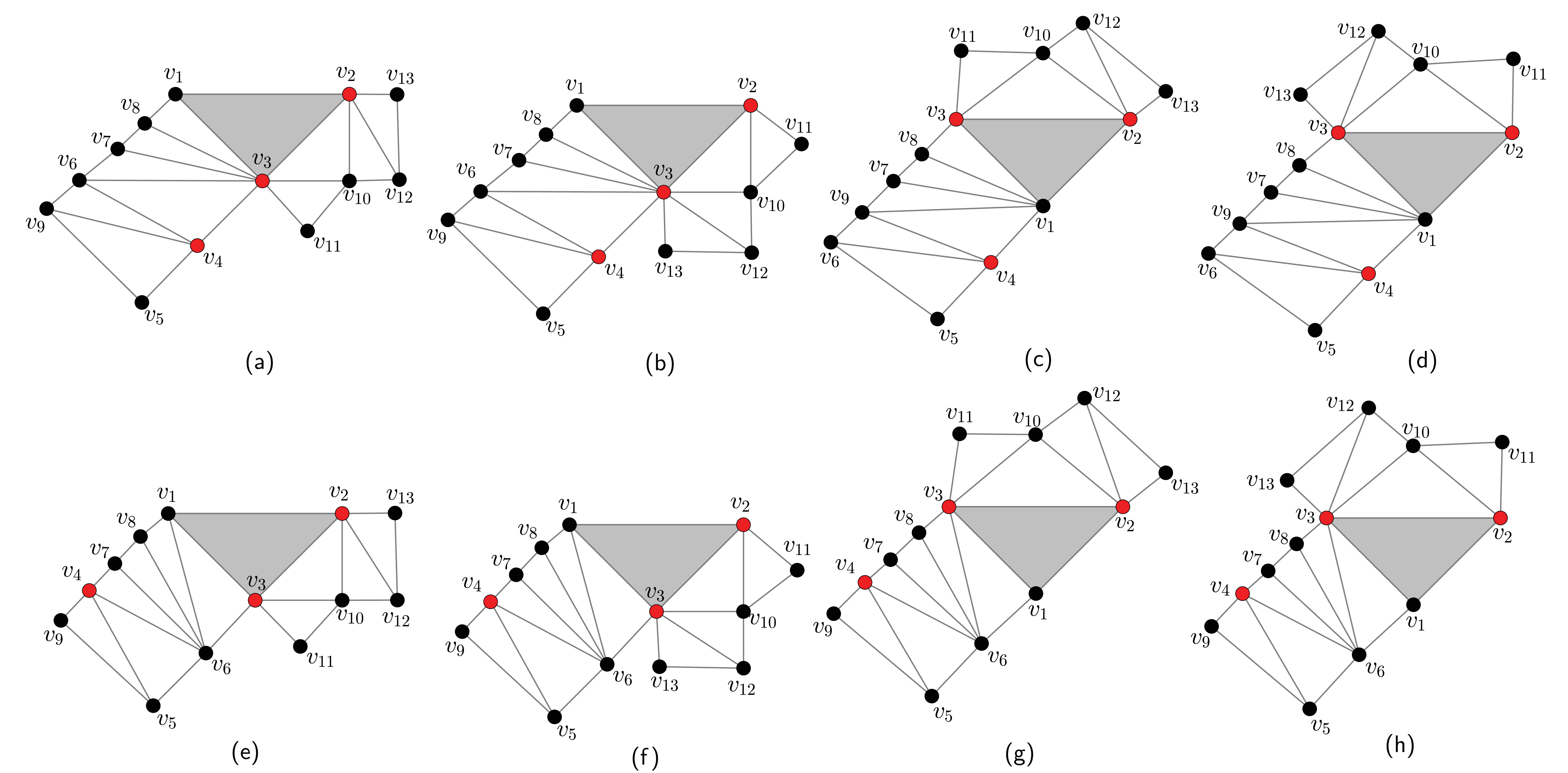}
			\includegraphics[scale=0.17]{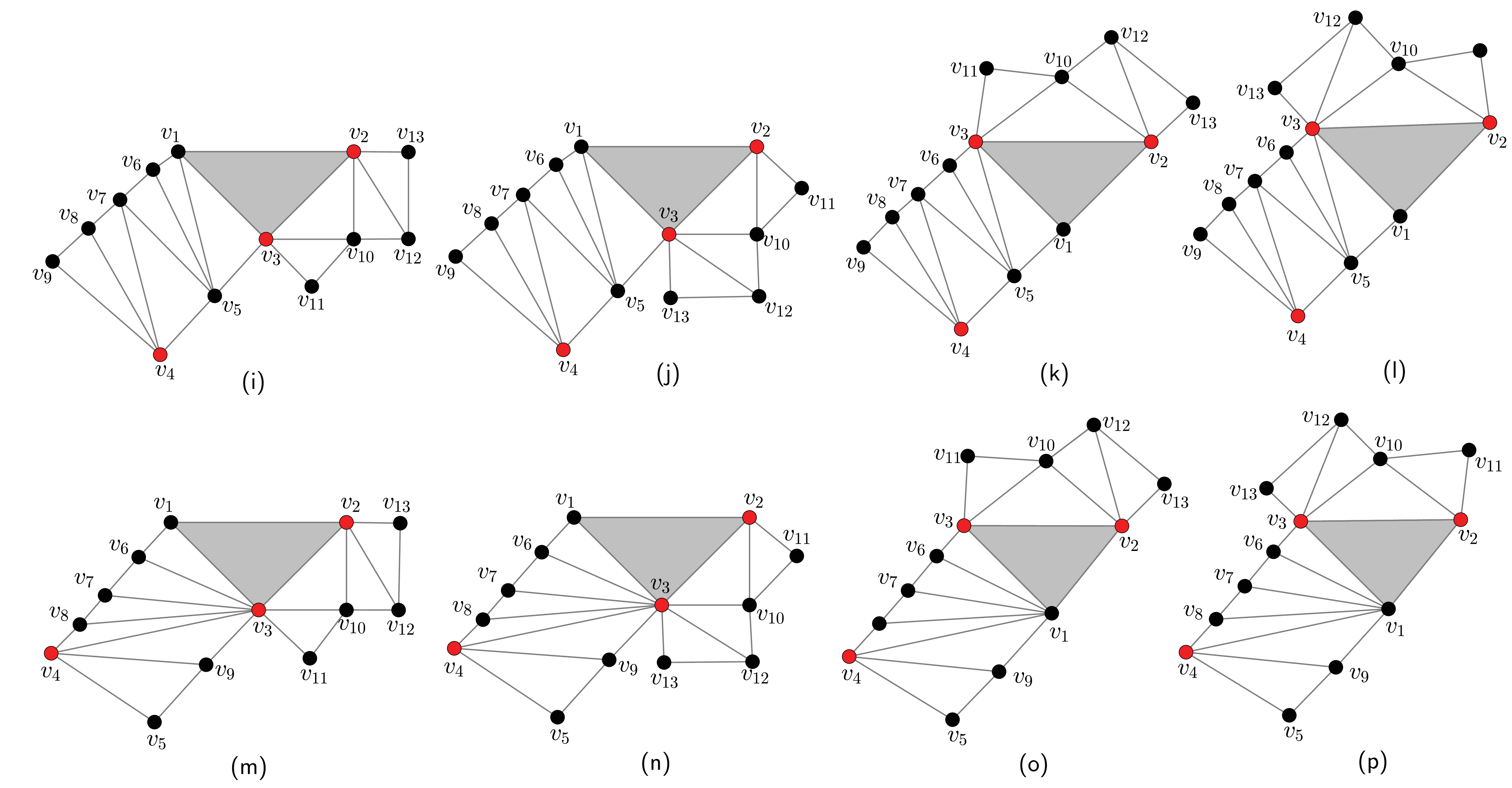}
			\caption{The regions of $G$ corresponding to tree $T_{24}$. The red vertices show a 2DD-set of $G[V(G)\setminus V(G')]$.}\label{tree_24}
		\end{center}
	\end{figure}

	\begin{claim}\label{tree24}
		The tree $T_{24}$ is not a maximal subtree of $T$.
	\end{claim}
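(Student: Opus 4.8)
The plan is to follow verbatim the template of the preceding claims (compare \Cref{tree16}, \Cref{tree18}, and \Cref{tree22}): assume for contradiction that $T_{24}$ is a maximal subtree $T_v$ of $T$, read off the associated region of $G$ from \Cref{tree_24}, delete a suitable portion to produce a strictly smaller mop, invoke the minimality of the counterexample $G$ to bound its disjunctive domination number, and then reinsert a constant number of vertices to obtain a 2DD-set of $G$ of size at most $\floor*{\frac{2}{9}(n+k)}$, contradicting $\gamma_2^d(G)>\floor*{\frac{2}{9}(n+k)}$.

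First I would fix the geometry. Since $T_{24}$ lies in the distance-$6$ family and contains the path $T'$ of \Cref{tree2to9}{\rm(a)} as a subtree, the subgraph of $G$ associated with $T_{24}$ is obtained from the shaded root triangle $V(T_v)=\{v_1,v_2,v_3\}$ by attaching, along the diagonal $v_1v_3$, a distance-$6$ branch triangulated as one of $H_5,H_6,H_7,H_8$ according to \Cref{lem1}\ref{6distance}, and by attaching, along $v_2v_3$, a $(1,2)$-branch triangulated as the region $H_{10}$ according to \Cref{cl3to10}. Writing $v_4,v_5,\ldots,v_{10}$ for the interior of the long (distance-$6$) branch, the argument is designed to run uniformly over the triangulation cases displayed in \Cref{tree_24}.

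Next I would excise the long branch. Put $G'=G-V_4^{10}$, so that $v_1v_3$ becomes an outer edge of the mop $G'$, of order $n'=n-7$, and (since only the degree-$2$ tip of the distance-$6$ branch is destroyed) with $k'\le k-1$ vertices of degree~$2$. The small cases $6\le n'\le 7$ are disposed of directly by \cref{obs3}, exactly as in \Cref{tree22}. For $n'\ge 8$ I would contract $v_1v_3$ to a vertex $x$; by \Cref{key}, $G_1$ is a mop of order $n_1=n-8\ge 7$ with $k_1\le k-1$ vertices of degree~$2$, and minimality gives
\[
\gamma_2^d(G_1)\le\floor*{\frac{2}{9}(n_1+k_1)}\le\floor*{\frac{2}{9}(n-8+k-1)}=\floor*{\frac{2}{9}(n+k)}-2 .
\]
Given a $\gamma_2^d$-set $D_1$ of $G_1$, I would then recover a 2DD-set $D$ of $G$ by reinstating two vertices of the deleted branch --- which, being a mop on the nine vertices $v_1,v_3,v_4,\ldots,v_{10}$, carries a $2$-element 2DD-set by the order-$9$ analysis inside \Cref{7to12mops} --- together with the customary split on whether $x\in D_1$ in order to re-expand $x$ into $v_1$ and $v_3$. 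In either case $|D|\le|D_1|+2\le\floor*{\frac{2}{9}(n+k)}$, the desired contradiction.

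The step I expect to be the real obstacle is the closing domination check, which is the same delicate point met in \Cref{tree16}, \Cref{tree18}, and \Cref{tree22}. Because the excised branch is long, every one of its vertices that is not made adjacent to $D$ must instead acquire two witnesses at distance~$2$, so I must verify --- separately for each triangulation $H_5$--$H_8$ of the branch --- that the two reinstated vertices, helped by the fact that the optimal set on the retained $(1,2)$-side already disjunctively dominates the junction vertex $v_3$ and hence $v_4$, together dominate all of $v_4,\ldots,v_{10}$. The bookkeeping around $x$ is what forces the $x\in D_1$ versus $x\notin D_1$ distinction: re-expanding $x$ must leave the $(1,2)$-side neighbors of $v_3$ disjunctively dominated, and one must confirm that reinstating only two vertices (not three) still suffices. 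This is precisely where both the order-$9$ bound on the branch and the full $-2$ slack produced by the contraction (so that indeed $k_1\le k-1$) are needed.
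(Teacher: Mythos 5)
Your overall template (minimal counterexample, delete, contract, invoke minimality, reinsert) and your reading of the shape of $T_{24}$ (root triangle, a distance-$6$ branch on one side, a $T'$-branch triangulated as $H_{10}$ on the other) are consistent with the paper, but there is an off-by-one error in the geometry that destroys your arithmetic. By \Cref{lem1}\ref{6distance} the regions $H_5,\ldots,H_8$ have nine vertices and \emph{include} the triangle $F_7$ of the degree-$3$ vertex, i.e.\ they contain the whole root triangle $\{v_1,v_2,v_3\}$, not just the edge $v_1v_3$: a path of length $6$ in $T$ below the root consists of six triangles, each contributing one new vertex. So the interior of the long branch is six vertices $v_4,\ldots,v_9$, not seven vertices $v_4,\ldots,v_{10}$ (this is confirmed by the paper's own deletion $V_3^{13}$ of the $13$-vertex region, $13=3+6+4$). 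With the correct count your $G'$ has order $n'=n-6$, only the branch tip among the deleted vertices has degree~$2$, and after contracting $v_1v_3$ you get $n_1+k_1\le (n-7)+(k-1)=n+k-8$. Since $\tfrac{2}{9}\cdot 8<2$, minimality yields only $\gamma_2^d(G_1)\le\floor*{\frac{2}{9}(n+k-8)}\le\floor*{\frac{2}{9}(n+k)}-1$, and the value $\floor*{\frac{2}{9}(n+k)}-1$ is actually attained (e.g.\ whenever $n+k\equiv 4\pmod 9$), not the $-2$ your plan requires. Reinstating two vertices then gives $\gamma_2^d(G)\le\floor*{\frac{2}{9}(n+k)}+1$, which is no contradiction; and a budget of one reinstated vertex cannot suffice, because the tip $u_1$ of the excised strip has both neighbors and both of its distance-$2$ vertices inside (or on the boundary of) the strip, so disjunctively dominating the strip forces at least two vertices there in the worst case --- the paper already needs two added vertices $\{v_2,v_4\}$ for the strictly smaller tree $T_{22}$. (Your side remark that $D_1$ dominating the junction vertex ``hence'' dominates $v_4$ is also not a valid inference: disjunctive domination of $v_3$ gives nothing for $v_4$.)

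This is not a repairable bookkeeping slip but a structural shortfall of your chosen decomposition: excising a distance-$6$ branch alone can never lower $n+k$ by more than $8$ (six vertices, one degree-$2$ tip, one contraction), which is one short of the $9$ needed to fund two reinserted vertices. That is exactly why the paper's proof of Claim~\ref{tree24} does something different: it deletes the \emph{entire} region of $T_{24}$ except $v_1,v_2$ (the eleven vertices $V_3^{13}$, which include all three degree-$2$ branch tips, so $k'\le k-2$), handles the small orders via \Cref{obs3}, and then contracts the \emph{parent-side} outer edge $v_2v_{14}$, obtaining $n_1+k_1\le n+k-14$ and hence a budget of three, spent on the set $\{v_2,v_3,v_4\}$ which covers both branches simultaneously (with the usual $x\in D_1$ versus $x\notin D_1$ split re-expanding $x$ into $v_2,v_{14}$). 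To repair your argument you would have to replace your deletion set by (essentially) the paper's; correcting the vertex count alone leaves the approach unworkable.
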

	\begin{proof}[Proof of Claim~\ref{tree24}]
		Suppose, to the contrary, that $T_{24}$ is a maximal subtree of $T$, and so $T_{24} = T_v$ where $v$ denotes the root of the subtree $T_v$.  We infer that the subgraph of $G$ associated with $T_{24}$ is obtained from either (i) the region $H_{5}$ by triangulating the region $v_1v_3v_4v_5v_9v_6v_7v_8$  according to \Cref{lem1}\ref{6distance} as illustrated in Figure~\ref{tree_24}(a)-(d) or (ii) the region $H_6$ by triangulating the region $v_1v_3v_6v_5v_9v_4v_7v_8$  according to \Cref{lem1}\ref{6distance} as illustrated in Figure~\ref{tree_24}(e)-(h) or (iii) the region $H_7$ by triangulating the region  $v_1v_3v_5v_4v_9v_8v_7v_6$ according to \Cref{lem1}\ref{6distance} as illustrated in Figure~\ref{tree_24}(i)-(l) or (iv) the region $H_8$ by triangulating the region $v_1v_3v_9v_5v_4v_8v_7v_6$ according to \Cref{lem1}\ref{6distance} as illustrated in Figure~\ref{tree_24}(m)-(p), where we let $V(T_v) = \{v_1,v_2,v_{3}\}$ be the (shaded) triangle in $G$ associated with the vertex~$v$. In the following, we present arguments that work in each cases.
		
		Let $G'$ be the mop of order $n'$ obtained from $G$ by deleting the vertices $V_3^{13}$, and let $G'$ have $k'$ vertices of degree~$2$. We note that $n' = n-11$ and $k' = k - 2$, and $v_2v_{14}$ is an outer edge of $G'$. Since $n \ge 13$, we have $n' \ge 2$. If $2 \le n' \le 4$, then $\{v_2,v_3,v_4\}$ is a 2DD-set of $G$, and hence $\gamma_2^d(G)\le 3 \le  \floor*{\frac{2}{9}(n+k)}$, a contradiction. If $5 \le n' \le 7$, then by \cref{obs3}, there exists a 2DD-set $D'$ of $G'$ such that $v_2\in D'$ and $|D'|=2$. Therefore, $D'\cup \{v_3,v_4\}$ is a 2DD-set of $G$, and so $\gamma_2^d(G)\le 4 \le \floor*{\frac{2}{9}(n+k)}$, a contradiction. Hence, $n' \ge 8$. Let $G_1$ be a graph of order $n_1$ obtained from $G'$ by contracting the edge $v_2v_{14}$ to form a new vertex $x$ in $G_1$, and let $G_1$ have $k_1$ vertices of degree~$2$. By Lemma~\ref{key}, $G_1$ is a mop. Since $n' \ge 8$, we note that $n_1 = n' - 1 \ge 7$. Further we note that $n_1 = n - 12$ and $k_1 \le k-2$. By the minimality of the mop $G$, we have $\gamma_2^d(G_1) \le \floor*{\frac{2}{9}(n_1+k_1)} \le \floor*{\frac{2}{9}(n-12+k-2)} \le \floor*{\frac{2}{9}(n+k)}-3$. Let $D_1$ be a $\gamma_2^d$-set of $G_1$. If $x \in D_1$, then let $D = (D_1 \setminus \{x\}) \cup \{v_2,v_3,v_4,v_{14}\}$. If $x \notin D_1$, then let $D = D_1 \cup \{v_2,v_3,v_4\}$. In both cases $D$ is a 2DD-set of $G$, and so $\gamma_2^d(G) \le |D| \le |D_1| + 3 \le \floor*{\frac{2}{9}(n+k)}$, a contradiction.
	\end{proof}
	
	\begin{figure}[htbp]
		\begin{center}
			\includegraphics[scale=0.16]{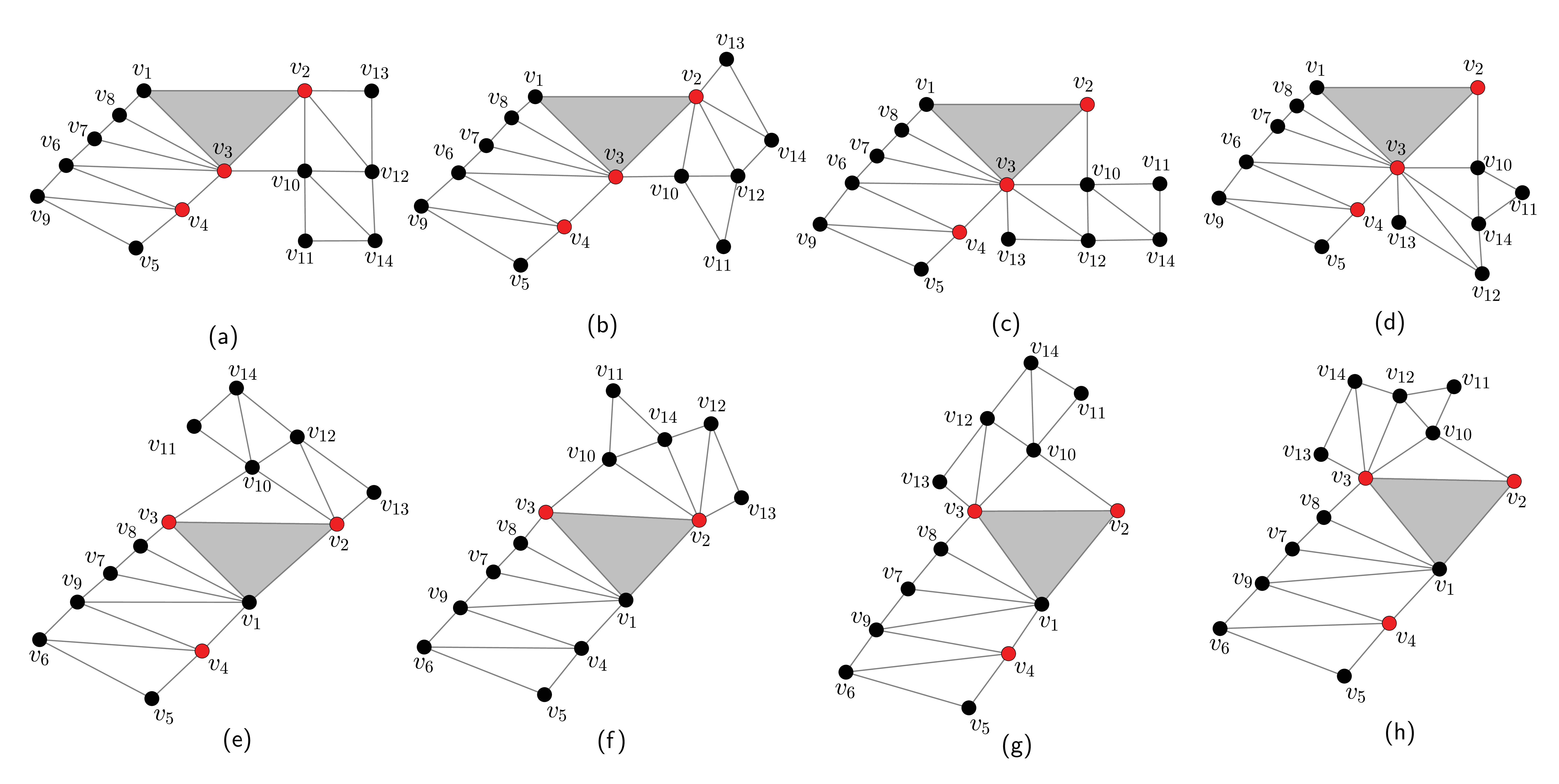}
			\includegraphics[scale=0.16]{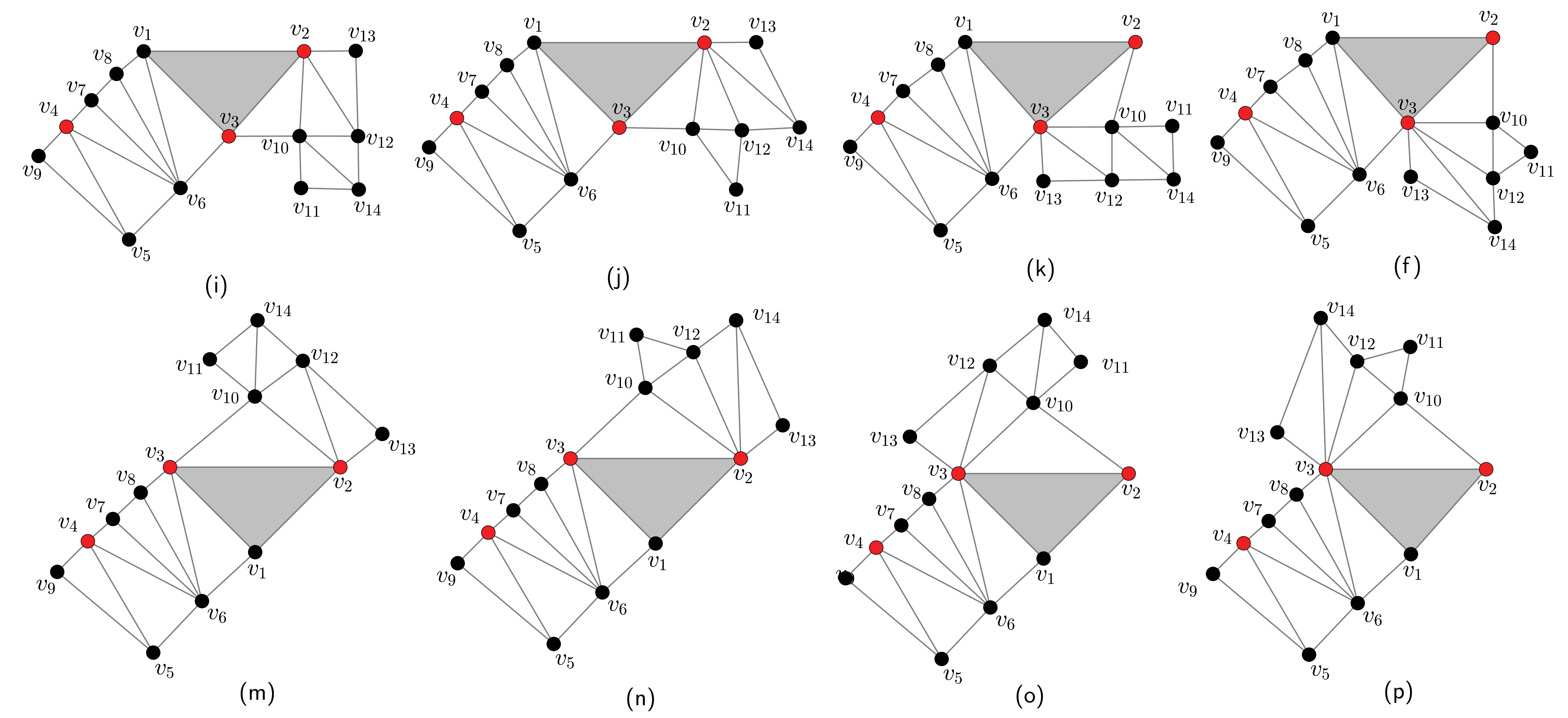}
			\caption{The regions of $G$ corresponding to tree $T_{25}$. The red vertices show a 2DD-set of $G[V(G)\setminus V(G')]$.}\label{tree_25}
		\end{center}
	\end{figure}

	\begin{figure}[htbp]
		\begin{center}
			\includegraphics[scale=0.16]{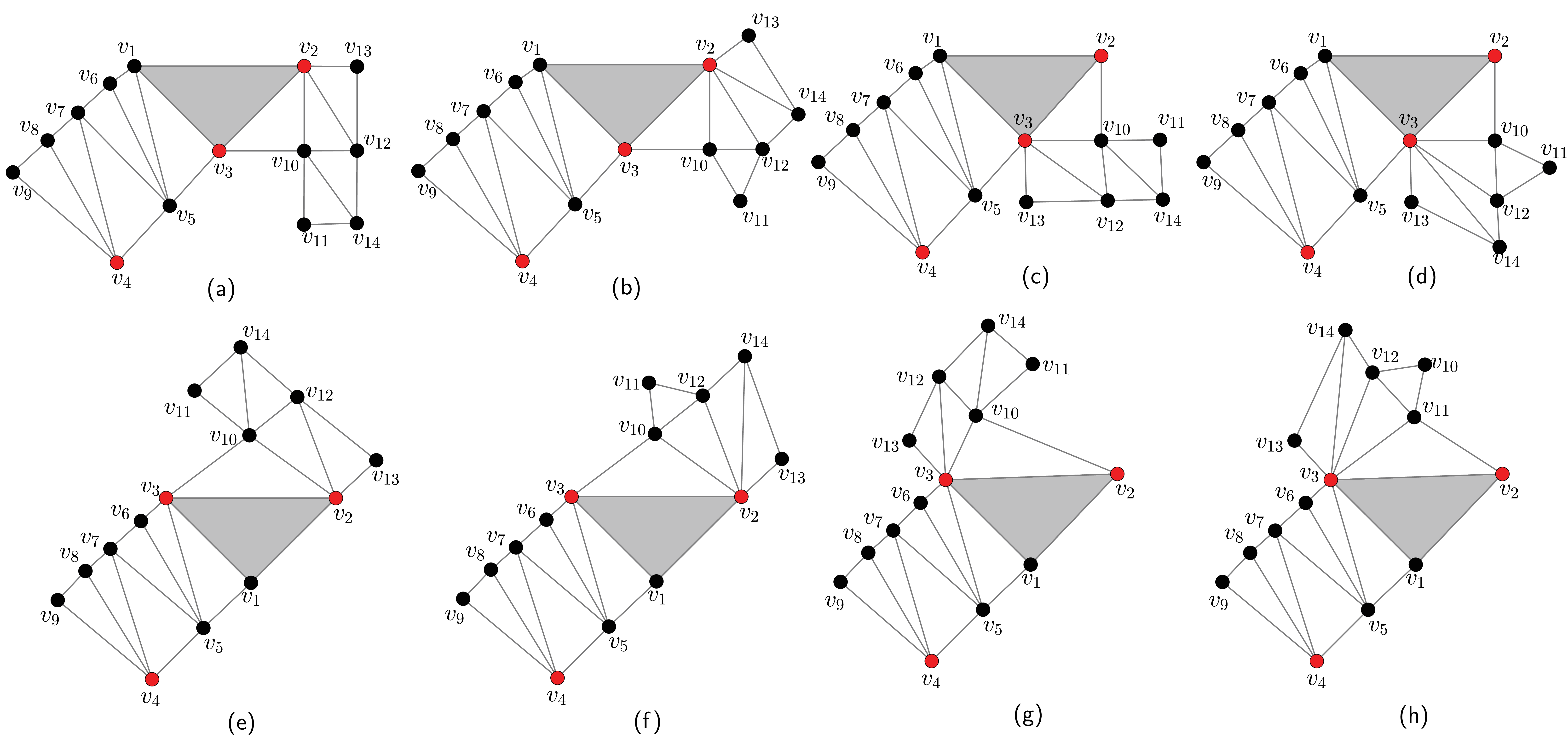}
			\includegraphics[scale=0.16]{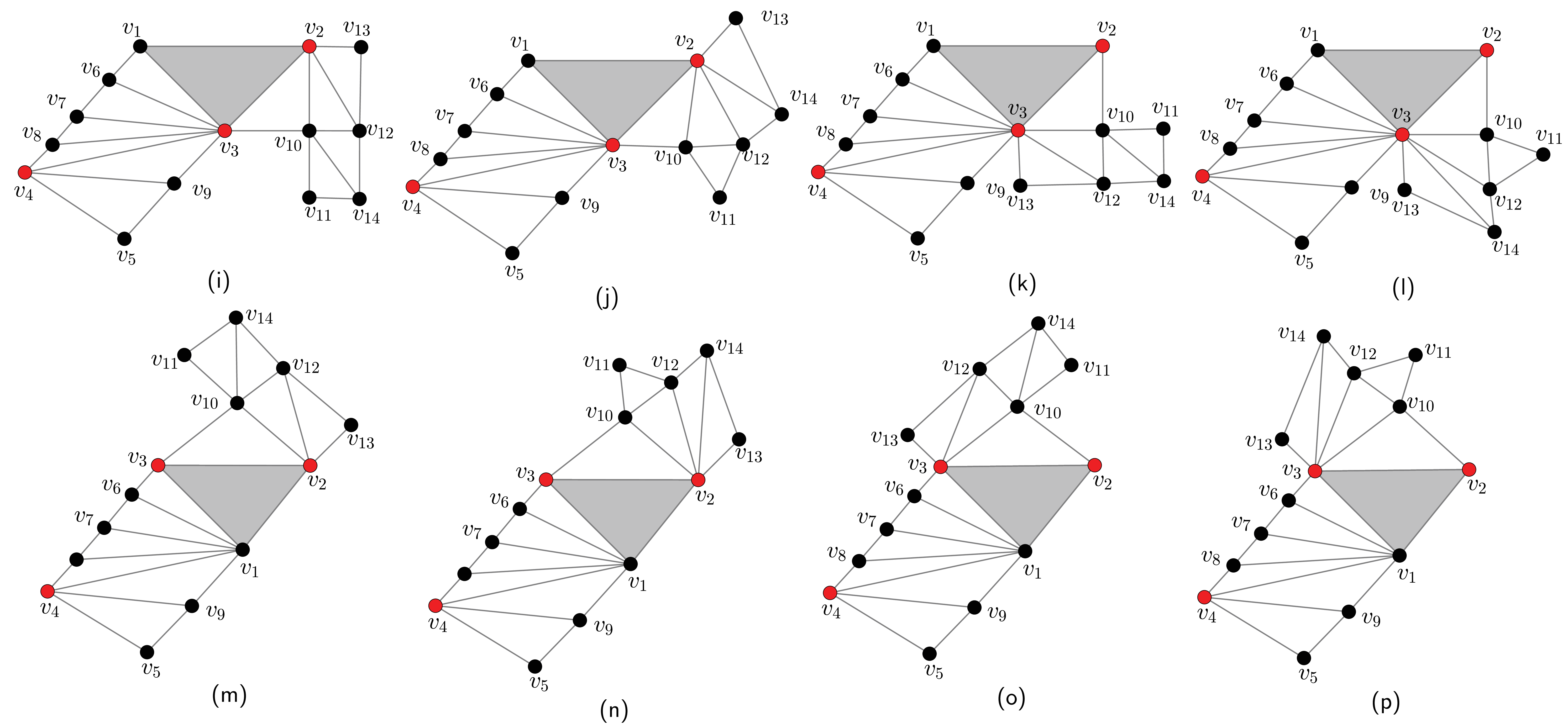}
			\caption{The regions of $G$ corresponding to tree $T_{25}$. The red vertices show a 2DD-set of $G[V(G)\setminus V(G')]$.}\label{tree_25_1}
		\end{center}
	\end{figure}

	\begin{claim}\label{tree25}
		The tree $T_{25}$ is not a maximal subtree of $T$.
	\end{claim}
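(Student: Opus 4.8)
The plan is to argue by contradiction in the by-now-standard fashion, supposing $T_{25}=T_v$ and writing $V(T_v)=\{v_1,v_2,v_3\}$ for the shaded triangle corresponding to the root~$v$. First I would determine the admissible local structure of~$G$. Since $T_{25}$ contains the distance-six backbone, the branch of~$T$ through the diametrical leaf forces the corresponding region of~$G$ to be triangulated as one of $H_5$, $H_6$, $H_7$, $H_8$ by \Cref{lem1}\ref{6distance}; and since $T_{25}$ also contains the subtree~$T'$, the other branch at~$v$ is triangulated as the region $H_{10}$ by \Cref{cl3to10}. Superimposing these two independent choices produces precisely the families of configurations drawn in \Cref{tree_25,tree_25_1}, and the whole argument must be phrased so as to handle all of them simultaneously.

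Next I would carry out the usual reduction. I delete the block of vertices separating the two dominators I intend to keep---concretely a set of the form $V_3^{13}\setminus\{v_{14}\}$ in direct analogy with the deletion used for $T_{24}$ in \Cref{tree24}---so that the residual graph~$G'$ is again a mop, with $n'=n-11$ and $k'=k-2$. When the interface edge $v_2v_{14}$ is an outer edge of~$G'$ I contract it via \Cref{key} to obtain a mop $G_1$ of order $n_1=n'-1=n-12$, and then invoke the minimality of the counterexample~$G$ to obtain $\gamma_2^d(G_1)\le\floor*{\frac{2}{9}(n_1+k_1)}\le\floor*{\frac{2}{9}(n+k)}-3$.

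Then I would lift a $\gamma_2^d$-set $D_1$ of $G_1$ back to~$G$ by adjoining a fixed set of three dominators taken from the root triangle and the hub of the distance-six fan; I expect $\{v_2,v_3,v_4\}$ to work in every case, with the contracted vertex~$x$ treated by the customary split into $x\in D_1$ and $x\notin D_1$. Here $v_4$ sits at the centre of each of $H_5$--$H_8$ and hence disjunctively dominates the entire distance-six branch, while $v_2$ and $v_3$ cover the root region together with the $H_{10}$ branch; thus $D=(D_1\setminus\{x\})\cup\{v_2,v_3,v_4,v_{14}\}$ (respectively $D_1\cup\{v_2,v_3,v_4\}$) is a 2DD-set of~$G$ of size at most $|D_1|+3$, contradicting $\gamma_2^d(G)>\floor*{\frac{2}{9}(n+k)}$. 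The finitely many small residual orders, say $2\le n'\le 7$, would be cleared away by hand: for $n'\le 4$ an explicit three-element set such as $\{v_2,v_3,v_4\}$ already dominates~$G$, while for $5\le n'\le 7$ \Cref{obs3} supplies a two-element 2DD-set of $G'$ through the appropriate boundary vertex, which augmented by two fixed vertices yields a 2DD-set of~$G$ of size at most $4\le\floor*{\frac{2}{9}(n+k)}$.

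The main obstacle will be the uniformity across the large number of configurations in \Cref{tree_25,tree_25_1}: I must check that the single set $\{v_2,v_3,v_4\}$ disjunctively dominates every deleted vertex no matter how the two branches were triangulated. The delicate vertices are those of the $H_{10}$ block adjacent to neither $v_2$ nor $v_3$; for these I would appeal to the two guaranteed common neighbours of a diagonal furnished by \Cref{obs1} to exhibit two members of~$D$ at distance exactly two. Verifying that this succeeds simultaneously for all four distance-six regions and both triangulations of the $T'$ branch---and confirming in each case that $v_2v_{14}$ is genuinely an outer edge of~$G'$ so that \Cref{key} applies---is the only substantive checking required; the remainder follows the template established in \Cref{tree24}.
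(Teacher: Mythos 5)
Your plan has a genuine structural flaw: you have transplanted the proof of \Cref{tree24} onto a tree whose associated region is one triangle larger, and the transplant does not survive. For $T_{25}$ the region of $G$ associated with $T_v$ is bounded by the diagonal $v_1v_2$ of the root triangle and has \emph{twelve} interior vertices $v_3,\ldots,v_{14}$ (this is visible in the paper's own reduction, where deleting all of $V_3^{14}$ leaves a mop); the non-diametrical branch is not the tree $T'$ itself --- that is the situation of $T_{24}$ --- but a strictly larger branch, which is precisely why the paper's deletion set is $V_3^{14}$ rather than $V_3^{13}$. Your proposal deletes only $V_3^{13}$ and keeps $v_{14}$. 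But $v_{14}$ lies strictly inside the region cut off by $v_1v_2$, so every neighbour of $v_{14}$ in $G$ lies among $v_1,v_2,v_3,\ldots,v_{13}$; moreover $v_{14}$ cannot be adjacent to both $v_1$ and $v_2$, since the cycle $v_1v_{14}v_2v_1$ would then enclose $v_3$, contradicting outerplanarity. Hence in your $G'=G-V_3^{13}$ the vertex $v_{14}$ has degree at most~$1$, so $G'$ is not $2$-connected and in particular not a mop. Consequently neither \Cref{key} (which contracts an outer edge \emph{of a mop}) nor the minimality hypothesis (which applies only to mops) can be invoked, and the reduction collapses. Worse, the ``interface edge'' $v_2v_{14}$ need not even exist: across the configurations in \Cref{tree_25,tree_25_1} the vertex $v_{14}$ is the deepest vertex of the second branch, and unless that branch is triangulated as a fan with apex $v_2$, the pair $v_2,v_{14}$ is non-adjacent.

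The repair is exactly the paper's argument, and it is simpler than what you propose: delete the whole interior $V_3^{14}$, so that $G'$ is the genuine sub-mop of $G$ bounded by the diagonal $v_1v_2$, with $n'=n-12$ and $k'=k-2$. No contraction is needed, because the arithmetic already gives $\floor*{\frac{2}{9}(n'+k')}\le\floor*{\frac{2}{9}(n+k-14)}\le\floor*{\frac{2}{9}(n+k)}-3$, after which $D=D'\cup\{v_2,v_3,v_4\}$ yields the contradiction; the small residual orders $2\le n'\le 7$ are cleared by explicit sets and \Cref{obs3}, much as you describe. The contraction trick is genuinely needed for $T_{24}$, whose interior has only eleven vertices so that $\floor*{\frac{2}{9}(n+k-13)}$ only guarantees a saving of~$2$; for $T_{25}$ it is both unnecessary and, as explained above, unavailable.
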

	\begin{proof}[Proof of Claim~\ref{tree25}]
		Suppose, to the contrary, that $T_{25}$ is a maximal subtree of $T$, and so $T_{25} = T_v$ where $v$ denotes the root of the subtree $T_v$.  We infer that the subgraph of $G$ associated with $T_{25}$ is obtained from either (i) the region $H_{5}$ by triangulating the region $v_1v_3v_4v_5v_9v_6v_7v_8$  according to \Cref{lem1}\ref{6distance} as illustrated in Figure~\ref{tree_25}(a)-(h) or (ii) the region $H_6$ by triangulating the region $v_1v_3v_6v_5v_9v_4v_7v_8$  according to \Cref{lem1}\ref{6distance} as illustrated in Figure~\ref{tree_25}(i)-(p) or (iii) the region $H_7$ by triangulating the region  $v_1v_3v_5v_4v_9v_8v_7v_6$ according to \Cref{lem1}\ref{6distance} as illustrated in Figure~\ref{tree_25_1}(a)-(h) or (iv) the region $H_8$ by triangulating the region $v_1v_3v_9v_5v_4v_8v_7v_6$ according to \Cref{lem1}\ref{6distance} as illustrated in Figure~\ref{tree_25_1}(i)-(p), where we let $V(T_v) = \{v_1,v_2,v_{3}\}$ be the (shaded) triangle in $G$ associated with the vertex~$v$. In the following, we present arguments that work in each cases.
		
		Let $G'$ be the mop of order $n'$ obtained from $G$ by deleting the vertices in $V_3^{14}$, and let $G'$ have $k'$ vertices of degree~$2$. We note that $n' = n-12$ and $k' = k - 2$. If $2 \le n' \le 4$, then $\{v_2,v_3,v_4\}$ is a 2DD-set of $G$, and hence $\gamma_2^d(G)\le 3 \le  \floor*{\frac{2}{9}(n+k)}$, a contradiction. If $5 \le n' \le 7$, then by \cref{obs3}, there  exists a 2DD-set $D'$ of $G'$ such that $v_2\in D'$ and $|D'|=2$. Therefore, $D'\cup \{v_3,v_4\}$ is a 2DD-set of $G$, and so $\gamma_2^d(G)\le 4 \le \floor*{\frac{2}{9}(n+k)}$, a contradiction. Hence, $n' \ge 8$. By the minimality of the mop $G$, we have $\gamma_2^d(G_1) \le \floor*{\frac{2}{9}(n'+k')} \le  \floor*{\frac{2}{9}(n-12+k-2)} \le \floor*{\frac{2}{9}(n+k)}-3$. Let $D_1$ be a $\gamma_2^d$-set of $G_1$ and let $D = D_1 \cup \{v_2,v_3,v_4\}$. The set $D$ is a 2DD-set of $G$, and so $\gamma_2^d(G) \le |D|\le |D_1| + 3 \le \floor*{\frac{2}{9}(n+k)}$, a contradiction.
	\end{proof}
	
	\begin{figure}[htbp]
		\begin{center}
			\includegraphics[scale=0.17]{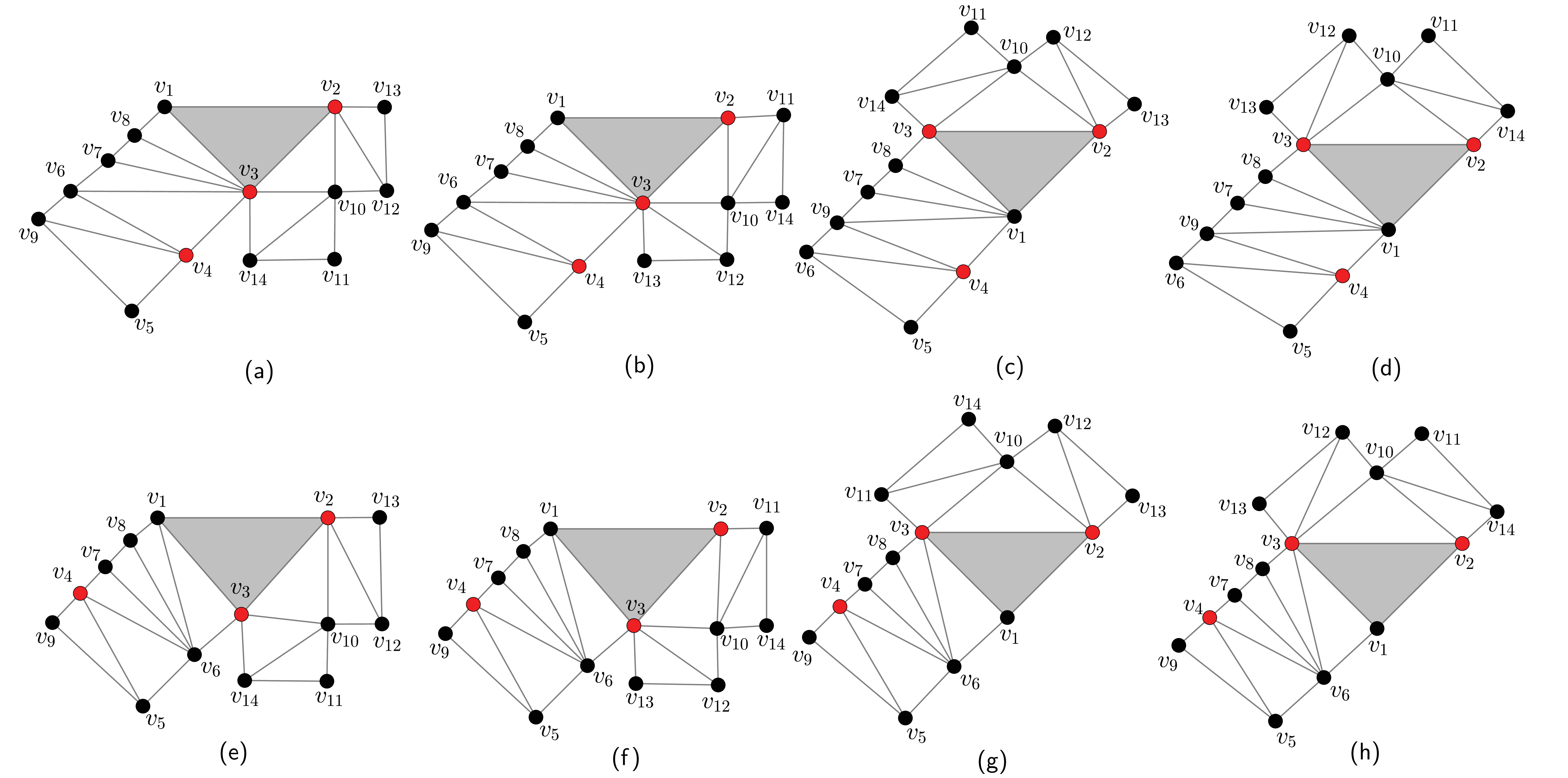}
			\includegraphics[scale=0.17]{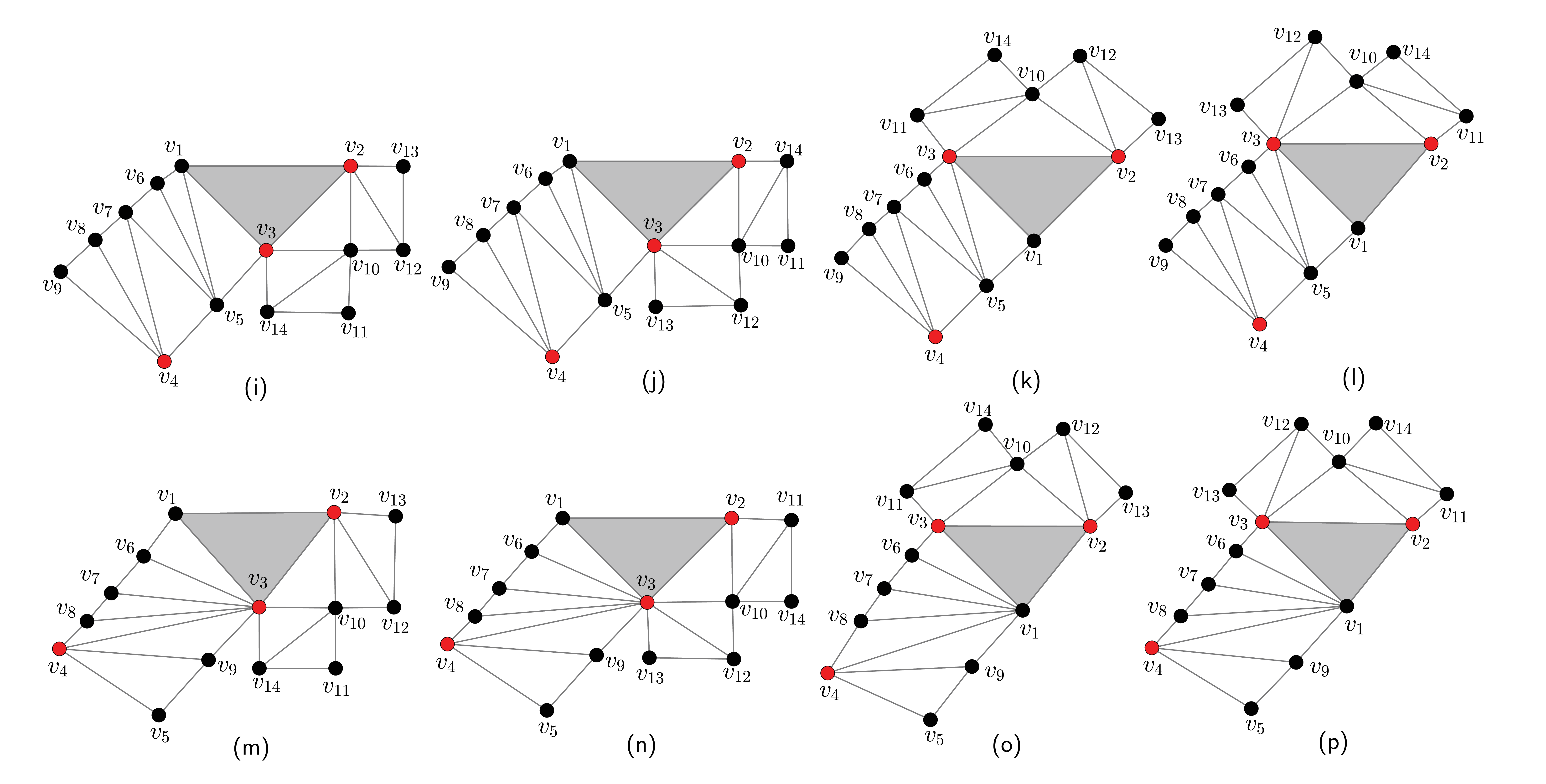}
			\caption{The regions of $G$ corresponding to tree $T_{26}$. The red vertices show a 2DD-set of $G[V(G)\setminus V(G')]$.}\label{tree_26}
		\end{center}
	\end{figure}

	\begin{claim}\label{tree26}
		The tree $T_{26}$ is not a maximal subtree of $T$.
	\end{claim}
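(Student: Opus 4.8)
The plan is to reuse, essentially verbatim, the contradiction scheme of Claims~\ref{tree22}--\ref{tree25}. Suppose $T_{26}=T_v$. I would first read off the region of $G$ that $T_{26}$ induces: writing $V(T_v)=\{v_1,v_2,v_3\}$ for the shaded root triangle, Lemma~\ref{lem1}\ref{6distance} forces the distance-$6$ branch descending from it to be one of $H_5,H_6,H_7,H_8$, while the embedded copy of $T''$ forces, through Claim~\ref{cl-10to21}, the secondary branch to be triangulated as the region $H_{13}$. Gluing these two along $V(T_v)$ yields precisely the finite family of configurations drawn in Figure~\ref{tree_26}, and the whole argument must be phrased so that it applies to each of them simultaneously.

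Next I would excise this region. The parent of $v$ meets the root triangle along the outer edge $v_1v_2$, so I would delete the interior vertices $v_3,\ldots,v_{14}$, setting $G'=G-V_3^{14}$; then $v_1v_2$ remains an outer edge of $G'$. A direct count gives $n'=n-12$ and $k'\le k-2$, the drop in degree-$2$ vertices coming from the tips of the two branches. By Lemma~\ref{key} one may, if the bookkeeping requires it, further contract $v_1v_2$ to land in a mop $G_1$ with $n_1=n-13$ and $k_1\le k-2$; I expect the cleaner route (no contraction) to work here exactly as in Claim~\ref{tree25}.

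I would then run the usual two-step estimate. The small residual orders are handled directly: for $n'\le 4$ I exhibit an explicit $3$-element 2DD-set of $G$, and for $5\le n'\le 7$ I use Observation~\ref{obs3} to produce a $2$-element 2DD-set of $G'$ anchored at a retained vertex and enlarge it by two vertices of the excised region. In the remaining range the minimality of $G$ gives $\gamma_2^d(G')\le\lfloor\tfrac{2}{9}(n'+k')\rfloor\le\lfloor\tfrac{2}{9}(n+k)\rfloor-3$, since $n'+k'\le n+k-14$; adjoining to a $\gamma_2^d$-set $D'$ a bounded augmenting set of the shape $\{v_2,v_3,v_4\}$ (with the contraction vertex replaced by its two preimages $v_1,v_2$ should the contracted variant be used) produces a 2DD-set $D$ of $G$ with $|D|\le|D'|+3\le\lfloor\tfrac{2}{9}(n+k)\rfloor$, contradicting the choice of $G$.

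The real work is verification, not construction. I must certify that the three augmenting vertices, together with the retained $v_1,v_2$, disjunctively dominate every excised vertex $v_3,\ldots,v_{14}$ at once, across all four distance-$6$ patterns $H_5$--$H_8$ glued to $H_{13}$. The awkward vertices are those lying deep in the distance-$6$ strip and at the seam where the two branches meet, where domination is supplied by the distance-$2$ clause (two witnesses in $D$) rather than by an edge. Confirming this uniformly is exactly where the forced shape $H_{13}$ from Claim~\ref{cl-10to21} and the four listed shapes of Lemma~\ref{lem1}\ref{6distance} are indispensable, since they rule out any stray triangulation that could leave an interior vertex undominated by so small an augmenting set.
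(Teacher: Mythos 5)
Your proposal matches the paper's own proof of Claim~\ref{tree26} essentially step for step: the same excision $G'=G-V_3^{14}$ with $n'=n-12$ and $k'\le k-2$, the same small-order cases (an explicit $3$-element 2DD-set for $n'\le 4$, and Observation~\ref{obs3} plus two added vertices for $5\le n'\le 7$), the same appeal to minimality giving $\gamma_2^d(G')\le\floor*{\frac{2}{9}(n+k)}-3$, and the same augmenting set $\{v_2,v_3,v_4\}$ with no edge contraction needed. Your reading of the configurations ($H_5$--$H_8$ from Lemma~\ref{lem1}\ref{6distance} glued to the secondary branch forced by Claim~\ref{cl-10to21}) and your closing remark that the only remaining work is checking that $\{v_2,v_3,v_4\}$ disjunctively dominates the excised vertices in every configuration are exactly what the paper's figures for $T_{26}$ certify, so the proposal is correct.
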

	\begin{proof}[Proof of Claim~\ref{tree26}]
		Suppose, to the contrary, that $T_{26}$ is a maximal subtree of $T$, and so $T_{26} = T_v$ where $v$ denotes the root of the subtree $T_v$. We infer that the subgraph of $G$ associated with $T_{26}$ is obtained from either (i) the region $H_{5}$ by triangulating the region $v_1v_3v_4v_5v_9v_6v_7v_8$  according to \Cref{lem1}\ref{6distance} as illustrated in Figure~\ref{tree_26}(a)-(d) or (ii) the region $H_6$ by triangulating the region  $v_1v_3v_6v_5v_9v_4v_7v_8$ according to \Cref{lem1}\ref{6distance} as illustrated in Figure~\ref{tree_26}(e)-(h) or (iii) the region $H_7$ by triangulating the region $v_1v_3v_5v_4v_9v_8v_7v_6$  according to \Cref{lem1}\ref{6distance} as illustrated in Figure~\ref{tree_26}(i)-(l) or (iv) the region $H_8$ by triangulating the region $v_1v_3v_9v_5v_4v_8v_7v_6$ according to \Cref{lem1}\ref{6distance} as illustrated in Figure~\ref{tree_26}(m)-(p), where we let $V(T_v) = \{v_1,v_2,v_{3}\}$ be the (shaded) triangle in $G$ associated with the vertex~$v$. In the following, we present arguments that work in each cases.
		
		Let $G'$ be the mop of order $n'$ obtained from $G$ by deleting the vertices in $V_3^{14}$, and let $G'$ have $k'$ vertices of degree~$2$. We note that $n' = n-12$ and $k' = k - 2$. If $2 \le n' \le 4$, then $\{v_2,v_3,v_4\}$ is a 2DD-set of $G$, and hence $\gamma_2^d(G)\le 3 \le  \floor*{\frac{2}{9}(n+k)}$, a contradiction. If $5 \le n' \le 7$, then by \cref{obs3}, there exists a 2DD-set $D'$ of $G'$ such that $v_2\in D'$ and $|D'|=2$. Therefore, $D'\cup \{v_3,v_4\}$ is a 2DD-set of $G$, and so $\gamma_2^d(G)\le 4 \le \floor*{\frac{2}{9}(n+k)}$, a contradiction. Hence, $n' \ge 8$. By the minimality of the mop $G$, we have $\gamma_2^d(G_1) \le \floor*{\frac{2}{9}(n'+k')} \le \floor*{\frac{2}{9}(n-12+k-2)} \le \floor*{\frac{2}{9}(n+k)}-3$. Let $D_1$ be a $\gamma_2^d$-set of $G_1$ and let $D = D_1 \cup \{v_2,v_3,v_4\}$. The set $D$ is a 2DD-set of $G$, and so $\gamma_2^d(G) \le |D| \le |D_1| + 3 \le \floor*{\frac{2}{9}(n+k)}$, a contradiction.
	\end{proof}

	\begin{figure}[htbp]
		\begin{center}
			\includegraphics[scale=0.16]{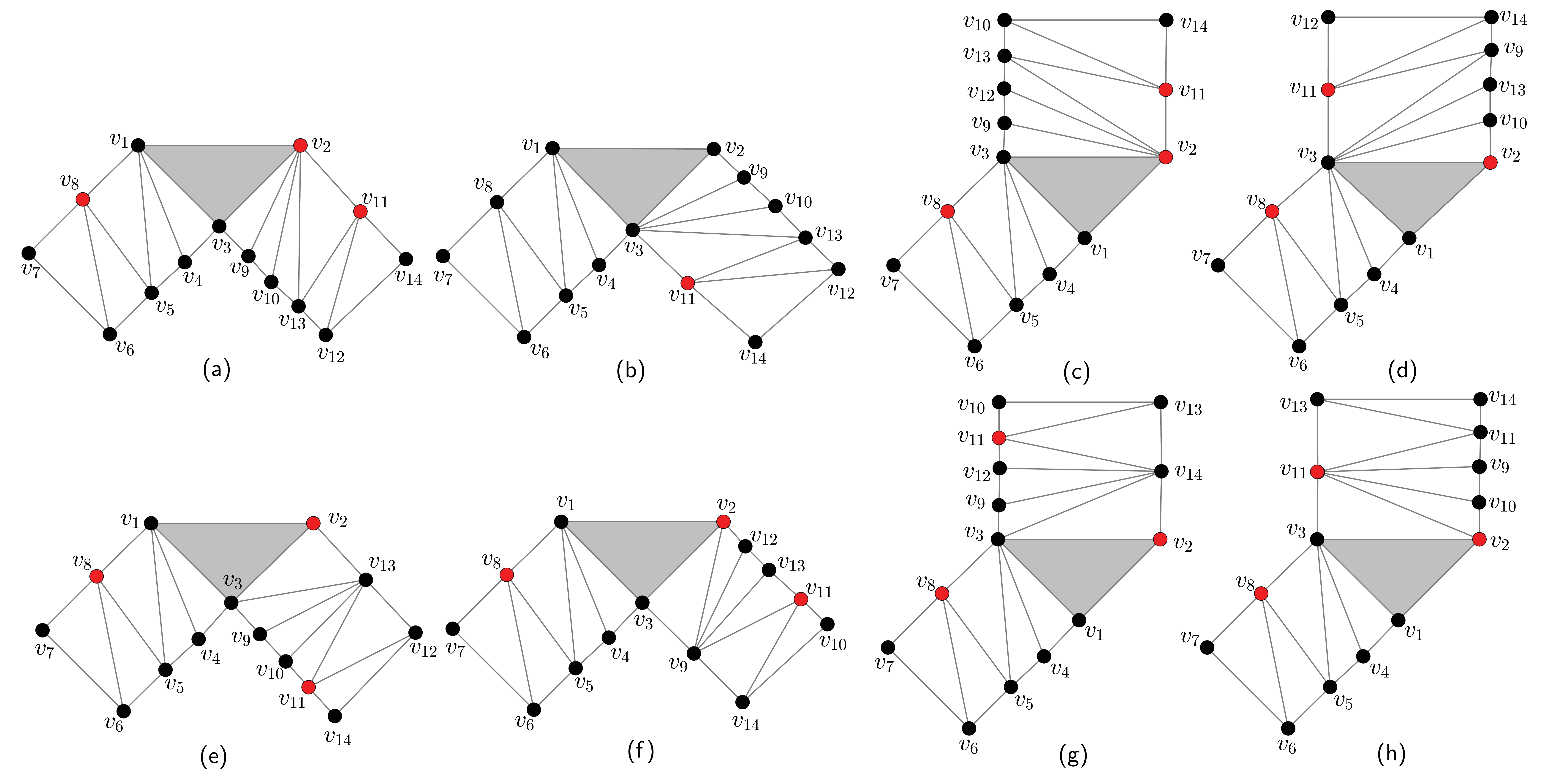}
			\includegraphics[scale=0.16]{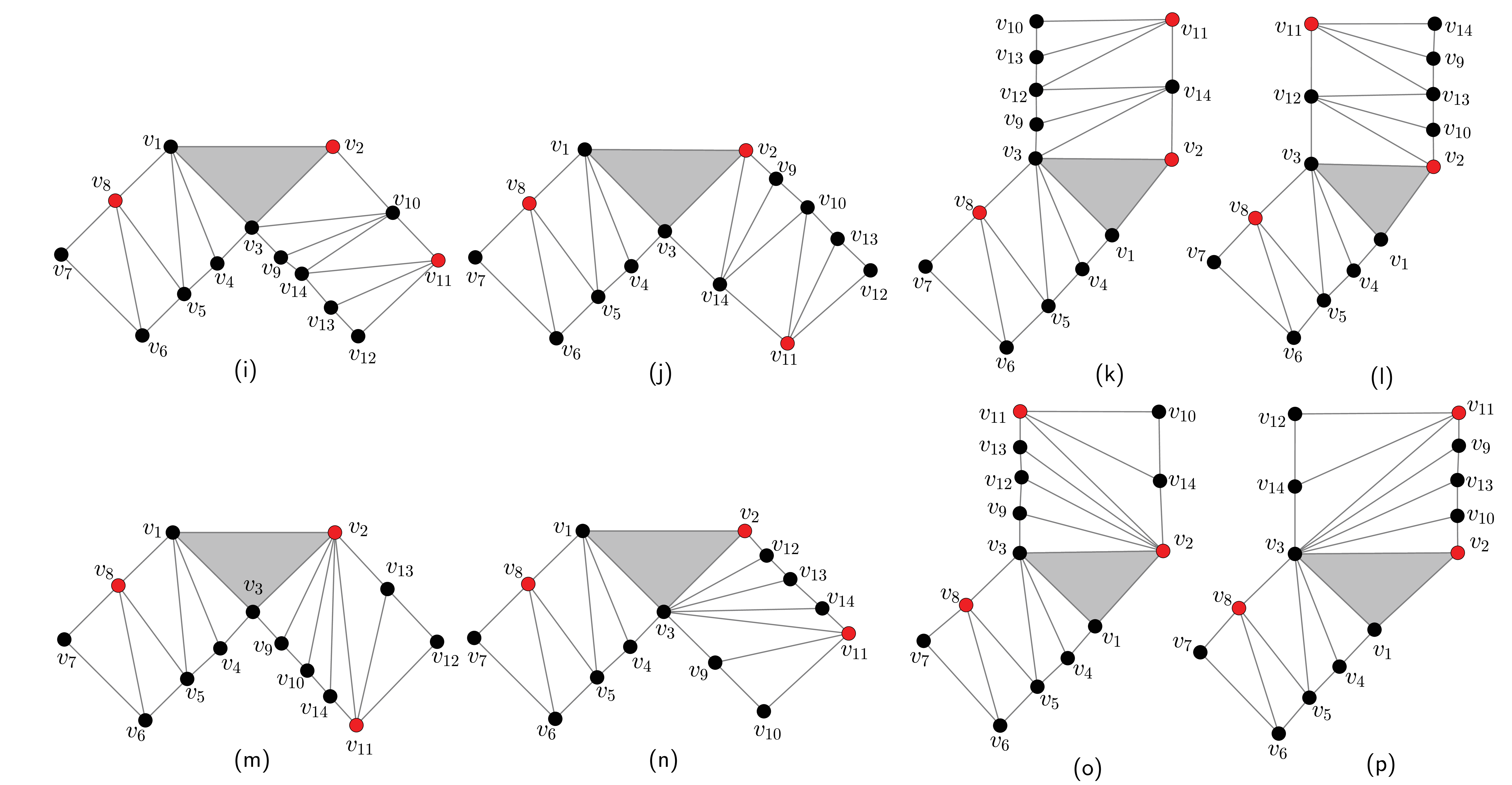}
			\caption{The regions of $G$ corresponding to tree $T_{27}$. The red vertices show a 2DD-set of $G[V(G)\setminus V(G')]$.}\label{tree_27}
		\end{center}
	\end{figure}
	
	\begin{figure}[htbp]
		\begin{center}
			\includegraphics[scale=0.16]{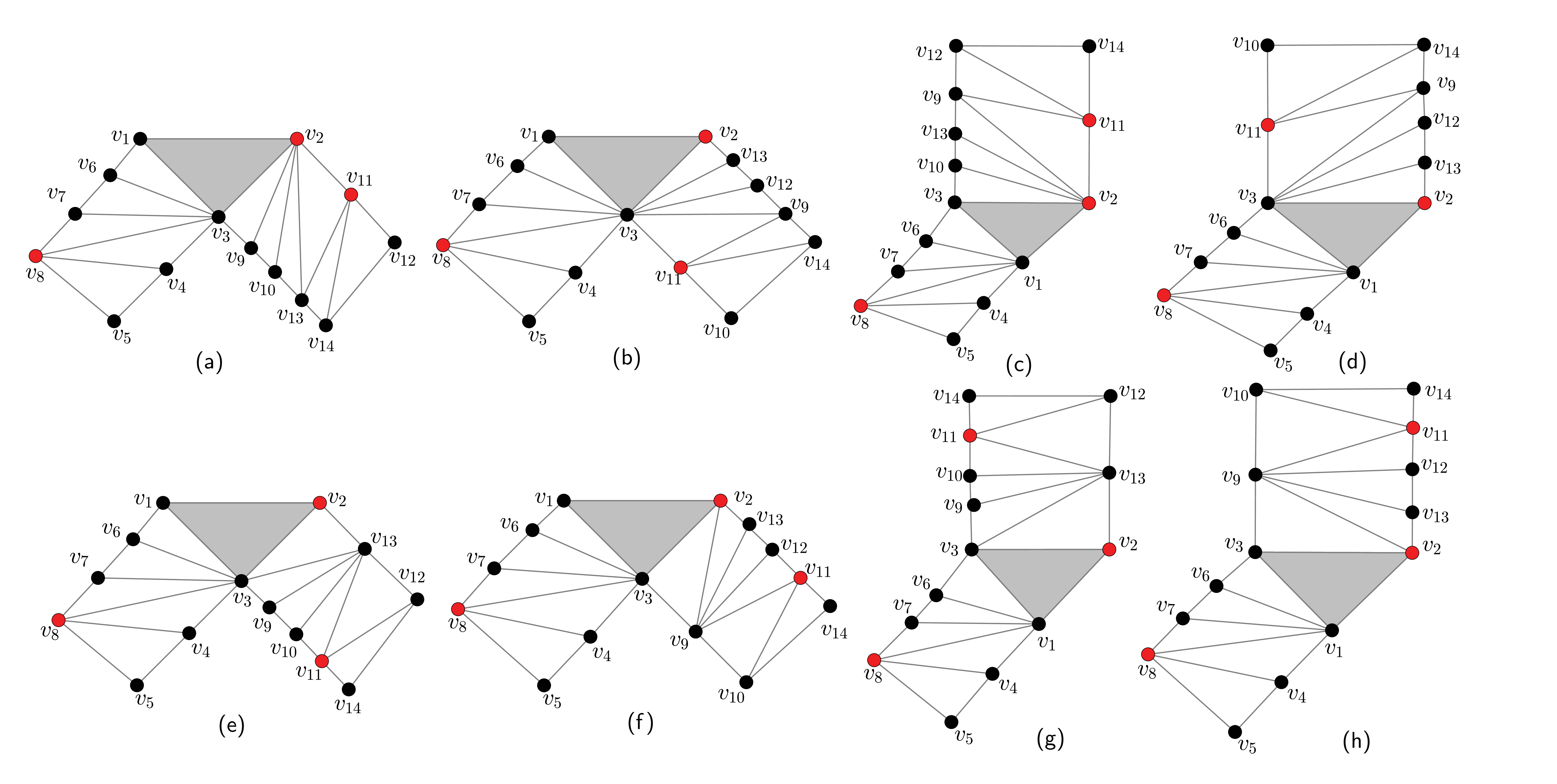}
			\includegraphics[scale=0.16]{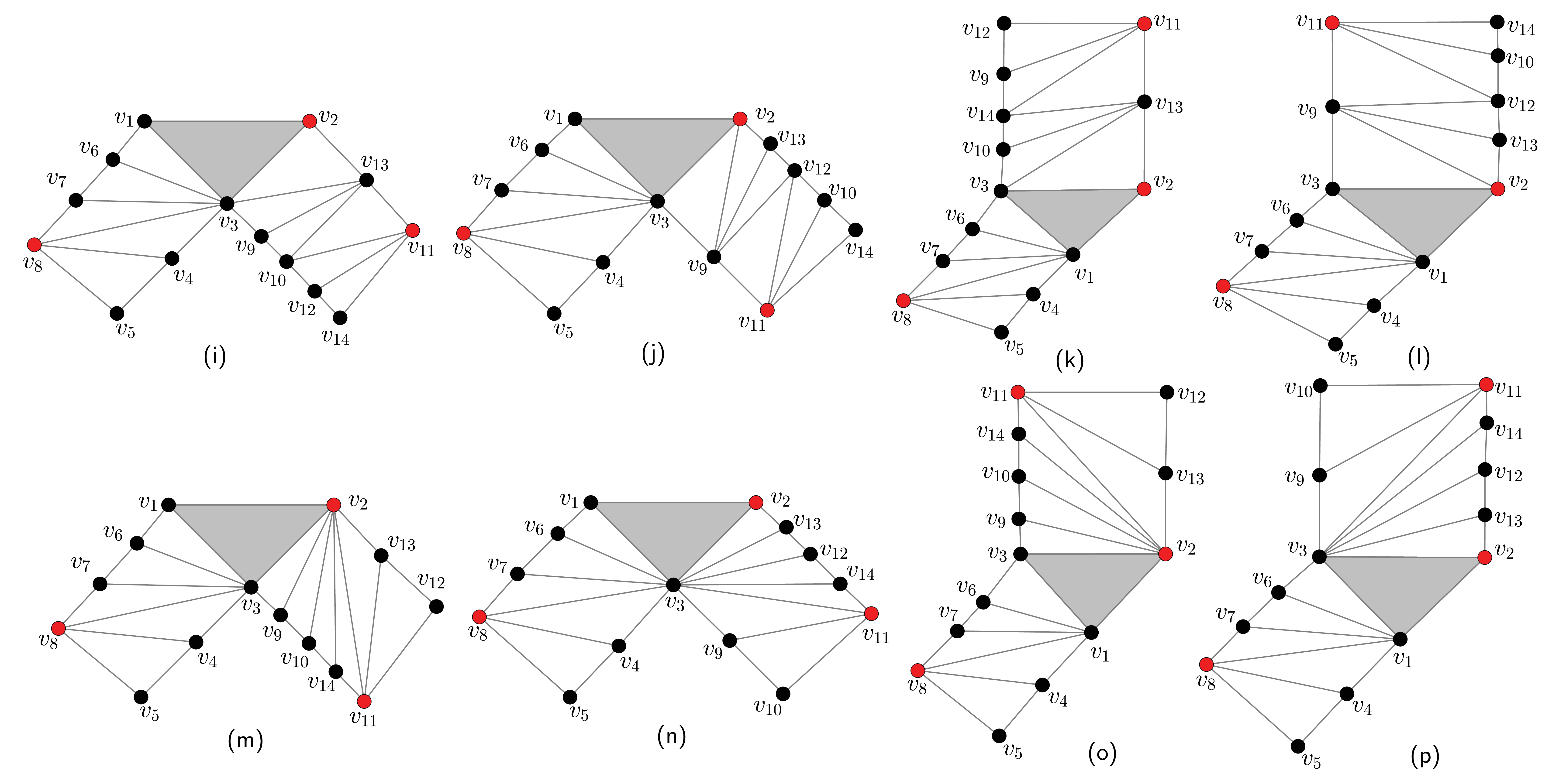}
			{\vspace{-.5cm}}
			\caption{The regions of $G$ corresponding to tree $T_{27}$. The red vertices show a 2DD-set of $G[V(G)\setminus V(G')]$.}\label{tree_27_1}
		\end{center}
	\end{figure}
	
	\begin{figure}[htbp]
		\begin{center}
			\includegraphics[scale=0.16]{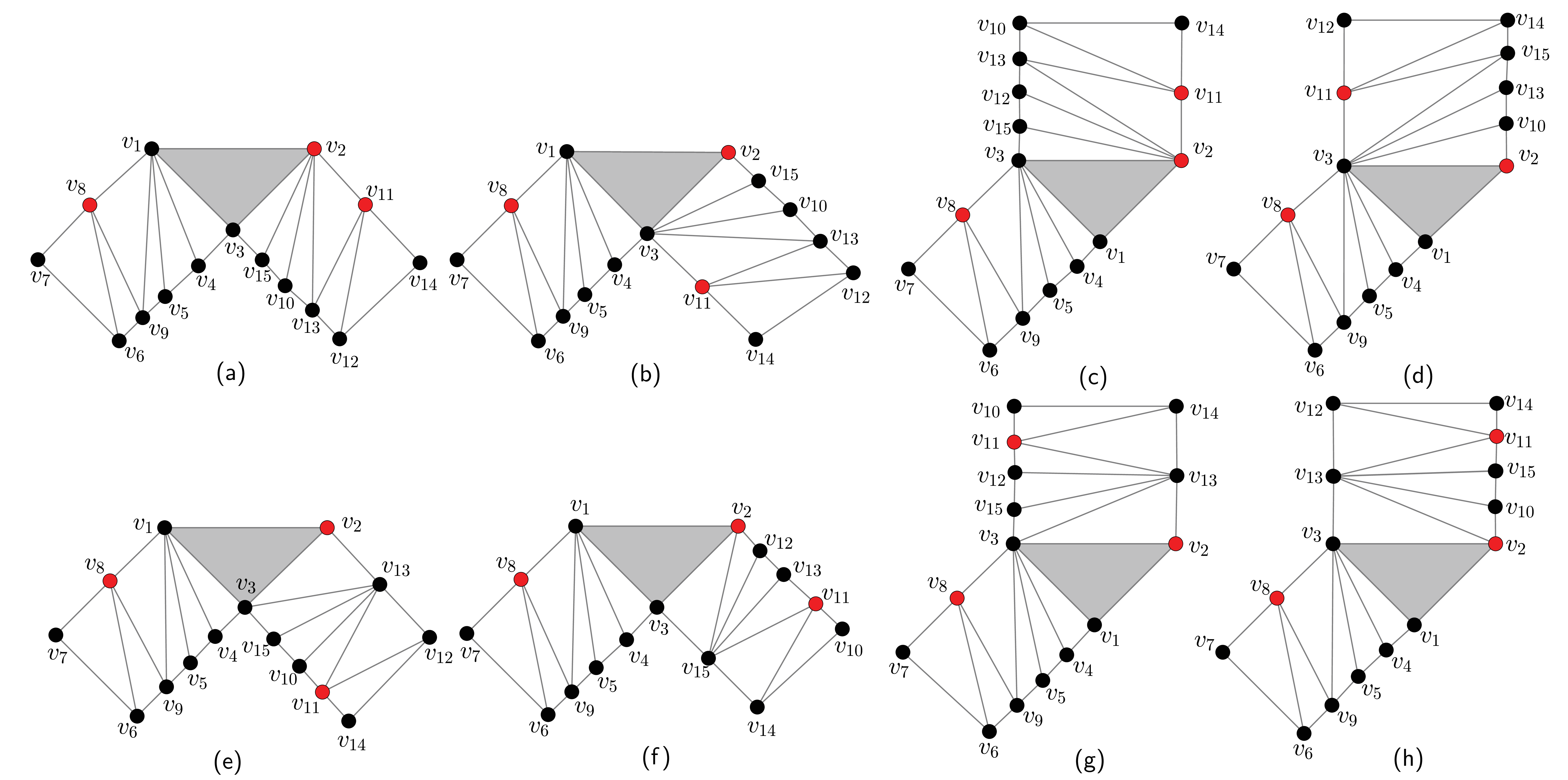}
			\includegraphics[scale=0.16]{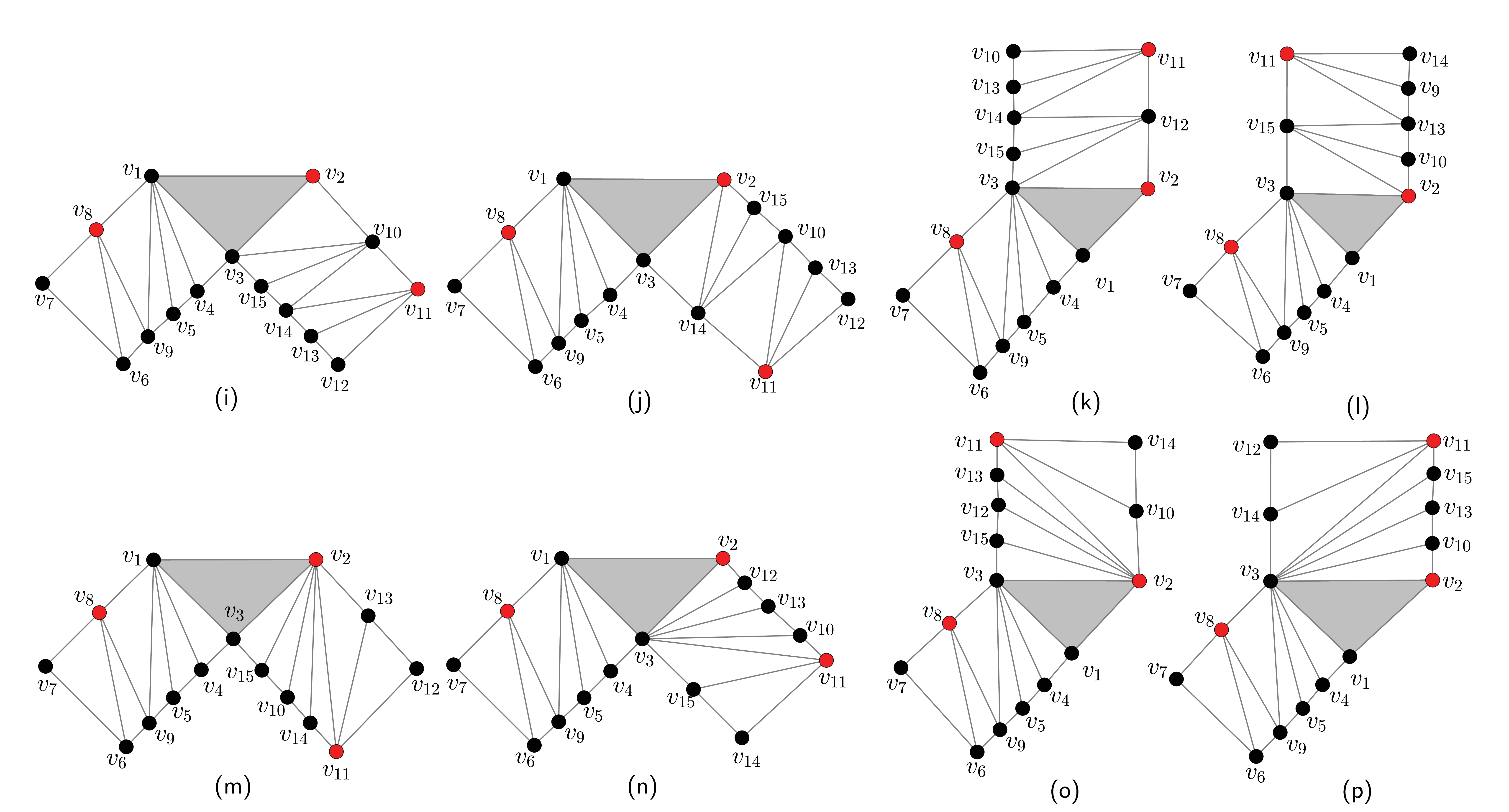}
			{\vspace{-.7cm}}
			\includegraphics[scale=0.16]{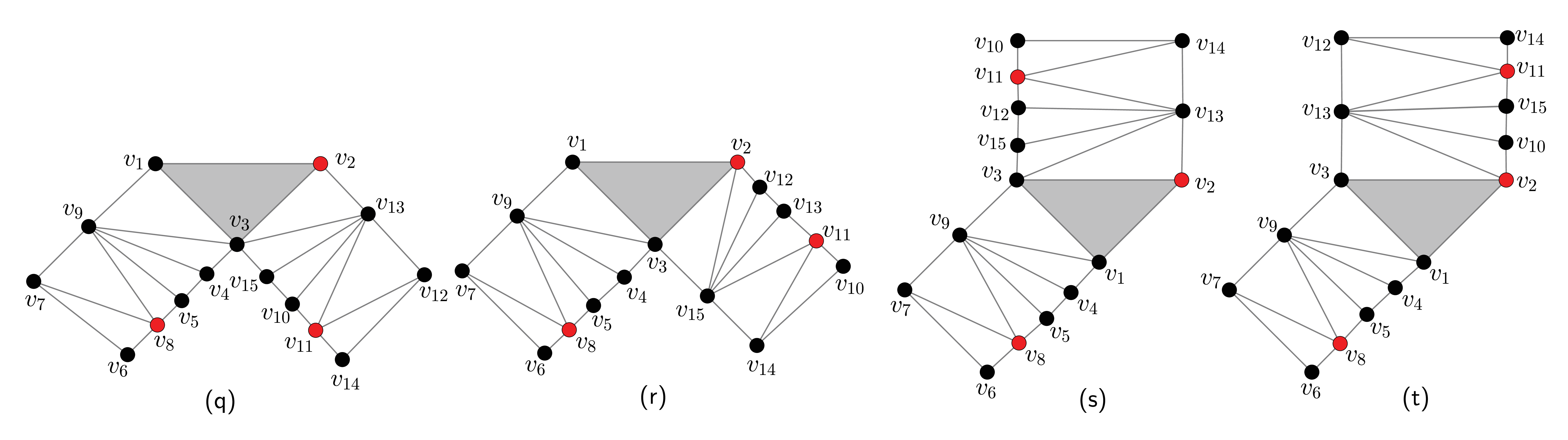}
			\caption{The regions of $G$ corresponding to tree $T_{28}$. The red vertices show a 2DD-set of $G[V(G)\setminus V(G')]$.}\label{tree_28}
		\end{center}
	\end{figure}
	
	\begin{figure}[htbp]
		\begin{center}
			\includegraphics[scale=0.16]{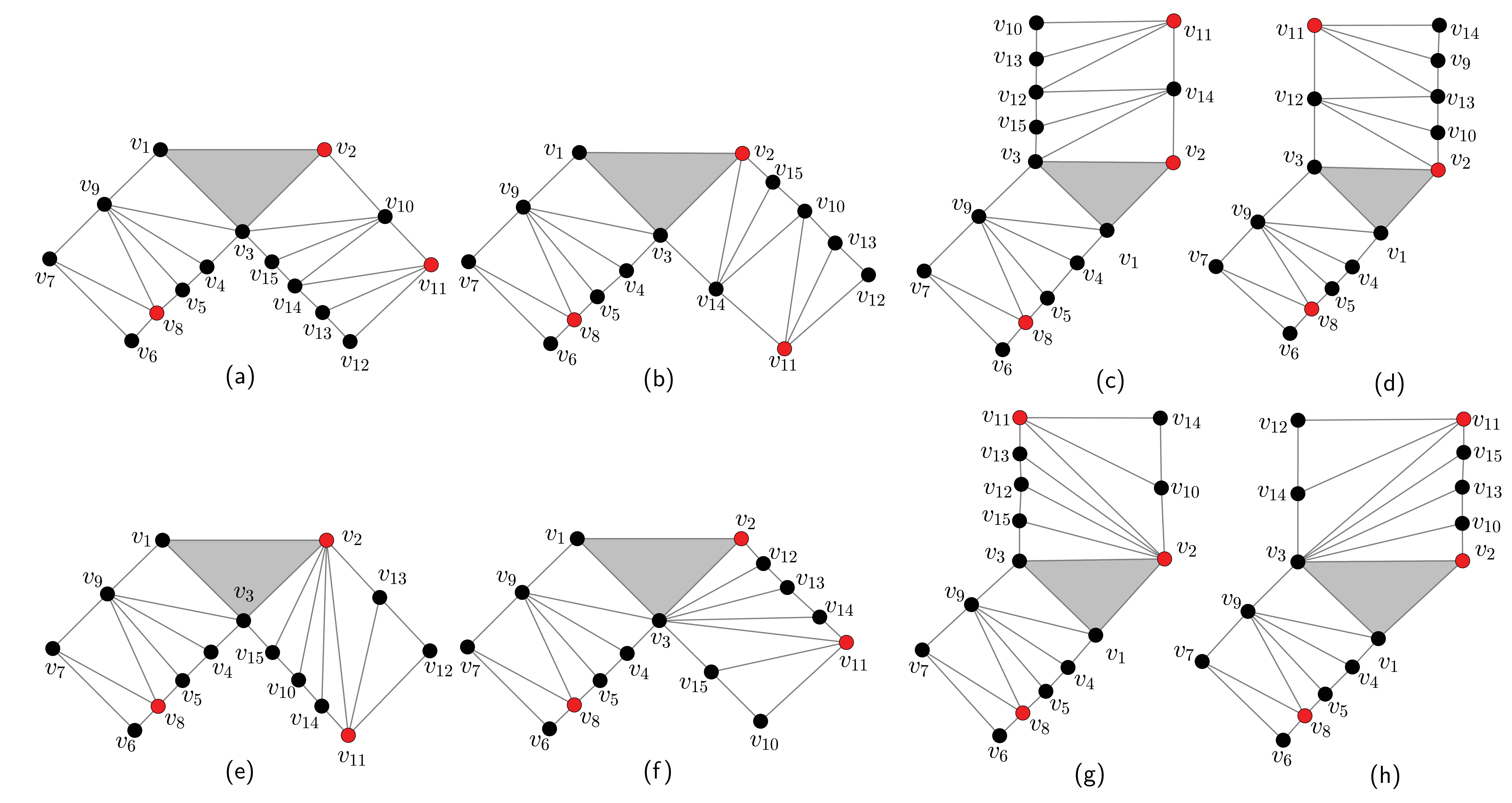}
			\includegraphics[scale=0.16]{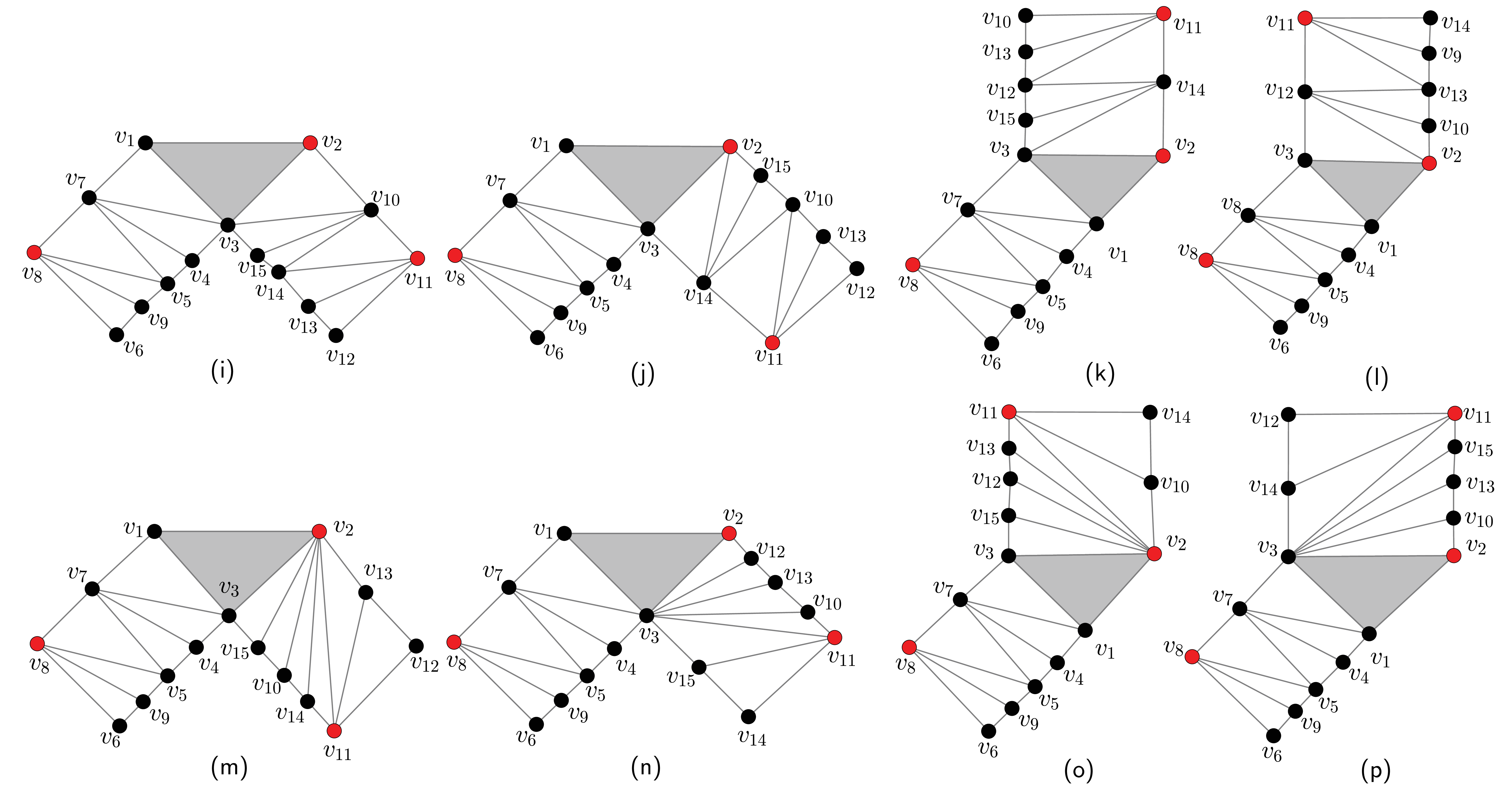}
			\includegraphics[scale=0.16]{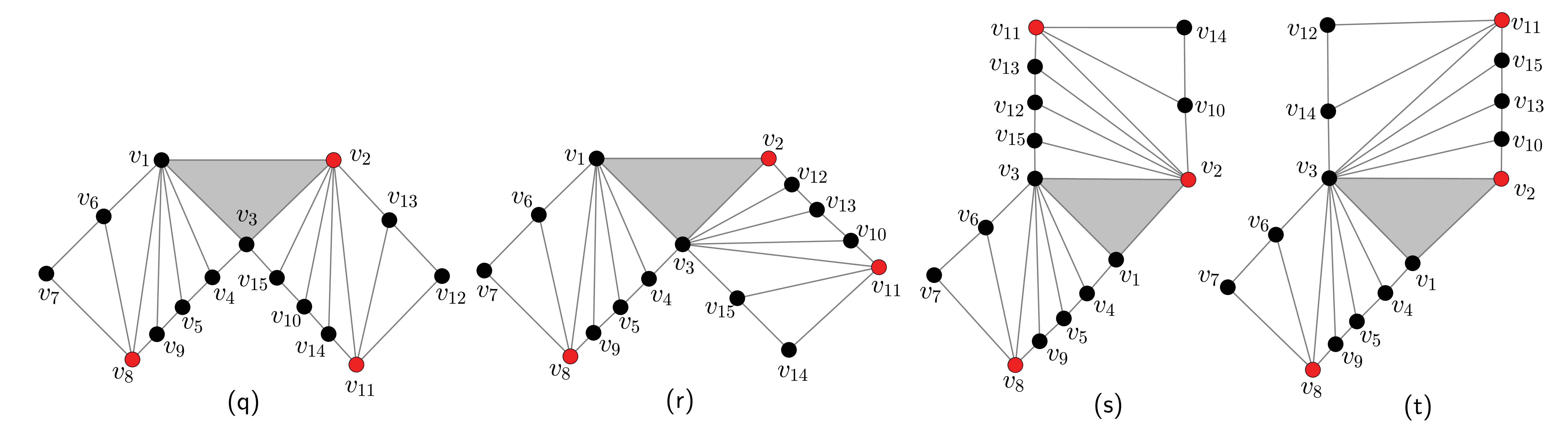}
			{\vspace{-.5cm}}
			\caption{The regions of $G$ corresponding to tree $T_{28}$. The red vertices show a 2DD-set of $G[V(G)\setminus V(G')]$.}\label{tree_28_1}
		\end{center}
	\end{figure}

	\begin{claim}\label{tree27}
		The tree $T_{27}$ is not a maximal subtree of $T$.
	\end{claim}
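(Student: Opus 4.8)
The plan is to argue exactly as in Claims~\ref{tree22}--\ref{tree26}, driving a contradiction from the minimality of the counterexample~$G$. So I would assume, to the contrary, that $T_{27}=T_v$ for some vertex~$v$, and begin by reading off the region of~$G$ carried by~$T_v$. Since $T_{27}$ possesses a branch at leaf-distance~$6$ from its root, \cref{lem1}\ref{6distance} forces that branch to be triangulated as one of $H_5,H_6,H_7,H_8$, while the second branch is triangulated according to \cref{cl3to10} and \cref{cl-10to21}; the resulting subgraphs are precisely those displayed in Figures~\ref{tree_27} and~\ref{tree_27_1}. I would fix the labelling so that the shaded root triangle is $V(R_v)=\{v_1,v_2,v_3\}$, the distance-$6$ branch occupies $v_1,\dots,v_9$, and the second branch occupies the consecutively numbered vertices up to~$v_{14}$.

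Next I would excise the interior of $T_v$ by setting $G'=G-V_3^{14}$, which is again a mop with $n'=n-12$ and $k'=k-2$, the drop of two in the degree-$2$ count coming from the two branch tips (a contraction of an outer edge of the root triangle, legitimised by \cref{key}, can be inserted here if the interface vertex needs merging, as in Claim~\ref{tree24}). The small residual mops I would dispatch first: if $G'$ has order at most~$7$, then by \cref{obs3} there is a 2DD-set $D'$ of~$G'$ of size~$2$ containing a prescribed interface vertex such as~$v_2$, and then $D'\cup\{v_3,v_4\}$ is a 2DD-set of~$G$ of size at most~$4\le\floor*{\frac{2}{9}(n+k)}$, a contradiction.

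For $n'\ge 8$ I would invoke the minimality of~$G$ to obtain $\gamma_2^d(G')\le\floor*{\frac{2}{9}(n'+k')}=\floor*{\frac{2}{9}(n+k-14)}\le\floor*{\frac{2}{9}(n+k)}-3$, the last step using $\tfrac{2}{9}\cdot 14=\tfrac{28}{9}>3$. Taking a $\gamma_2^d$-set $D'$ of~$G'$ and forming $D=D'\cup\{v_2,v_3,v_4\}$ exactly as in Claim~\ref{tree25}, I would verify that these three vertices disjunctively dominate every deleted vertex of $V_3^{14}$; then $D$ is a 2DD-set of~$G$ with $|D|\le|D'|+3\le\floor*{\frac{2}{9}(n+k)}$, contradicting $\gamma_2^d(G)>\floor*{\frac{2}{9}(n+k)}$.

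The crux is the domination check in that final step. A single three-vertex patch $\{v_2,v_3,v_4\}$ must disjunctively dominate the whole of $V_3^{14}$ across all of the numerous triangulations compressed into Figures~\ref{tree_27}--\ref{tree_27_1}, and the two deepest vertices of the distance-$6$ branch, namely $v_8$ and $v_9$, lie outside the adjacency range of the patch and can be reached only through the ``two vertices at distance~$2$'' clause (this is exactly where \cref{lem1}\ref{6distance} is indispensable, since it is the restriction to $H_5,\dots,H_8$ that keeps a root vertex within distance~$2$ of every branch vertex). The real labour, as in the parallel claims, is to organise these subcases so that one fixed patch serves uniformly; moreover the arithmetic is tight, since $\tfrac{28}{9}$ only barely exceeds~$3$, so the deletion size~$12$ together with the drop $k'=k-2$ leaves no slack and must be hit on the nose.
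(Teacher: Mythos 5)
Your high-level framework (delete the region of $T_v$, invoke minimality, re-add a small patch) is the paper's, but you have misread what $T_{27}$ is, and this breaks the proof at two load-bearing points. The actual $T_{27}$ has \emph{two long branches}: one at distance~$5$ from the root, triangulated as $H_1$ or $H_2$ via \cref{lem1}\ref{5distance} over $v_4,\dots,v_8$, and one at distance~$6$, triangulated over $v_9,\dots,v_{14}$; it is not a distance-$6$ branch paired with a short branch handled by \cref{cl3to10} and \cref{cl-10to21} --- that is the structure of $T_{24}$--$T_{26}$, whose proofs you are transplanting. The consequence is that the region of $T_{27}$ contains only \emph{two} ear triangles, so deleting $V_3^{14}$ yields $k' = k-1$ (the paper's count), not $k' = k-2$ as you assert; your justification ``the drop of two coming from the two branch tips'' ignores the degree-$2$ vertex created at the interface. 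With $k'=k-1$ your arithmetic collapses: $\floor*{\frac{2}{9}(n'+k')} \le \floor*{\frac{2}{9}(n+k-13)}$ only guarantees a saving of~$2$, since $\frac{2}{9}\cdot 13 < 3$, so a three-vertex patch does not fit. This is precisely why the paper does \emph{not} argue directly: it contracts the outer edge $v_1v_2$ (via \cref{key}) to obtain $G_1$ with $n_1 = n-13$ and $k_1 \le k-1$, giving $\floor*{\frac{2}{9}(n_1+k_1)} \le \floor*{\frac{2}{9}(n+k)}-3$, and then splits the reconstruction into the cases $x \in D_1$ and $x \notin D_1$. You mention this contraction only as an optional parenthetical; for the actual $T_{27}$ it is indispensable.

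The second failure is the patch itself. The set $\{v_2,v_3,v_4\}$ is the one the paper uses for $T_{25}$/$T_{26}$, where it sits beside the hub of the single long branch. In the actual $T_{27}$ region the two long branches occupy $v_4,\dots,v_8$ and $v_9,\dots,v_{14}$, and a patch clustered at $\{v_2,v_3,v_4\}$ leaves the tips of both branches --- in particular the degree-$2$ ear of the distance-$6$ branch --- at distance at least~$3$ from every patch vertex, hence not disjunctively dominated. The paper instead uses $\{v_2,v_8,v_{11}\}$ (augmented by $v_1$ when the contracted vertex lies in $D_1$), placing one vertex inside each branch; some such spread-out choice is forced by the geometry in Figures~\ref{tree_27}--\ref{tree_27_1}. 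A smaller slip: you dispatch all residual mops of order at most~$7$ via \cref{obs3}, but that observation requires order at least~$5$; the cases $2 \le n' \le 4$ must be handled by an explicit set, as the paper does.
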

	\begin{proof}[Proof of Claim~\ref{tree27}]
		Suppose, to the contrary, that $T_{27}$ is a maximal subtree of $T$, and so $T_{27} = T_v$ where $v$ denotes the root of the subtree $T_v$.  We infer that the subgraph of $G$ associated with $T_{27}$ is obtained from either (i) the region $H_1$ by triangulating the region $v_1v_3v_4v_5v_6v_7v_8$  according to \Cref{lem1}\ref{5distance} as illustrated in Figure~\ref{tree_27}(a)-(p) or (ii) the region $H_2$ by triangulating the region $v_1v_3v_4v_5v_8v_7v_6$ according to \Cref{lem1}\ref{5distance} as illustrated in Figure~\ref{tree_27_1}(a)-(p), where we let $V(T_v) = \{v_1,v_2,v_{3}\}$ be the (shaded) triangle in $G$ associated with the vertex~$v$. In the following, we present arguments that work in each cases.
		
		Let $G'$ be the mop of order $n'$ obtained from $G$ by deleting the vertices $V_3^{14}$, and let $G'$ have $k'$ vertices of degree~$2$. We note that $n' = n-12$ and $k' = k - 1$, and $v_1v_{2}$ is an outer edge of $G'$. If $2 \le n' \le 4$, then $\{v_2,v_8,v_{11}\}$ is a 2DD-set of $G$, and hence $\gamma_2^d(G)\le 3 \le  \floor*{\frac{2}{9}(n+k)}$, a contradiction. If $5 \le n' \le 7$, then by \cref{obs3}, there exists a 2DD-set $D'$ of $G'$ such that $v_2\in D'$ and $|D'|=2$. Therefore, $D'\cup \{v_8,v_{11}\}$ is a 2DD-set of $G$, and so $\gamma_2^d(G)\le 4 \le \floor*{\frac{2}{9}(n+k)}$, a contradiction. Hence, $n' \ge 8$. Let $G_1$ be a graph of order $n_1$ obtained from $G'$ by contracting the edge $v_1v_{2}$ to form a new vertex $x$ in $G_1$, and let $G_1$ have $k_1$ vertices of degree~$2$. By Lemma~\ref{key}, $G_1$ is a mop. Since $n' \ge 8$, we note that $n_1 = n' - 1 \ge 7$. Further we note that $n_1 = n - 13$ and $k_1 \le k-1$. By the minimality of the mop $G$, we have $\gamma_2^d(G_1) \le \floor*{\frac{2}{9}(n_1+k_1)} \le \floor*{\frac{2}{9}(n-13+k-1)} \le \floor*{\frac{2}{9}(n+k)}-3$. Let $D_1$ be a $\gamma_2^d$-set of $G_1$. If $x \in D_1$, then let $D = (D_1 \setminus \{x\}) \cup \{v_1,v_2,v_8,v_{11}\}$. If $x \notin D_1$, then let $D = D_1 \cup \{v_2,v_8,v_{11}\}$. In both cases $D$ is a 2DD-set of $G$, and so $\gamma_2^d(G) \le |D| \le |D_1| + 3 \le \floor*{\frac{2}{9}(n+k)}$, a contradiction.
	\end{proof}

	\begin{claim}\label{tree28}
		The tree $T_{28}$ is not a maximal subtree of $T$.
	\end{claim}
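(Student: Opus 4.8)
The plan is to argue by contradiction exactly as in the preceding claims, producing a 2DD-set of $G$ of cardinality at most $\floor*{\frac{2}{9}(n+k)}$. First I would suppose $T_{28}=T_v$, with root triangle $V(R_v)=\{v_1,v_2,v_3\}$. Since $T_{28}$ is the richest of the admissible subtrees, I expect one of its two long branches to realize a distance-$6$ leaf, so by \Cref{lem1}\ref{6distance} that branch corresponds to one of the regions $H_5,H_6,H_7,H_8$, while the remaining branch is triangulated according to the earlier structural claims (its region being of the form already analyzed in \Cref{cl3to10} or \Cref{cl-10to21}). These choices produce precisely the sub-configurations drawn in Figures~\ref{tree_28} and~\ref{tree_28_1}. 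The single most important observation to record at the outset is that, \emph{uniformly across every such sub-configuration}, a fixed three-element set consisting of the neighbor $v_2$ of the root apex together with the two ``spine'' dominators of the distance-$6$ branch (the analogues of $v_4$ and the opposite apex used in \Cref{tree24}) disjunctively dominates every vertex of the region associated with $T_v$, no matter how each quadrilateral inside the region is triangulated.

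Next I would delete the block of vertices comprising $T_v$, retaining only the vertex at which the branch reattaches to the rest of $T$ (mirroring the deletions $G'=G-V_3^{13}$ and $G'=G-V_3^{14}$ used for \Cref{tree24} and \Cref{tree27}). This yields a mop $G'$ of order $n'=n-c$ with $k'=k-d$ vertices of degree~$2$, where the constants $c,d$ are read off from the figures and are arranged so that $c+d$ is large enough to absorb the dominators added later. Two regimes then need separate treatment. For small residual orders I would argue directly: when $n'$ is tiny an explicit three- or four-element set such as $\{v_2,v_3,v_4\}$ (suitably augmented) is already a 2DD-set of $G$, and for $5\le n'\le 7$ I would invoke \Cref{obs3} to extract a $2$-element 2DD-set $D'$ of $G'$ containing a prescribed vertex lying on the cut edge, then adjoin the spine dominators to cover $T_v$. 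For $n'\ge 8$ I would, if the relevant outer edge must be preserved (as in \Cref{tree27}), contract it to form a mop $G_1$ by \Cref{key}, and then apply the minimality of $G$ to obtain $\gamma_2^d(G_1)\le \floor*{\frac{2}{9}(n_1+k_1)}\le \floor*{\frac{2}{9}(n+k)}-a$ for the appropriate number $a\in\{3,4\}$ of vertices to be added.

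Finally I would lift a $\gamma_2^d$-set $D_1$ of $G_1$ back to a 2DD-set $D$ of $G$: if the contracted vertex $x$ belongs to $D_1$, I replace it by its two pre-images and append the spine dominators, while if $x\notin D_1$ I simply adjoin all of the dominators. In either branch $|D|\le|D_1|+a$, and combined with the displayed bound this gives $\gamma_2^d(G)\le\floor*{\frac{2}{9}(n+k)}$, contradicting the choice of $G$ as a minimum counterexample. The arithmetic here mirrors that of \Cref{tree24} through~\Cref{tree27}: one only needs $\tfrac{2}{9}(c+d)\ge a$, which is guaranteed by the size of the deleted block.

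I expect the genuine obstacle to be the case analysis rather than any single clever step. Because the distance-$6$ region admits the four essentially different triangulations $H_5$--$H_8$ and the attached branch admits several more, the figures break into many panels, and the crux is to verify that one and the same choice of added vertices disjunctively dominates the entire deleted region in \emph{all} of these panels simultaneously. Once this uniform-domination check is carried out, the remainder of the argument is the now-routine deletion/contraction/extension computation used throughout Section~\ref{Sect:main-proof}.
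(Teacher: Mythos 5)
Your proposal follows essentially the same route as the paper's proof: assume $T_{28}=T_v$ with root triangle $\{v_1,v_2,v_3\}$, identify the associated region via Lemma~\ref{lem1}\ref{6distance}, delete the block of region vertices (the paper deletes $V_3^{15}$, so $n'=n-13$, $k'=k-1$), settle small residual orders directly or via Observation~\ref{obs3}, and for $n'\ge 8$ apply minimality and lift a $\gamma_2^d$-set of $G'$ by adding the fixed three-element set $\{v_2,v_8,v_{11}\}$, the arithmetic closing because $\tfrac{2}{9}\cdot 14\ge 3$. The only deviations are cosmetic: in this case the paper needs no edge contraction at all (the slack from deleting $13$ vertices suffices), and both branches of $T_{28}$ are distance-$6$ regions $H_5$--$H_8$ rather than one branch of the form treated in Claims~\ref{cl3to10} or~\ref{cl-10to21}, but neither point alters your argument.
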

	\begin{proof}[Proof of Claim~\ref{tree28}]
		Suppose, to the contrary, that $T_{28}$ is a maximal subtree of $T$, and so $T_{28} = T_v$ where $v$ denotes the root of the subtree $T_v$.  We infer that the subgraph of $G$ associated with $T_{28}$ is obtained from either (i) the region $H_{5}$ by triangulating the region $v_1v_3v_4v_5v_9v_6v_7v_8$  according to \Cref{lem1}\ref{6distance} as illustrated in Figure~\ref{tree_28}(a)-(p) or (ii) the region $H_6$ by triangulating the region $v_1v_3v_4v_5v_8v_6v_7v_9$ according to \Cref{lem1}\ref{6distance} as illustrated in Figure~\ref{tree_28}(q)-(t) and Figure~\ref{tree_28_1}(a)-(h)  or (iii) the region $H_7$ by triangulating the region $v_1v_3v_4v_5v_9v_6v_8v_7$ according to \Cref{lem1}\ref{6distance} as illustrated in Figure~\ref{tree_28_1}(i)-(p) or (iv) the region $H_8$ by triangulating the region $v_1v_3v_4v_5v_9v_8v_7v_6$ according to \Cref{lem1}\ref{6distance} as illustrated in Figure~\ref{tree_28_1}(q)-(t), where we let $V(T_v) = \{v_1,v_2,v_{3}\}$ be the (shaded) triangle in $G$ associated with the vertex~$v$. In the following, we present arguments that work in each cases.
		
		Let $G'$ be the mop of order $n'$ obtained from $G$ by deleting the vertices $V_3^{15}$, and let $G'$ have $k'$ vertices of degree~$2$. We note that $n' = n-13$ and $k' = k - 1$. If $2 \le n' \le 4$, then $\{v_2,v_8,v_{11}\}$ is a 2DD-set of $G$, and hence $\gamma_2^d(G)\le 3 \le  \floor*{\frac{2}{9}(n+k)}$, a contradiction. If $5 \le n' \le 7$, then by \cref{obs3}, there exists a 2DD-set $D'$ of $G'$ such that $v_2\in D'$ and $|D'|=2$. Therefore, $D'\cup \{v_8,v_{11}\}$ is a 2DD-set of $G$, and so $\gamma_2^d(G) \le 4 \le \floor*{\frac{2}{9}(n+k)}$, a contradiction. Hence, $n' \ge 8$. By the minimality of the mop $G$, we have $\gamma_2^d(G_1) \le \floor*{\frac{2}{9}(n'+k')} \le \floor*{\frac{2}{9}(n-13+k-1)} \le \floor*{\frac{2}{9}(n+k)}-3$. Let $D_1$ be a $\gamma_2^d$-set of $G_1$ and let $D = D_1 \cup \{v_2,v_8,v_{11}\}$. The set $D$ is a 2DD-set of $G$, and so $\gamma_2^d(G) \le |D| \le |D_1| + 3 \le \floor*{\frac{2}{9}(n+k)}$, a contradiction.
	\end{proof}
	
	We now return to the proof of Theorem~\ref{thm:main}. By Claims~\ref{tree1}-\ref{tree28}, we conclude that $T$ does not contains any tree $T_i$ shown in Figure~\ref{trees} as a subtree for $i\in [28]$, a contradiction to Claim~\ref{cl-2}. We deduce, therefore, that our supposition that Theorem~\ref{thm:main} is false is incorrect. Hence every maximal outerplanar of order $n \ge 7$ with $k$ vertices of degree~$2$ satisfies $\gamma_2^d(G) \le  \floor*{\frac{2}{9}(n+k)}$. This completes the proof of Theorem~\ref{thm:main}.	
\end{proof}

\section{Conclusion}
\label{Sect:conclude}

In this section, we show that upper bound shown in Theorem~\ref{thm:main} is tight. Note that each graph $G_i$ shown in Figure~\ref{tightexamples} has $\gamma_t^d(G_i) =  \floor*{\frac{2}{9}(n+k)}$, where $i\in[6]$.

\begin{figure}[H]
	\begin{center}
		\includegraphics[scale=.51]{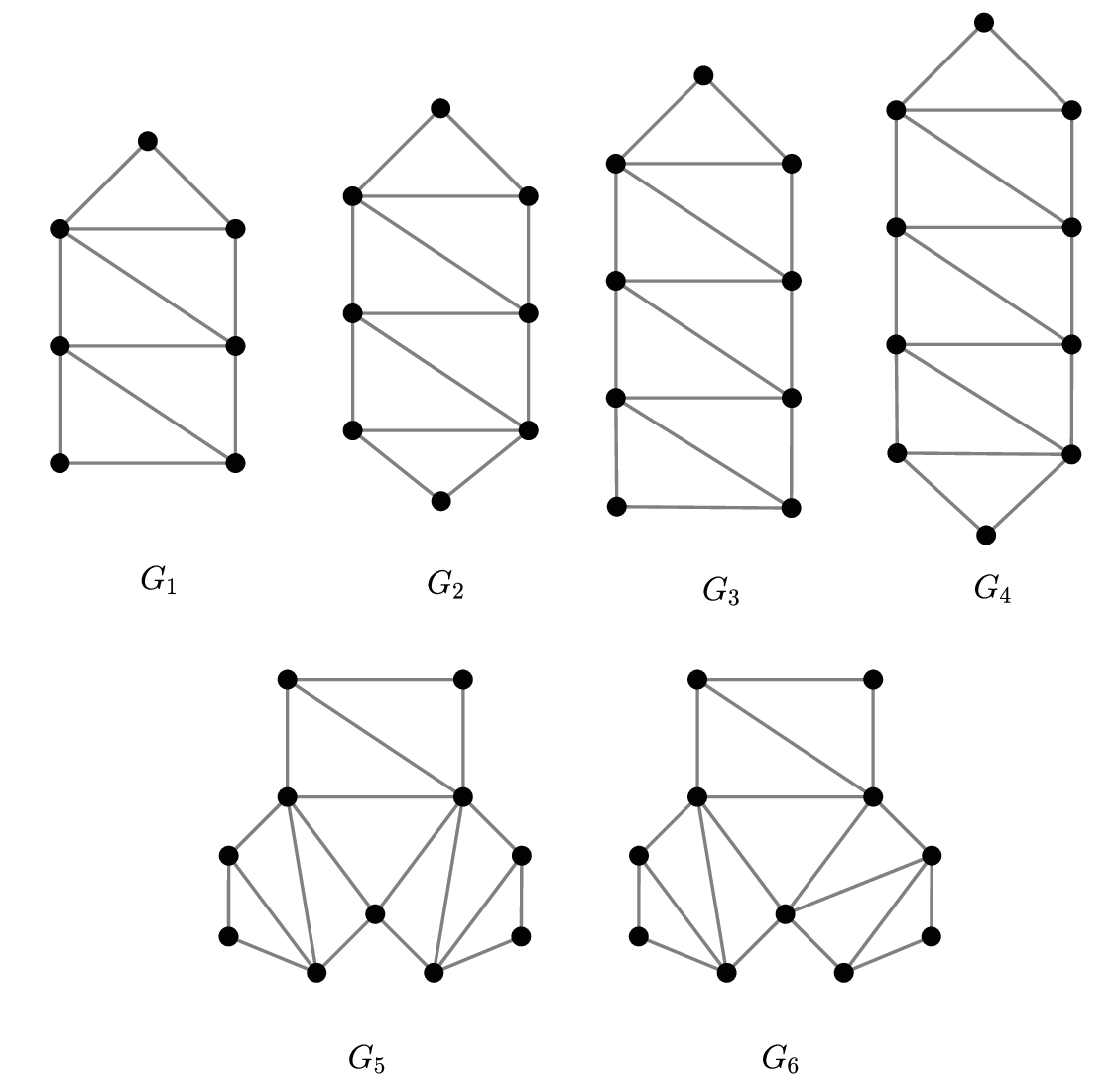}
		\caption{The graphs $G_1, G_2, G_3, G_4, G_5$ and $G_6$.} \label{tightexamples}
	\end{center}
\end{figure}

\section*{Declarations}

\noindent{\bf Conflict of interest} The authors do not have any financial or non financial interests that are directly or indirectly related to the work submitted for publication.

\noindent{\bf Data availability}
No data was used for the research described in this paper.

\noindent{\bf Acknowledgments} Research of Michael A. Henning was supported in part by the South African National Research Foundation (grants 132588, 129265) and the University of Johannesburg.

\medskip

\end{document}